\newcommand{\titlename}{A Multifractal Decomposition for Self-similar Measures with Exact Overlaps}
\newcommand{\shorttitlename}{Multifractal Decomposition}
\newcommand{\docclasses}{28A78, 28A80}
\newcommand{\dockeywords}{self-similar measures, multifractal formalism, weak separation condition}
\subjclass[2020]{\docclasses}
\keywords{\dockeywords}
\begin{document}
\title[\shorttitlename]{\titlename}
\author{Alex Rutar}
\address{Mathematical Institute, North Haugh, St Andrews, Fife KY16 9SS, Scotland}
\email{ar339@st-andrews.ac.uk}
\thanks{The author was supported by EPSRC Grant EP/V520123/1.}
\thanks{This paper is in final form and no version of it will be submitted for publication elsewhere.}

\begin{abstract}
    We study self-similar measures in $\R$ satisfying the weak separation condition along with weak technical assumptions which are satisfied in all known examples.
    For such a measure $\mu$, we show that there is a finite set of concave functions $\{\tau_1,\ldots,\tau_m\}$ such that the $L^q$-spectrum of $\mu$ is given by $\min\{\tau_1,\ldots,\tau_m\}$ and the multifractal spectrum of $\mu$ is given by $\max\{\tau_1^*,\ldots,\tau_m^*\}$, where $\tau_i^*$ denotes the concave conjugate of $\tau_i$.
    In particular, the measure $\mu$ satisfies the multifractal formalism if and only if its multifractal spectrum is a concave function.
    This implies that $\mu$ satisfies the multifractal formalism at values corresponding to points of differentiability of the $L^q$-spectrum.
    We also verify existence of the limit for the $L^q$-spectra of such measures for every $q\in\R$.
    As a direct application, we obtain many new results and simple proofs of well-known results in the multifractal analysis of self-similar measures satisfying the weak separation condition.
\end{abstract}

\maketitle
\tableofcontents
\section{Introduction}
Given a finite Borel measure $\mu$, a standard way to quantify the density of $\mu$ at a given point $x$ in its support is through the \defn{local dimension}, which is the quantity
\begin{equation*}
    \dim_{\loc}(\mu,x) = \lim_{r\to 0}\frac{\log B(x,r)}{\log r}
\end{equation*}
when the limit exists.
A natural question to ask is the following: what is the structure of the set of points which have a prescribed local dimension $\alpha$?
In many interesting cases, these level sets of local dimensions are uncountable and dense in $\supp \mu$, but have $\mu$-measure zero for most values of $\alpha$.
We will focus on the Hausdorff dimensions of these level sets of local dimensions, which we denote by
\begin{equation*}
    f_\mu(\alpha)=\dim_H\{x\in\supp\mu:\dim_{\loc}(\mu,x)=\alpha\}.
\end{equation*}
The function $f_\mu$ is commonly known as the \defn{(fine Hausdorff) multifractal spectrum} of $\mu$.

Related to the multifractal spectrum is the \defn{$L^q$-spectrum} of the measure $\mu$, which is given by
\begin{equation*}
    \tau_\mu(q)=\liminf_{r\to 0}\frac{\log\sup\sum_i\mu(B(x_i,r))^q}{\log r}
\end{equation*}
where the supremum is taken over all disjoint families of balls $\{B(x_i,r)\}_i$ with $x_i\in\supp \mu$.
A standard application of Hölder's inequality shows that $\tau_\mu$ is a concave function of $q$.
The $L^q$-spectrum is related to the multifractal spectrum through a heuristic relationship known as the \defn{multifractal formalism}.
It states that if the measure $\mu$ is ``sufficiently nice'', then the multifractal spectrum is the concave function given by
\begin{equation*}
    f_\mu(\alpha)=\tau_\mu^*(\alpha)=\inf\{\alpha q-\tau_\mu(q):q\in\R\}.
\end{equation*}

One can think of the $L^q$-spectrum as a sort of box-counting dimension, whereas the multifractal spectrum is a generalization of the Hausdorff dimension.
Of course, the multifractal formalism does not hold in general: for example, in the presence of non-conformality, $f_\mu$ and $\tau_\mu$ can both be concave conjugate functions but $f_\mu(\alpha)<\tau_\mu^*(\alpha)$ for all $\alpha$ \cite{jr2011}.
In some sense, this is a consequence of the fact that the box and Hausdorff dimensions of non-conformal sets are, in general, not the same.
However, even when a measure is ``locally nice'', the multifractal formalism can fail: if $\mu_1$ and $\mu_2$ are probability measures with disjoint supports each satisfying the multifractal formalism and $\nu=(\mu_1+\mu_2)/2$, a straightforward exercise from the definitions shows that
\begin{equation}\label{e:min-formula}
    \begin{aligned}
        \tau_\nu(q)&=\min\{\tau_{\mu_1}(q),\tau_{\mu_2}(q)\}\\
        f_\nu(q) &= \max\{f_{\mu_1}(q),f_{\mu_2}(q)\}.
    \end{aligned}
\end{equation}
In particular, $\nu$ satisfies the multifractal formalism if and only if $\tau_{\mu_1}(q)\leq\tau_{\mu_2}(q)$ or $\tau_{\mu_2}(q)\leq\tau_{\mu_1}(q)$.
Our main result states, for a certain class of conformal measures, that this phenomenon is the only way in which the multifractal formalism can fail.

We will focus on the multifractal analysis of self-similar measures in $\R$, which are defined as follows.
Given a finite set of maps $(S_i)_{i\in\mathcal{I}}$ where each $S_i:\R\to\R$ is given by $S_i(x)=r_i x+d_i$ where $0<|r_i|<1$ and probabilities $(p_i)_{i\in\mathcal{I}}$ with $p_i>0$ and $\sum p_i=1$, the self-similar measure $\mu$ is uniquely defined by
\begin{equation*}
    \mu=\sum_{i\in\mathcal{I}}p_i\cdot S_i\mu
\end{equation*}
where $S_i\mu$ is the pushforward of $\mu$ by $S_i$.

Self-similar measures are ``locally nice'' by nature of their construction (indeed, they have equal box and Hausdorff dimensions \cite{fal1997}), so one might be more optimistic for nice multifractal properties.
For example, self-similar measures are exact-dimensional \cite{fh2009}, which means that there is precisely one value $\alpha$ for which the level set $\{x\in\supp\mu:\dim_{\loc}(\mu,x)=\alpha\}$ has full $\mu$-measure.
If there is an open set $U$ satisfying $\bigcup_{i\in\mathcal{I}}S_i(U)\subseteq U$ where the union is disjoint, we say that $\mu$ satisfies the \defn{open set condition} \cite{hut1981}.
For such measures, the $L^q$-spectrum is the unique smooth function satisfying $\sum_{i\in\mathcal{I}}p_i^qr_i^{-\tau_\mu(q)}=1$, and the multifractal formalism holds \cite{cm1992,pat1997}.

However, for self-similar measures with overlaps, the multifractal formalism can fail.
One of the earliest known examples of this fact is due to Hu and Lau \cite{hl2001}, where they show that the three-fold convolution of the Cantor measure has an isolated point in its set of local dimensions, and therefore fails the multifractal formalism.
This measure, and generalizations, have been studied in \cite{flw2005,hhn2018,lw2005,shm2005} among other papers.
Another class of well-studied measures are the Bernoulli convolutions, which is the law of the random variable $\sum_{n=0}^\infty\pm\lambda^n$ for $\lambda\in(0,1)$ where the $+$ and $-$ signs are chosen with equal probabilities.
In this case, for any parameter $\lambda\in(1/\phi,1)$ where $\phi$ is the Golden mean, the set of local dimensions has an isolated point \cite[Prop. 2.2]{hh2019}, and $\phi$ is maximal with this property.
Testud \cite{tes2006a} constructed self-similar measures associated with digit-like sets for which the multifractal spectrum is non-concave and the maximum of two non-trivial concave functions.
Thus behaviour similar to \cref{e:min-formula} can occur for self-similar measures with overlaps.

On the other hand, for Bernoulli convolutions with contraction ratio the reciprocal of a simple Pisot number (the unique positive root of a polynomial $x^k-x^{k-1}-\cdots-x-1$ for some $k\geq 2$), the multifractal formalism is known to hold \cite{fen2005}.
It is also shown in \cite{rut2021} that any self-similar measure associated with the IFS $\{\lambda_1 x,\lambda_2 x+\lambda_1(1-\lambda_2),\lambda_2 x+(1-\lambda_2)\}$ for $\lambda_1,\lambda_2>0$ and $\lambda_1+2\lambda_2-\lambda_1\lambda_2\leq 1$ satisfies the multifractal formalism.
The $L^q$-spectra of self-similar measures also have a certain amount of regularity: the limit defining $\tau_\mu(q)$ is known to exist for any $q\geq 0$ \cite{ps2000}.

We see that, even for self-similar measures, a wide variety of behaviour is possible.
Determining precisely when the multifractal formalism is satisfied, and more generally understanding properties of the $L^q$-spectrum and multifractal spectrum when it is not, is a very challenging question and little is known.

In this paper, we develop a general theory in an attempt to remedy this.
We will show for an important class of self-similar measures that the varied multifractal behaviour observed above follows from a decomposition similar in form to \cref{e:min-formula}.
More precisely, we show that the $L^q$-spectrum of $\mu$ is given by the minimum of a finite set of concave functions, and the multifractal spectrum of $\mu$ is given by the maximum of their concave conjugates.
These concave functions can be loosely interpreted as the $L^q$-spectra of a decomposition of $\mu$ as a sum of subadditive set functions, each satisfying a multifractal formalism.
By standard arguments involving concave functions, this shows that the multifractal formalism holds for $\mu$ in the following generic sense: $\mu$ satisfies the multifractal formalism if and only if $f_\mu$ is a concave function.
This is in stark contrast to measures associated with iterated function systems of non-conformal maps as discussed above.

\subsection{The weak separation condition and finite type conditions}
Many of the examples mentioned in the preceding section satisfy various closely-related finite type conditions \cite{fen2003,hhrtoappear,ln2007,nw2001}.
Heuristically, these finite type conditions require that there are only ``finitely many overlaps''
These separation conditions are all special cases of the \defn{weak separation condition} of Lau and Ngai \cite{ln1999}, which states that there is a uniform bound on the number of simultaneous ``distinct overlaps'' (see \cref{e:wsc} for a precise statement).
Note that the weak separation condition is strictly weaker than the open set condition.
When the invariant set $\supp\mu$ is a closed interval, the generalized finite type condition coincides with the weak separation condition \cite{fen2016,hhrtoappear}.
It is an open question to determine, outside certain degenerate situations, if these two separation conditions are equivalent in general.

The multifractal analysis of such measures have been extensively studied since.
Such measures have enough structure to allow strong results, yet the class contains many interesting examples and exceptional behaviour.
The most significant general result to date, due to Feng and Lau \cite{fl2009}, states that for self-similar measures satisfying the weak separation condition, the multifractal formalism holds for any $q\geq 0$, and for $q<0$ there is an open set $U_0$ on which $\mu$ is sufficiently regular so that the $L^q$- and multifractal spectra restricted to $U_0$ satisfy the multifractal formalism.
However, the relatively open set $U_0\cap K$ is almost always a proper subset of $K$, so this result only gives a (somewhat coarse) lower bound for $f_\mu$.
The case for $q<0$ is more challenging to establish in general: indeed, we already saw for such self-similar measures that the multifractal formalism need not hold.

For measures satisfying the weak separation condition in $\R$, the author recently established general conditions based on connectivity properties of an associated graph for which the regularity on the set $U_0$ can be extended to the entire set $K$ \cite{rut2021}.
This can be applied to verify the multifractal formalism for all $q\in\R$ for certain examples such as those discussed in \cite[Prop. 4.3]{lw2004} or \cite[Ex. 8.5]{dn2017}.

Our work here vastly extends these results under a slightly more specialized hypothesis (detailed in \cref{d:pfnc}).
We will discuss our technical conditions and results in detail in the following section.
We are not aware of any IFS satisfying the weak separation condition for which the technical conditions do not hold.

\subsection{Main results and outline of the paper}
\subsubsection{Symbolic encoding and the transition graph}
In \cref{s:gt-construct}, we define a generalized version of the constructions in \cite{fen2003,hhs2018,rut2021} which provides a more cohesive perspective on the ``net interval'' constructions defined therein and simplifies the study of certain examples.
The construction is based on the idea of an \defn{iteration rule} $\Phi$ (see \cref{d:iter}), which describes how to define inductively a nested heirarchy of partitions $\{\mathcal{P}_n\}_{n=0}^\infty$ in a way which depends only on the local geometry of $K$ (see \cref{p:ttype}).
The end result is to construct a rooted directed graph $\mathcal{G}$, which we call the \defn{transition graph}.
The edges of the graph $\mathcal{G}$ are equipped with matrices $T(e)$, such that norms of products of matrices corresponding to finite paths beginning at the root vertex encode the measure $\mu$ on a rich family of subsets (this result is given in \cref{p:mat-mu}).
When the transition graph $\mathcal{G}$ is finite, we say that the IFS satisfies the \defn{finite neighbour condition with respect to $\Phi$}, or the $\Phi$-FNC for short (see \cref{d:pfnc}).
For the remainder of this paper, we will assume that this condition is satisfied.

We denote by $\Omega^\infty$ the set of infinite paths in $\mathcal{G}$ originating at the root vertex, which is equipped with an ``almost injective'' Lipschitz projection $\pi:\Omega^\infty\to K$.
The set $\Omega^\infty$ can be thought of as ``symbolic'' analogue of $K$, where the weights $W(e)$ encode the metric structure of $K$ and the matrices $T(e)$ encode the self-similar measure $\mu$.
The overarching approach in this paper is to establish results in the space $\Omega^\infty$, and then using the projection $\pi$ obtain corresponding results about the multifractal analysis of the self-similar measure $\mu$.
The main technical challenge is that the map $\pi$ is not, in general, bi-Lipschitz.

The graph $\mathcal{G}$ need not be strongly connected.
We call the non-trivial connected components of $\mathcal{G}$ \defn{loop classes}, which we define fully in \cref{ss:irred}.
Since the tail of any infinite path is an infinite path in a loop class, we obtain a decomposition
\begin{equation*}
    \Omega^\infty=\bigcup_{i=1}^m\Omega^\infty_{\mathcal{L}_i}
\end{equation*}
for appropriate sets $\Omega^\infty_{\mathcal{L}_i}$, where the union is disjoint.
This decomposition of $\mathcal{G}$ will correspond directly (outside certain degenerate situations) with the decomposition given in \cref{t:meas-split}.
For example, a graphic of a (hypothetical) transition graph is given in \cref{f:gen-tr-graph} and one can observe that there are 4 non-trivial strongly connected components $\mathcal{L}_i$ for $i=1,\ldots,4$.
\begin{figure}[ht]
    \begin{tikzpicture}[
    baseline=(current bounding box.center),
    scale=1.3
    ]
    \node[vtx,label=above right:{$\vroot$}] (vroot) at (0,0) {};

    \coordinate (loop1) at (-2.7, -0.4);
    \coordinate (loop2) at (3, -1.6);
    \coordinate (loop3) at (-2.5, -2);
    \coordinate (ess) at (1,-3.5);

    \node[vtx] (v1) at (-0.5,-1) {};
    \node[vtx] (v2) at (1,-1.5) {};
    \node[vtx] (v3) at (0.5,-2.5) {};
    \node[vtx] (v4) at (2.9,-2.7) {};

    \draw[thick,dotted] (loop1) ellipse (0.8cm and 0.5cm);
    \node[fill=white] (l1Label) at ($(-2.7,-0.4) + (180:0.8cm and 0.5cm) $) {$\mathcal{L}_1$};

    \node[vtx] (l1a) at (-2.5,-0.5) {};

    \draw[edge] (l1a) .. controls +(145+45:1) and +(145-45:1) .. (l1a);

    \draw[thick,dotted] (loop2) ellipse (1.2cm and 0.6cm);
    \node[fill=white] (l2Label) at ($(3,-1.6) + (0:1.3cm and 0.7cm) $) {$\mathcal{L}_2$};

    \node[vtx] (l2a) at (3.5,-1.9) {};
    \node[vtx] (l2b) at (2.2,-1.5) {};
    \node[vtx] (l2c) at (3.1,-1.2) {};

    \draw[edge] (l2a) -- (l2b);
    \draw[edge] (l2b) -- (l2c);
    \draw[edge] (l2c) -- (l2a);

    \draw[thick,dotted] (-2.5,-2) ellipse (1.3cm and 0.7cm);
    \node[fill=white] (l3Label) at ($(-2.5,-2) + (180:1.3cm and 0.7cm) $) {$\mathcal{L}_3$};

    \node[vtx] (l3a) at (-1.9,-1.8) {};
    \node[vtx] (l3b) at (-2.5,-2.3) {};
    \node[vtx] (l3c) at (-3.0,-1.6) {};
    \node[vtx] (l3d) at (-3.0,-2.2) {};

    \draw[edge] (l3a) -- (l3b);
    \draw[edge] (l3b) -- (l3d);
    \draw[edge] (l3d) -- (l3c);
    \draw[edge] (l3c) -- (l3a);
    \draw[edge] (l3b) -- (l3c);

    \draw[thick,fill=gray!30] (ess) ellipse (1.5cm and 0.5cm);
    \node[fill=white] (essLabel) at (ess) {$\mathcal{L}_4$};

    \draw[edge] (vroot) -- (l1a);
    \draw[edge] (vroot) -- (v1);
    \draw[edge] (vroot) -- (v2);

    \draw[edge] (l3b) -- ($ (ess) + (140:1.5cm and 0.5cm) $);
    \draw[edge] (v3) -- ($ (ess) + (100:1.5cm and 0.5cm) $);
    \draw[edge] (l2b) -- ($ (ess) + (70:1.5cm and 0.5cm) $);
    \draw[edge] (v4) -- ($ (ess) + (30:1.5cm and 0.5cm) $);

    \draw[edge] (l1a) -- (l3c);
    \draw[edge] (v1) -- (v3);
    \draw[edge] (v2) -- (v3);
    \draw[edge] (v1) -- (l3a);
    \draw[edge] (vroot) -- (l2b);
    \draw[edge] (l2a) -- (v4);

\end{tikzpicture}
    \caption{A ``generic'' transition graph}
    \label{f:gen-tr-graph}
\end{figure}

\subsubsection{Loop classes and the upper bounds}
There can be components $\mathcal{L}_i$ where the corresponding sets $\pi(\Omega_{\mathcal{L}_i})\subseteq K$ have measure 0 (in \cref{f:gen-tr-graph}, this is $\mathcal{L}_1$, $\mathcal{L}_2$, and $\mathcal{L}_3$).
However, even though the measure $\mu$ cannot be restricted to $\pi(\Omega_{\mathcal{L}_i})$ in a sensible way, the corresponding symbolic measure (which we denote by $\rho$) does restrict properly.
In \cref{ss:symb-defs}, we define symbolic analogues $\tau_{\mathcal{L}_i}$ of the $L^q$-spectrum and $f_{\mathcal{L}_i}$ of the multifractal spectrum for the loop classes $\mathcal{L}_i$.
These functions can be interpreted as $L^q$-spectra and multifractal spectra of some appropriate subadditive set functions defined on $\pi(\Omega_{\mathcal{L}_i})$.

In \cref{l:lq-upper-bound} and \cref{t:m-upper-bound}, we establish the following general upper bounds.
\begin{itheorem}\label{t:gen-upper}
    Suppose $\mu$ is a self-similar measure satisfying the $\Phi$-FNC with loop classes $\mathcal{L}_1,\ldots,\mathcal{L}_m$ and corresponding symbolic $L^q$-spectra $\tau_{\mathcal{L}_1},\ldots,\tau_{\mathcal{L}_m}$.
    Then
    \begin{align*}
        f_\mu(\alpha)&\leq\max\{\tau_1^*(\alpha),\ldots,\tau_m^*(\alpha)\} & \tau_\mu(q)\leq\min\{\tau_1(q),\ldots,\tau_m(q)\}.
    \end{align*}
\end{itheorem}
Unlike the general upper bound $f_\mu\leq\tau_\mu^*$ \cite[Thm. 4.1]{ln1999}, this upper bound for $f_\mu$ follows by an argument which depends sensitively on the existence of the local dimension in the definition of $f_\mu(\alpha)$.
The precise ideas here can be found in \cref{l:approx-reg} and the surrounding discussion.
Note that upper bound given in \cref{t:gen-upper} is already a non-trivial improvement on the general bound $\tau_\mu^*$ when $\max\{\tau_1^*,\ldots,\tau_m^*\}$ is not a concave function.
Indeed, since $\tau_\mu(q)\leq\tau_i(q)$, we have $\tau_\mu^*(\alpha)\geq\tau_i^*(\alpha)$ so that
\begin{equation*}
    \tau_\mu^*\geq\max\{\tau_1^*,\ldots,\tau_m^*\},
\end{equation*}
but $\tau_\mu^*$ is necessarily concave.

\subsubsection{Irreducibility, decomposability, and the lower bounds}
In order to establish the lower bounds, we require two main assumptions.
The first, which we call \defn{irreducibility}, can be interpreted as an internal connectivity property for the loop classes, and depends only on properties of the paths and transition matrices internal to some loop class $\mathcal{L}_i$ (see \cref{sss:irreducibility}).
This property was introduced and studied in \cite{fen2009}; as with that paper, this result is essential for establishing the symbolic multifractal formalism in \cref{t:multi-f}.
The irreducibility assumption is also important to resolve the fact that the projection $\pi$ is not, in general, bi-Lipschitz.
This technical result is given in \cref{t:reg-sub}.
While irreducibility formally depends on the choice of probabilities, in practice, every example of which the author is aware can be verified by the slightly stronger hypothesis of \cref{l:irred}, which does not depend on the choice of probabilities.

The second main assumption, which we call \defn{decomposability}, is a statement about the finite paths which do not have any edges in loop classes (see \cref{sss:decomposable}).
This property is closely related to the positive transition matrix assumption in \cite{hhstoappear}, and our proof of \cref{t:lq-lower-bound} largely follows the ideas in that document.
This assumption allows a product-like decomposition of $\Omega^\infty$ as $\Omega_{\mathcal{L}_1}^\infty\times\cdots\times\Omega_{\mathcal{L}_m}^\infty$ in a way which preserves the norms of matrices.
See \cref{e:Psi-def} for the precise statement and application of this idea.

We will also assume a simple non-degeneracy property (given in \cref{d:degen}).
Similar statements can be made assuming some loop classes are degenerate, but we omit this discussion for simplicity.
We then have the following result, proven in \cref{c:m-spectrum} and \cref{t:lq-lower-bound}.
\begin{itheorem}\label{t:meas-split}
    Suppose $\mu$ is a self-similar measure satisfying the $\Phi$-FNC with loop classes $\mathcal{L}_1,\ldots,\mathcal{L}_m$ and corresponding symbolic $L^q$-spectra $\tau_{\mathcal{L}_1},\ldots,\tau_{\mathcal{L}_m}$.
    Suppose each loop class is non-degenerate.
    Then:
    \begin{enumerate}[nl,r]
        \item If the irreducibility assumption is satisfied,
            \begin{equation*}
                f_\mu(\alpha)=\max\{\tau_{\mathcal{L}_1}^*(\alpha),\ldots,\tau_{\mathcal{L}_m}^*(\alpha)\}.
            \end{equation*}
        \item If the decomposability assumption is satisfied, the limit defining $\tau_\mu(q)$ exists for every $q\in\R$.
            Moreover,
            \begin{equation*}
                \tau_\mu(q)=\min\{\tau_{\mathcal{L}_1}(q),\ldots,\tau_{\mathcal{L}_m}(q)\}.
            \end{equation*}
    \end{enumerate}
\end{itheorem}
Outside the open set condition \cite{ap1996} and the case $q\geq 0$ \cite{ps2000}, there does not appear to be any general existence results for the limit $\tau_\mu(q)$ when $\mu$ is a self-similar measure.
Moreover, the author is not aware of any self-similar measure satisfying the weak separation condition which does not satisfy all the hypotheses in \cref{t:meas-split}.
To provide evidence for this claim, we observe that the hypotheses are satisfied for a number of examples (see \cref{s:multi-examples}).
However, verifying these conditions in general seems to be a very challenging question.

We can now use \cref{t:meas-split} to describe precisely when the multifractal formalism holds.
We say that $\mu$ satisfies the multifractal formalism at $\alpha$ if $f_\mu(\alpha)=\tau_\mu^*(\alpha)$.
Recall that the subdifferential $\partial\tau_{\mathcal{L}_i}(q)$ is the interval from the right derivative to the left derivative of $\tau_{\mathcal{L}_i}$ at $q$.
The following result is proven in \cref{c:multi-validity}.
\begin{icorollary}
    Let $\mu$ satisfy the same hypotheses as \cref{t:meas-split}, along with the irreducibility and decomposability assumptions.
    Then $\mu$ satisfies the multifractal formalism at $\alpha$ if and only if $\alpha\in\partial\tau_{\mathcal{L}_i}(q)$ for some $1\leq i\leq m$ and $q\in\R$ with $\min\{\tau_{\mathcal{L}_1}(q),\ldots,\tau_{\mathcal{L}_m}(q)\}=\tau_{\mathcal{L}_i}(q)$.
    In particular, if the derivative $\alpha=\tau_\mu'(q)$ exists, then $\mu$ satisfies the multifractal formalism at $\alpha$.
\end{icorollary}
In other words, the multifractal formalism fails precisely on phase transitions (values of $\alpha$ corresponding to points of non-differentiability of the $L^q$-spectrum) caused by transitions in $\min\{\tau_{\mathcal{L}_1}(q),\ldots,\tau_{\mathcal{L}_m}(q)\}$ from some $\tau_{\mathcal{L}_i}(q)$ to $\tau_{\mathcal{L}_j}(q)$ for $i\neq j$.
This corollary is highlighted in \cref{f:phase-t} with two loop class $\mathcal{L}_1$ and $\mathcal{L}_2$ such that $\tau_{\mathcal{L}_1}$ and $\tau_{\mathcal{L}_2}$ intersect.
For values of $\alpha$ corresponding to the phase transition of $\tau_\mu=\min\{\tau_{\mathcal{L}_1},\tau_{\mathcal{L}_2}\}$ at their intersection point $q_0$, we see that $\tau_\mu^*$ differs from $f_\mu=\max\{\tau_{\mathcal{L}_1}^*,\tau_{\mathcal{L}_2}^*\}$.
Here, the multifractal formalism is satisfied at $\alpha$ if and only if $\alpha\notin(\alpha_2,\alpha_1)$.
In fact, $\tau_\mu^*$ is the infimal concave function bounded below by $f_\mu$.
Thus the phase transitions which cause the multifractal formalism to fail are fundamentally linked to the connectivity properties of the transition graph.
For example, this provides a general explanation for the phenomenon observed by Testud \cite{tes2006a} for self-similar measures associated with digit-like sets (see \cref{ss:tes-ex}).
\begin{figure}[htp]
    \subfloat[$L^q$-spectra]{
        \def\intPointx{-0.330566}
\def\intPointy{-0.984925}
\def\fintPointx{1.15269}
\def\fintPointy{0.201244}
\def\phaseStart{0.863887}
\def\phaseEnd{1.83555}
\def\phaseStartY{0.699355}
\def\phaseEndY{0.378154}
\begin{tikzpicture}[>=stealth,xscale=2,font=\tiny]
    \draw[->,] (-1.6,0) -- (1.6,0);
    \draw[->] (0,-4) -- (0,2);
    \draw[thick,dotted] (\intPointx,\intPointy) -- (1.5, {\phaseStart * (1.5-\intPointx)+\intPointy)}) node[right]{$\begin{aligned}\text{\textit{slope }}&=\alpha_2\\&=\tau_{\mathcal{L}_2}'(q_0)\end{aligned}$};
    \draw[thick,dotted] (\intPointx,\intPointy) -- (1.5, {\phaseEnd * (1.5-\intPointx)+\intPointy)}) node[right]{$\begin{aligned}\text{\textit{slope }}&=\alpha_1\\&=\tau_{\mathcal{L}_1}'(q_0)\end{aligned}$};
    \node[label=left:{$\tau_{\mathcal{L}_1}$}] (t1Label) at (-1.5, -3.57841) {};
    \node[label=left:{$\tau_{\mathcal{L}_2}$}] (t2Label) at (-1.5, -2.07886) {};
    \begin{scope}
        \path[clip] (-1.5,\intPointy) rectangle (1.3,-4);
        \draw[thick] plot file {figures/tau1_points.txt};
        \draw[thick,gray] plot file {figures/tau2_points.txt};
    \end{scope}
    \begin{scope}
        \path[clip] (-1.5,\intPointy) rectangle (1.3,2);
        \draw[thick,gray] plot file {figures/tau1_points.txt};
        \draw[thick] plot file {figures/tau2_points.txt};
    \end{scope}
    \node[circle,inner sep=0.8pt,draw=black,fill=black] at (\intPointx,\intPointy) {};
    \draw[thick,dotted] (\intPointx,\intPointy) -- (\intPointx,0.2) node[above]{$q_0$};
    \matrix [draw,below right] at (current bounding box.north west) {
        \node [strike out,draw=black,thick,label=right:{$\min\{\tau_{\mathcal{L}_1},\tau_{\mathcal{L}_2}\}$}] {}; \\
        \node [strike out,draw=gray,thick,label=right:{$\tau_{\mathcal{L}_1}\text{ or }\tau_{\mathcal{L}_2}$}] {}; \\
    };
\end{tikzpicture}
    }

    \subfloat[Concave conjugates and multifractal spectra]{
        \def\intPointx{-0.330566}
\def\intPointy{-0.984925}
\def\fintPointx{1.15269}
\def\fintPointy{0.201244}
\def\phaseStart{0.863887}
\def\phaseEnd{1.83555}
\def\phaseStartY{0.699355}
\def\phaseEndY{0.378154}
\begin{tikzpicture}[>=stealth,scale=4,font=\tiny]
    \draw[fill=gray!5,draw=none] (\phaseStart,0) rectangle (\phaseEnd,1);
    \draw[->] (-0.1,0) -- (3,0);
    \draw[->] (0,-0.1) -- (0,1);
    \begin{scope}
        \path[clip] (0,0) rectangle (\phaseStart,1);
        \draw[thick] plot file {figures/f1_points.txt};
        \draw[thick] plot file {figures/f2_points.txt};
    \end{scope}
    \begin{scope}
        \path[clip] (\phaseStart,\fintPointy) rectangle (\phaseEnd,1);
        \draw[gray,thick] plot file {figures/f1_points.txt};
        \draw[gray,thick] plot file {figures/f2_points.txt};
    \end{scope}
    \begin{scope}
        \path[clip] (\phaseStart,0) rectangle (\phaseEnd,\fintPointy);
        \draw[dashed,thick] plot file {figures/f1_points.txt};
        \draw[dashed,thick] plot file {figures/f2_points.txt};
    \end{scope}
    \begin{scope}
        \path[clip] (\phaseEnd,0) rectangle (3,1);
        \draw[thick] plot file {figures/f1_points.txt};
        \draw[thick] plot file {figures/f2_points.txt};
    \end{scope}
    \draw[thick] (\phaseStart,\phaseStartY) -- (\phaseEnd,\phaseEndY);
    \draw[thick,dotted] (\phaseStart,-0.05) node[below]{$\alpha_2$} -- (\phaseStart,1);
    \draw[thick,dotted] (\phaseEnd,-0.05) node[below]{$\alpha_1$} -- (\phaseEnd,1);
    \matrix [draw,below left] at (current bounding box.north east) {
        \node [strike out,draw=black,thick,label=right:{$(\min\{\tau_{\mathcal{L}_1},\tau_{\mathcal{L}_2}\})^*$}] {}; \\
        \node [strike out,draw=gray,thick,label=right:{$\max\{\tau_{\mathcal{L}_1}^*,\tau_{\mathcal{L}_2}^*\}$}] {}; \\
    };
    \draw[<->,>=stealth,shorten <=1pt,shorten >=1pt] (\phaseStart,0.85) --node[fill=white]{\textit{phase transition}} (\phaseEnd,0.85);
\end{tikzpicture}
    }
    \caption{An example illustrating a non-trivial phase transition}
    \label{f:phase-t}
\end{figure}

There can be phase transitions not of this form: for example, for the Bernoulli convolution associated with the Golden mean, $\tau_\mu=\tau_{\mathcal{L}}$ for a loop class $\mathcal{L}$ but $\tau_\mu(q)$ is not differentiable \cite{fen2005}.
Our results provide some explanation for the phenomenon of self-similar measures with non-differentiable $L^q$-spectra which still satisfy the multifractal formalism.

A \defn{simple} loop class is a loop class where the edges can be ordered to form a cycle which does not repeat vertices.
In \cref{f:gen-tr-graph}, the simple loop classes are given by $\mathcal{L}_1$ and $\mathcal{L}_2$.
As a straightforward application of \cref{t:meas-split} along with basic properties of concave functions, we obtain the following result.
The proof of this result can be found in \cref{ss:cons}.
\begin{icorollary}\label{c:all-simple}
    Let $\mu$ satisfy the same hypotheses as \cref{t:meas-split}, along with the irreducibility and decomposability assumptions.
    Then $\mu$ satisfies the multifractal formalism if and only if the multifractal spectrum is a concave function.
    In particular, if every non-essential loop class is simple, this happens if and only if the set of local dimensions is a closed interval.
\end{icorollary}

\subsubsection{Applications and analysis of examples}
The hypotheses in \cref{c:all-simple} are satisfied in many well-known examples.
Here, we list some IFSs for which \cref{c:all-simple} applies, so that any associated self-similar measure satisfies the multifractal formalism if and only if the set of local dimensions is a closed interval:
\begin{itemize}
    \item the family $\{\frac{x}{d}+\frac{j}{md}(d-1):j=0,1,\ldots,m\}$ with $m\geq d-1\geq 1$ integers, which includes the 3-fold convolution of the Cantor measure \cite{hl2001}, and is discussed in detail in \cite[Sec. 5]{hhn2018}.
    \item Bernoulli convolutions with parameters that are reciprocals of simple Pisot numbers \cite{fen2005}, or reciprocals of the Pisot roots of the polynomials $x^3 - 2x^2 + x - 1$, $x^4-x^3- 2x^2+ 1$, and $x^4- 2x^3+ x - 1$ (see \cref{ss:bconv-Pisot}).
    \item the IFS $\{\rho x,\rho^2 x+\rho-\rho^2,\rho^2x+1-\rho^2\}$ where $1/\rho$ is the Golden mean, considered in \cite[Sec. 5.3.3]{hr2021}.
\end{itemize}
By combining our results with the detailed study of sets of local dimensions contained in the references cited above, we obtain a number of new examples of measures satisfying the multifractal formalism which were not previously known in the literature.
Such results about the validity of the multifractal formalism were previously only known for Bernoulli convolutions associated with simple Pisot numbers \cite{fen2005}.
We refer the reader to \cite{hr2021} for details related to the computation of sets of local dimensions under similar assumptions to this paper.

To conclude this paper, we will provide a detailed study of some examples in \cref{s:multi-examples} to illustrate more concretely how our results may be applied in specific situations.
Our selection of examples does not attempt to be exhaustive, and the examples are primarily chosen to illustrate how our results explain the different multifractal phenomena exhibited by self-similar measures satisfying the weak separation condition.

In \cref{ss:tes-ex}, we study a family of self-similar measures associated with an IFS with maps of the form $x\mapsto x/\ell+i/\ell$ or $x\mapsto -x/\ell+(i+1)/\ell$ where $\ell\geq 2$ is an integer and $i\in\{0,1,\ldots,\ell-1\}$.
Such measures were first studied by Testud \cite{tes2006a}, where he provided some of the first known examples of self-similar measures which exhibit non-trivial non-concave spectra.
Our results extend and contextualize his results, since we do not require any assumptions on the digit sets.
This also extends results obtained by Olsen and Snigireva  \cite{os2008} for such measures.

In \cref{ss:bconv-Pisot}, we provide a simple (given our general results) verification of the multifractal formalism for Bernoulli convolutions with parameters that are reciprocals of simple Pisot numbers.
This fact was first observed by Feng \cite{fen2005}.
Our technique is more general and depends only on establishing certain structural properties of the transition graph.
Our results also apply, for example, to the polynomials $x^3 - 2x^2 + x - 1$, $x^4-x^3- 2x^2+ 1$, and $x^4- 2x^3+ x - 1$.

Finally, in \cref{ss:non-e}, we verify the multifractal formalism for any self-similar measure associated with a class of IFS generalizing an example of Lau and Wang \cite{lw2004}, which is the IFS $\{\lambda_1 x, \lambda_2 x+\lambda_1(1-\lambda_2),\lambda_2 x+(1-\lambda_2)\}$.
The multifractal formalism for the self-similar measure studied by Lau and Wang was first verified by the author in a recent paper \cite{rut2021}.
We provide a simplified proof of this fact, which generalizes naturally to a family of related examples (which also includes \cite[Ex. 8.5]{dn2017}).

\subsubsection{Questions}
We conclude this section with three natural questions.
\begin{enumerate}[nl]
    \item Are the hypotheses in \cref{t:meas-split} satisfied for every measure $\mu$ satisfying the weak separation condition?
        Both a counterexample or a proof of non-existence here would be very interesting.
    \item In what generality does a version of \cref{t:meas-split} hold?
        Is the multifractal spectrum of any self-similar measure always the maximum of a finite set of concave functions?
    \item If $\mu$ is a self-similar (or self-conformal) measure and $f_\mu$ is a concave function, is it necessarily true that $\mu$ satisfies the multifractal formalism?
\end{enumerate}
\subsection{Acknowledgements}
The author would like to thank Kathryn Hare for many extensive discussions concerning the topics in this paper.
The author also thanks Jonathan Fraser and Kenneth Falconer for detailed comments on a draft version of this paper, and more generally for helpful comments and suggestions.

\subsection{Notation}
Given a general set $X$, we denote by $\#X$ the cardinality of the set $X$.

We denote by $\R$ the set of reals equipped with the standard Euclidean metric.
All sets and functions considered in this document are Borel unless otherwise noted.
If $\mu$ is a Borel measure and $f$ a measurable function, we denote by $f\mu$ the push-forward of $\mu$ by $f$, which is given by the rule
\begin{equation*}
    f\mu(E)=\mu(f^{-1}(E)).
\end{equation*}
Given a Borel set $E$, we write $E^\circ$ to denote the topological interior and $\diam(E)$ the diameter of $E$.
A map $f:\R\to\R$ is Lipschitz if $|f(x)-f(y)|\leq C|x-y|$ for some $C>0$.
Then $f$ bi-Lipschitz if $f$ is invertible with Lipschitz inverse, and a similarity of equality holds.

Given families $(a_i)_{i\in I}$ and $(b_i)_{i\in I}$ of non-negative real numbers, we write $a_i\preccurlyeq b_i$ if there exists some constant $C>0$ such that $a_i\leq Cb_i$ for each $i\in I$.
We say $a_i\asymp b_i$ if $a_i\preccurlyeq b_i$ and $b_i\preccurlyeq a_i$.
We will always allow such relationships to depend implicitly on the governing iterated function system (including the probabilities) and the transition rule $\Phi$.
Any other dependence, unless otherwise stated, will be indicated explicitly with a subscript.


\section{Some brief preliminaries}
\subsection{Weighted iterated function systems}
In our setting, a \defn{weighted iterated function system} (WIFS) is a tuple $(S_{i},p_i)_{i\in\mathcal{I}}$ where
\begin{equation}  \label{e:ifs}
    S_{i}(x)=r_{i}x+d_{i}:\mathbb{R}\rightarrow \mathbb{R}\text{ for each }i\in\mathcal{I}
\end{equation}
with $0<\left\vert r_{i}\right\vert <1$, so that each $S_i$ is a contracting similaritity in $\R$, and the $p_i$ satisfy $p_i>0$ and $\sum p_i=1$.
We refer to the tuple $(S_i)_{i\in\mathcal{I}}$ simply as an \defn{iterated function system} (IFS).

There are two important invariant objects associated with a WIFS, both of which can be realized as the unique fixed point of a contraction mapping on an appropriate metric space.
The first is a non-empty, compact set $K$ satisfying 
\begin{equation*}
    K=\bigcup_{i\in\mathcal{I}}S_{i}(K), 
\end{equation*}
known as the \defn{self-similar set} associated with the WIFS.
The second is a Borel probability measure $\mu$ satisfying
\begin{equation}\label{e:minv}
    \mu(E)=\sum_{i=1}^mp_{i}\cdot S_i\mu(E)
\end{equation}
for any Borel set $E\subseteq K$, where $S_i\mu(E)=\mu(S_i^{-1}(E))$ is the pushforward of $\mu$ by $S_i$.
We say that $\mu$ is the \defn{self-similar measure} associated with the WIFS.
We refer the reader to the book of Falconer \cite{fal1997} for details concerning the existence and uniqueness of these objects.

Note that $\supp\mu=K$.
Throughout this document, we will assume that $K$ is not a singleton, so that $\mu$ is a non-atomic measure.
By conjugating the maps as necessary (which amounts to an appropriate translation of the $d_i$), we may assume that the convex hull of $K$ is $[0,1]$.

Let $\mathcal{I}^*=\bigcup_{n=0}^\infty\mathcal{I}^n$ denote the set of all finite tuples on $\mathcal{I}$.
Given $\sigma = (\sigma_1,\ldots,\sigma_n)\in\mathcal{I}^n$, we write
\begin{equation*}
    S_\sigma = S_{\sigma_1}\circ\cdots\circ S_{\sigma_n},\qquad r_\sigma = r_{\sigma_1}\circ\cdots\circ r_{\sigma_n},
\end{equation*}
and
\begin{equation*}
    p_\sigma = p_{\sigma_1}\circ\cdots\circ p_{\sigma_n}.
\end{equation*}
Abusing notation slightly, we denote the empty word (the unique word of length zero) by $\emptyset$ and write $S_\emptyset=\id$, $p_\emptyset=1$, and $r_\emptyset=1$.
Given another word $\tau=(\tau_1,\ldots,\tau_m)$, the \defn{concatenation} $\sigma\tau$ is the word $(\sigma_1,\ldots,\sigma_n,\tau_1,\ldots,\tau_m)$.
We say that a word $\sigma$ is a \defn{prefix} of $\tau$ if there exists some $\omega$ such that $\tau=\sigma\omega$.

\subsection{Concave functions}
Let $f:\R\to\R\cup\{-\infty\}$ be a concave function.
The \defn{subdifferential} of $f$ at $x$ is given by
\begin{equation*}
    \partial f(x)=\{\alpha: \alpha(y-x)+f(x)\geq f(y)\text{ for any }y\in\R\}.
\end{equation*}
Of course, if $f$ is differentiable at $x$, then $\partial f(x)=\{f'(x)\}$.
The \defn{concave conjugate} of $f$ is the function
\begin{equation*}
    f^*(\alpha):=\inf\{\alpha x-f(x):x\in\R\}.
\end{equation*}
Naturally, the infimum may be attained at $-\infty$.
Note that $f^*$ is always concave, and concave convolution is involutive (i.e. $f^{**}=f$ when $f$ is a concave function).
We will use the fact that $f^*(\alpha)+f(x)=\alpha x$ whenever $\alpha\in\partial f(x)$.

We refer the reader to \cite{roc1970} for more detail and proofs of these facts.

\subsection{Local dimensions and multifractal analysis}\label{ss:mfa}
Let $\mu$ be a finite Borel measure in $\R$ with compact support.
\begin{definition}
    Let $x\in \supp\mu$ be arbitrary.
    Then the \defn{lower local dimension of $\mu$ at $x$} is given by
    \begin{equation*}
        \underline{\dim}_{\loc}(\mu,x)=\liminf_{t\to 0}\frac{\log \mu(B(x,t))}{\log t}
    \end{equation*}
    and the \defn{upper local dimension} $\overline{\dim}_{\loc}(\mu,x)$ is given similarly with the limit inferior replaced by the limit superior.
    When the values of the upper and lower local dimension agree, we call the shared value the \defn{local dimension of $\mu$ at $x$}, denoted $\dim_{\loc}(\mu,x)$.
\end{definition}
We are primarily interested in understanding geometric properties of the level sets of local dimensions.
Define
\begin{equation*}
    E_\mu(\alpha) =\bigl\{x\in \supp\mu:\underline{\dim}_{\loc}(\mu,x)=\overline{\dim}_{\loc}(\mu,x)=\alpha\bigr\}.
\end{equation*}
We will focus on the \defn{(fine Hausdorff) multifractal spectrum} of $\mu$, which is the function $f_\mu:\R\to\R\cup\{-\infty\}$ given by
\begin{equation*}
    f_\mu(\alpha):=\dim_H E_\mu(\alpha)
\end{equation*}
where, by convention, we write $\dim_H \emptyset = -\infty$.

A different (but related) way to quantify the density of $\mu$ is through the $L^q$-spectrum of the measure.
\begin{definition}
    The \defn{$L^q$-spectrum} of $\mu$ is given by
    \begin{equation*}
        \tau_\mu(q) = \liminf_{t\to 0}\frac{\log \sup \sum_i\mu(B(x_i,t))^q}{\log t}
    \end{equation*}
    where the supremum is over families of disjoint balls $\{B(x_i,t)\}_{i=1}^m$ centred at $x_i\in K$.
\end{definition}
Standard arguments show that the function $\tau_\mu$ is an increasing concave function of $q$.
Set
\begin{align*}
    \alpha_{\min}(\mu) &= \lim_{q\to\infty}\frac{\tau_\mu(q)}{q} & \alpha_{\max}(\mu) &= \lim_{q\to-\infty}\frac{\tau_\mu(q)}{q}.
\end{align*}
When $\mu$ is a self-similar measure, it is known that $\alpha_{\min}$ and $\alpha_{\max}$ are finite real numbers (see, for example, \cite[Cor. 3.2]{fl2009}).

The multifractal formalism is a heuristic relationship introduced in \cite{hjk+1986} which relates the $L^q$-spectrum and the multifractal spectrum of $\mu$ under certain conditions.
\begin{definition}
    Given $\alpha\in\R$, we say that the measure $\mu$ satisfies the \defn{multifractal formalism at $\alpha$} if
    \begin{equation*}
        f_\mu(\alpha)=\tau_\mu^*(\alpha)
    \end{equation*}
    where $\tau_\mu^*$ is the concave conjugate of $\tau_\mu$.
    We say that $\mu$ satisfies the \defn{(complete) multifractal formalism} if $\mu$ satisfies the multifractal formalism at every $\alpha\in\R$.
\end{definition}
In particular, $f_\mu(\alpha)$ is a concave function which takes finite values precisely on the interval $[\alpha_{\min}(\mu),\alpha_{\max}(\mu)]$.

It always holds that $f_\mu(\alpha)\leq\tau_\mu^*(\alpha)$ (see, for example, \cite[Thm. 4.1]{ln1999}) and if $E_\mu(\alpha)$ is non-empty, then $\alpha_{\min}\leq\alpha\leq\alpha_{\max}$ \cite[Cor. 3.2]{fl2009}.
However, as discussed in the introduction, the set of $\alpha$ where $E_\mu(\alpha)\neq\emptyset$ need not be a closed interval and, even if it is, the multifractal formalism need not hold \cite{tes2006a}.

\section{A generalized transition graph construction}\label{s:gt-construct}
Self-similar measures have a natural encoding as a projection of self-similar measures in sequence space.
Let $\mathcal{I}^\infty$ denote the set of all infinite sequences on the alphabet $\mathcal{I}$ equipped with the natural product metric.
Given a sequence $(i_n)_{n=1}^\infty\in\mathcal{I}^\infty$, define the projection $\pi_0:\mathcal{I}^\infty\to K$ by the rule
\begin{equation*}
    \pi_0((i_n)_{n=1}^\infty)=\lim_{n\to\infty}S_{i_1}\circ\cdots\circ S_{i_n}(0).
\end{equation*}
When the compact sets $S_i(K)$ are disjoint for distinct $i\in\mathcal{I}$, the map $\pi_0$ is bi-Lipschitz.
In this case, the value of the measure $\mu$ has a simple formula for a rich family of subsets of $K$, namely
\begin{equation}\label{e:meas-p}
    \mu\bigl(S_{i_1}\circ\cdots\circ S_{i_n}(K)\bigr)=p_{i_1}\cdots p_{i_n}.
\end{equation}
However, when the measure $\mu$ has overlaps, such a simple formula no longer holds since the projection $\pi_0$ fails (in some situations quite badly) to be bi-Lipschitz.

A technique to overcome this limitation was first introduced by Feng \cite{fen2003} and extended in \cite{hhs2018,rut2021}.
In the subsequent sections, we will introduce a convenient framework which generalizes the prior net interval constructions; this will allow the simplification of analysis of examples in \cref{s:multi-examples}.
As this construction underlies all the results in this paper, we informally summarize the main ideas here.

Recall that the convex hull of $K$ is $[0,1]$.
In \cref{ss:gen-net-iv}, we inductively construct a nested sequence of partitions of $K$ with mesh size tending to 0, which we will denote by $(\mathcal{P}_n)_{n=0}^\infty$.
Here, a partition $\mathcal{P}_n$ is a finite collection of closed intervals $\{\Delta_1,\ldots,\Delta_\ell\}$ where $\Delta_i^\circ\cap\Delta_j^\circ=\emptyset$ for $i\neq j$, $\Delta_i^\circ\cap K\neq\emptyset$, and $K\subset\bigcup_{i=1}^k\Delta_i$.
We set $\mathcal{P}_0=\{[0,1]\}$.
We will associate to each $\Delta\in\mathcal{P}_n$ a \defn{neighbour set} $\vs(\Delta)$ (an ordered tuple of similarity maps from $\R$ to $\R$) such that each similarity map is a normalized version of some word $S_\sigma$ with $S_\sigma(K)\cap\Delta^\circ$.
In the sense of \cref{l:dk-max}, we also require that $\vs(\Delta)$ does not contain repetitions and satisfies a sort of maximality.
For a given $\Delta\in\mathcal{P}_n$, we want that the \defn{children} $\{\Delta':\Delta'\in\mathcal{P}_{n+1},\Delta'\subset\Delta\}$ depend uniquely on $\vs(\Delta)$, as made precise in \cref{p:ttype}.
The \defn{(basic) iteration rule} given in \cref{d:biter} and \cref{d:iter} underpins this inductive construction (we think of the domain of an iteration rule as the set of all possible neighbour sets of net intervals), and the technical hypotheses ensure that the various properties listed above are satisfied.
Now, in \cref{ss:tg-sr}, we construct a directed \defn{transition graph} $\mathcal{G}$ with root vertex $\vroot$ such that the finite paths in $\mathcal{G}$ of length $n$ are in bijection with the partitions $\mathcal{P}_n$ (see \cref{l:netiv-sr}).
We associate to the edges in $\mathcal{G}$ \defn{transition matrices} such that the $\mu$-measure of a net interval is the norm of the corresponding products of matrices (see \cref{p:mat-mu}).

Our main assumption from this point on will be that the graph $\mathcal{G}$ is finite; more details on this assumption are given in \cref{ss:fnc}.
While $\mathcal{G}$ is not, in general, strongly connected, we can enumerate the non-trivial maximal strongly connected components as $\{\mathcal{L}_1,\ldots,\mathcal{L}_m\}$ (we refer to these as \defn{loop classes}, as defined in \cref{d:loop-class}).
Denote the set of infinite paths in $\mathcal{G}$ beginning at $\vroot$ by $\Omega^\infty$.
Given an infinite path $(e_n)_{n=1}^\infty$ in $\Omega^\infty$, there is a unique loop class $\mathcal{L}_i$ such that for all $n$ sufficiently large, $e_n$ is an edge in some loop class $\mathcal{L}_i$.
The bijections from finite paths of length $n$ to $\mathcal{P}_n$ induce a Lipschitz surjection $\pi:\Omega^\infty\to K$ (note that the metric structure on $\Omega^\infty$ is defined in \cref{ss:symb-defs}, and depends on the edge weights).
The space $\Omega^\infty$ equipped with the projection $\pi$ is analgous to the space $\mathcal{I}^\infty$ along with the projection $\pi_0$.
Moreover, since the $\mathcal{P}_n$ are partitions of $K$, $\pi$ is nearly a bijection (it is injective on all but countably many points), and while $\pi$ need not be bi-Lipschitz, it is close to being so in a heuristic sense.
The main cost is that we must replace the products of scalars in \cref{e:meas-p} with norms of products of matrices.
This introduces additional technical challenges, which necessitate the assumptions of irreducibility and decomposability, which are discussed in more detail in \cref{ss:irred}.

\subsection{Partitions and net intervals}\label{ss:gen-net-iv}
We write by $\Sim(\R)=\{f(x)=ax+b:a\in\R\setminus\{0\},b\in\R\}$ the set of similarity maps from $\R$ to $\R$, and equip $\Sim(\R)$ with the total order induced by the lexicographic order on the pairs $(a,b)$ (or any other fixed total order).
We then denote by $\Sim^*(\R)$ the set of finite tuples $(f_1,\ldots,f_m)$ where $f_1<\cdots<f_m$ and $m\in\N$ is arbitrary.
\begin{definition}\label{d:biter}
    A \defn{basic iteration rule} is a map $\Phi$ which associates to each tuple $(f_1,\ldots,f_m)$ in $\Sim^*(\R)$ a tuple $(\mathcal{C}_1,\ldots,\mathcal{C}_m)$ where each $\mathcal{C}_i$ is a finite subset of $\mathcal{I}^*$ satisfying the following condition: for all $n\in\N$ sufficiently large, every $\sigma\in\mathcal{I}^n$ has a unique prefix in $\mathcal{C}_i$.
\end{definition}
A good example to keep in mind is the rule basic iteration rule $\Phi(f_1,\ldots,f_m)=(\mathcal{I},\ldots,\mathcal{I})$.
This example is discussed in more detail in \cref{ex:uniform-transition}.

Given a closed interval $J\subseteq\R$, we denote by $T_J$ the unique similarity $T_J(x)=rx+a$ with $r>0$ such that
\begin{equation*}
    T_J([0,1])=J:
\end{equation*}
Of course, $r=\diam(J)$ and $a$ is the left endpoint of $J$.

Using the notion of a basic iteration rule, we can inductively construct a heirarchy of partitions of $K$ as follows.
First, suppose we are given a pair $(\Delta,v)$ where $\Delta=[a,b]$ is a closed interval and $v=(f_1,\ldots,f_m)\in\Sim^*(\R)$.
Let
\begin{align}
    \begin{split}\label{e:H-def}
        \mathcal{Y} = \mathcal{Y}(\Delta,v) &= \bigcup_{f\in v}\{T_\Delta\circ f\circ S_\tau:\tau\in\mathcal{C}_i,1\leq i\leq m\}\\
        Y = Y(\Delta,v) &= \{a,b\}\cup\{g(z):g\in\mathcal{Y},z\in\{0,1\},g(z)\in\Delta\}
    \end{split}
\end{align}
and write the elements of $Y$ as $a=y_1<\cdots<y_{k+1}=b$.
Order the intervals $\{[y_i,y_{i+1}]:(y_i,y_{i+1})\cap K\neq\emptyset\}$ from left to right as $(\Delta_1,\ldots,\Delta_n)$.
We then define the \defn{children} of $\Delta$ (with respect to $\Phi$) as the set of pairs $(\Delta_i,v_i)$ where $v_i$ is given by ordering the distinct elements of the set
\begin{equation}\label{e:ch-nb-def}
    \{T_{\Delta_i}^{-1}\circ g: g\in\mathcal{Y}, g(K)\cap\Delta_i^\circ\neq\emptyset\}.
\end{equation}
If $\Delta_i=[a_i,b_i]$, the \defn{position index} is given by $q(\Delta_i,\Delta)=(a_i-a)/\diam(\Delta)$.
The position index is used to distinguish distinct children of $\Delta$ with the same neighbour set.

Now, using the above procedure, we can inductively construct our net intervals and neighbour sets.
Begin with the pair $\{([0,1],(\id))\}=\mathcal{N}_0$.
Having constructed $\mathcal{N}_n$ for some $n\in\N\cup\{0\}$, we denote by $\mathcal{N}_{n+1}$ the set of all children of pairs $(\Delta,v)\in\mathcal{N}_n$, and let $\mathcal{N}=\bigcup_{n=0}^\infty\mathcal{N}_n$.
Set
\begin{equation*}
    \mathcal{P}_n=\{\Delta:(\Delta,v)\in\mathcal{N}_n\},
\end{equation*}
which is the set of all net intervals at level $n$.
Since the net intervals are disjoint except on endpoints, one may think of $\mathcal{P}_n$ as a partition of $K$.

Note that distinct intervals $\Delta$ overlap at most on endpoints, and for each $x\in K$ and $n\in\N$ there is some $\Delta\in\mathcal{P}_n$ with $x\in\Delta$.
Given some $(\Delta,v)\in\mathcal{N}_n$, we say that $\Delta$ is a \defn{net interval} of level $n$, and that $v$ is the \defn{neighbour set} of $\Delta$.
We refer to a similarity $f\in v$ as a \defn{neighbour} of $\Delta$.
When the level $n$ is implicit, we write $\vs(\Delta)$ to denote the neighbour set $v$.
For an example computing the net intervals and neighbour sets, see \cref{ex:uniform-transition}.

We make two basic observations which follow immediately from the construction by an induction argument.
\begin{itemize}
    \item Let $(\Delta,v)\in\mathcal{V}$ with $f\in v$.
        Then $T_\Delta\circ f=S_\sigma$ for some $\sigma\in\mathcal{I}^*$.
    \item If $[a,b]=\Delta\in\mathcal{P}_m$, there exists some $(\Delta_0,v)\in\mathcal{V}_k$ with $k\leq m$ and $f\in v$ such that $a=T_{\Delta_0}\circ f(z)$ for some $z\in\{0,1\}$.
        The same statement also holds for $b$.
\end{itemize}
Here is a short example illustrating the net interval construction, along with these two observations.
\begin{example}\label{e:gen-ifs}
    Consider the IFS given by the maps
    \begin{align*}
        S_1(x) &= \frac{x}{3} & S_2(x) &= \frac{x}{3}+\frac{2}{9} & S_3(x) &= \frac{x}{3}+\frac{2}{3}
    \end{align*}
    along with the basic iteration rule given by $\Phi(f_1,\ldots,f_n)=(\mathcal{I},\ldots,\mathcal{I})$.
    By definition of $\Phi$, we have
    \begin{equation*}
        \mathcal{Y}([0,1],\{\id\})=\{T_{[0,1]}\circ \id\circ S_i:i\in\mathcal{I}\}=\{S_1,S_2,S_3\}
    \end{equation*}
    and, expanding the definition,
    \begin{equation*}
        Y([0,1],\{\id\})=\{S_i(z):i\in\mathcal{I},z\in\{0,1\}\}= \bigl\{0,\frac{2}{9},\frac{1}{3},\frac{5}{9},\frac{2}{3},1\bigr\}.
    \end{equation*}
    Note that $(2/3,1)\cap K=\emptyset$ (since $(2/3,1)\cap S_i([0,1])=\emptyset$ for each $i\in\mathcal{I}$), so $\Delta$ has children $\Delta_1=[0,2/9]$, $\Delta_2=[2/9,1/3]$, $\Delta_3=[1/3,5/9]$, and $\Delta_4=[2/3,1]$.
    These net intervals are depicted in \cref{f:ex-net-intervals} along with their positions relative to the intervals $S_i([0,1])$ for $i=1,2,3$.
    For illustrative purposes, we also compute $\vs(\Delta_2)$.
    Note that $T_{\Delta_2}(x)=x/3+2/9$ so that $T(x):=T_{\Delta_2}^{-1}(x)=9x-2$, so we have
    \begin{align*}
        \vs(\Delta_2)&=\{T_{\Delta_2}^{-1}\circ g:g\in\mathcal{Y},g(K)\cap\Delta_2^\circ\neq\emptyset\}=\{T\circ S_i:i=1,2\}\\
                     &= \{x\mapsto T(x/3), x\mapsto T(x/3+2/9)\}=\{x\mapsto 3x-2,x\mapsto 3x\}.
    \end{align*}
    If, furthermore, we wanted to compute the children of $\Delta_2$ in $\mathcal{P}_2$, we would begin by computing
    \begin{align*}
        \mathcal{Y}(\Delta_2,\vs(\Delta_2)) &= \{T_{\Delta_2}\circ f\circ S_i:i\in\mathcal{I},f\in\vs(\Delta_2)\}\\
                                            &= \{S_i\circ S_j:i\in\{1,2\},j\in\mathcal{I}\}
    \end{align*}
    and then continuing as above.
\end{example}
\begin{figure}[ht]
    \def\lb{0.544}
\begin{tikzpicture}[
    xscale=14,
    netiv/.style={thick,shorten <= 1pt,shorten >= 1pt,<->}]
    \drawiv{0}{1/3}{0}
    \drawiv{2/9}{5/9}{0.5}
    \drawiv{2/3}{1}{0}
    \foreach \x/\lbl in {0/$0$,{2/9}/$\frac{2}{9}$,{1/3}/$\frac{1}{3}$,{5/9}/$\frac{5}{9}$,{2/3}/$\frac{2}{3}$,1/$1$} {
        \draw[thick,dotted] (\x,0.8) -- (\x,-1.1) node[below]{\lbl};
    }

    \draw[netiv] (0,-0.7) -- node[fill=white]{$\Delta_1$} (2/9,-0.7);
    \draw[netiv] (2/9,-0.7) -- node[fill=white]{$\Delta_2$} (1/3,-0.7);
    \draw[netiv] (1/3,-0.7) -- node[fill=white]{$\Delta_3$} (5/9,-0.7);
    \draw[netiv] (2/3,-0.7) -- node[fill=white]{$\Delta_4$} (1,-0.7);
\end{tikzpicture}
    \caption{Net intervals in $\mathcal{P}_1$ as described in \cref{e:gen-ifs}.}
    \label{f:ex-net-intervals}
\end{figure}

In order to avoid certain degenerate situations, we require two additional assumptions on the basic iteration rule $\Phi$.
\begin{definition}\label{d:iter}
    Let $\Phi$ be a basic iteration rule.
    We say that $\Phi$ is an \defn{iteration rule} if
    \begin{enumerate}[nl,r]
        \item $\displaystyle\lim_{n\to\infty}\max_{(\Delta,v)\in\mathcal{V}_n}\max_{\{x\mapsto rx+a\}\in v} r \diam(\Delta)=0$, and
        \item if $(\Delta,v)\in\mathcal{V}$ and $f_1\neq f_2\in v$, then for any $\sigma\in\mathcal{I}^*$, we have $f_1\circ S_\sigma\neq f_2$.
    \end{enumerate}
\end{definition}
Note that if $f\in\vs(\Delta)$ is any neighbour, then $f(x)=ax+b$ for some $a\geq 1$.
Thus, (i) implies that the diameters of net intervals also tend uniformly to zero.
In fact, since $K\subseteq\bigcup_{\Delta\in\mathcal{P}_n}\Delta$ and the endpoints of each $\Delta$ are elements of $K$,
\begin{equation*}
    \bigcap_{n=0}^\infty\bigcup_{\Delta\in\mathcal{P}_n}\Delta=K.
\end{equation*}

We now have the following basic lemma.
\begin{lemma}\label{l:dk-max}
    Fix some pair $(\Delta,v)\in\mathcal{V}_n$.
    Then for each $f\in v$, $f(K)\cap(0,1)\neq\emptyset$, and
    \begin{equation}\label{e:dk}
        \Delta^\circ\cap K=\Delta^\circ\cap \bigcup_{f\in v}T_\Delta\circ f(K).
    \end{equation}
    Moreover, if $\sigma\in\mathcal{I}^*$ is any word satisfying $S_\sigma(K)\cap\Delta^\circ\neq\emptyset$, there is a unique word $\tau$ such that $T_{\Delta}^{-1}\circ S_\tau\in v$ and either $\tau$ is a prefix of $\sigma$ or $\sigma$ is a prefix of $\tau$.
\end{lemma}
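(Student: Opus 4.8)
The lemma makes three claims about a pair $(\Delta, v) \in \mathcal{V}_n$: first, that each neighbour $f \in v$ satisfies $f(K) \cap (0,1) \neq \emptyset$; second, the covering identity \eqref{e:dk}; and third, a "matching prefix" statement relating words $\sigma$ with $S_\sigma(K) \cap \Delta^\circ \neq \emptyset$ to the neighbours in $v$. Let me plan a proof.

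---

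The plan is to prove all three claims simultaneously by induction on the level $n$, since the entire construction of $(\mathcal{N}_n)$ is inductive and the neighbour set $v_i$ of a child $\Delta_i$ is defined in \eqref{e:ch-nb-def} directly in terms of the parent data $(\Delta, v)$. The base case $n=0$ is the pair $([0,1], (\id))$, where $f = \id$ and the claims reduce to the hypothesis that $K \subseteq [0,1]$ with $K \cap (0,1) \neq \emptyset$ (recall the convex hull of $K$ is $[0,1]$ and $K$ is not a singleton), so all three statements hold trivially.

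\medskip

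\emph{The covering identity and the first claim.} For the inductive step, fix $(\Delta, v) \in \mathcal{V}_n$ and let $(\Delta_i, v_i)$ be one of its children. First I would unwind the definitions: by \eqref{e:H-def} the generating set $\mathcal{Y} = \mathcal{Y}(\Delta, v)$ consists of maps $T_\Delta \circ f \circ S_\tau$, and by \eqref{e:ch-nb-def} the neighbour set $v_i$ consists precisely of the normalized maps $T_{\Delta_i}^{-1} \circ g$ for those $g \in \mathcal{Y}$ with $g(K) \cap \Delta_i^\circ \neq \emptyset$. The key structural fact is that every word $\sigma$ appears, after sufficiently many refinements, with a unique prefix in some $\mathcal{C}_i$ (the defining property of a basic iteration rule in \cref{d:biter}), so that $\bigcup_{\tau \in \mathcal{C}_i} S_\tau(K) = K$ up to the relevant overlaps. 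Using the inductive covering identity \eqref{e:dk} for the parent and the fact that $\{S_\tau(K) : \tau \in \mathcal{C}_i\}$ covers $K$, I would deduce that $\bigcup_{g \in \mathcal{Y}} g(K)$ covers $\Delta^\circ \cap K$, hence covers each $\Delta_i^\circ \cap K$. Restricting to those $g$ actually meeting $\Delta_i^\circ$ and rewriting $g = T_{\Delta_i} \circ (T_{\Delta_i}^{-1} \circ g)$ gives exactly the child-level identity $\Delta_i^\circ \cap K = \Delta_i^\circ \cap \bigcup_{h \in v_i} T_{\Delta_i} \circ h(K)$. The first claim, $h(K) \cap (0,1) \neq \emptyset$ for each $h \in v_i$, follows because $h \in v_i$ only when the corresponding $g$ satisfies $g(K) \cap \Delta_i^\circ \neq \emptyset$, which after normalizing by $T_{\Delta_i}^{-1}$ forces $h(K)$ to meet the interior $(0,1)$.

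\medskip

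\emph{The prefix statement.} For the final claim, suppose $S_\sigma(K) \cap \Delta^\circ \neq \emptyset$. The natural approach is again induction, pushing the word $\sigma$ down through the generations. Writing $T_\Delta \circ f = S_{\sigma(f)}$ for each neighbour (the first of the two bulleted observations), a word $\tau$ with $T_\Delta^{-1} \circ S_\tau \in v$ corresponds to a prefix relation at the level of the underlying words in $\mathcal{I}^*$. Existence should come from the covering identity just established: since $S_\sigma(K)$ meets $\Delta^\circ$, its image under normalization must agree with one of the generating maps, and the defining prefix property of $\mathcal{C}_i$ guarantees that $\sigma$ and the appropriate $\tau$ are prefix-comparable. \emph{Uniqueness} is where I expect the main obstacle: two distinct neighbours could a priori both be prefix-comparable with $\sigma$. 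This is precisely where condition (ii) of \cref{d:iter} enters — the assumption that $f_1 \circ S_\omega \neq f_2$ for distinct $f_1, f_2 \in v$ and any $\omega$ rules out one neighbour arising as a refinement of another, which is exactly the degeneracy that would break uniqueness. I would isolate uniqueness as the delicate step and argue by contradiction: if both $\tau_1$ and $\tau_2$ gave neighbours prefix-comparable to $\sigma$, then one of $\tau_1, \tau_2$ would be a prefix of the other, and composing with the corresponding normalized maps would violate condition (ii). The care needed to handle both orientations of the prefix relation (whether $\tau$ is a prefix of $\sigma$ or vice versa), together with correctly tracking the normalizing similarities $T_\Delta$ across levels, is the part requiring the most bookkeeping.
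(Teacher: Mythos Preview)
Your proposal is correct and follows essentially the same route as the paper: induction on the level $n$ for the covering identity \eqref{e:dk} (using that each $\mathcal{C}_i$ covers $K$ via $K=\bigcup_{\omega\in\mathcal{C}_i}S_\omega(K)$), with the first claim read off from the defining condition \eqref{e:ch-nb-def}, and existence/uniqueness of $\tau$ obtained from the inductive construction together with condition~(ii) of \cref{d:iter}. The paper's own argument is in fact more terse than yours on the prefix statement---it simply asserts that existence ``follows by construction'' and uniqueness ``follows from (ii)''---so your identification of the case analysis on the direction of the prefix relation as the place requiring care is apt, and your plan to derive a contradiction from condition~(ii) when two candidate words $\tau_1,\tau_2$ arise is exactly the intended mechanism.
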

\begin{proof}
    We prove \cref{e:dk} by induction on $n$.
    The case $n=0$ is immediate, so now let $(\Delta,v)\in\mathcal{V}_n$ have parent $(\Delta',v')\in\mathcal{V}_{n-1}$.
    Write $v'=(f_1,\ldots,f_m)$ and $\Phi(v')=(\mathcal{C}_1,\ldots,\mathcal{C}_m)$.
    Note that by definition of $\Phi$, for each $i$,
    \begin{equation*}
        K=\bigcup_{\sigma\in\mathcal{C}_i}S_\sigma(K).
    \end{equation*}
    Thus by the inductive hypothesis,
    \begin{equation*}
        (\Delta')^\circ\cap K = (\Delta')^\circ\cap\bigcup_{i=1}^m\bigcup_{\sigma\in\mathcal{C}_i}T_{\Delta'}\circ f_i\circ S_\sigma(K).
    \end{equation*}
    But by construction, if $T_{\Delta'}\circ f_i\circ S_\sigma(K)\cap\Delta^\circ\neq\emptyset$, then $T_\Delta^{-1}\circ T_{\Delta'}\circ f_i\circ S_\sigma\in v$, so the result follows.

    For the second part, the existence of the word $\tau$ follows by construction, and uniqueness follows from (ii) in \cref{d:iter}.
\end{proof}
We now have the following fundamental result, the proof of which is similar to \cite[Thm. 2.8]{rut2021}.
We include the main details and leave additional verification to the reader.
\begin{proposition}\label{p:ttype}
    Let $(f_i)_{i\in\mathcal{I}}$ be an IFS with basic iteration rule $\Phi$.
    Then for any $\Delta^{(1)}\in\mathcal{P}_{n_1}$ with children $(\Delta_1^{(1)},\ldots,\Delta_{m_1}^{(1)})$, if $\Delta^{(2)}\in\mathcal{P}_{n_2}$ where $\vs(\Delta^{(1)})=\vs(\Delta^{(2)})$ has children $(\Delta_1^{(2)},\ldots,\Delta_{m_2}^{(2)})$, then $m_1=m_2$ and for each $1\leq i\leq m$,
    \begin{enumerate}[nl,r]
        \item $\vs(\Delta_i^{(1)})=\vs(\Delta_i^{(2)})$,
        \item $q(\Delta,\Delta_i)=q(\Delta^{(1)},\Delta_i^{(2)})$, and
        \item $\diam(\Delta)/\diam(\Delta_i)=\diam(\Delta')/\diam(\Delta_i')$.
    \end{enumerate}
\end{proposition}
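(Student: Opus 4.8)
The plan is to show that the entire child-construction procedure is \emph{equivariant} under the normalization $T_\Delta^{-1}$, so that after renormalizing each net interval to $[0,1]$ the data defining its children depends only on the neighbour set $v=\vs(\Delta^{(1)})=\vs(\Delta^{(2)})$ and not on the interval itself or its level. Concretely, I would first record that the rescaled generator set $\hat{\mathcal{Y}}:=T_\Delta^{-1}\circ\mathcal{Y}(\Delta,v)=\bigcup_{f\in v}\{f\circ S_\tau:\tau\in\mathcal{C}_i,\ 1\le i\le m\}$, with $(\mathcal{C}_1,\dots,\mathcal{C}_m)=\Phi(v)$, depends only on $v$, since $\Phi$ is a function of the neighbour set alone. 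Applying $T_\Delta^{-1}$ to the cut-point set $Y(\Delta,v)$ from \cref{e:H-def} and using $g(z)\in\Delta\iff(T_\Delta^{-1}\circ g)(z)\in[0,1]$, the normalized cut points become $\{0,1\}\cup\{\hat g(z):\hat g\in\hat{\mathcal{Y}},\ z\in\{0,1\},\ \hat g(z)\in[0,1]\}$, again a function of $v$ only.

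The one place where $K$ enters and the argument is not purely formal is the selection of which subintervals $[y_i,y_{i+1}]$ are retained as children, namely the condition $(y_i,y_{i+1})\cap K\neq\emptyset$. Here I would invoke \cref{l:dk-max}: since each open subinterval $(y_i,y_{i+1})$ lies in $\Delta^\circ$, applying $T_\Delta^{-1}$ and using $(0,1)\cap T_\Delta^{-1}(K)=(0,1)\cap\bigcup_{f\in v}f(K)$ (the image of \cref{e:dk} under $T_\Delta^{-1}$), the retention condition becomes $(\hat y_i,\hat y_{i+1})\cap\bigcup_{f\in v}f(K)\neq\emptyset$, which depends only on $v$. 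This is the crux of the whole proposition: it is exactly the point where $T_\Delta^{-1}(K)$, which genuinely varies with $\Delta$, may be replaced inside $(0,1)$ by the $v$-determined set $\bigcup_{f\in v}f(K)$. Consequently the collection of normalized children $\hat\Delta_i:=T_\Delta^{-1}(\Delta_i)$, as an ordered list of intervals in $[0,1]$, is identical for $\Delta^{(1)}$ and $\Delta^{(2)}$; in particular $m_1=m_2$.

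Properties (ii) and (iii) then fall out by reading off normalized data: the position index $q(\Delta_i,\Delta)=(a_i-a)/\diam(\Delta)$ is precisely the left endpoint of $\hat\Delta_i$, and $\diam(\Delta_i)/\diam(\Delta)=\diam(\hat\Delta_i)$, both of which I have just shown agree across $\Delta^{(1)}$ and $\Delta^{(2)}$. For property (i), the neighbour set of a child $\Delta_i$ is $\{T_{\Delta_i}^{-1}\circ g:g\in\mathcal{Y},\ g(K)\cap\Delta_i^\circ\neq\emptyset\}$; I would rewrite $T_{\Delta_i}^{-1}\circ g=(T_\Delta^{-1}\circ T_{\Delta_i})^{-1}\circ\hat g=T_{\hat\Delta_i}^{-1}\circ\hat g$ using the composition identity $T_\Delta^{-1}\circ T_{\Delta_i}=T_{\hat\Delta_i}$ for orientation-preserving normalizations, and observe that $g(K)\cap\Delta_i^\circ\neq\emptyset\iff\hat g(K)\cap\hat\Delta_i^\circ\neq\emptyset$ by bijectivity of $T_\Delta$. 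Hence the neighbour set equals $\{T_{\hat\Delta_i}^{-1}\circ\hat g:\hat g\in\hat{\mathcal{Y}},\ \hat g(K)\cap\hat\Delta_i^\circ\neq\emptyset\}$, manifestly a function of $v$ alone, which completes all three claims.

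The only genuine obstacle throughout is the careful bookkeeping of the two $K$-intersection conditions. The first—determining which subintervals survive as children—is the essential difficulty and is resolved precisely by \cref{l:dk-max}; the second—determining which generators contribute to a child's neighbour set—is a direct transport of sets under the bijection $T_\Delta$ and requires no additional input. Notably no induction on the level is needed here, as the entire statement reduces to the single normalized picture attached to $v$.
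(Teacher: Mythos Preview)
Your proposal is correct and follows essentially the same approach as the paper: both arguments reduce to the observation that the child-construction data, once normalized, depends only on the neighbour set $v$, with \cref{l:dk-max} supplying the crucial step that the $K$-intersection condition is $v$-determined. The only cosmetic difference is that the paper uses the direct similarity $\psi=T_{\Delta^{(1)}}\circ T_{\Delta^{(2)}}^{-1}$ to transport data between the two intervals, whereas you normalize each interval separately to $[0,1]$ via $T_\Delta^{-1}$; these are equivalent reparametrizations of the same idea.
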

\begin{proof}
    For each $j=1,2$, let $\mathcal{Y}_j,Y_j$ correspond to the set $\Delta^{(j)}$ as in \cref{e:H-def} in the definition of children.
    Then with $\psi=T_{\Delta^{(1)}}\circ T_{\Delta^{(2)}}^{-1}$, we have $\mathcal{Y}_1=\{\psi\circ g:g\in\mathcal{Y}_2\}$ and $\psi(Y_2)=Y_1$.
    Thus with the elements of $Y_1$ in order as $y_1<\cdots<y_{k+1}$, the elements of $Y_2$ are given in order as $\psi(y_1)<\cdots<\psi(y_{k+1})$.

    Now since
    \begin{equation*}
        (\Delta^{(j)})^\circ\cap K = (\Delta^{(j)})^\circ\cap\bigcup_{f\in v} T_{\Delta^{(j)}}\circ f(K)
    \end{equation*}
    from \cref{l:dk-max}, it follows that $\psi:\Delta^{(2)}\cap K\to\Delta^{(1)}\cap K$ is a surjection so that $(y_i,y_{i+1})\cap K\neq\emptyset$ if and only if $(\psi(y_i),\psi(y_{i+1}))\cap K\neq\emptyset$.
    Thus $m_1=m_2$.

    From here, (i), (ii), and (iii) follow by direct computation.
\end{proof}

\begin{remark}\label{r:pr-ch}
    Sometimes, it can hold that $(\Delta,v)$ has a unique child $(\Delta',v')$ where $\Delta=\Delta'$.
    For technical purposes, in order to avoid this degenerate situation, it is convenient to redefine the iteration rule $\Phi$ as follows.
    Write $v=(f_1,\ldots,f_m)$, $g=(g_1,\ldots,g_k)$, and suppose $\Phi(v)=(\mathcal{C}_1,\ldots,\mathcal{C}_m)$ and $\Phi(v')=(\mathcal{C}_1',\dots,\mathcal{C}_k')$.
    Since $\Delta=\Delta'$, for each $f_i$ and $\sigma\in\mathcal{C}_i$, either $f_i\circ S_\sigma(K)\cap\Delta^\circ=\emptyset$ or $f_i\circ S_\sigma = g_j$ for some $j$.
    Now for each $1\leq i\leq m$ and $\sigma\in\mathcal{C}_i$, set
    \begin{equation*}
        \mathcal{U}_{i,\sigma}=\begin{cases}
            \{\emptyset\} &: f_i\circ S_\sigma(K)\cap\Delta^\circ=\emptyset\\
            \mathcal{C}_j &: f_i\circ S_\sigma=g_j
        \end{cases}
    \end{equation*}
    and define
    \begin{equation*}
        \widetilde{\mathcal{C}}_i = \bigcup_{\sigma\in\mathcal{C}_i}\{\sigma\tau:\tau\in\mathcal{U}_{i,\sigma}\}.
    \end{equation*}
    Then define $\widetilde{\Phi}$ by $\widetilde{\Phi}(v)=(\widetilde{\mathcal{C}}_1,\ldots,\widetilde{\mathcal{C}}_m)$, and $\widetilde{\Phi}=\Phi$ otherwise.
    It is straightforward to verify that $\widetilde{\Phi}$ is an iteration rule, and with this definition, the children of $(\Delta,v)$ with respect to $\widetilde{\Phi}$ are precisely the children of $(\Delta',v')$ with respect to $\Phi$.

    Note that an infinite sequence of children where all the net intervals are identical is disallowed by (i) in \cref{d:iter}.
    Repeating this construction, we may thus assume that each net interval $\Delta$ has at least two distinct children, and for any $\Delta\in\mathcal{P}$, there is a unique $n$ such that $\Delta\in\mathcal{P}_n$.
\end{remark}

We conclude with two examples explaining the relationship with our general net interval construction and earlier net interval constructions.
In practice, all iteration rules the author has used fall into these two classes.
\begin{example}\label{ex:uniform-transition}
    As discussed, the rule
    \begin{equation*}
        \Phi(f_1,\ldots,f_m)=(\mathcal{I},\ldots,\mathcal{I})
    \end{equation*}
    always defines an iteration rule.
    Here, the neighbour sets and net intervals can be described in a slightly different way.
    Enumerate the points $\{S_\sigma(0),S_\sigma(1):\sigma\in\mathcal{I}^n\}$ in increasing order as $0=y_0<y_1<\cdots<y_{s(n)}= 1$.
    We claim that
    \begin{equation}\label{e:pn-formula}
        \mathcal{P}_n=\{[y_i,y_{i+1}]:(y_i,y_{i+1})\cap K\neq\emptyset,0\leq i < s(n)\}
    \end{equation}
    and for a net interval $\Delta\in\mathcal{P}_n$,
    \begin{equation}\label{e:vs-formula}
        \vs(\Delta)=\{T_\Delta^{-1}\circ S_\sigma:\sigma\in\mathcal{I}^n,S_\sigma(K)\cap\Delta^\circ\neq\emptyset\}.
    \end{equation}
    Let us prove that this holds by induction.
    When $n=0$, \cref{e:pn-formula} and \cref{e:vs-formula} both hold trivially.
    Thus suppose $n\in\N$ is arbitrary and $\Delta=[a,b]\in\mathcal{P}_n$.
    From the definition in \cref{e:H-def} along with \cref{e:pn-formula} and \cref{e:vs-formula}, we observe that
    \begin{align*}
        \mathcal{Y}&=\bigcup_{f\in\vs(\Delta)}\{T_\Delta\circ f\circ S_i:i\in\mathcal{I}\}\\
                   &= \bigcup_{\{\sigma\in\mathcal{I}^n:S_\sigma(K)\cap\Delta^\circ\neq\emptyset\}}\{S_{\sigma i}:i\in\mathcal{I}\}\\
                   &= \{S_{\sigma i}:\sigma\in\mathcal{I}^n,S_\sigma(K)\cap\Delta^\circ\neq\emptyset,i\in\mathcal{I}\}\\
                   &=\{S_\tau:\tau\in\mathcal{I}^{n+1},S_\tau(K)\cap\Delta^\circ\neq\emptyset\}.
    \end{align*}
    and therefore
    \begin{equation*}
        Y = \{a,b\}\cup\{S_\tau(z):z\in\{0,1\},\tau\in\mathcal{I}^{n+1},S_\tau(z)\in\Delta\}.
    \end{equation*}
    Thus the children of $\Delta$ in $\mathcal{P}_{n+1}$ are precisely of the form given in \cref{e:pn-formula}, and if $\Delta_i$ is any child of $\Delta$, from the definition \cref{e:ch-nb-def} it has neighbour set 
    \begin{equation*}
        \vs(\Delta_i)=\{T_{\Delta_i}^{-1}\circ g: g\in\mathcal{Y}, g(K)\cap\Delta_i^\circ\neq\emptyset\}=\{T_{\Delta_i}^{-1}\circ S_\tau:\tau\in\mathcal{I}^{n+1},S_\tau(K)\cap\Delta_i^\circ\neq\emptyset\}
    \end{equation*}
    and \cref{e:vs-formula} holds for $\Delta_i$.
    Since any net interval $\Delta$ satisfies $\Delta^\circ\cap K\neq\emptyset$, every net interval in $\mathcal{P}_{n+1}$ must be given in this way.
    Thus \cref{e:pn-formula} and \cref{e:vs-formula} hold for $\mathcal{P}_{n+1}$.

    If each $S_i(x)=\lambda x+d_i$ for some fixed $0<\lambda<1$, the net intervals are the same as those considered by Feng \cite{fen2003}, and our definition of a neighbour set is closely related to the characteristic vector defined in that paper.
    See \cite[Rem. 2.2]{rut2021} for more details on this relationship.
\end{example}
\begin{example}\label{ex:weighted-transition}
    Given a tuple of similarities $(f_1,\ldots,f_m)$ with each $f_i(x)=a_ix+b_i$, let $a=\max\{|a_i|:1\leq i\leq m\}$ and define
    \begin{equation*}
        \mathcal{C}_i=
        \begin{cases}
            \mathcal{I} &: |a_i|=a\\
            \{\emptyset\} &: |a_i|<a
        \end{cases}
    \end{equation*}
    where we recall that $\emptyset$ denotes the empty word.
    Then the map $\Phi(f_1,\ldots,f_m)=(\mathcal{C}_1,\ldots,\mathcal{C}_m)$ defines an iteration rule which gives the net intervals and neighbour sets as defined in \cite[Sec. 2.2]{rut2021}.
    Indeed, with this construction, the rule defining children of $\Delta$ described above coincides exactly with the notion of the child of a net interval from \cite[Sec. 2.3]{rut2021}.
\end{example}

\subsection{The transition graph and symbolic representations}\label{ss:tg-sr}
We begin by introducing some useful terminology from graph theory.
By a rooted graph $\mathcal{G}$, we mean a directed graph (possibly with loops and multiple edges) consisting of a set $V(\mathcal{G})$ of vertices with a distinguished vertex $\vroot\in V(\mathcal{G})$, and a set $E(\mathcal{G})$ of edges.
By an \defn{edge} $e$, we mean a triple $e=(v_1,v_2,q)$ where $v_1\in V(\mathcal{G})$ is the \defn{source}, $v_2\in V(\mathcal{G})$ is the \defn{target}, and $q$ is the \defn{label} of the edge $e$.
The point of the label is to distinguish multiple edges, but it is safe to imagine that the graph does not have multiple edges.

A \defn{finite path} in $\mathcal{G}$ is a sequence $\eta=(e_1,\ldots,e_n)$ of edges in $\mathcal{G}$ such that the target of each $e_i$ is the source of $e_{i+1}$.
We say that the \defn{length} of $\eta$ is $n$, and denote this by $|\eta|$.
A finite path is a \defn{cycle} if, in addition, the source of $e_1$ is the target of $e_n$.
A (one-way) infinite path is a sequence $(e_i)_{i=1}^\infty$ where the target of each $e_i$ is the source of $e_{i+1}$ for $i\in\N$.
Given paths $\eta_1=(e_1,\ldots,e_n)$ and $\eta_2=(e_{n+1},\ldots,e_{n+m})$, if the target of $e_n$ is the source of $e_{n+1}$, the \defn{concatenation} $\eta_1\eta_2$ is the path $(e_1,\ldots,e_{n+m})$.
When it is convenient, we will abuse notation and treat edges as paths of length $1$.

We say that a (finite or infinite) path is \defn{rooted} if it begins at the root vertex $\vroot$, and we denote by $\Omega^\infty$ (resp. $\Omega^*$) the set of all infinite (resp. finite) rooted paths.
For any $n\in\N\cup\{0\}$, $\Omega^n$ denotes the set of all rooted paths of length $n$.
We say that $\eta_1$ is a \defn{prefix} of $\eta$ in $\Omega^*$ (resp. $\Omega^\infty$) if $\eta=\eta_1\eta'$ for some finite (resp. infinite) path $\eta'$.
Given a path $\gamma=(e_i)_{i=1}^\infty\in\Omega^\infty$, we denote the unique prefix of $\gamma$ in $\Omega^n$ by $\gamma|n=(e_1,\ldots,e_n)$.

We now define the main object in consideration in this document.
Fix a WIFS $(S_i,p_i)_{i\in\mathcal{I}}$ along with an iteration rule $\Phi$.
Then the \defn{transition graph} $\mathcal{G}=\mathcal{G}\bigl((S_i,p_i)_{i\in\mathcal{I}},\Phi\bigr)$, is a rooted graph defined as follows.
The vertex set of $\mathcal{G}$ is the set of all neighbour sets $\{v:{(\Delta,v)\in\mathcal{V}}\}$ with root vertex $\vroot=\{\id\}$ corresponding to the net interval $[0,1]$.
Now whenever $(\Delta,v)$ has child $(\Delta',v')$, we introduce an edge $(v,v',q(\Delta,\Delta'))$, where the position index $q(\Delta,\Delta')$ is the label distinguishing multiple edges between the vertices $v$ and $v'$.
This construction is well-defined by \cref{p:ttype}.
Given a vertex $v\in V(\mathcal{G})$, which is a neighbour set $v=(f_1,\ldots,f_m)$, we write $d(v)=m$.
For the remainder of this document, the set $\Omega^\infty$ (and $\Omega^*$, $\Omega^n$) will always be associated with the transition graph $\mathcal{G}$

Now given a path $\eta=(e_1,\ldots,e_n)\in\Omega^n$, there is a unique sequence of net intervals $(\Delta_i,v_i)_{i=0}^n$ with $\Delta_0=[0,1]$ where each $(\Delta_i,v_i)\in\mathcal{V}_i$, $\Delta_{i+1}$ is the child of $\Delta_i$, and
\begin{equation*}
    e_i=\bigl(v_i,v_{i+1},q(\Delta_i,\Delta_{i+1})\bigr).
\end{equation*}
This follows directly by construction of the edge set of the transition graph $\mathcal{G}$.
Since net intervals either coincide or overlap only on endpoints, this path is uniquely determined by the last net interval in the sequence.
Thus we may define a map $\pi:\Omega^n\to\mathcal{P}_n$ by $\pi(\eta)=\Delta_n$.
\begin{lemma}\label{l:netiv-sr}
    The map $\pi:\Omega^n\to\mathcal{P}_n$ is a well-defined bijection for each $n\in\N\cup\{0\}$.
\end{lemma}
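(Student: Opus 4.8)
The plan is to prove all three properties — well-definedness, surjectivity, and injectivity — by exploiting the fact that $\{\mathcal{P}_n\}$ is a genuine refining hierarchy, so that the sequence of net intervals attached to a path and the chain of ancestors of a net interval carry exactly the same information. Throughout I would use \cref{r:pr-ch} to guarantee that each net interval belongs to a single level $n$, keeping the indexing unambiguous.

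First I would establish that $\pi$ is \emph{well-defined}, which amounts to verifying the claim that each $\eta = (e_1,\ldots,e_n)\in\Omega^n$ determines a unique sequence $(\Delta_i,v_i)_{i=0}^n$. I would argue by induction on $i$, starting from the forced value $(\Delta_0,v_0) = ([0,1],\vroot)$. The key observation is that the distinct children $\Delta_1,\ldots,\Delta_k$ of a fixed net interval $\Delta$ are consecutive closed intervals meeting only at endpoints, hence have pairwise distinct left endpoints; consequently their position indices $q(\Delta,\Delta_j) = (a_j-a)/\diam(\Delta)$ are pairwise distinct. Thus, given $\Delta_i$ and an edge $e_{i+1}$ with label $q_{i+1}$, there is at most one child of $\Delta_i$ with that position index.

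The main subtlety — and the part I expect to require the most care — is that the edge $e_{i+1} = (v_i,v_{i+1},q_{i+1})$ need not have been introduced from the specific net interval $\Delta_i$: any net interval sharing the neighbour set $v_i = \vs(\Delta_i)$ could have generated it. Here I would invoke \cref{p:ttype}: since the number of children, their neighbour sets, and their position indices depend only on $v_i$, the edge $e_{i+1}$ corresponds to a genuine child of $\Delta_i$ — namely the unique child with position index $q_{i+1}$ — and this child has neighbour set $v_{i+1}$. This simultaneously confirms that the edge set is consistently defined and pins down $\Delta_{i+1}$ uniquely, completing the induction and showing $\pi(\eta)=\Delta_n$ is well-defined.

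For surjectivity, I would start from an arbitrary $\Delta_n\in\mathcal{P}_n$ and follow its chain of parents back to $[0,1]$, which exists by the inductive construction of $\mathcal{N}$; reading off the neighbour sets and position indices of the successive pairs yields a rooted path $\eta\in\Omega^n$ with $\pi(\eta)=\Delta_n$. For injectivity, it suffices to show that each $\Delta_n\in\mathcal{P}_n$ has a \emph{unique} parent in $\mathcal{P}_{n-1}$: if $\Delta_n$ were a child of two distinct parents $\Delta_{n-1}$ and $\Delta_{n-1}'$, then the nonempty set $\Delta_n^\circ\cap K$ would lie in both $\Delta_{n-1}^\circ$ and $(\Delta_{n-1}')^\circ$, contradicting that distinct net intervals in $\mathcal{P}_{n-1}$ overlap only at endpoints. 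Iterating recovers the entire ancestor chain of $\Delta_n$, and hence, via the reconstruction of the second paragraph, the unique path mapping to it.
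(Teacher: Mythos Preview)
Your argument is correct and is essentially the fleshed-out version of what the paper does: the paper establishes well-definedness inline just before stating the lemma (``This follows directly by construction of the edge set of the transition graph $\mathcal{G}$''), and handles injectivity in one line via ``net intervals either coincide or overlap only on endpoints'', leaving surjectivity implicit. Your use of \cref{p:ttype} to justify that an edge created from one net interval with neighbour set $v_i$ really does correspond to a child of any other net interval with the same neighbour set is exactly the point the paper is gesturing at with ``by construction''.
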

Given a net interval $\Delta\in\mathcal{P}_n$, the \defn{symbolic representation} of $\Delta$ is the path $\pi^{-1}(\Delta)\in\Omega^n$.

Now given an infinite path $\gamma=(e_i)_{i=1}^\infty\in\Omega^\infty$, there corresponds a sequence of net intervals $(\Delta_i)_{i=0}^\infty$ with $\Delta_i\in\mathcal{P}_i$ where $\Delta_0=[0,1]$ and $\Delta_{i+1}$ is a the child of $\Delta_i$ corresponding to the edge $e_{i+1}$.
Of course, $\Delta_n=\pi(\gamma|n)$.
Since $\lim_{i\to\infty}\diam(\Delta_i)=0$, there exists a unique point in $K$, which we call $\pi(\gamma)$, satisfying
\begin{equation*}
    \{\pi(\gamma)\}=\bigcap_{i=1}^\infty\Delta_i.
\end{equation*}
In analogy to the net interval case, we refer to a path $\gamma\in\pi^{-1}(x)$ as a \defn{symbolic representation} of $x$.
It is clear by construction of net intervals that the map $\pi:\Omega^\infty\to K$ is surjective.
Note that $\pi$ need not be injective, but if $x\in K$ has fibre $\pi^{-1}(x)$ with cardinality greater than 1, then $x$ must be an endpoint of some net interval $\Delta$.
In this situation, $\pi^{-1}(x)$ contains two paths.
Since there are only countably many net intervals, $\pi$ is injective on all but at most countably many paths.
We say that $x$ is an \defn{interior point} of $K$ if $\pi^{-1}(x)$ has cardinality 1.

\subsection{Edge weights and transition matrices}
For our purposes, perhaps the two most important attributes of a net interval $\Delta$ are its diameter $\diam(\Delta)$ and measure $\mu(\Delta)$.
Moreover, recall that we have a correspondence $\pi:\Omega^n\to\mathcal{P}_n$ taking rooted paths in the transition graph to net intervals in $\R$.
Through this correspondence, we get the corresponding ``symbolic diameter'' $\diam\circ\pi$ and ``symbolic measure'' $\mu\circ\pi$ defined on the set of rooted finite paths $\Omega^*$.
In this section, we will define two natural objects which takes values on $E(\mathcal{G})$ which will allow us to encode the functions $\diam\circ\pi$ and $\mu\circ\pi$ respectively in a way intrinsic to the transition graph.

We first describe $\diam\circ\pi$ as a product of weights on edges.
\begin{definition}\label{d:edge-weight}
    The \defn{edge weight function} for $\mathcal{G}$ is the map $W:E(\mathcal{G})\to(0,1)$ such that if the edge $e$ corresponds to the child $\Delta'\subseteq\Delta$, then $W(e)=\diam(\Delta')/\diam(\Delta)$.
    Given a path $\eta=(e_1,\ldots,e_n)$, we write $W(\eta)=W(e_1)\cdots W(e_n)$.
\end{definition}
Note that the edge weight is well-defined by \cref{p:ttype} (see also \cref{r:pr-ch}).
Of course, when $\Delta\in\mathcal{P}_n$ has symbolic representation $\eta=(e_i)_{i=1}^n$,
\begin{equation*}
    \diam(\Delta)=\diam(\pi(\eta))=W(e_1)\cdots W(e_n)=W(\eta),
\end{equation*}
so that $\diam\circ\pi=W$.

We now describe $\mu\circ\pi$ as the norm of products of matrices associated with edges.
Let $e\in E(\mathcal{G})$ be an edge corresponding to $(\Delta_1,(f_1,\ldots,f_m))$ the parent of $(\Delta_2,(g_1,\ldots,g_n))$, and let $\Phi(f_1,\ldots,f_m)=(\mathcal{C}_1,\ldots,\mathcal{C}_m)$ where $\Phi$ is the iteration rule.
For each $(i,j)\in\{1,\ldots,m\}\times\{1,\ldots,n\}$, set
\begin{equation*}
    \mathcal{E}_{i,j}=\{\omega\in\mathcal{C}_i:T_{\Delta_1}\circ f_i\circ S_\omega=T_{\Delta_2}\circ g_j\}.
\end{equation*}
Then the \defn{transition matrix} is the $n\times m$ matrix $T(e)$ given by
\begin{equation}\label{e:tr-mat}
    T(e)_{i,j}=\frac{f_i\mu((0,1))}{g_j\mu((0,1))}\cdot\sum_{\omega\in\mathcal{E}_{i,j}}p_\omega
\end{equation}
where we recall that $f\mu$ is the pushforward of $\mu$ by the function $f$.
It is clear that the transition matrix depends only on the edge $e$.
We note the following important observations.
\begin{lemma}\label{l:pos-col-row}
    If $e\in E(\mathcal{G})$ is any edge, then $T(e)$ has a positive entry in each column.
    Moreover, for any $v\in V(\mathcal{G})$ and $1\leq i\leq d(v)$, there is an edge $e$ with source $v$ such that $T(e)$ has a positive entry in row $i$.
\end{lemma}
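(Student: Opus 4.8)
The plan is to reduce both assertions to the single combinatorial condition that the relevant index sets $\mathcal{E}_{i,j}$ are nonempty, and then to read off this nonemptiness from the construction of the child neighbour set. First I would record the preliminary observation that the scalar prefactor $f_i\mu((0,1))/g_j\mu((0,1))$ appearing in \cref{e:tr-mat} is always strictly positive: by \cref{l:dk-max} every neighbour $f$ satisfies $f(K)\cap(0,1)\neq\emptyset$, so $f^{-1}((0,1))$ is an open set meeting $K=\supp\mu$ and hence has positive $\mu$-measure, giving $f\mu((0,1))>0$. Since every $p_\omega>0$, it follows that $T(e)_{i,j}>0$ if and only if $\mathcal{E}_{i,j}\neq\emptyset$.

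For the first statement (a positive entry in each column), I would unwind the definition of the child neighbour set $\vs(\Delta_2)$ from \cref{e:ch-nb-def}. Each neighbour $g_j$ of the child $\Delta_2$ is by definition of the form $T_{\Delta_2}^{-1}\circ g$ where $g\in\mathcal{Y}(\Delta_1,v)$ and $g(K)\cap\Delta_2^\circ\neq\emptyset$; by \cref{e:H-def}, $g=T_{\Delta_1}\circ f_i\circ S_\tau$ for some $i$ and some $\tau\in\mathcal{C}_i$. Then $T_{\Delta_1}\circ f_i\circ S_\tau=T_{\Delta_2}\circ g_j$, so $\tau\in\mathcal{E}_{i,j}$ and the entry $T(e)_{i,j}$ is positive, giving a positive entry in the column indexed by $g_j$.

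The second statement is the substantive one. Fix a vertex $v=(f_1,\dots,f_m)$ and an index $i$; since the outgoing transition matrices depend only on the source vertex (\cref{p:ttype}), I may fix any net interval $\Delta_1$ with $\vs(\Delta_1)=v$. Using the identity $K=\bigcup_{\omega\in\mathcal{C}_i}S_\omega(K)$ together with $f_i(K)\cap(0,1)\neq\emptyset$ from \cref{l:dk-max}, and applying $T_{\Delta_1}$ (which carries $(0,1)$ onto $\Delta_1^\circ$), I obtain some $\omega\in\mathcal{C}_i$ for which $g:=T_{\Delta_1}\circ f_i\circ S_\omega\in\mathcal{Y}(\Delta_1,v)$ satisfies $g(K)\cap\Delta_1^\circ\neq\emptyset$. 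It then suffices to produce a child $\Delta_2$ of $\Delta_1$ with $g(K)\cap\Delta_2^\circ\neq\emptyset$: for such a child, $T_{\Delta_2}^{-1}\circ g$ is one of its neighbours, say $g_j$, whence $\omega\in\mathcal{E}_{i,j}$ and $T(e)_{i,j}>0$ in row $i$ for the edge $e$ from $\Delta_1$ to $\Delta_2$.

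The main obstacle, and the one place requiring a genuine argument, is excluding the degenerate possibility that $g(K)$ meets $\Delta_1^\circ$ only at the finitely many partition points of $Y(\Delta_1,v)$, never entering a child's interior. I would resolve this by a non-atomicity argument: the set $g(K)\cap\Delta_1^\circ$ is the image under the bijection $g$ of $K\cap g^{-1}(\Delta_1^\circ)$, and $g^{-1}(\Delta_1^\circ)$ is open and meets $K=\supp\mu$, so it has positive $\mu$-measure; as $\mu$ is non-atomic, $g(K)\cap\Delta_1^\circ$ must be infinite. On the other hand, every point of $K\cap\Delta_1^\circ$ lies either in the interior of some child interval or in the finite set $Y$, since a point of $K$ cannot lie in a ``gap'' interval $(y_k,y_{k+1})$ with $(y_k,y_{k+1})\cap K=\emptyset$. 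Because $g(K)\subseteq K$, the infinitude of $g(K)\cap\Delta_1^\circ$ forces it to meet some child interior $\Delta_2^\circ$, which completes the argument.
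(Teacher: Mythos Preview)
Your proof is correct and follows essentially the same approach as the paper's: both reduce the column statement to the definition of the child neighbour set in \cref{e:ch-nb-def}, and both handle the row statement by showing that the image $T_{\Delta_1}\circ f_i(K)$ (or an iterate of it) meets the interior of some child. The paper's proof is terser and simply asserts the existence of such a child without justification; your non-atomicity argument carefully fills this gap, which the paper leaves implicit.
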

\begin{proof}
    Since each neighbour $g_j$ is of the form $T_{\Delta_2}\circ T_{\Delta_1}^{-1}\circ f_i\circ S_\sigma$ for some $i$ and (possibly empty) word $\sigma$, each column of $T(e)$ has a non-negative entry.

    To see the second part, let $\Delta$ be a net interval with $\vs(\Delta)=v$.
    Since $T_\Delta((0,1))\cap f_i(K)\neq\emptyset$ for each $1\leq i\leq m$, there is some child $\Delta'\subseteq\Delta$ such that $T_{\Delta'}((0,1))\cap f_i(K)\neq\emptyset$.
    Then if $e$ is the edge corresponding to $\Delta'\subseteq\Delta$, $T(e)$ has a positive entry in row $i$ by \cref{l:dk-max} and the definition of the transition matrix.
\end{proof}
Given a path $\eta=(e_1,\ldots,e_n)$, we write $T(\eta)=T(e_1)\cdots T(e_n)$.
We write $\norm{T(\eta)}=\sum_{i,j}T(\eta)_{i,j}$ to denote the matrix $1$-norm.

Now fix a pair $(\Delta,(f_1,\ldots,f_m))\in\mathcal{N}_n$, let $\muv(\Delta)=(q_1,\ldots,q_m)$ where
\begin{equation}\label{e:qi-formula}
    q_i=f_i\mu((0,1))\sum_{\substack{\sigma\in\mathcal{I}^*\\S_\sigma=T_\Delta\circ f_i}}p_\sigma.
\end{equation}
Using the self-similarity relation of $\mu$, the definition of the iteration rule $\Phi$, and condition (ii) in \cref{d:iter}, one can verify that
\begin{equation*}
    \mu(\Delta)=\norm{\muv(\Delta)}.
\end{equation*}
Now a similar argument as the proof of \cite[Thm. 2.12]{rut2021} gives the following result:
\begin{proposition}\label{p:mat-mu}
    Let $(S_i)_{i\in\mathcal{I}}$ have associated self-similar measure $\mu$ and fix an iteration rule $\Phi$.
    Then $\muv\circ\pi=T$ for every $n\in\N$, so if $\eta\in\Omega^n$,
    \begin{equation*}
        \mu(\pi(\eta))=\norm{T(\eta)}.
    \end{equation*}
\end{proposition}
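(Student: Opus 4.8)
The plan is to prove the vector identity $\muv(\pi(\eta)) = T(\eta)$ by induction on the length $n = \abs{\eta}$, and then deduce the norm statement from the relation $\mu(\Delta) = \norm{\muv(\Delta)}$ recorded just before the proposition. Since the root vertex $\vroot = \{\id\}$ satisfies $d(\vroot) = 1$, the product $T(\eta) = T(e_1)\cdots T(e_n)$ is naturally a $1\times d(\pi(\eta))$ row vector indexed by the neighbours of $\pi(\eta)$, so equating it with $\muv(\pi(\eta))$ is meaningful. For the base case $n = 0$, the empty path corresponds via \cref{l:netiv-sr} to $\Delta = [0,1]$ with neighbour set $(\id)$, and \cref{e:qi-formula} gives $\muv([0,1]) = (\mu((0,1))) = (1)$, which is the empty matrix product.

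The inductive step reduces everything to a single one-step identity: if $e$ is the edge from a parent $(\Delta,(f_1,\ldots,f_m))$ to a child $(\Delta',(g_1,\ldots,g_k))$, then
\begin{equation*}
    \muv(\Delta') = \muv(\Delta)\,T(e),
\end{equation*}
that is, the $j$-th entry $q_j'$ of $\muv(\Delta')$ equals $\sum_{i=1}^m q_i\,T(e)_{i,j}$. Granting this, if $\muv(\pi(\eta|n)) = T(\eta|n)$ then $\muv(\pi(\eta|(n+1))) = \muv(\pi(\eta|n))\,T(e_{n+1}) = T(\eta|n)\,T(e_{n+1}) = T(\eta|(n+1))$, completing the induction.

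To establish the one-step identity I would expand both sides using \cref{e:qi-formula} and \cref{e:tr-mat} and isolate the combinatorial content. Writing $\tilde{q}_i = \sum_{S_\sigma = T_\Delta\circ f_i} p_\sigma = q_i/(f_i\mu((0,1)))$, the claim is equivalent to the unnormalized recursion $\sum_{S_\tau = T_{\Delta'}\circ g_j} p_\tau = \sum_i \tilde{q}_i \sum_{\omega\in\mathcal{E}_{i,j}} p_\omega$, after which the pushforward factors $f_i\mu((0,1))$ and $g_j\mu((0,1))$ are reinstated through the ratio in \cref{e:tr-mat}. The engine is a bijection: every word $\tau$ with $S_\tau = T_{\Delta'}\circ g_j$ factors uniquely as $\tau = \sigma\omega$, where $\sigma$ is the unique prefix of $\tau$ with $T_\Delta^{-1}\circ S_\sigma = f_i$ a neighbour of $\Delta$, and $\omega\in\mathcal{E}_{i,j}\subseteq\mathcal{C}_i$. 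Indeed, by the definition of children in \cref{e:ch-nb-def} each $g_j$ has the form $T_{\Delta'}^{-1}\circ T_\Delta\circ f_i\circ S_\omega$ with $\omega\in\mathcal{C}_i$, so $T_{\Delta'}\circ g_j = S_\sigma\circ S_\omega$ and $\omega\in\mathcal{E}_{i,j}$; conversely the existence and uniqueness of the prefix $\sigma$ is precisely the second part of \cref{l:dk-max}, whose uniqueness rests on condition (ii) of \cref{d:iter}. Since $p$ is multiplicative under concatenation, $p_\tau = p_\sigma p_\omega$, and summing over the bijection factors the left-hand sum into the stated product.

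The main obstacle is this unique factorization together with the bookkeeping of the normalization factors. It is essential that the bijection is phrased at the level of words $\tau$ rather than of maps $S_\tau$: distinct words producing the same similarity are counted separately with their own $p$-weights, which is exactly why overlaps (several triples $(i,\sigma,\omega)$ yielding the same composed map) cause no double counting. The delicate computation is then to verify that the telescoping ratio of the pushforward normalization factors $f_i\mu((0,1))$ and $g_j\mu((0,1))$ matches the corresponding ratio in the definition of $T(e)$, which must be tracked with care through the composition. Once the one-step identity is in hand, the final conclusion is immediate: $\norm{T(\eta)} = \sum_j q_j(\pi(\eta)) = \mu(\pi(\eta))$ by the relation $\mu(\Delta) = \norm{\muv(\Delta)}$.
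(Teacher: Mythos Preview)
Your proposal is correct and takes essentially the same approach as the paper: you frame it as induction via the one-step identity $\muv(\Delta')=\muv(\Delta)\,T(e)$, whereas the paper states the equivalent $n$-step word decomposition $\sigma=\sigma_1\cdots\sigma_n$ with $\sigma_i\in\mathcal{C}_{j(i)}$ in one shot, but unrolling your recursion recovers exactly that statement. Both arguments rest on the same bijection between words realising $T_{\Delta'}\circ g_j$ and factored words $\sigma\omega$, with uniqueness coming from \cref{l:dk-max} and condition~(ii) of \cref{d:iter}; your version is somewhat more explicit about where these ingredients enter.
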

\begin{proof}
    Let $\Delta=\pi(\eta)$ and let $\eta$ end at the vertex $v$.
    Given $(\Delta,v)\in\mathcal{N}_n$, there exists a unique sequence $(\Delta_i,v_i)_{i=0}^n$ where $\Delta_i\in\mathcal{N}_i$, $\Delta_{i+1}$ is a child of $\Delta_i$, and $\Delta_n=\Delta$.
    Now for $f\in v$, let $T_\Delta\circ f=S_\sigma$ for some $\sigma\in\mathcal{I}^*$.
    Then one can write $\sigma=\sigma_1\ldots\sigma_n$ if and only if $\sigma_i\in\mathcal{C}_{j(i)}$ where $j(i)$ satisfies $T_{\Delta_i}^{-1}\circ f^{(i)}_{j(i)}=S_{\sigma_i}$ with $v_i=(f^{(i)}_1,\ldots,f^{(i)}_{m_i})$.
    Thus the entry of $T(\eta)$ corresponding to the index $f$ is the sum of $p_\sigma$ over all $\sigma$ satisfying $T_\Delta\circ f=S_\sigma$.
\end{proof}
We observe that the transition matrices are analgous to the role of the probabilities $(p_i)_{i\in\mathcal{I}}$ described in \cref{e:meas-p}.

We conclude by mentioning the following straightforward but important property of transition matrices.
\begin{lemma}\label{l:left-prod}
    If $\eta=\eta_1\eta_2\in\Omega^*$ with $\eta_1\in\Omega^n$, then $\norm{T(\eta)}\asymp_n\norm{T(\eta_2)}$.
\end{lemma}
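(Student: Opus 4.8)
The plan is to reduce the statement to an elementary estimate for products of entrywise non-negative matrices, exploiting the fact that every transition matrix has a positive entry in each column (\cref{l:pos-col-row}). Write $A = T(\eta_1)$ and $B = T(\eta_2)$, so that $T(\eta) = AB$. The first observation is that every transition matrix $T(e)$ has non-negative entries, by inspection of \cref{e:tr-mat}, and hence so does any product of them; in particular $A$, $B$, and $AB$ all have non-negative entries, and in this case $\norm{\cdot}$ is simply the sum of all entries.

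The key computation is the identity
\begin{equation*}
    \norm{AB} = \sum_{j} c_j(A)\, r_j(B),
\end{equation*}
where $c_j(A) = \sum_i A_{i,j}$ is the $j$-th column sum of $A$, $r_j(B) = \sum_k B_{j,k}$ is the $j$-th row sum of $B$, and $j$ ranges over the contracted dimension. This follows immediately from expanding $(AB)_{i,k} = \sum_j A_{i,j}B_{j,k}$ and summing over $i$ and $k$. Since $\norm{B} = \sum_j r_j(B)$ and every $r_j(B) \geq 0$, this identity sandwiches $\norm{AB}$ between $(\min_j c_j(A))\,\norm{B}$ and $(\max_j c_j(A))\,\norm{B}$.

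The upper bound is then immediate: $\norm{T(\eta)} \leq (\max_j c_j(A))\,\norm{T(\eta_2)}$, and since $\mathcal{G}$ is finite there are only finitely many paths $\eta_1 \in \Omega^n$, so $\max_{\eta_1 \in \Omega^n}\max_j c_j(T(\eta_1))$ is a finite constant depending only on $n$. The substance of the argument lies in the lower bound, which requires $c_j(A) > 0$ for \emph{every} column $j$, that is, that no column of $A = T(\eta_1)$ vanishes. This is where \cref{l:pos-col-row} enters, and I expect it to be the only genuine point of the proof: each factor $T(e_i)$ has a positive entry in each column, and this property is preserved under products of non-negative matrices. Indeed, if $A'$ and $B'$ are non-negative with a positive entry in each column and $k$ is any column index, choose $j_0$ with $B'_{j_0,k} > 0$ and then $i_0$ with $A'_{i_0,j_0} > 0$; then $(A'B')_{i_0,k} \geq A'_{i_0,j_0}B'_{j_0,k} > 0$, so $A'B'$ again has a positive entry in each column.

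By induction, $A = T(e_1)\cdots T(e_n)$ has a positive entry, hence a strictly positive column sum, in each column. Using finiteness of $\mathcal{G}$ once more, $\min_{\eta_1 \in \Omega^n}\min_j c_j(T(\eta_1))$ is bounded below by a positive constant depending only on $n$. Combined with the sandwich inequality, this yields $\norm{T(\eta)} \geq c\,\norm{T(\eta_2)}$ for a constant $c = c(n) > 0$, which together with the upper bound gives the comparison $\norm{T(\eta)}\asymp_n \norm{T(\eta_2)}$.
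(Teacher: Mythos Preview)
Your proof is correct and follows essentially the same approach as the paper's: both use \cref{l:pos-col-row} to ensure each column of $T(\eta_1)$ has a positive entry (giving the lower bound), submultiplicativity or the equivalent column-sum estimate for the upper bound, and finiteness of $\Omega^n$ to make the constants uniform. Your version simply spells out the ``straightforward calculation'' the paper alludes to via the identity $\norm{AB}=\sum_j c_j(A)\,r_j(B)$.
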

\begin{proof}
    By \cref{l:pos-col-row}, every transition matrix has a non-zero entry in each column, so a straightforward calculation shows that there exists some constant $a=a(\eta_1)$ such that $\norm{T(\eta_1\eta_2)}\geq a(\eta_1)\norm{T(\eta_2)}$.
    On the other hand, $\norm{T(\eta_1\eta_2)}\leq\norm{T(\eta_1)}\norm{T(\eta_2)}$ by submultiplicativity of the matrix norm.
    But there are only finitely many paths in $\Omega^n$, giving the result.
\end{proof}

\subsection{The finite neighbour condition}\label{ss:fnc}
Throughout this section, we have made no assumptions about the IFS $(S_i)_{i\in\mathcal{I}}$ or the transition graph $\mathcal{G}$.
We now introduce the main restriction of this paper.

The finite neighbour condition was introduced in \cite{hhrtoappear} as a variation of the generalized finite type condition introduced by Lau and Ngai \cite{ln2007}.
In general, such ``finite type'' conditions attempt to capture the idea that an IFS only has finitely many possible overlaps.
It is known that the finite neighbour condition is equivalent to the generalized finite type condition holding with respect to the interval $(0,1)$ \cite{hhrtoappear}.
We introduce the following definition, which is a natural generalization of the usual finite neighbour condition with respect to our more general transition graph construction.
\begin{definition}\label{d:pfnc}
    We say that the IFS $(S_i)_{i\in\mathcal{I}}$ satisfies the \defn{finite neighbour condition with respect to the iteration rule $\Phi$}, or the $\Phi$-FNC for short, if the corresponding transition graph is a finite graph.
\end{definition}
Closely related to this finite neighbour condition is the \defn{weak separation condition}.
This separation condition is satisfied if
\begin{equation}\label{e:wsc}
    \sup_{x\in K,r>0}\#\{S_\sigma:r\cdot r_{\min}<|r_\sigma|\leq r,S_\sigma(K)\cap(x-r,x+r)\neq\emptyset\}<\infty.
\end{equation}
The weak separation condition was introduced by Lau and Ngai in \cite{ln1999}; this definition is not the original but equivalent by \cite[Thm. 1]{zer1996}.
Standard arguments show that any IFS satisfying the $\Phi$-FNC necessarily satisfies the weak separation condition (see, for example, \cite{hhrtoappear,ln2007}).
Moreover, when $K$ is a convex set, the weak separation condition implies that the $\Phi$-FNC holds with respect to the iteration rule $\Phi$ from \cref{ex:weighted-transition} \cite{hhrtoappear}.

\section{Loop classes, irreducibility, and decomposability}
In this section, we introduce the notion of a loop class of the transition graph $\mathcal{G}$, and other related definitions.
These definitions are required to state the main technical assumptions (irreducibility and decomposability) which underpin the main results presented later in this paper.
We also discuss certain general situations in which the technical assumptions are satisfied.

For the remainder of the paper (including this section), we will assume that $(S_i,p_i)_{i\in\mathcal{I}}$ satisfies the $\Phi$-FNC with finite transition graph $\mathcal{G}$.
Note that many concepts in this section hold more generally for an arbitrary transition graph $\mathcal{G}$, but we do not distinguish this during the subsequent discussions for sake of simplicity.

\subsection{Loop classes}\label{ss:irred}
Let $G$ be a directed multigraph.
Recall that a graph $H$ is an \defn{induced subgraph} of $G$ if $H$ is the graph consisting of the vertices $V(H)$ and any edge $e\in E(G)$ such that $e$ connects two vertices in $H$.
\begin{definition}\label{d:loop-class}
    Let $H$ be an induced subgraph of $G$.
    We say that $H$ is \defn{strongly connected} if for any vertices $v,w\in V(H)$, there is a directed path from $v$ to $w$.
    Then $H$ is a \defn{loop class} (in $G$) if it is strongly connected, contains at least one edge, and is maximal with these properties.

    Now if $H$ is a loop class, we say that $H$ is \defn{simple} if each vertex in $H$ has exactly one outgoing edge (in $H$).
    We say that $H$ is \defn{essential} if for any $v\in V(H)$ and $w\in V(G)$, if there is a directed path from $v$ to $w$, then $w\in V(H)$ as well.
\end{definition}
Of course, any essential loop class is necessarily not simple.
Note that distinct loop classes have disjoint vertex and edge sets, but there may be vertices which do not belong to any loop class.
\begin{remark}
    Previous authors (e.g. \cite{hhn2018}) distinguished between loop classes and maximal loop classes.
    In this document, our loop classes are always maximal.
\end{remark}
\begin{example}
    In \cref{f:gen-tr-graph}, the loop classes are given by $\{\mathcal{L}_1,\mathcal{L}_2,\mathcal{L}_3,\mathcal{L}_4\}$.
    The loop classes $\mathcal{L}_1$ and $\mathcal{L}_2$ are simple, while $\mathcal{L}_3$ is not; and $\mathcal{L}_4$, being an essential loop class, is not simple.
\end{example}

Since the transition graph $\mathcal{G}$ is a finite graph, there are only finitely many loop classes.
Given any path $\gamma=(e_i)_{i=1}^\infty\in\Omega^\infty$, there is a unique loop class $\mathcal{L}$ such that there is some $N$ such that for all $k\geq N$, $e_k$ is an edge in $\mathcal{L}$.
We say that $\gamma$ is \defn{eventually in $\mathcal{L}$} and denote the set of all such $\gamma$ by $\Omega^\infty_{\mathcal{L}}$.

We may now set
\begin{align*}
    K_{\mathcal{L}} &= \{x\in K:\pi^{-1}(x)\cap\Omega^\infty_{\mathcal{L}}\neq\emptyset\} & \kint_{\mathcal{L}} &= \{x\in K:\pi^{-1}(x)\subseteq\Omega^\infty_{\mathcal{L}}\}.
\end{align*}
Of course, for each $x\in K$ there is at least one loop class $\mathcal{L}$ such that $x\in K_{\mathcal{L}}$, and at most two such sets.
Note that $\kint_{\mathcal{L}}$ is the topological interior of $K_{\mathcal{L}}$ (relative to $K$) if and only if $\mathcal{L}$ is an essential loop class.
If $x\in K$ is an interior point, then $x\in\kint_{\mathcal{L}}$ for a unique loop class $\mathcal{L}$.

Our analysis is focused on two technical assumptions, which we call irreducibility and decomposability.
We discuss these assumptions in the following two sections.
\subsection{Irreducibility}\label{sss:irreducibility}
Irreducibility can be loosely interpreted as a type of ``measure connectivity'' within the loop class.
\begin{definition}\label{d:irred}
    Let $\mathcal{L}$ be a loop class.
    We say that $\mathcal{L}$ is \defn{irreducible} if there exists a finite set of paths $\mathcal{H}$ such that for any paths $\eta_1,\eta_2$ in $\mathcal{L}$, there is some $\gamma\in\mathcal{H}$ such that $\eta_1\gamma\eta_2$ is a path and
    \begin{equation*}
        \norm{T(\eta_1\gamma\eta_2)}\asymp\norm{T(\eta_1)}\norm{T(\eta_2)}.
    \end{equation*}
    We say that the transition graph $\mathcal{G}$ is \defn{irreducible} if every loop class is irreducible.
\end{definition}
Since $\mathcal{L}$ is a finite graph, by submultiplicativity of the matrix norm, one can always guarantee that
\begin{equation*}
    \norm{T(\eta_1\gamma\eta_2)}\preccurlyeq\norm{T(\eta_1)}\norm{T(\eta_2)}.
\end{equation*}
On the other hand, establishing the lower inequality is more challenging.
This notion of irreducibility is motivated by various hypotheses studied by past authors \cite{fen2009,hr2021}.
We are not aware of any loop class of any IFS satisfying the finite neighbour condition that does not satisfy this irreducibility hypothesis.

In the following lemmas, we observe that this technical hypothesis is satisfied in a number of general cases.

Enumerate $V(\mathcal{L})=\{v_1,\ldots,v_k\}$.
For each $1\leq i,j\leq k$, let
\begin{align*}
    \mathcal{A}_{i,j}&=\{e\in E(G):e\text{ is an edge from }v_i\text{ to }v_j\} & M_{i,j}=\sum_{e\in\mathcal{A}_{i,j}}T(e)
\end{align*}
and define the block matrix
\begin{equation*}
    M=M(\mathcal{L}):=\begin{pmatrix}
        M_{1,1}&\cdots&M_{1,k}\\
        \vdots&\ddots&\vdots\\
        M_{k,1}&\cdots&M_{k,k}
    \end{pmatrix}
\end{equation*}
The following proof is straightforward and included in, say, \cite{hr2021}.
Recall the matrix $M$ is irreducible if for each $i,j$, there exists some $n=n(i,j)$ such that $(M^n)_{i,j}>0$.
\begin{lemma}\label{l:irred}
    Suppose the matrix $M$ is irreducible.
    Then $\mathcal{L}$ is an irreducible loop class.
\end{lemma}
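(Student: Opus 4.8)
The plan is to exploit the freedom, built into \cref{d:irred}, to let the connecting path $\gamma$ depend on $\eta_1$ and $\eta_2$. This flexibility is essential: one cannot in general find a single path whose transition matrix is strictly positive (if, say, every $T(e)$ in $\mathcal{L}$ has monomial support, then so does every product $T(\gamma)$, yet $M$ may still be irreducible). The only inequality requiring proof is the lower bound, since $\norm{T(\eta_1\gamma\eta_2)}\preccurlyeq\norm{T(\eta_1)}\norm{T(\eta_2)}$ follows from submultiplicativity as already noted, once $\gamma$ is drawn from a finite set.

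First I would rewrite the relevant norm as a bilinear form. Since the $1$-norm of a non-negative matrix satisfies $\norm{A}=\mathbf{1}^{\top}A\mathbf{1}$, setting $A=T(\eta_1)$, $B=T(\gamma)$, and $C=T(\eta_2)$ gives $\norm{ABC}=c_1^{\top}Bc_2$, where $c_1=A^{\top}\mathbf{1}$ is the vector of column sums of $T(\eta_1)$ and $c_2=C\mathbf{1}$ is the vector of row sums of $T(\eta_2)$; moreover $\norm{A}\norm{C}=\norm{c_1}_1\norm{c_2}_1$. By \cref{l:pos-col-row} every transition matrix has a positive entry in each column, and a short induction shows this property is preserved under products, so $c_1$ is strictly positive and $c_2$ is a nonzero non-negative vector. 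Letting $j^{*}$ and $k^{*}$ index the largest entries of $c_1$ and $c_2$, and using that these vectors have only $d(w_1)$ and $d(w_2)$ coordinates, I obtain $\norm{c_1}_1\norm{c_2}_1\leq d(w_1)d(w_2)\,(c_1)_{j^{*}}(c_2)_{k^{*}}$, while $c_1^{\top}Bc_2\geq (c_1)_{j^{*}}\,B_{j^{*}k^{*}}\,(c_2)_{k^{*}}$. Hence it suffices to choose $\gamma$, depending on $\eta_1,\eta_2$ only through $j^{*}$ and $k^{*}$, so that the single entry $T(\gamma)_{j^{*}k^{*}}$ is bounded below by a uniform positive constant.

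This is exactly what irreducibility of $M$ supplies. Enumerating $V(\mathcal{L})=\{v_1,\dots,v_k\}$ and expanding $M^n$ blockwise, the $(v_a,v_b)$ block of $M^n$ equals $\sum_{\gamma}T(\gamma)$, summed over all length-$n$ paths $\gamma$ from $v_a$ to $v_b$ in $\mathcal{L}$. Thus for each ordered pair of scalar indices $(v_a,j)$ and $(v_b,k)$, irreducibility yields some $n$ with $(M^n)_{(v_a,j),(v_b,k)}>0$, and since all summands are non-negative there is a single path $\gamma_{a,b,j,k}$ from $v_a$ to $v_b$ with $T(\gamma_{a,b,j,k})_{j,k}>0$. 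I would then take $\mathcal{H}$ to be the finite collection of all such $\gamma_{a,b,j,k}$ over the finitely many indices $a,b,j,k$. Given arbitrary $\eta_1,\eta_2$ ending and beginning at $v_a=w_1$ and $v_b=w_2$, selecting $\gamma=\gamma_{a,b,j^{*},k^{*}}\in\mathcal{H}$ and taking the uniform constant to be the least positive entry occurring among $\{T(\gamma):\gamma\in\mathcal{H}\}$, divided by $(\max_{v}d(v))^2$, gives $\norm{T(\eta_1)}\norm{T(\eta_2)}\preccurlyeq\norm{T(\eta_1\gamma\eta_2)}$ and hence the desired $\asymp$.

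The main obstacle is conceptual rather than computational: recognizing that a uniform strictly positive connecting matrix need not exist, so that one must instead route according to the dominant coordinates $(j^{*},k^{*})$ of the marginals $c_1$ and $c_2$, invoking $M$-irreducibility only to realize positivity of one prescribed entry at a time.
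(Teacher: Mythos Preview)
Your proof is correct and follows essentially the same approach as the paper's own argument. The paper picks a maximal \emph{entry} $T(\eta_1)_{k,i}\geq d_{\max}^{-2}\norm{T(\eta_1)}$ (and similarly for $\eta_2$) via pigeonhole, whereas you pick a maximal column sum and row sum; but in both cases the crux is to use irreducibility of $M$ to realise positivity of a single prescribed entry $T(\gamma)_{j^{*}k^{*}}$ via a path $\gamma$ drawn from the finite collection $\mathcal{H}=\{\gamma_{a,b,j,k}\}$, and then bound $\norm{T(\eta_1\gamma\eta_2)}$ below by the product of those dominant entries.
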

\begin{proof}
    It suffices to show for any vertices $v,w\in V(\mathcal{L})$, $1\leq i\leq d(v)$, and $1\leq j\leq d(w)$, there exists some path $\gamma$ from $v$ to $w$ such that $T(\gamma)_{i,j}>0$.

    Let $\mathcal{H}$ be the finite set of paths in \cref{d:irred} and let $A$ be the smallest strictly positive entry for any $\eta\in\mathcal{H}$ and let $d_{\max}=\max\{d(v):v\in V(\mathcal{L})\}$ (which is also the maximum number of rows or columns of any transition matrix $T(e)$ where $e\in E(\mathcal{L})$).
    Let $\eta_1,\eta_2$ be any finite paths in $\mathcal{L}$.
    By the pidgeonhole principle, there exists some $k,i,j,\ell$ such that $T(\eta_1)_{k,i}\geq d_{\max}^{-2}\norm{T(\eta_1)}$ and $T(\eta_2)_{j,\ell}\geq d_{\max}^{-2}\norm{T(\eta_2)}$.
    Let $\phi\in\mathcal{H}$ be a path from the target vertex of $\eta_1$ to the source vertex of $\eta_2$ such that $T(\phi)_{i,j}\geq A>0$.
    Then
    \begin{equation*}
        \norm{T(\eta_1\phi\eta_2)}\geq T(\eta_1)_{k,i}T(\phi)_{i,j}T(\eta_2)_{j,\ell}\geq Ad_{\max}^{-4}\norm{T(\eta_1)}\norm{T(\eta_2)}.
    \end{equation*}
    The upper bound follows since
    \begin{equation*}
        \norm{T(\eta_1\phi\eta_2)}\leq\norm{T(\eta_1)}\norm{T(\eta_2)}\max\{\norm{T(\phi)}:\phi\in\mathcal{H}\}
    \end{equation*}
    by submultiplicativity of the norm.
\end{proof}

We next observe that an essential loop class is always irreducible.
\begin{lemma}\label{l:ess-irred}
    Let $\mathcal{L}$ be an essential loop class of $\mathcal{G}$.
    Then $\mathcal{L}$ is irreducible.
\end{lemma}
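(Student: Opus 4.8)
Show that every essential loop class $\mathcal{L}$ is irreducible in the sense of \cref{d:irred}.

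The plan is to exploit the defining property of an essential loop class: once a path enters $\mathcal{L}$, it can never leave. This means that the full measure-theoretic structure of $\mu$ on the region $\kint_{\mathcal{L}}$ is captured entirely by paths internal to $\mathcal{L}$, and I want to leverage this to produce the connecting paths $\gamma$ required by irreducibility. The natural strategy is to reduce to \cref{l:irred}: I will argue that for an essential loop class, the block matrix $M = M(\mathcal{L})$ is irreducible, and then invoke \cref{l:irred} directly. The crux is therefore to establish that the aggregated transition matrix $M$ is irreducible, i.e.\ that for each pair of (vertex, row-index) and (vertex, column-index) indices, some power of $M$ has a strictly positive entry in the corresponding position.

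First I would fix two vertices $v, w \in V(\mathcal{L})$ and indices $1 \le i \le d(v)$, $1 \le j \le d(w)$. By \cref{l:pos-col-row}, starting from $v$ and row $i$ there is an edge whose transition matrix has a positive entry in row $i$; because $\mathcal{L}$ is essential, the target of this edge is forced to remain in $\mathcal{L}$, so I can propagate forward while keeping positivity in some row. Dually, every column of every transition matrix has a positive entry (again \cref{l:pos-col-row}), so I can trace backwards from $w$ and column $j$ to find an incoming edge contributing positively to column $j$. Since $\mathcal{L}$ is strongly connected, there is a path from the target of the forward edge to the source of the backward edge, and I would concatenate these pieces to build a single path $\gamma$ from $v$ to $w$ with $T(\gamma)_{i,j} > 0$. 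The essentiality is precisely what guarantees that all intermediate edges stay inside $\mathcal{L}$, so that the product $T(\gamma)$ is built from matrices $T(e)$ with $e \in E(\mathcal{L})$ and hence corresponds to a positive entry of an appropriate power of $M$.

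The main obstacle I anticipate is controlling \emph{which} row indices remain positive as one propagates along edges: a positive entry in row $i$ of one matrix may, after multiplication, spread positivity to various rows, and I must ensure the chain of positive entries connects the prescribed starting index $i$ to the prescribed terminal index $j$ rather than merely producing \emph{some} positive entry. The key technical point is that \cref{l:pos-col-row} gives control in both directions—every column has a positive entry (so backward reachability of column $j$ is automatic) and for each row there exists an edge realizing positivity in that row (so forward reachability of row $i$ is available)—and essentiality removes the possibility of ``escaping'' $\mathcal{L}$, so these two threads can always be joined using strong connectivity. Once $M$ is shown to be irreducible, the conclusion is immediate from \cref{l:irred}.
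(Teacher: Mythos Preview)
Your plan to reduce to \cref{l:irred} is exactly what the paper does, but the argument you sketch for the irreducibility of $M(\mathcal{L})$ has a genuine gap at the ``connect using strong connectivity'' step. After one forward edge you are at some pair $(v',k)$ with $T(e)_{i,k}>0$, and after one backward edge you need to reach some pair $(w',k')$; strong connectivity of $\mathcal{L}$ gives you a graph path from $v'$ to $w'$, but it does \emph{not} tell you that the product of transition matrices along that path has a positive $(k,k')$ entry. That last statement is precisely irreducibility of $M$ restricted to the intermediate vertices, so the argument is circular. Concretely, nothing in \cref{l:pos-col-row} rules out a situation like two vertices each with $d=2$ joined by edges whose transition matrices are both $\left(\begin{smallmatrix}1&0\\0&1\end{smallmatrix}\right)$: both conclusions of \cref{l:pos-col-row} are satisfied, the graph is strongly connected, yet the associated block matrix $M$ is reducible. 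Essentiality alone does not exclude this kind of decoupling of indices.

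The paper's proof supplies the missing idea by going outside the graph combinatorics and using the weak separation condition \eqref{e:wsc} directly. It proves the much stronger statement that for any $v,w\in V(\mathcal{L})$ and any $1\le i\le d(v)$ there is a path $\gamma$ from $v$ to $w$ in $\mathcal{L}$ with \emph{row $i$ of $T(\gamma)$ strictly positive}. The construction is geometric: take a net interval $\Delta$ with $\vs(\Delta)=v$ and neighbour $f_i$, write $T_\Delta\circ f_i=S_{\sigma_0}$, push deep enough to find $S_\sigma([0,1])\subseteq\Delta$ with $\sigma_0$ a prefix of $\sigma$, and then choose a net interval $\Delta_1\subseteq S_\sigma(U)$ with $\vs(\Delta_1)=w$, where $U$ is an interval attaining the supremum in \eqref{e:wsc}. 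Maximality of that supremum forces every neighbour of $\Delta_1$ to factor through $S_{\sigma_0}$, which is exactly the statement that every entry of row $i$ of $T(\gamma)$ is positive. Essentiality is used to guarantee that $\Delta_1$ with the prescribed neighbour set $w$ exists inside the chosen region and that the resulting path $\gamma$ stays in $\mathcal{L}$. This geometric input is what your combinatorial sketch is missing.
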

\begin{proof}
    In fact, we will show for any $v,w\in V(\mathcal{L})$ and $1\leq i\leq d(v)$, there exists some path $\gamma$ from $v$ to $w$ such that row $i$ of $T(\gamma)$ is strictly positive.
    The required result will then follow by \cref{l:irred}.

    Let $\Delta$ be any net interval with $\vs(\Delta)=v$ and neighbour set $v=(f_1,\ldots,f_{d(v)})$.
    Since $T_\Delta\circ f_i=S_{\sigma_0}$ for some $\sigma_0\in\mathcal{I}^*$ with $S_{\sigma_0}(K)\cap\Delta^\circ\neq\emptyset$, there exists some word $\sigma$ with prefix $\sigma_0$ such that $S_{\sigma}\subseteq\Delta$.
    Let $U=(x-r,x+r)$ attain the supremum in \cref{e:wsc} with words $\tau_1,\ldots,\tau_\ell$ satisfying $r\cdot r_{\min}<|r_{\tau_k}|\leq r$ and $S_{\tau_k}(K)\cap U\neq\emptyset$.
    Observe that $S_\sigma(U)$ also attains the supremum in \cref{e:wsc} with words $\sigma\tau_1,\ldots,\sigma\tau_\ell$.
    By condition (i) in \cref{d:iter} and since $\mathcal{L}$ is an essential loop class, there exists some net interval $\Delta_1\subseteq S_\sigma(U)$ with $\vs(\Delta_1)=w$ such that if $g$ is any neighbour of $\Delta_1$, then the contraction ratio $T_{\Delta_1}\circ g$ is less than $|r_\sigma|r$.
    Let $\gamma$ be the path corresponding to $\Delta_1\subseteq \Delta$, which is necessarily a path from $v$ to $w$ in $\mathcal{L}$.

    It remains to show that row $i$ of $T(\gamma)$ is strictly positive.
    Let $g\in\vs(\Delta_1)$ be arbitrary and let $S_\omega=T_{\Delta_1}\circ g$; by choice of $\Delta_1$, we have $|r_\omega|\leq |r_\sigma|r$.
    Since $S_\omega(K)\cap\Delta^\circ\neq\emptyset$, we have $S_\omega(K)\cap S_\sigma(U)\neq\emptyset$.
    Let $\xi$ be the unique prefix of $\omega$ with minimal length satisfying $|r_\xi|\leq|r_\sigma|r$.
    In particular, $|r_\xi|>|r_\sigma|r\cdot r_{\min}$ and $S_\xi(K)\cap S_\sigma(U)\neq\emptyset$, forcing $S_\xi=S_{\sigma\tau_j}$ for some $1\leq j\leq\ell$ by maximality of $\ell$.
    Unpacking definitions, this means that there is some word $\phi$ such that
    \begin{equation*}
        T_{\Delta_1}\circ g = S_{\sigma_0}\circ S_\phi=T_\Delta\circ f_i\circ S_\phi.
    \end{equation*}
    In other words, the entry in $T(\gamma)$ corresponding to the neighbours $f_i$ of $v$ and $g$ of $w$ is strictly positive.
    Since $g$ was an arbitrary neighbour of $\Delta_1$, the result follows.
\end{proof}
\subsection{Decomposability}\label{sss:decomposable}
Unlike irreducibility which, up to a fixed constant multiple, states that one can join paths within a loop class without changing the norm of the corresponding transition matrix, decomposability states that for a given path passing through multiple loop classes, the norm is comparable to the norms of the components of the path within each loop class it passes through.

We begin by defining the notion of an \defn{initial path} and a \defn{transition path} in the transition graph $\mathcal{G}$ as follows.
Let $\mathcal{G}$ have loop classes $\mathcal{L}_1,\ldots,\mathcal{L}_m$ and root vertex $\vroot$.
Let $\psi=(e_1,\ldots,e_n)$ be a path in $\mathcal{G}$ connecting vertices $(v_0,v_1,\ldots,v_n)$.
We say a path $\psi$ is a \defn{transition path} if
\begin{enumerate}[nl]
    \item $v_0$ is a vertex in $V(\mathcal{L}_j)$ for some $j$,
    \item $v_n$ is a vertex in $V(\mathcal{L}_k)$ for some $k\neq j$, and
    \item each $v_i$ with $0<i<n$ is not a vertex in any loop class.
\end{enumerate}
Similarly, we say that $\psi$ is an \defn{initial path} if we replace condition (1) with
\begin{enumerate}[nl,r]
    \item[(1')] $v_0=\vroot$
\end{enumerate}
There are only finitely many initial paths and transition paths since they cannot repeat vertices.

By definition of the loop class, we can sort the loop classes $\mathcal{L}_1,\ldots,\mathcal{L}_m$ in a (not necessarily unique) order such that if $\psi$ is any transition path joining loop classes $\mathcal{L}_i$ and $\mathcal{L}_j$, then $i<j$.
Now suppose $\eta=(e_1,\ldots,e_n)\in\Omega^*$ is any finite rooted path.
Then we can uniquely write
\begin{equation*}
    \eta = \phi\lambda_1\psi_1\ldots\psi_{m-1}\lambda_m
\end{equation*}
for possibly empty paths $\phi,\psi_i,\lambda_i$, where $\phi$ is an initial path, each $\lambda_i$ is a path in $\mathcal{L}_i$, and each $\psi_i$ is a transition path.
We call the tuple $(\lambda_1,\ldots,\lambda_m)$ the \defn{decomposition} of the path $\eta$.
\begin{example}
    In \cref{f:gen-tr-graph}, an example of a valid order is $\mathcal{L}_1,\mathcal{L}_2,\mathcal{L}_3,\mathcal{L}_4$.
    Note that any decomposition can contain a maximum of $3$ non-empty paths $\lambda_i$ (corresponding to the loop classes $\mathcal{L}_1,\mathcal{L}_3,\mathcal{L}_4$).
\end{example}
By convention, if $\lambda_i$ is an empty path, we write $\norm{T(\lambda_i)}=1$.
\begin{definition}
    We say that the transition graph $\mathcal{G}$ is \defn{decomposable} if for any path $\eta\in\Omega^*$ with decomposition $(\lambda_1,\ldots,\lambda_m)$, we have
    \begin{equation*}
        \norm{T(\eta)}\asymp \norm{T(\lambda_1)}\cdots \norm{T(\lambda_m)}
    \end{equation*}
    with constants depending only on the transition graph $\mathcal{G}$.
\end{definition}
We now discuss a few examples in which the transition graph $\mathcal{G}$ is decomposable.
\begin{lemma}\label{l:pos-tr}
    Suppose every transition path $\eta$ has that $T(\eta)$ is a strictly positive matrix.
    Then $\mathcal{G}$ is decomposable.
\end{lemma}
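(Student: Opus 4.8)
The plan is to prove the two inequalities behind $\asymp$ separately, using throughout that each $T(e)$ is entrywise non-negative and that $\norm{\cdot}$ is the entrywise $1$-norm. Write $T(\eta)=T(\phi)T(\lambda_1)T(\psi_1)\cdots T(\psi_{m-1})T(\lambda_m)$. The upper bound is immediate from submultiplicativity: $\norm{T(\eta)}\le\norm{T(\phi)}\bigl(\prod_i\norm{T(\psi_i)}\bigr)\prod_i\norm{T(\lambda_i)}$. Since initial paths and transition paths cannot repeat vertices, there are only finitely many of them, so $\norm{T(\phi)}$ and each $\norm{T(\psi_i)}$ are bounded by a constant depending only on $\mathcal{G}$; an empty $\lambda_i$ contributes an identity matrix, whose norm is the bounded dimension $d(v)$. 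Absorbing all these bounded factors into the implied constant and invoking the convention $\norm{T(\lambda_i)}=1$ for empty $\lambda_i$ yields $\norm{T(\eta)}\preccurlyeq\norm{T(\lambda_1)}\cdots\norm{T(\lambda_m)}$.

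The crux is the lower bound, and the single mechanism driving it is the following elementary estimate for a strictly positive matrix $P$ with least entry $c>0$ and non-negative matrices $A,B$ of compatible dimensions:
\[
    \norm{APB}=\sum_{i,\ell}\sum_{j,k}A_{ij}P_{jk}B_{k\ell}\geq c\Bigl(\sum_{ij}A_{ij}\Bigr)\Bigl(\sum_{k\ell}B_{k\ell}\Bigr)=c\,\norm{A}\,\norm{B},
\]
together with the one-sided versions $\norm{AP}\geq c\norm{A}$ and $\norm{PB}\geq c\norm{B}$ (which follow since each row sum of $P$ is at least $c$). By hypothesis, every transition path supplies exactly such a $P$.

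To exploit this, I would first discard the empty loop-class factors. Let $\lambda_{i_1},\ldots,\lambda_{i_s}$ enumerate the non-empty pieces. Between two consecutive non-empty pieces the only interleaved factors are empty $\lambda$'s, so the corresponding run $\psi_{i_k}\psi_{i_k+1}\cdots\psi_{i_{k+1}-1}$ concatenates into a single path $Q_k$ whose matrix $T(Q_k)$ is a product of strictly positive matrices, hence strictly positive; because the loop classes are visited in increasing order and $\mathcal{G}$ is finite, each $Q_k$ has bounded length and lies in a fixed finite set, so their entries are bounded below by a uniform $c>0$. The rooted segment of $\eta$ preceding $\lambda_{i_1}$ has bounded length, so by \cref{l:left-prod} it may be stripped at the cost of a uniform constant, and any transition-path run following $\lambda_{i_s}$ is strictly positive and removable via $\norm{AP}\geq c\norm{A}$. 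It then remains to bound $\norm{T(\lambda_{i_1})Q_1\,T(\lambda_{i_2})Q_2\cdots Q_{s-1}\,T(\lambda_{i_s})}$ from below, and applying the displayed inequality once per connector $Q_k$ gives $\norm{T(\eta)}\succcurlyeq c^{s-1}\prod_k\norm{T(\lambda_{i_k})}=\prod_{i=1}^m\norm{T(\lambda_i)}$, again using the convention for empty factors.

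I expect the main obstacle to be the bookkeeping around empty loop-class pieces rather than any analytic difficulty: one must verify that a maximal run of transition paths separated only by empty factors is genuinely an admissible path with strictly positive transition matrix, and that only finitely many such connectors arise so that the constant $c$ is uniform. This is precisely where the ordering of the loop classes and the finiteness of $\mathcal{G}$ are used; once these structural points are in place, the strict-positivity estimate does all the work.
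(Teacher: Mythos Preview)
Your proof is correct and follows essentially the same approach as the paper: strip the initial segment via \cref{l:left-prod}, use submultiplicativity for the upper bound, and for the lower bound exploit the elementary estimate $\norm{APB}\geq c\,\norm{A}\,\norm{B}$ for strictly positive $P$ applied at each transition path. The paper's version is terser---it writes the bound as $C^{m-1}\prod_i\norm{T(\lambda_i)}$ without explicitly treating the empty $\lambda_i$---whereas you carefully group runs of transition paths separated only by empty loop-class factors into strictly positive connectors $Q_k$; this extra bookkeeping is sound but not strictly necessary, since $m$ is fixed and one can always absorb the discrepancy between $C^{s-1}$ and $C^{m-1}$ into the implied constant.
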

\begin{proof}
    Since there are only finitely many transition paths, there exists a constant $C>0$ such that for any transition path $\psi$ and valid indices $i,j$, $T(\eta)_{i,j}\geq C$.
    But now if $\eta\in\Omega^*$ has decomposition $(\lambda_1,\ldots,\lambda_m)$, we can uniquely write $\eta=\phi\lambda_1\psi_1\ldots\psi_{m-1}\lambda_m$ where each $\psi_i$ is a transition path.
    Then by \cref{l:left-prod},
    \begin{align*}
        \norm{T(\eta)}&=\norm{T(\phi\lambda_1\psi_1\ldots\psi_{m-1}\lambda_m)} \asymp \norm{T(\lambda_1\psi_1\ldots\psi_{m-1}\lambda_m)}\\
                      &\geq C^{m-1}\norm{T(\lambda_1)}\cdots\norm{T(\lambda_m)}.
    \end{align*}
    Of course, $C$ and $m$ depend only on $\mathcal{G}$, so the lower bound holds.
    The upper bound always follows by submultiplicativity of the matrix norm, since there are only finitely many choices for the paths $\phi$, $\psi_i$.
\end{proof}
\begin{lemma}\label{l:size-one-loops}
    Suppose that every vertex in a non-essential loop class is a neighbour set of size one.
    Then $\mathcal{G}$ is decomposable.
\end{lemma}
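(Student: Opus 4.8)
The upper bound $\norm{T(\eta)}\preccurlyeq\norm{T(\lambda_1)}\cdots\norm{T(\lambda_m)}$ holds in general by submultiplicativity of the matrix norm together with the finiteness of the sets of initial and transition paths, exactly as in the proof of \cref{l:pos-tr}. The work is therefore entirely in the lower bound, and the plan is to exploit the fact that traversing a size-one loop class contributes only a positive scalar to the transition product.

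The structural observation driving the argument is that, in the decomposition $\eta=\phi\lambda_1\psi_1\cdots\psi_{m-1}\lambda_m$, at most one visited loop class can be essential, and if one is, it must be the terminal one: by \cref{d:loop-class} a path cannot leave an essential loop class, so any essential $\mathcal{L}_i$ with $\lambda_i$ nonempty is necessarily the last nonempty component. Discarding empty components, write $\eta=\phi\lambda_1'\psi_1'\cdots\psi_{p-1}'\lambda_p'$ for the loop classes actually visited, in order. Every $\lambda_j'$ with $j<p$ then lies in a non-essential loop class, hence---by hypothesis---in one all of whose vertices have size one; since $\vroot$ also has size one, the corresponding $T(\lambda_j')$ are $1\times1$, i.e. positive scalars $t_j'$ (positivity by \cref{l:pos-col-row}), and every transition matrix joining two size-one vertices is likewise $1\times1$.

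I would now isolate the terminal transition. Let $P=\phi\lambda_1'\psi_1'\cdots\psi_{p-2}'\lambda_{p-1}'$ be the rooted prefix ending at the source of $\psi_{p-1}'$, which is a vertex in the non-essential class $\mathcal{L}_{i_{p-1}}$ and hence has size one. Thus $T(P)$ is a $1\times1$ matrix, a positive scalar $s$, and since the contributions of $\phi$ and of the transition paths range over finitely many positive values while the loop contributions are the $t_j'$, one has $s\asymp\prod_{j<p}t_j'=\prod_{j<p}\norm{T(\lambda_j')}$. Writing $T(\eta)=s\cdot T(\psi_{p-1}'\lambda_p')$ reduces the claim to $\norm{T(\psi_{p-1}'\lambda_p')}\asymp\norm{T(\lambda_p')}$, which follows from \cref{l:left-prod}: its proof applies verbatim to the fixed, bounded (not necessarily rooted) prefix $\psi_{p-1}'$, using \cref{l:pos-col-row} and the finiteness of the set of transition paths to produce a uniform constant. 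Combining gives $\norm{T(\eta)}\asymp\prod_{j=1}^p\norm{T(\lambda_j')}=\prod_{i=1}^m\norm{T(\lambda_i)}$. The remaining cases---$p=1$, or a non-essential terminal class---are only easier: when $p=1$ one applies \cref{l:left-prod} directly to the rooted path $\phi\lambda_1'$, and when the terminal class is non-essential the entire product is scalar.

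The step I expect to be the main obstacle, and the reason this does not reduce to \cref{l:pos-tr}, is the final transition path $\psi_{p-1}'$ entering the terminal loop class: if that class is essential its vertices may have size larger than one, so $T(\psi_{p-1}')$ need not be strictly positive and the uniform positive lower bound used in \cref{l:pos-tr} is unavailable. The resolution is to not ask for positivity of this last matrix at all, but rather to treat it as a bounded prefix of $\lambda_p'$ and control it through \cref{l:left-prod}, which requires only the much weaker ``positive entry in each column'' property guaranteed by \cref{l:pos-col-row}.
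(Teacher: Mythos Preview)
Your argument is correct, but you have taken a longer route than the paper, and the reason you give for doing so rests on a misconception. You claim that \cref{l:pos-tr} does not apply because the final transition path $\psi_{p-1}'$ lands in a possibly essential loop class and hence $T(\psi_{p-1}')$ ``need not be strictly positive.'' But it is: any transition path begins in a non-essential loop class (one cannot leave an essential one), so by hypothesis it begins at a size-one vertex, and therefore its transition matrix has a single row. A one-row non-negative matrix with a positive entry in each column---guaranteed by \cref{l:pos-col-row} applied along the path---is automatically strictly positive. This is exactly the paper's proof: every transition path has strictly positive transition matrix, so \cref{l:pos-tr} applies directly and the argument is two lines. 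Your decomposition into a scalar prefix and a terminal piece controlled via \cref{l:left-prod} does work, and in fact re-derives the content of \cref{l:pos-tr} in this special case, but it is not needed once you notice that a one-row matrix with no zero columns is already strictly positive.
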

\begin{proof}
    By \cref{l:pos-tr}, it suffices to show for any transition path $\eta$ that $T(\eta)$ is a strictly positive matrix.
    By definition of an essential loop class, if $\eta$ is a transition path from loop classes $\mathcal{L}_i$ to $\mathcal{L}_j$, then $\mathcal{L}_i$ is non-essential.
    Thus by assumption, $\eta$ is a path beginning at a vertex with neighbour set consisting of a single neighbour, so that $T(\eta)$ is a matrix with 1 row.
    Since every transition matrix has a positive entry in every column by \cref{l:pos-col-row}, $T(\eta)$ is a strictly positive matrix, so the result follows by \cref{l:pos-tr}
\end{proof}
We now establish an irreducibility type condition to guarantee that the transition graph is decomposable when all non-essential loop classes are simple.

We begin with some general observations about non-negative irreducible matrices.
Let $M$ be an irreducible matrix with spectral radius $r$.
It is known that if $M$ is a strictly positive matrix, then the limit $\lim_{k\to\infty} M^k/r^k$ exists \cite{sen1981}.
While this limit need not exist in general if $M$ is irreducible, using similar arguments, one can show that there are constants $c_1,c_2>0$ such that for all $n$ sufficiently large, either $M^n_{i,j}=0$ or
\begin{equation*}
    c_1 r^n\leq M^n_{i,j}\leq c_2 r^n.
\end{equation*}
In particular, suppose $M_1,\ldots,M_k$ are irreducible matrices and $A_1,\ldots,A_{k+1}$ are such that $A=A_1 M_1^{n_1}\cdots A_kM_k^{n_k}A_{k+1}\neq 0$.
Then for all $n_i$ sufficiently large, if $M_i$ has spectral radius $r_i$, either $A_{k,\ell}=0$ or
\begin{equation}\label{e:irred-power}
    A_{j,\ell}\asymp_{A_1,\ldots,A_{k+1}} r_1^{n_1}\cdots r_k^{n_k}.
\end{equation}
This observation is the main idea in the following result.
\begin{lemma}
    Suppose every non-essential loop class is simple.
    For each simple loop class $\mathcal{L}$, suppose there is a path $\theta$ in $\mathcal{L}$ beginning and ending at the same vertex such that $T(\theta)$ is an irreducible matrix.
    Then $\mathcal{G}$ is decomposable.
\end{lemma}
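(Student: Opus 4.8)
The plan is to prove the lower bound $\norm{T(\eta)}\succcurlyeq\norm{T(\lambda_1)}\cdots\norm{T(\lambda_m)}$, since the reverse inequality always holds by submultiplicativity of the norm. Write $\eta=\phi\lambda_1\psi_1\cdots\psi_{m-1}\lambda_m$ and let $\mathcal{L}_{i_1},\ldots,\mathcal{L}_{i_s}$ with $i_1<\cdots<i_s$ be the loop classes for which $\lambda_{i_j}$ is nonempty. The first observation is that $\mathcal{L}_{i_1},\ldots,\mathcal{L}_{i_{s-1}}$ are all \emph{non-essential}: after each $\lambda_{i_j}$ with $j<s$ the path $\eta$ continues, via a transition path, into a strictly later loop class, so $\eta$ leaves $\mathcal{L}_{i_j}$, and an essential loop class cannot be left. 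By hypothesis each of these is therefore simple and carries an irreducible cycle. Only the terminal class $\mathcal{L}_{i_s}$, where $\eta$ ends, may be essential. Since there are finitely many initial and transition paths and each occurs at most once in the decomposition, the total length of $\phi$ and the $\psi_i$ is bounded in terms of $\mathcal{G}$ alone, so \cref{l:left-prod} lets me discard the initial path $\phi$ at the cost of a uniform constant.

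The next step is to analyze a single simple loop class. Such a class is a single directed cycle, so a path $\lambda_{i_j}$ through it is determined by its entry vertex and its length; once $\lambda_{i_j}$ is long it passes through the base vertex of the chosen irreducible cycle $\theta_{i_j}$ several times, and I can write $T(\lambda_{i_j})=A_{i_j}N_{i_j}^{\,n_{i_j}}B_{i_j}$, where $N_{i_j}=T(\theta_{i_j})$ is irreducible with spectral radius $r_{i_j}$, where $n_{i_j}\to\infty$ as $|\lambda_{i_j}|\to\infty$, and where $A_{i_j},B_{i_j}$, absorbing the bounded entry and exit segments and any leftover partial loops, range over a finite set of matrices. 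Substituting these into $\omega'=\lambda_{i_1}\psi\cdots\lambda_{i_{s-1}}\psi_{i_{s-1}}$, the portion of $\eta$ preceding $\lambda_{i_s}$, and grouping each bounded connector $B_{i_j}T(\psi)A_{i_{j+1}}$ with its neighbours, puts $T(\omega')$ exactly into the form $A_1 M_1^{n_1}\cdots A_kM_k^{n_k}A_{k+1}$ appearing in \eqref{e:irred-power}. Because $\eta$ is rooted, $\norm{T(\eta)}=\mu(\pi(\eta))>0$ by \cref{p:mat-mu}, so no factor of the product can vanish; applying \eqref{e:irred-power} uniformly over the finitely many connectors (and treating the finitely many short $\lambda_{i_j}$ as $\asymp 1$) shows that every nonzero entry of $T(\omega')$ is comparable to $P:=\prod_{j<s}r_{i_j}^{\,n_{i_j}}$, and likewise $\norm{T(\lambda_{i_j})}\asymp r_{i_j}^{\,n_{i_j}}$. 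Hence $\norm{T(\omega')}\asymp P\asymp\prod_{j<s}\norm{T(\lambda_{i_j})}$.

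To finish I would peel off the terminal factor $T(\lambda_{i_s})$, even though $\mathcal{L}_{i_s}$ need not be simple. By induction on path length using \cref{l:pos-col-row}, every column of $T(\omega')$ contains a positive entry; together with the previous step this forces every column sum of $T(\omega')$ to be comparable to $P$. Writing $a_k$ for the column sums,
\begin{equation*}
    \norm{T(\omega')T(\lambda_{i_s})}=\sum_{k,\ell}a_k\,T(\lambda_{i_s})_{k,\ell}\asymp P\sum_{k,\ell}T(\lambda_{i_s})_{k,\ell}=P\,\norm{T(\lambda_{i_s})}\asymp\norm{T(\omega')}\,\norm{T(\lambda_{i_s})}.
\end{equation*}
Combining this with $\norm{T(\eta)}\asymp\norm{T(\omega'\lambda_{i_s})}$ and the estimate for $\norm{T(\omega')}$ gives $\norm{T(\eta)}\asymp\prod_j\norm{T(\lambda_{i_j})}=\prod_{i=1}^m\norm{T(\lambda_i)}$, using the convention $\norm{T(\lambda_i)}=1$ for empty $\lambda_i$; the degenerate cases $s\leq 1$ follow immediately from \cref{l:left-prod}.

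I expect the terminal step to be the main obstacle. Since the essential loop class is not assumed to admit an irreducible cycle, I cannot apply \eqref{e:irred-power} to $\lambda_{i_s}$ directly; the separation of its norm instead hinges on the fact that the preceding block $T(\omega')$ has all of its column sums comparable, which is exactly what the combination of \eqref{e:irred-power} (entries comparable) and \cref{l:pos-col-row} (every column nonzero) provides.
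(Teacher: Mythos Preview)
Your proposal is correct and follows essentially the same route as the paper: both arguments write each non-terminal (hence simple) loop-class segment as a bounded conjugate of a power of the irreducible cycle, invoke \eqref{e:irred-power} to control every nonzero entry of the prefix matrix up to the terminal class, use \cref{l:pos-col-row} to guarantee each column of that prefix is nonzero, and then peel off the terminal segment by pairing a nonzero prefix entry with the relevant entry of $T(\lambda_{i_s})$. The only cosmetic difference is that you phrase the last step via column sums while the paper picks the single maximal entry of $T(\xi)$ and a matching nonzero entry in its column; these are equivalent since the matrix dimensions are uniformly bounded.
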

\begin{proof}
    For simplicity, we assume there is a unique essental class; the proof in the general case follows similarly.
    Denote the simple loop classes by $\mathcal{L}_1,\ldots,\mathcal{L}_k$, and for each $1\leq i\leq k$, let $\theta_i$ be a cycle in $V(\mathcal{L}_i)$ such that $T(\theta_i)$ is an irreducible matrix.
    Let $T(\theta_i)$ have spectral radius $r_i$.

    If $\eta\in\Omega^*$ is an arbitrary path, it has decomposition of the form $(\lambda_1,\ldots,\lambda_k,\xi)$ where $\lambda_i=\gamma_i^{(1)}\theta_i^{n_i}\gamma_i^{(2)}$ with $n_i$ maximal and $\xi$ is a path in the essential loop class.
    Since $n_i$ is maximal and $\mathcal{L}_i$ is simple, the paths $\gamma_i^{(1)}$ and $\gamma_i^{(2)}$ have length at most the length of $\theta$, so there are only finitely many possible paths $\gamma_i^{(j)}$.
    Thus by \cref{e:irred-power},
    \begin{equation*}
        \norm{T(\lambda_i)}\asymp r_i^{n_i}.
    \end{equation*}
    Now, we may write
    \begin{equation*}
        \eta = \phi\theta_1^{n_1}\psi_1\theta_2^{n_2}\ldots\theta_k^{n_k}\psi_k\xi
    \end{equation*}
    where $\phi=\phi'\gamma_1^{(1)}$ with $\phi'$ an initial path, and each $\psi_i$ is of the form $\gamma_i^{(2)}\psi_i'\gamma_{i+1}^{(1)}$ for $i<k$ or $\gamma_k^{(2)}\psi_k'$, and the paths $\psi_i'$ are transition paths.
    Of course, some of the paths $\gamma_i^{(j)}$ or $\psi_i'$ may be the empty path.
    The point here is that there are only finitely many possible choices for the paths $\phi,\psi_1,\ldots,\psi_k$, independent of the choice of $\eta$.

    It always holds that $\norm{T(\eta)}\preccurlyeq\norm{T(\lambda_1)}\cdots\norm{T(\lambda_k)}\norm{T(\xi)}$.
    Thus it suffices to show that
    \begin{equation*}
        \norm{T(\eta)}\succcurlyeq r_1^{n_1}\cdots r_k^{n_k}\norm{T(\xi)}.
    \end{equation*}
    Let $M=T(\phi\theta_1^{n_1}\psi_1\theta_2^{n_2}\ldots\theta_k^{n_k}\psi_k)$.
    By \cref{e:irred-power}, for each index $j,\ell$, either $M_{j,\ell}=0$ or
    \begin{equation*}
        M_{j,\ell}\asymp r_1^{n_1}\cdots r_k^{n_k}.
    \end{equation*}
    But now if $T(\xi)$ has maximal entry $T(\xi)_{p,q}$, we have $\norm{T(\xi)}\succcurlyeq T(\xi)_{p,q}$.
    Since column $p$ of the matrix $M$ has a non-negative entry by \cref{l:pos-col-row}, get $p'$ such that $M_{p',p}\succcurlyeq r_1^{n_1}\cdots r_k^{n_k}$ and
    \begin{equation*}
        \norm{T(\eta)}=\norm{M\cdot T(\xi)}\geq M_{p',p}T(\xi)_{p,q}\succcurlyeq r_1^{n_1}\cdots r_k^{n_k}\norm{T(\xi)}
    \end{equation*}
    as required.
\end{proof}

\section{Loop class spectra and a multifractal formalism}
\subsection{Measures and metric structure on paths in the transition graph}\label{ss:symb-defs}
The set $\Omega^\infty$ of infinite rooted paths has a natural metric space structure given by the weights.
Given paths $\gamma_1,\gamma_2$, define
\begin{equation*}
    d(\gamma_1,\gamma_2)=\inf\{W(\eta):\eta\text{ a prefix of }\gamma_1\text{ and }\gamma_2\}.
\end{equation*}
The topology is generated by the closed and open cylinders
\begin{equation*}
    [\eta]:=\{\gamma\in\Omega^*:\eta\text{ a prefix of }\gamma\}.
\end{equation*}
It is easy to see that this space is compact and totally disconnected.
Of course, $\pi([\eta])=\pi(\eta)\cap K$ where we recall that $\pi(\eta)$ is the net interval with symbolic representation $\eta$.
It is productive to interpret the space $\Omega^\infty$ with the above metric as a ``separated'' version of the set $K$.

We have the following straightforward result:
\begin{lemma}\label{l:pi-Lip}
    The map $\pi:\Omega^\infty\to K$ is Lipschitz with constant $1$.
\end{lemma}
\begin{proof}
    Let $\gamma_1$ and $\gamma_2$ be two distinct paths in $\Omega^\infty$ with maximal common prefix $\eta\in\Omega^n$, so that $d(\gamma_1,\gamma_2)=W(\eta)$.
    Let $\Delta\in\mathcal{P}_n$ have symbolic representation $\eta$.
    By definition, $W(\eta)=\diam(\Delta)$.
    But then $\pi(\gamma_1),\pi(\gamma_2)\in\Delta$ so
    \begin{equation*}
        |\pi(\gamma_1)-\pi(\gamma_2)|\leq\diam(\Delta) = W(\eta)=d(\gamma_1,\gamma_2)
    \end{equation*}
    as required.
\end{proof}
Our general philosophy is to establish multifractal properties of the space $\Omega^\infty$ (in terms of the corresponding subspaces $\Omega^\infty_{\mathcal{L},\zeta}$ defined below), and then translate these results to the self-similar measure $\mu$.
However, the main difficulty in establishing corresponding multifractal results is that, in general, the map $\pi$ is not in general bi-Lipschitz (even when restricted to the maximal domain on which it is injective).
Many of the technical results in the following sections are established to overcome this.

Since the graph $\mathcal{G}$ is not in general strongly connected, we will study subspaces of $\Omega^\infty$ corresponding to the loop classes.
Fix a loop class $\mathcal{L}$, and let $\zeta\in\Omega^*$ be a fixed path which ends at a vertex $v$ in $\mathcal{L}$.
Recall that $\Omega^\infty_{\mathcal{L}}$ is the set of infinite paths eventually in the loop class $\mathcal{L}$.
Now, define the set
\begin{equation*}
    \Omega^\infty_{\mathcal{L},\zeta}:=\{\gamma\in\Omega^\infty_{\mathcal{L}}:\zeta\text{ is a prefix of }\gamma\}
\end{equation*}
This is a compact subspace of $\Omega^\infty$ (note that the sets $\Omega^\infty_{\mathcal{L}}$ need not be compact).
We also define the analgous sets
\begin{equation*}
    \Omega^*_{\mathcal{L},\zeta}:=\{\eta\in\Omega^*_{\mathcal{L}}:\zeta\text{ is a prefix of }\eta\}
\end{equation*}
consisting of finite, rather than infinite, paths.
Often, given $\eta\in\Omega^*_{\mathcal{L},\zeta}$, we will abuse notation and write $[\eta]$ to denote the cylinder $[\eta]\cap\Omega^\infty_{\mathcal{L},\zeta}\subseteq\Omega^\infty$.

Suppose $\Delta=\pi(\zeta)$ is the net interval with symblic representation $\zeta$.
Then one can verify that
\begin{equation*}
    \pi\bigl(\Omega^\infty_{\mathcal{L},\zeta}\bigr)=K_{\mathcal{L}}\cap\Delta.
\end{equation*}

We now turn our attention to the measure $\mu$.
Since distinct net intervals in the same level overlap only on endpoints and the self-similar measure $\mu$ is non-atomic, one can verify that the rule
\begin{equation*}
    \mu(\pi(\eta))=\norm{T(\eta)}
\end{equation*}
for paths $\eta\in\Omega^*$ extends to a unique Borel measure on $\Omega^\infty$.
We would like to restrict this measure $\mu\circ \pi$ to the subsets $\Omega^\infty_{\mathcal{L},\zeta}$ in a meaningful way.
However, these sets can have measure $0$ in $\Omega^\infty$ (in fact, they have non-zero measure if and only if $\mathcal{L}$ is an essential loop class).

Regardless, it is convenient to simply consider the measure $\mu\circ\pi$ as being defined on finite rooted paths (or the corresponding cylinders).
With this in mind, we define a function $\rho:\Omega^*\to [0,1]$ by the rule
\begin{equation*}
    \rho(\eta)=\mu\circ\pi(\eta).
\end{equation*}
Now $\rho$ restricts naturally to a function $\rho:\Omega^*_{\mathcal{L},\zeta}\to[0,1]$, though this restriction is not in general additive.

\subsubsection{Loop class $L^q$-spectra}
We now use the function $\rho$ to define an analogue of the $L^q$-spectrum of measures for loop classes.

To motivate this, we first state an equivalent formulation of the $L^q$-spectrum of $\mu$ using the function $\rho$.
Set
\begin{equation*}
    \mathcal{F}(t) = \bigl\{\eta=(e_1,\ldots,e_n)\in\Omega^*:W(e_1\ldots e_n)\leq t<W(e_1\ldots e_{n-1})\bigr\}.
\end{equation*}
One can think of the sets $\mathcal{F}(t)$ as a ``scale-uniform'' analogue in $\Omega^\infty$ of the partitions $\mathcal{P}_n$ (which may contaimay contain intervals with vastly different diameters).
We then have the following standard result, which is a weighted version of \cite[Prop. 4.3]{hhstoappear} or \cite[Prop. 5.6]{fen2003}.
We include the main details for the convenience of the reader.
\begin{proposition}\label{p:lq-lim}
    Let $\mu$ be a self-similar measure satisfying the finite neighbour condition.
    Then
    \begin{equation*}
        \tau_\mu(q)= \liminf_{t\to 0}\frac{\log\sum_{\eta\in\mathcal{F}(t)}\rho(\eta)^q}{\log t}
    \end{equation*}
\end{proposition}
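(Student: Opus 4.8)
The plan is to prove that the $L^q$-spectrum $\tau_\mu(q)$, originally defined via the liminf over disjoint families of balls $\{B(x_i,t)\}$, coincides with the liminf over the partition-like sums $\sum_{\eta \in \mathcal{F}(t)} \rho(\eta)^q$. The heart of the argument is a comparison between the ball-counting sum and the sum over $\mathcal{F}(t)$, showing that the two differ by at most a bounded multiplicative factor at each scale $t$. Since such bounded factors contribute $\log(\text{const})/\log t \to 0$, the two liminf expressions are equal. I will handle the two inequalities separately, and the main work is controlling the overlaps introduced by the fact that $\pi$ is not injective and that net intervals at a fixed scale $t$ can have genuinely different diameters.

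First I would record the geometric facts I need. The collection $\{\pi(\eta) : \eta \in \mathcal{F}(t)\}$ consists of net intervals whose diameters $W(\eta)$ lie in the range $(t \cdot W_{\min}, t]$, where $W_{\min} = \min_{e \in E(\mathcal{G})} W(e) > 0$ is bounded below since $\mathcal{G}$ is finite (by the $\Phi$-FNC). By \cref{l:dk-max} and the construction in \cref{ss:gen-net-iv}, these net intervals cover $K$ and overlap only on endpoints, so they form an essentially disjoint cover of $K$ by intervals of diameter comparable to $t$. Moreover, by \cref{p:mat-mu}, $\rho(\eta) = \mu(\pi(\eta)) = \norm{T(\eta)}$. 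The key geometric input is a bounded-overlap statement: any ball $B(x,t)$ meets only a uniformly bounded number of net intervals in $\mathcal{F}(t)$, and conversely any net interval in $\mathcal{F}(t)$ is contained in a ball $B(x,Ct)$ for an absolute constant $C$. This bounded multiplicity is where the weak separation condition (equivalently, the finiteness of $\mathcal{G}$) does the real work.

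For the inequality $\tau_\mu(q) \geq \liminf_t \frac{\log \sum_{\eta} \rho(\eta)^q}{\log t}$ (and the reverse), I would split into the cases $q \geq 0$ and $q < 0$, since the direction of Hölder/monotonicity estimates flips. For a disjoint family of balls $\{B(x_i,t)\}$, I bound $\mu(B(x_i,t))$ above and below by sums of $\mu(\pi(\eta))$ over the boundedly-many $\eta \in \mathcal{F}(t)$ meeting $B(x_i,t)$; using the elementary inequalities $(\sum_{k=1}^N a_k)^q \asymp_N \sum_k a_k^q$ valid for $q \geq 0$ (with $N$ bounded), and the analogous reversed bound for $q<0$ using that $\mu(B(x_i,t)) \geq \mu(\pi(\eta))$ for at least one contributing $\eta$, I transfer between the two sums up to a constant factor depending only on $q$ and the bounded multiplicity $N$. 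Conversely, covering each net interval by a ball of radius comparable to $t$ and extracting a disjoint subfamily via a standard $5r$-type selection gives the matching comparison. Taking logarithms, dividing by $\log t < 0$, and passing to the liminf, the constants vanish and the two expressions agree.

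The main obstacle I expect is the case $q < 0$. For negative $q$ the quantity $\mu(B(x_i,t))^q$ is large when $\mu(B(x_i,t))$ is small, so the estimates are sensitive to balls of anomalously small measure; one must ensure that the balls in the supremizing family are centred at points of $K$ and that the corresponding net intervals genuinely carry comparable measure, rather than being artificially small because a ball straddles the endpoint of a net interval where $\mu$ is thin. This is exactly where one uses that the centres $x_i \in K = \supp\mu$ and that, by \cref{l:pos-col-row} together with \cref{l:left-prod}, the measure $\mu(\pi(\eta)) = \norm{T(\eta)}$ cannot collapse relative to its neighbours. I would also remark that the liminf (rather than lim) is what lets me avoid proving existence of the limit here, which is deferred to \cref{t:lq-lower-bound}. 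The remaining verifications are the routine elementary inequalities for sums raised to a power, which I would state and leave to the reader as indicated in the proposition.
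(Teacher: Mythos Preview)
Your outline handles the inequality $\tau_\mu(q)\geq\liminf$ correctly, and for $q\geq 0$ the bounded-overlap plus Jensen argument matches the paper. The gap is in the converse inequality for $q<0$, and your proposed fix via a $5r$-type selection does not close it.

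To obtain $\tau_\mu(q)\leq\liminf$ you must exhibit, for each $t$, a centred packing $\{B(x_i,r)\}$ with $r\asymp t$ satisfying $\sum_i\mu(B(x_i,r))^q\succcurlyeq\sum_{\eta\in\mathcal{F}(t)}\rho(\eta)^q$. For $q<0$ this requires $\mu(B(x_i,r))\preccurlyeq\rho(\eta_i)$ for a corresponding net interval $\eta_i$. Your approach of covering each $\pi(\eta)$ by a larger ball and extracting a disjoint subfamily gives only $\mu(B)\geq\rho(\eta)$, which is the wrong direction; and the lemmas you cite (\cref{l:pos-col-row}, \cref{l:left-prod}) say nothing about finding a ball of small enough measure. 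What the paper actually does is construct, for each net interval $\Delta=\pi(\eta)$, a ball $B(x_\eta,r_\eta)\subseteq\Delta$ with $r_\eta\asymp t$ and $\mu(B(x_\eta,r_\eta))\asymp\rho(\eta)$. The construction uses the neighbour structure: pick a neighbour $f$ of $\Delta$ realising a positive fraction of $\mu(\Delta)$ via \cref{e:qi-formula}, choose a fixed word $\tau$ (depending only on $f$, of which there are finitely many) with $f\circ S_\tau(K)\subseteq(\epsilon,1-\epsilon)$, and set $x_\eta=T_\Delta\circ f\circ S_\tau(0)$. The resulting ball lies strictly inside $\Delta$ (so the balls are automatically disjoint, no $5r$ argument needed) and its measure is squeezed between $\rho(\eta)$ and a fixed multiple of $\rho(\eta)$ by self-similarity. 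This interior-ball construction is the missing idea in your proposal; without it the $q<0$ case of the second inequality does not go through.
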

\begin{proof}
    First suppose $x\in K$ and $t>0$ is arbitrary, and let $\{B(x_i,t)\}_i$ be any centred packing of $K$.
    If $\eta\in\mathcal{F}(t)$ has $x\in\pi(\eta)$, we always have $\pi(\eta)\subseteq B(x,t)$, so for $q<0$
    \begin{equation*}
        \sum_{\eta\in\mathcal{F}(t)}\rho(\eta)^q\geq\sum_i\mu(B(x_i,t))^q.
    \end{equation*}
    On the other hand, for $q\geq 0$, since there are only finitely many edge weights, there is some $N$ such that $\#\{\eta\in\mathcal{F}(t):\pi(\eta)\cap B(x,t)\neq\emptyset\}\leq N$.
    Since a given net interval $\pi(\eta)$ overlaps with at most 2 distinct balls in $\{B(x_i,t)\}_i$, for $q\geq 0$ by Jensen's inequality
    \begin{equation*}
        \sum_i\mu(B(x_i,t))^q\leq\sum_i\Bigl(\sum_{\substack{\eta\in\mathcal{F}(t)\\\pi(\eta)\cap B(x_i,t)\neq\emptyset}}\rho(\eta)\Bigr)^q\preccurlyeq_q\sum_{\eta\in\mathcal{F}(t)}\rho(\eta)^q.
    \end{equation*}
    Thus
    \begin{equation*}
        \tau_\mu(q) \geq \liminf_{t\to 0}\frac{\log\sum_{\eta\in\mathcal{F}(t)}\rho(\eta)^q}{\log t}.
    \end{equation*}

    Conversely, suppose $\Delta=\pi(\eta)$ is some net interval, where $\eta\in\mathcal{F}(t)$.
    If $f$ is any neighbour of $\Delta$, since $f(K)\cap(0,1)\neq\emptyset$, there exists some word some $\epsilon>0$ and $\tau\in\mathcal{I}^*$ depending only on $f$ such that $f\circ S_\tau(K)\subseteq(\epsilon,1-\epsilon)$ and $0<r_\tau<\epsilon$.
    Since there are only finitely many neighbour sets, and hence only finitely many neighbours, we may assume $\epsilon\geq \epsilon_0>0$ and $p_\tau\geq p_0>0$ for some fixed $\epsilon_0,p_0$.

    Now by \cref{p:mat-mu} along with \cref{e:qi-formula}, there is some $M>0$ fixed such that there is $f\in\vs(\Delta)$ satisfying
    \begin{equation*}
        \rho(\eta)=\mu(\Delta)\leq M\sum_{\substack{\sigma\in\mathcal{I}^*\\S_\sigma=T_\Delta\circ f}}p_\sigma.
    \end{equation*}
    But then by choice of $\tau$, with $x_\eta=T_\Delta\circ f\circ S_\tau(0)\in K$ and $r_\eta=\epsilon|r_\tau|\diam(\Delta)$, we have $B(x_\eta,r_\eta)\subseteq\Delta$ and
    \begin{equation*}
        \rho(\eta)\geq \mu(B(x_\eta,r_\eta))\geq \mu(T_\Delta\circ f\circ S_\tau(K))\succcurlyeq\sum_{\substack{\sigma\in\mathcal{I}^*\\S_\sigma=T_\Delta\circ f}}p_\sigma p_\tau\succcurlyeq\rho(\eta).
    \end{equation*}
    Thus the centred packing $\{B(x_\eta,r_\eta)\}_\eta$ satisfies
    \begin{equation*}
        \sum_{\eta\in\mathcal{F}(t)}\mu(B(x_\eta,r_\eta))^q\asymp_q \sum_{\eta\in\mathcal{F}(t)}\rho(\eta)^q.
    \end{equation*}
    But $r_\eta\asymp t$, so for any $q\in\R$,
    \begin{equation*}
        \tau_\mu(q) \leq \liminf_{t\to 0}\frac{\log\sum_{\eta\in\mathcal{F}(t)}\rho(\eta)^q}{\log t}.
    \end{equation*}
    This gives the desired result.
\end{proof}
When $q\geq 0$, for any ``sufficiently uniform'' (e.g. dyadic) partition of $K$, one may always define the $L^q$-spectrum of any finite measure with respect to such a partition (see, for example, \cite[Prop. 3.1]{ln1999}).
When $q<0$, such an operation is more delicate since the intervals in a partition can intersect $K$ on sets of disproportionately small $\mu$-measure.
\cref{p:lq-lim} essentially states that the partitions $\{\pi(\eta):\eta\in\mathcal{F}(t)\}$ of $K$ for $t>0$ avoid this issue.

Now for $t>0$, and $\zeta$ and $\mathcal{L}$ defined as above, set
\begin{align*}
    \mathcal{F}_{\mathcal{L},\zeta}(t)&=\mathcal{F}(t)\cap\Omega^*_{\mathcal{L},\zeta}.
\end{align*}
We then define
\begin{equation*}
    \tau_{\mathcal{L},\zeta}(q) = \lim_{t\to 0}\frac{\log\sum_{\eta\in\mathcal{F}_{\mathcal{L},\zeta}(t)}\rho(\eta)^q}{\log t}.
\end{equation*}
We have the following basic lemma.
The argument giving existence of the limit is similar to \cite[Lem. 2.2]{fen2009}.
\begin{lemma}\label{l:lq-limit}
    The function $\tau_{\mathcal{L},\zeta}(q)$ is a concave function of $q$, and the limit exists for any $q\in\R$.
    Moreover, if $\zeta'$ is any other path ending in $\mathcal{L}$, then $\tau_{\mathcal{L},\zeta'}=\tau_{\mathcal{L},\zeta}$.
\end{lemma}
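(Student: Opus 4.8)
The plan is to first strip the fixed prefix $\zeta$, reducing $\tau_{\mathcal{L},\zeta}$ to a quantity intrinsic to the loop class, then to establish existence of the limit by a Fekete-type argument, deduce concavity from the log-convexity of power sums, and finally obtain independence of $\zeta$ from strong connectivity of $\mathcal{L}$. Let $v$ denote the terminal vertex of $\zeta$, and for a scale $s\in(0,1)$ consider the \emph{prefix-free} sum
\[
    \Lambda(s)=\sum_{\lambda}\norm{T(\lambda)}^q,
\]
where $\lambda$ ranges over paths in $\mathcal{L}$ starting at $v$ whose weight first drops below $s$ (that is, $W(\lambda)\leq s<W(\lambda^-)$, with $\lambda^-$ the path $\lambda$ with its last edge removed). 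Each $\eta\in\mathcal{F}_{\mathcal{L},\zeta}(t)$ is exactly of the form $\eta=\zeta\lambda$ with $\lambda$ as above at scale $t/W(\zeta)$, and $\norm{T(\zeta\lambda)}\asymp\norm{T(\lambda)}$ by \cref{l:left-prod} with implied constant depending only on the fixed $\zeta$; raising to the power $q$ and summing gives $\sum_{\eta\in\mathcal{F}_{\mathcal{L},\zeta}(t)}\rho(\eta)^q\asymp\Lambda(t/W(\zeta))$. Since $\log(t/W(\zeta))=\log t+O(1)$, it suffices to show that $\log\Lambda(s)/\log s$ converges as $s\to0$.

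For existence I would apply Fekete's lemma to $h(u)=-\log\Lambda(e^{-u})$. Splitting each path at the first scale-$s_1$ drop and applying submultiplicativity of the matrix norm (together with the lower bounds furnished by \cref{l:pos-col-row} and \cref{l:left-prod}) yields a one-sided multiplicative estimate relating $\Lambda(s_1 s_2)$ to $\Lambda(s_1)\Lambda(s_2)$: submultiplicative for $q\geq 0$ and supermultiplicative for $q<0$. Because there are only finitely many edge weights, the scale-boundary overshoot costs only a bounded factor, and because $\mathcal{L}$ is strongly connected the finitely many possible intermediate vertices can be compared uniformly. Thus $h$ is superadditive (for $q\geq 0$) or subadditive (for $q<0$) up to an additive constant, so $h(u)/u$ converges. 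Concavity is then immediate and in fact does not require the subadditivity argument: for each fixed $t$ the map $q\mapsto\log\sum_{\eta}\rho(\eta)^q$ is convex by Hölder's inequality (the log-convexity of power sums), so dividing by $\log t<0$ produces a concave function of $q$, and the pointwise limit of concave functions is concave.

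For independence of $\zeta$, let $\zeta'$ end at a vertex $v'$ of $\mathcal{L}$. Since $\mathcal{L}$ is strongly connected, I would fix a path $\gamma$ in $\mathcal{L}$ from $v'$ to $v$. The correspondence $\lambda\mapsto\gamma\lambda$, together with \cref{l:left-prod}, shows that the sums defining $\tau_{\mathcal{L},\zeta'}$ and $\tau_{\mathcal{L},\zeta}$ agree up to a bounded multiplicative factor at comparable scales $t$ and $W(\gamma)^{\pm 1}t$, so the corresponding $\log/\log t$ limits coincide and $\tau_{\mathcal{L},\zeta'}=\tau_{\mathcal{L},\zeta}$.

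I expect the main obstacle to be the case $q<0$ in the existence argument. Here only the one-sided submultiplicativity $\norm{T(\lambda_1\lambda_2)}\leq\norm{T(\lambda_1)}\norm{T(\lambda_2)}$ of the matrix norm is available—this lemma deliberately does \emph{not} assume irreducibility, which would supply the reverse comparison—so one must choose the direction of the multiplicative estimate with care and control both the scale-boundary overshoot and the varying intermediate terminal vertices (via strong connectivity of $\mathcal{L}$) in order to produce a genuinely (super/sub)additive sequence to which Fekete's lemma applies. The reduction to the scale-uniform cuts $\mathcal{F}(t)$ recorded in \cref{p:lq-lim} is precisely what keeps the $q<0$ case tractable.
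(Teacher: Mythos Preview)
Your proposal is correct and follows essentially the same approach as the paper: H\"older's inequality for concavity, a Fekete-type sub/supermultiplicativity argument (splitting into the cases $q\geq 0$ and $q<0$) for existence of the limit, and strong connectivity of $\mathcal{L}$ together with \cref{l:left-prod} for independence of $\zeta$. The only cosmetic differences are that you first strip the prefix $\zeta$ and work with the intrinsic sum $\Lambda$, whereas the paper works directly with $A_q(t)=\sum_{\eta\in\mathcal{F}_{\mathcal{L},\zeta}(t)}\rho(\eta)^q$; and that you phrase both $q$-cases as ``splitting'', while the paper handles $q<0$ by concatenation (given $\eta_1\in\mathcal{F}(t_1)$ and $\eta_2=\zeta\phi_2\in\mathcal{F}(t_2)$, it builds $\eta_1\gamma_w\phi_2\psi\in\mathcal{F}(r_0t_1t_2)$ via a fixed connecting path $\gamma_w$ in $\mathcal{L}$)---but this is just the same comparison read in the opposite direction.
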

\begin{proof}
    Concavity is a standard application of Hölder's inequality.
    We now see existence of the limit.
    Write $A_q(t)=\sum_{\eta\in\mathcal{F}_{\mathcal{L},\zeta}(t)}\rho(\eta)^q$.
    All implicit constants below may depend on the choice of $\zeta$.

    First suppose $q\geq 0$ and set $r_0=W_{\min}\cdot\min\{W(\zeta\gamma_w):w\in V(\mathcal{L})\}$ where $W_{\min}=\min\{W(e):e\in E(\mathcal{L})\}$.
    Suppose $\eta\in\mathcal{F}_{\mathcal{L},\zeta}(r_0t_1t_2)$, so we may write $\eta=\eta_1\phi$ where $\eta_1\in\mathcal{F}_{\mathcal{L},\zeta}(t_2)$.
    If $\phi$ begins at the vertex $w$, by choice of $r_0$ write $\phi=\psi\phi_0$ such that with $\eta_2=\zeta\gamma_w\psi$, $\eta_2\in\mathcal{F}_{\mathcal{L},\zeta}(t_2)$.
    Observe that $W(\phi_0)\asymp r_0$ so there are only finitely possible values of $\norm{T(\phi_0)}$.
    Thus by \cref{l:left-prod} we have $\norm{T(\psi)}\asymp \rho(\eta_2)$ so that
    \begin{equation*}
        \rho(\eta)\leq \rho(\eta_1)\norm{T(\psi)}\norm{T(\phi_0)}\preccurlyeq\rho(\eta_1)\rho(\eta_2).
    \end{equation*}
    Thus $A_q(r_0t_1t_2)\preccurlyeq_q A_q(t_1)A_q(t_2)$ so the limit exists for $q\geq 0$ by submultiplicativity.

    Now suppose $q<0$.
    Let $\zeta$ end at the vertex $v\in V(\mathcal{L})$, and for each $w\in V(\mathcal{L})$, let $\gamma_w$ be a path in $\mathcal{L}$ from $v$ to $w$.
    Given $\eta_i\in\mathcal{F}_{\mathcal{L},\zeta}(t_i)$ for $i=1,2$ and $t_i$ sufficiently small, write $\eta_2=\zeta\phi_2$ so that $W(\phi_2)\asymp t_2$ and $\norm{T(\phi_2)}\asymp\norm{T(\eta_2)}$ by \cref{l:left-prod}.
    Then if the path $\eta_1$ ends at the vertex $w$, $\eta_1\gamma_w\phi_2$ is an admissible path with $W(\eta_1\gamma_w\phi_2)\asymp t_1t_2$, so there exists some fixed $r_0>0$ such that $W(\eta_1\gamma_w\phi_2)\geq r_0 t_1t_2$.
    Thus get a path $\psi$ with $W(\psi)\asymp r_0$ such that $\eta_1\gamma_w\phi_2\psi\in\mathcal{F}_{\mathcal{L},\zeta}(r_0t_1t_2)$, and by \cref{l:left-prod}
    \begin{equation*}
        \rho(\eta_1\gamma_w\phi_2\psi)^q\succcurlyeq_q\bigl(\norm{T(\gamma_w)}\norm{T(\psi)}\bigr)^q\cdot\rho(\eta_1)^q\rho(\eta_2)^q.
    \end{equation*}
    But $\mathcal{L}$ is a finite graph (so there are only finitely many paths $\gamma_w$) and $W(\psi)\asymp r_0$ (so there are only finitely many paths $\psi$).
    Thus $A_q(t_1)A_q(t_2)\preccurlyeq_q A_q(r_0 t_1t_2)$ and the limit exists for $q<0$ by supermultiplicativity.

    To see the final claim, suppose $\zeta$ ends at the vertex $v$ and $\zeta'$ ends at the vertex $v'$ where $v,v'$ are both vertices in $\mathcal{L}$.
    Let $\phi$ be any path in $\mathcal{L}$ from $v$ to $v'$.
    Let $\Psi:\Omega^*_{\mathcal{L}}(\zeta')\to\Omega^*_{\mathcal{L},\zeta}$ be given by $\Psi(\zeta'\eta)=\zeta\phi\eta$, and note that
    \begin{equation}\label{e:incl}
        \Psi(\mathcal{F}_{\mathcal{L},\zeta'}(W(\zeta')t))\subseteq\mathcal{F}_{\mathcal{L},\zeta}(W(\zeta\phi)t).
    \end{equation}
    Now if $\zeta'\eta\in\Omega^*_{\mathcal{L},\zeta}$, by \cref{l:left-prod},
    \begin{equation*}
        \rho(\Psi(\zeta'\eta))=\norm{T(\zeta\phi\eta)}\asymp\norm{T(\eta)}\asymp \norm{T(\zeta'\eta)}=\rho(\zeta'\eta)
    \end{equation*}
    and combining this with \cref{e:incl} yields
    \begin{equation*}
        \sum_{\eta\in\mathcal{F}_{\mathcal{L},\zeta'}(W(\zeta')t)}\rho(\eta)^q\succcurlyeq_q \sum_{\eta\in\mathcal{F}_{\mathcal{L},\zeta}(W(\zeta\phi)t)}\rho(\eta)^q.
    \end{equation*}
    Since $\zeta$, $\zeta'$, and $\phi$ are fixed , it follows that $\tau_{\mathcal{L},\zeta}(q)\geq\tau_{\mathcal{L},\zeta'}(q)$.
    The reverse inequality follow by the same argument with the roles of $\zeta$ and $\zeta'$ swapped.
\end{proof}
\begin{proposition}\label{p:ess-formula}
    Suppose $\mathcal{L}$ is an essential loop class of $\mathcal{G}$.
    Then if $\Delta$ is any net interval with neighbour set $\vs(\Delta)\in V(\mathcal{L})$, with $\nu=\mu|_{\Delta}$, we have
    \begin{equation*}
        \tau_{\mathcal{L}}(q)=\tau_\nu(q).
    \end{equation*}
    In particular, $\tau_{\mathcal{L}}(q)=\tau_\mu(q)$ for any $q\geq 0$.
\end{proposition}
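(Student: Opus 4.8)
The plan is to reduce the statement to a \emph{localised} version of \cref{p:lq-lim}, applied to $\nu=\mu|_\Delta$ in place of $\mu$. Write $\zeta=\pi^{-1}(\Delta)\in\Omega^*$ for the symbolic representation of $\Delta$; by hypothesis $\zeta$ ends at the vertex $\vs(\Delta)\in V(\mathcal{L})$. The first observation is that essentiality of $\mathcal{L}$ forces $\Omega^*_{\mathcal{L},\zeta}$ to consist of \emph{every} finite rooted path with prefix $\zeta$: since $\mathcal{L}$ is an induced subgraph containing all edges between its vertices, and since every out-edge of a vertex of $V(\mathcal{L})$ has its target in $V(\mathcal{L})$, any path extending $\zeta$ has all subsequent edges in $\mathcal{L}$. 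Consequently $\mathcal{F}_{\mathcal{L},\zeta}(t)$ is exactly the set of paths $\eta\in\mathcal{F}(t)$ with $\pi(\eta)\subseteq\Delta$; these net intervals tile $K\cap\Delta$ up to endpoints with diameters comparable to $t$, and by \cref{p:mat-mu} together with $\pi(\eta)\subseteq\Delta$ we have $\rho(\eta)=\mu(\pi(\eta))=\nu(\pi(\eta))$. Thus it suffices to prove
\[
  \tau_\nu(q)=\liminf_{t\to 0}\frac{\log\sum_{\eta\in\mathcal{F}_{\mathcal{L},\zeta}(t)}\nu(\pi(\eta))^q}{\log t},
\]
since \cref{l:lq-limit} guarantees the corresponding limit exists and equals $\tau_{\mathcal{L},\zeta}(q)=\tau_{\mathcal{L}}(q)$.

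This identity is proved by repeating the argument of \cref{p:lq-lim} with $\mu$ replaced by $\nu$, $K$ replaced by $\supp\nu=K\cap\Delta$, and $\mathcal{F}(t)$ replaced by $\mathcal{F}_{\mathcal{L},\zeta}(t)$. For a centred packing $\{B(x_i,t)\}$ of $K\cap\Delta$, each net interval $\pi(\eta)$ with $\eta\in\mathcal{F}_{\mathcal{L},\zeta}(t)$ and $x_i\in\pi(\eta)$ satisfies $\pi(\eta)\subseteq B(x_i,t)$; for $q<0$ this yields $\sum_\eta\rho(\eta)^q\geq\sum_i\nu(B(x_i,t))^q$, while for $q\geq 0$ the bounded-overlap estimate and Jensen's inequality give the reverse comparison up to a constant. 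For the opposite direction one constructs, exactly as in \cref{p:lq-lim}, an interior ball $B(x_\eta,r_\eta)\subseteq\pi(\eta)$ with $r_\eta\asymp t$ and $\nu(B(x_\eta,r_\eta))\succcurlyeq\rho(\eta)$; here the deep word $\tau$ chosen via $f\circ S_\tau(K)\subseteq(\epsilon,1-\epsilon)$ keeps the ball inside $\pi(\eta)\subseteq\Delta$, so $\nu$ and $\mu$ agree on it and the original estimate applies verbatim. This establishes both inequalities and hence $\tau_\nu=\tau_{\mathcal{L}}$.

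For the final assertion it remains to check $\tau_\nu(q)=\tau_\mu(q)$ for $q\geq 0$. The inequality $\tau_\mu(q)\leq\tau_\nu(q)$ is immediate from \cref{p:lq-lim}: for $q\geq 0$ the sum over $\mathcal{F}_{\mathcal{L},\zeta}(t)\subseteq\mathcal{F}(t)$ is dominated by the full sum, and dividing by $\log t<0$ reverses the inequality after taking the liminf. For the reverse inequality I would use self-similarity to plant a rescaled copy of $\mu$ inside $\Delta$: choosing a neighbour $f\in\vs(\Delta)$ and a word $\tau$ with $T_\Delta\circ f\circ S_\tau(K)\subseteq\Delta^\circ$, and writing $S_\sigma=T_\Delta\circ f\circ S_\tau$, the relation $\mu\geq p_\sigma\,S_\sigma\mu$ gives $\nu(S_\sigma E)=\mu(S_\sigma E)\geq p_\sigma\mu(E)$ for every Borel $E\subseteq K$. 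Transporting a near-optimal packing of $K$ at scale $t$ by $S_\sigma$ produces a disjoint family in $\Delta$ at scale $\lvert r_\sigma\rvert t$ with $\sum\nu(\cdot)^q\succcurlyeq p_\sigma^q\sum_i\mu(B(x_i,t))^q$, and since $\log(\lvert r_\sigma\rvert t)/\log t\to 1$ this yields $\tau_\nu(q)\leq\tau_\mu(q)$, completing the equality.

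The main obstacle is the second paragraph: reproving \cref{p:lq-lim} in the localised setting while controlling boundary effects, that is, ensuring the interior balls used in the converse estimate remain inside $\Delta$ so that $\nu$ and $\mu$ coincide on them. Once essentiality is used to identify $\mathcal{F}_{\mathcal{L},\zeta}(t)$ with the scale-$t$ net intervals contained in $\Delta$, the remaining estimates are routine adaptations of the global argument.
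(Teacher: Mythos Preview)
Your proposal is correct and follows essentially the same route as the paper: the key observation that essentiality makes $\Omega^*_{\mathcal{L},\zeta}$ coincide with all rooted paths having prefix $\zeta$, followed by rerunning the proof of \cref{p:lq-lim} for $\nu=\mu|_\Delta$, is exactly what the paper does (in one sentence). Your treatment of the $q\geq 0$ clause via the planted-copy argument is the standard one the paper defers to \cite[Prop.~3.1]{fl2009}; you have simply written it out rather than citing it.
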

\begin{proof}
    This follows by the same argument as \cref{p:lq-lim}, observing that if $\zeta\in\Omega^*$ is a path ending in an essential loop class $\mathcal{L}$, then $\eta\in\Omega^*_{\mathcal{L},\zeta}$ if and only if $\eta\in\Omega^*$ and $\zeta$ is a prefix of $\eta$.

    That $\tau_{\mathcal{L}}(q)=\tau_\mu(q)$ for $q\geq 0$ follows by standard arguments (see, for example, \cite[Prop. 3.1]{fl2009}).
\end{proof}
\begin{remark}\label{r:ess-unique}
    In fact, using arguments similar to the proof of \cite[Thm. 4.5]{rut2021}, one can show that if $\mathcal{L}$ and $\mathcal{L}'$ are essential classes, then $\tau_{\mathcal{L}}=\tau_{\mathcal{L}'}$.
    In practice, with the standard choices of iteration rules given in \cref{ex:uniform-transition} and \cref{ex:weighted-transition}, there will always be a unique essential class.
\end{remark}

\subsubsection{Loop class local dimensions}
Given an infinite path $\gamma=(e_n)_{n=1}^\infty\in\Omega^\infty$, recall that $\gamma|n=(e_1,\ldots,e_n)\in\Omega^n$.
We then define
\begin{equation*}
    \underline{\dim}_{\loc}(\rho,\gamma)=\liminf_{n\to\infty}\frac{\log\rho(\gamma|n)}{\log W(\gamma|n)}
\end{equation*}
with similar definitions for the (upper) local dimension.
With this, we define
\begin{equation*}
    E_{\mathcal{L},\zeta}(\alpha) = \{\gamma\in\Omega^\infty_{\mathcal{L},\zeta}:\underline{\dim}_{\loc}(\rho,\gamma)=\overline{\dim}_{\loc}(\rho,\gamma)=\alpha\}.
\end{equation*}
Now let $f_{\mathcal{L},\zeta}:\R\to\R\cup\{-\infty\}$ be given by
\begin{equation*}
    f_{\mathcal{L},\zeta}(\alpha):=\dim_H E_{\mathcal{L},\zeta}(\alpha).
\end{equation*}
Note that $E_{\mathcal{L},\zeta}(\alpha)$ may be the empty set; by convention, we write $\dim_H\emptyset=-\infty$.
Following the theme for $L^q$-spectra, we have the following easy result.
\begin{lemma}
    If $\mathcal{L}$ is a loop class and $\zeta,\zeta'\in\Omega^*$ both end in $\mathcal{L}$, then
    \begin{equation*}
        f_{\mathcal{L},\zeta}(\alpha)=f_{\mathcal{L},\zeta'}(\alpha).
    \end{equation*}
\end{lemma}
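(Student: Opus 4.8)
The plan is to mimic the final paragraph of the proof of \cref{l:lq-limit}: I would construct an explicit map between $\Omega^\infty_{\mathcal{L},\zeta'}$ and a cylinder subset of $\Omega^\infty_{\mathcal{L},\zeta}$ which is simultaneously a similarity for the metric $d$ (and hence preserves Hausdorff dimension) and preserves local dimensions (and hence carries the level sets $E_{\mathcal{L},\zeta'}(\alpha)$ into the level sets $E_{\mathcal{L},\zeta}(\alpha)$). First I would record the structural observation that, since $\mathcal{L}$ is a \emph{maximal} strongly connected component, a path which leaves $\mathcal{L}$ can never return. Consequently, if $\zeta$ ends at a vertex $v\in V(\mathcal{L})$, then every $\gamma\in\Omega^\infty_{\mathcal{L},\zeta}$ has the form $\gamma=\zeta\eta$ with $\eta$ an infinite path lying entirely in $\mathcal{L}$ and beginning at $v$; similarly, if $\zeta'$ ends at $v'$, every element of $\Omega^\infty_{\mathcal{L},\zeta'}$ is $\zeta'\eta'$ with $\eta'$ in $\mathcal{L}$ beginning at $v'$. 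By strong connectivity of $\mathcal{L}$, fix a path $\phi$ in $\mathcal{L}$ from $v$ to $v'$ and define $\Psi(\zeta'\eta')=\zeta\phi\eta'$, which is a bijection from $\Omega^\infty_{\mathcal{L},\zeta'}$ onto $[\zeta\phi]\cap\Omega^\infty_{\mathcal{L},\zeta}$.

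Next I would verify that $\Psi$ is a similarity. If $\gamma_1'=\zeta'\eta_1'$ and $\gamma_2'=\zeta'\eta_2'$ have maximal common prefix $\zeta'\omega$, then their images $\zeta\phi\eta_1'$ and $\zeta\phi\eta_2'$ have maximal common prefix $\zeta\phi\omega$, so multiplicativity of $W$ gives $d(\Psi\gamma_1',\Psi\gamma_2')=W(\zeta\phi\omega)=\frac{W(\zeta\phi)}{W(\zeta')}\,W(\zeta'\omega)=\frac{W(\zeta\phi)}{W(\zeta')}\,d(\gamma_1',\gamma_2')$, a fixed ratio independent of the two paths. In particular $\Psi$ is bi-Lipschitz and therefore preserves Hausdorff dimension of any subset of $\Omega^\infty_{\mathcal{L},\zeta'}$.

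The crux is to show $\Psi$ preserves local dimensions. Writing $p'=|\zeta'|$ and $p=|\zeta\phi|$, for $\gamma'=\zeta'\eta'$ and its image $\delta=\zeta\phi\eta'$ I would use \cref{l:left-prod} to get $\rho(\gamma'|n)\asymp\norm{T(\eta'|_{n-p'})}$ and $\rho(\delta|m)\asymp\norm{T(\eta'|_{m-p})}$, with constants depending only on $\zeta',\zeta,\phi$, while $W(\gamma'|n)=W(\zeta')\,W(\eta'|_{n-p'})$ and $W(\delta|m)=W(\zeta\phi)\,W(\eta'|_{m-p})$. Setting $a_k=\log\norm{T(\eta'|_k)}$ and $b_k=\log W(\eta'|_k)$ (so $b_k\to-\infty$), the ratio defining the local dimension of $\gamma'$ at level $n$ equals $(a_{n-p'}+O(1))/(b_{n-p'}+\log W(\zeta'))$, and that of $\delta$ at level $m$ equals $(a_{m-p}+O(1))/(b_{m-p}+\log W(\zeta\phi))$, where the $O(1)$ terms are uniformly bounded. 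Since $b_k\to-\infty$, the bounded additive errors and the fixed additive constants are negligible in the limit, and the finite index shift does not affect $\liminf$ or $\limsup$ as the level tends to infinity; hence $\underline{\dim}_{\loc}(\rho,\gamma')=\underline{\dim}_{\loc}(\rho,\delta)$ and likewise for the upper local dimension. Therefore $\Psi$ restricts to a bijection from $E_{\mathcal{L},\zeta'}(\alpha)$ onto $E_{\mathcal{L},\zeta}(\alpha)\cap[\zeta\phi]$. This is the main obstacle, as it is precisely where the non-bi-Lipschitz behaviour of $\mu\circ\pi$ must be tamed via \cref{l:left-prod}.

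Finally I would combine these facts. Since $\Psi$ preserves Hausdorff dimension and maps $E_{\mathcal{L},\zeta'}(\alpha)$ onto a subset of $E_{\mathcal{L},\zeta}(\alpha)$, we obtain $f_{\mathcal{L},\zeta'}(\alpha)=\dim_H E_{\mathcal{L},\zeta'}(\alpha)=\dim_H\bigl(E_{\mathcal{L},\zeta}(\alpha)\cap[\zeta\phi]\bigr)\le f_{\mathcal{L},\zeta}(\alpha)$. Running the same argument with the roles of $\zeta$ and $\zeta'$ interchanged (using a path $\phi'$ in $\mathcal{L}$ from $v'$ to $v$) yields the reverse inequality $f_{\mathcal{L},\zeta}(\alpha)\le f_{\mathcal{L},\zeta'}(\alpha)$, and equality follows.
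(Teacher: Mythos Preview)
Your proof is correct and follows essentially the same approach as the paper's own proof: both construct the map $\zeta'\eta'\mapsto\zeta\phi\eta'$, invoke \cref{l:left-prod} to show local dimensions are preserved, observe the map is bi-Lipschitz (you go further and note it is a similarity), and conclude by symmetry. Your version simply supplies the details the paper leaves as ``straightforward to verify.''
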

\begin{proof}
    Let $\zeta$ end at the vertex $v$ and $\zeta'$ end at the vertex $v'$, and let $\phi$ be a path in $\mathcal{L}$ from $v$ to $v'$.
    Now if $\zeta'\gamma\in\Omega^\infty_{\mathcal{L}}(\zeta')$, then $\zeta\phi\gamma\in\Omega^\infty_{\mathcal{L},\zeta}$ and by \cref{l:left-prod},
    \begin{equation*}
        \underline{\dim}_{\loc}(\rho,\zeta'\gamma)=\underline{\dim}_{\loc}(\rho,\zeta\phi\gamma).
    \end{equation*}
    Moreover, it is straightforward to verify that the map $\zeta'\gamma\mapsto\zeta\phi\gamma$ is bi-Lipschitz on its image, so that $f_{\mathcal{L},\zeta}(\alpha)\geq f_{\mathcal{L},\zeta'}(\alpha)$ for any $\alpha$.
    The same argument yields the converse inequality, as required.
\end{proof}

\subsection{Multifractal formalism for irreducible loop classes}\label{ss:mf-Moran}
We maintain notation from the previous section, recalling that $\zeta\in\Omega^*$ is a path ending at a vertex in the loop class $\mathcal{L}$.

In light of the results in the previous section, the following notions are well-defined.
\begin{definition}
    We define the \defn{loop class $L^q$-spectrum}, denoted by $\tau_{\mathcal{L}}(q)$, as $\tau_{\mathcal{L}}(q)=\tau_{\mathcal{L},\zeta}(q)$.
    Similarly, we define the \defn{loop class multifractal spectrum} by $f_{\mathcal{L}}(\alpha)=f_{\mathcal{L},\zeta}(\alpha)$.
\end{definition}
For convenience, we write
\begin{align}\label{e:a-min-max}
    \alpha_{\min}(\mathcal{L})&=\lim_{q\to\infty}\frac{\tau_{\mathcal{L}}(q)}{q}& \alpha_{\max}(\mathcal{L})&=\lim_{q\to-\infty}\frac{\tau_{\mathcal{L}}(q)}{q}.
\end{align}
The limits necessarily exist by concavity of $\tau_{\mathcal{L}}(q)$, and a straightforward argument shows that they are finite.

Our main result in this section is the following multifractal formalism, which relates the multifractal spectrum with the $L^q$-spectrum on the loop class.
\begin{theorem}\label{t:multi-f}
    Let $\mathcal{L}$ be an irreducible loop class in $\mathcal{G}$.
    Then $f_{\mathcal{L}}=\tau_{\mathcal{L}}^*$.
\end{theorem}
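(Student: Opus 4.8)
The plan is to establish the two inequalities $f_{\mathcal{L}}\leq\tau_{\mathcal{L}}^*$ and $f_{\mathcal{L}}\geq\tau_{\mathcal{L}}^*$ separately, following the thermodynamic-formalism strategy for products of matrices developed in \cite{fen2009}. Throughout I would work directly in the ultrametric space $\Omega^\infty_{\mathcal{L},\zeta}$, using that $\diam[\eta]\asymp W(\eta)$, so that Hausdorff dimension is computed from covers by cylinders of diameter $W(\eta)$, while the symbolic mass of a cylinder is $\rho(\eta)=\norm{T(\eta)}$.

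For the upper bound I would fix $q\in\R$ and show $\dim_H E_{\mathcal{L},\zeta}(\alpha)\leq\alpha q-\tau_{\mathcal{L}}(q)$; taking the infimum over $q$ then gives $f_{\mathcal{L}}(\alpha)\leq\tau_{\mathcal{L}}^*(\alpha)$. The point is that any $\gamma\in E_{\mathcal{L},\zeta}(\alpha)$ has, for each small $t$, a unique prefix $\eta=\gamma|n\in\mathcal{F}_{\mathcal{L},\zeta}(t)$ with $W(\eta)\asymp t$ and $\rho(\eta)=W(\eta)^{\alpha+o(1)}$. Writing $s=\alpha q-\tau_{\mathcal{L}}(q)$ and estimating $W(\eta)^s\asymp\rho(\eta)^q W(\eta)^{-\tau_{\mathcal{L}}(q)}$ on the relevant cylinders, the defining relation $\sum_{\eta\in\mathcal{F}_{\mathcal{L},\zeta}(t)}\rho(\eta)^q\asymp t^{\tau_{\mathcal{L}}(q)}$ bounds the covering sum $\sum W(\eta)^s$ uniformly in $t$, whence $\mathcal{H}^s(E_{\mathcal{L},\zeta}(\alpha))<\infty$. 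This direction uses only the definitions and requires no irreducibility; the only care needed is to absorb the $o(1)$ with an $\epsilon$-argument and to treat $q>0$ and $q<0$ with the appropriate monotonicity.

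The lower bound is the substantive half, and is where irreducibility is essential. For $\alpha\in\partial\tau_{\mathcal{L}}(q_0)$, so that $\tau_{\mathcal{L}}^*(\alpha)=\alpha q_0-\tau_{\mathcal{L}}(q_0)$, I would construct a Borel probability measure $\nu$ on $\Omega^\infty_{\mathcal{L},\zeta}$ as a weak-$*$ limit of the normalized level measures assigning mass $\asymp\rho(\eta)^{q_0}W(\eta)^{-\tau_{\mathcal{L}}(q_0)}$ to cylinders $[\eta]$ with $\eta\in\mathcal{F}_{\mathcal{L},\zeta}(t)$. The irreducibility hypothesis of \cref{d:irred}, which permits concatenation $\norm{T(\eta_1\gamma\eta_2)}\asymp\norm{T(\eta_1)}\norm{T(\eta_2)}$ through a fixed finite set of connecting paths, is exactly what is needed to show that $\nu$ is well-defined and satisfies the quasi-Bernoulli (Gibbs) estimate $\nu([\eta])\asymp\rho(\eta)^{q_0}W(\eta)^{-\tau_{\mathcal{L}}(q_0)}$ for \emph{all} cylinders, overcoming the mere submultiplicativity of $\rho$. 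From this Gibbs property I would deduce, via a sub/super-additive ergodic argument, that for $\nu$-a.e.\ $\gamma$ one has $\log\rho(\gamma|n)/\log W(\gamma|n)\to\alpha$, so $\nu(E_{\mathcal{L},\zeta}(\alpha))=1$, and simultaneously $\log\nu([\gamma|n])/\log W(\gamma|n)\to\alpha q_0-\tau_{\mathcal{L}}(q_0)=\tau_{\mathcal{L}}^*(\alpha)$.

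The mass distribution principle (Billingsley's lemma, available because balls in $\Omega^\infty_{\mathcal{L},\zeta}$ are cylinders) then gives $\dim_H E_{\mathcal{L},\zeta}(\alpha)\geq\tau_{\mathcal{L}}^*(\alpha)$, which establishes the formalism for $\alpha$ interior to $[\alpha_{\min}(\mathcal{L}),\alpha_{\max}(\mathcal{L})]$; the endpoints follow by concavity and upper semicontinuity of both sides. The main obstacle is precisely the construction of $\nu$ together with the uniform Gibbs property and the almost-everywhere convergence of the local dimension of $\rho$: since $\rho$ arises from norms of noncommuting matrix products rather than a genuine product measure, there is no direct Bernoulli or transfer-operator structure to exploit, and it is the irreducibility that supplies the bounded-distortion control making these estimates go through. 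This is the technical heart of the argument, inherited from \cite{fen2009}.
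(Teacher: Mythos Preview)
Your proposal is correct in outline and follows a recognised route; the paper itself explicitly remarks that one could prove this result via ``a weighted version of the arguments in \cite{fen2009}'', which is precisely what you sketch. However, the paper opts for a different method, so a brief comparison is worthwhile.

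The paper's lower bound does not construct a Gibbs-type equilibrium measure. Instead it invokes a symbolic version of \cite[Thm.~2.2]{fen2012} (stated in the paper as Proposition~\ref{p:asymp-good}): one need only verify, for each $q$ at which $\tau_{\mathcal{L}}'(q)=\alpha$ exists, a uniform ``branching'' estimate showing that beneath any cylinder $[\eta]$ of scale $2^{-n}$ one can find $\succcurlyeq 2^{m(\tau_{\mathcal{L}}^*(\alpha)-o(1))}$ sub-cylinders of scale $2^{-n-m}$ whose $\rho$-masses are controlled multiplicatively. The paper obtains these sub-cylinders by a direct concatenation argument: a counting lemma (Lemma~\ref{l:counting}) supplies many paths $\phi_i\in\mathcal{F}_{\mathcal{L},\zeta}(2^{-m_0})$ with $\rho(\phi_i)\approx 2^{-m_0\alpha}$, and irreducibility is used once to glue each $\phi_i$ onto $\eta$ through a connector $\gamma\in\mathcal{H}$ with $\rho(\eta\gamma\psi)\asymp\rho(\eta)\rho(\phi_i)$. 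This yields the hypotheses of Proposition~\ref{p:asymp-good}, which then delivers $f_{\mathcal{L}}=\tau_{\mathcal{L}}^*$ for all $\alpha$ at once, including endpoints and subdifferential intervals.

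Your Gibbs-measure approach buys a conceptually cleaner picture (an explicit measure witnessing the dimension), but at the cost of a heavier construction: one must take weak-$*$ limits, verify the quasi-Bernoulli bound on \emph{all} cylinders (not merely those reachable through connectors in $\mathcal{H}$), and run an almost-everywhere local-dimension argument. The paper's Moran-type route is lighter because the irreducibility hypothesis is invoked only in the combinatorial concatenation step, with no measure-theoretic limit or ergodic input required; the cited black box from \cite{fen2012} absorbs the remaining work. Both are valid, and the paper acknowledges yours as an alternative.
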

In particular, $f_{\mathcal{L}}$ is a concave function taking finite values precisely on the interval $[\alpha_{\min}(\mathcal{L}),\alpha_{\max}(\mathcal{L})]$.

By definition, it suffices to show $f_{\mathcal{L},\zeta}=\tau_{\mathcal{L},\zeta}^*$ for a path $\zeta\in\Omega^*$ ending at a vertex in $\mathcal{L}$.
There are many ways to prove this result.
One could use a weighted version of the arguments in \cite{fen2009}, which are proven in a similar irreducible matrix-product setting.
Another option is to follow the arguments in \cite{fl2009}.

We find it most efficient to use the following result, which is a simplified ``symbolic'' version of \cite[Thm. 2.2]{fen2012}.
\begin{proposition}[\cite{fen2012}]\label{p:asymp-good}
    Suppose for any $q\in\R$ such that the derivative $\tau_{\mathcal{L},\zeta}'(q)=\alpha$ exists, there exist numbers $b(q,k)$ and $c(q,k)$ such that the following properties hold:
    \begin{enumerate}[nl,r]
        \item We have $\lim_{k\to\infty}b(q,k)=0$.
        \item Suppose $n\in\N$ and $\eta\in\mathcal{F}_{\mathcal{L},\zeta}(2^{-n})$.
            Then for any $m\geq c(q,k)$, there are distinct paths $\eta_1,\ldots,\eta_N\in\mathcal{F}_{\mathcal{L},\zeta}(2^{-n-m})$ such that $\eta$ is a prefix of each $\eta_i$,
            \begin{equation*}
                N \geq 2^{m(\tau^*_{\mathcal{L},\zeta}(\alpha)-b(q,k))},
            \end{equation*}
            and
            \begin{equation*}
                2^{-m(\tau_{\mathcal{L},\zeta}'(q)+1/k)}\leq \frac{\rho(\eta_i)}{\rho(\eta)}\leq 2^{-m(\tau_{\mathcal{L},\zeta}'(q)-1/k)}.
            \end{equation*}
    \end{enumerate}
    Then $f_{\mathcal{L},\zeta}(\alpha)=\tau_{\mathcal{L},\zeta}^*(\alpha)$ for each $\alpha\in\R$.
\end{proposition}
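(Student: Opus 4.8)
The plan is to prove the two inequalities $f_{\mathcal{L},\zeta}(\alpha)\le\tau^*_{\mathcal{L},\zeta}(\alpha)$ and $f_{\mathcal{L},\zeta}(\alpha)\ge\tau^*_{\mathcal{L},\zeta}(\alpha)$ separately, the first being a general covering estimate and the second being the genuine content of hypotheses (1)--(2). One could instead simply transcribe \cite[Thm. 2.2]{fen2012} after matching $\rho$ with the relevant potential, since the statement is a symbolic version of that result; but I would give the self-contained argument. Throughout abbreviate $\tau=\tau_{\mathcal{L},\zeta}$, fix $\alpha$, and (for the main step) fix $q$ with $\tau'(q)=\alpha$.

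For the upper bound, fix $\delta>0$. If $\gamma\in E_{\mathcal{L},\zeta}(\alpha)$ then $\rho(\gamma|n)\le W(\gamma|n)^{\alpha-\delta}$ for all large $n$, so $E_{\mathcal{L},\zeta}(\alpha)$ is covered by the cylinders $[\eta]$ with $\eta\in\mathcal{F}_{\mathcal{L},\zeta}(t)$ and $\rho(\eta)\le t^{\alpha-\delta}$. Estimating the $s$-dimensional Hausdorff sum for $q\ge0$ by $W(\eta)^s\le W(\eta)^{s-q(\alpha-\delta)}\rho(\eta)^q$ (and the opposite pairing using $\rho(\eta)\ge W(\eta)^{\alpha+\delta}$ for $q\le0$), summing over the cover, and invoking $\sum_{\eta\in\mathcal{F}_{\mathcal{L},\zeta}(t)}\rho(\eta)^q\asymp t^{\tau(q)}$ from \cref{l:lq-limit}, the Hausdorff sum tends to $0$ as $t\to0$ whenever $s>q\alpha-\tau(q)+O(\delta)$. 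Optimizing over $q$ and letting $\delta\to0$ yields $f_{\mathcal{L},\zeta}(\alpha)\le\tau^*(\alpha)$.

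The lower bound is the main step and is where the hypotheses are used, to build a Moran subset of $E_{\mathcal{L},\zeta}(\alpha)$ of dimension arbitrarily close to $\tau^*(\alpha)$. Choose an increasing sequence $k_j\to\infty$ and set $b_j=b(q,k_j)$, $c_j=c(q,k_j)$, $m_j\ge c_j$. Starting from $[\zeta]$, apply (2) repeatedly: from a cylinder $[\eta]$ at scale $2^{-n}$ obtain $N\ge 2^{m_j(\tau^*(\alpha)-b_j)}$ descendants $[\eta_i]\subseteq[\eta]$ at scale $2^{-n-m_j}$ with $2^{-m_j(\alpha+1/k_j)}\le\rho(\eta_i)/\rho(\eta)\le 2^{-m_j(\alpha-1/k_j)}$. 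The intersection of the resulting nested family is a compact Cantor set $C\subseteq\Omega^\infty_{\mathcal{L},\zeta}$. Because the per-stage ratio exponents converge to $\alpha$ along the diagonal $k_j\to\infty$ and $W$ is comparable to $2^{-m_j}$ at each stage, every $\gamma\in C$ has $\underline{\dim}_{\loc}(\rho,\gamma)=\overline{\dim}_{\loc}(\rho,\gamma)=\alpha$, so $C\subseteq E_{\mathcal{L},\zeta}(\alpha)$. Putting the Bernoulli measure $\nu$ giving equal mass to the descendants at each stage, a stage-$j$ cylinder $[\eta]$ satisfies $\nu([\eta])\le\prod_{\ell\le j}2^{-m_\ell(\tau^*(\alpha)-b_\ell)}$ while $\diam[\eta]=W(\eta)\asymp 2^{-\sum_{\ell\le j}m_\ell}$; a mass distribution estimate, using the uniform comparability of edge weights to pass from cylinders to balls in the metric $d$, gives $\dim_H C\ge\liminf_j(\tau^*(\alpha)-b_j)=\tau^*(\alpha)$ by hypothesis (1). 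Combined with the upper bound this proves equality at every $\alpha=\tau'(q)$; for $\alpha$ lying in an affine piece of $\tau^*$ (so not of the form $\tau'(q)$) the result follows by concavity of $\tau^*$ and interpolation between neighbouring exponents.

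The main obstacle is the lower bound: one must ensure the Cantor set produced by iterating (2) lands in the \emph{exact} level set $E_{\mathcal{L},\zeta}(\alpha)$, which forces the diagonal choice $k_j\to\infty$ so that the accumulated local dimension equals $\alpha$ rather than merely lying within $1/k$ of it, while simultaneously keeping the Frostman exponent $\tau^*(\alpha)-b_j\to\tau^*(\alpha)$. Controlling the intermediate scales between consecutive Moran stages, and passing from cylinder masses to ball masses in the metric $d$, are the technical points where the uniform bounds on the edge weights $W$ are essential.
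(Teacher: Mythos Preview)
The paper does not give its own proof of this proposition: it is stated with a citation to \cite{fen2012} and used as a black box in the proof of \cref{t:multi-f}. So there is no argument in the paper to compare your proposal against; you are supplying what the paper omits.

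Your sketch follows the standard route (essentially that of \cite{fen2012}): the upper bound via a covering argument using \cref{l:lq-limit}, and the lower bound via a Moran construction iterating hypothesis (ii) along a diagonal $k_j\to\infty$, together with the mass distribution principle. This is correct in outline for $\alpha$ of the form $\tau'(q)$. One point you leave implicit is the choice of the $m_j$: to ensure the local dimension equals $\alpha$ at \emph{every} scale (not just along the construction scales $\sum_{\ell\le j}m_\ell$), you need $m_{j+1}/\sum_{\ell\le j}m_\ell\to 0$, so that the ``intermediate scales'' you mention at the end are genuinely controlled.

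The real gap is your final sentence. For $\alpha$ in a jump interval of $\partial\tau$ (or at the endpoints $\alpha_{\min},\alpha_{\max}$), you claim the lower bound ``follows by concavity of $\tau^*$ and interpolation between neighbouring exponents''. Concavity of $\tau^*$ gives you nothing here: the function $f_{\mathcal{L},\zeta}$ is not known a priori to be concave, so you cannot interpolate its values. What is actually needed is a genuine construction: write $\alpha=t\alpha_1+(1-t)\alpha_2$ with $\alpha_1,\alpha_2$ approachable as limits of derivative values, and build a Moran set that alternates between blocks coming from the $\alpha_1$-construction and blocks from the $\alpha_2$-construction, in asymptotic proportion $t:(1-t)$. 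Since $\tau^*$ is affine on $[\alpha_1,\alpha_2]$, the resulting Frostman exponent is $t\tau^*(\alpha_1)+(1-t)\tau^*(\alpha_2)=\tau^*(\alpha)$, and the local dimension along the set is $\alpha$. This mixing step is where the argument for non-derivative values actually lives, and your proposal does not supply it.
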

We first observe the following standard counting lemma, which is similar to \cite[Prop. 3.3]{fl2009}, but the proof is easier.
\begin{lemma}\label{l:counting}
    Suppose $\mathcal{L}$ is any loop class (not necessarily irreducible) and the derivative $\tau_{\mathcal{L},\zeta}'(q)=\alpha$ exists.
    Then for any $\delta>0$, there is $t_0=t_0(\delta,q)$ such that for all $0<t<t_0$, there is $F^*(t)\subset \mathcal{F}_{\mathcal{L},\zeta}(t)$ such that
    \begin{enumerate}[nl,r]
        \item $\# F^*(t)\geq t^{-\tau_{\mathcal{L},\zeta}^*(\alpha)+\delta(|q|+1)}$ and
        \item $t^{\alpha+\delta}\leq \rho(\eta)\leq t^{\alpha-\delta}$ for each $\eta\in F^*(t)$.
    \end{enumerate}
\end{lemma}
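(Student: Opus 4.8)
The goal is a standard counting lemma that, at a point where $\tau_{\mathcal{L},\zeta}$ is differentiable with slope $\alpha$, produces a large family of level-$t$ paths all carrying $\rho$-mass close to $t^\alpha$. The natural tool is a Legendre-transform / large-deviations argument applied to the partition functions $A_q(t) = \sum_{\eta\in\mathcal{F}_{\mathcal{L},\zeta}(t)}\rho(\eta)^q$ whose logarithmic asymptotics define $\tau_{\mathcal{L},\zeta}$. I would work at a fixed $q$ with $\tau_{\mathcal{L},\zeta}'(q)=\alpha$, and split $\mathcal{F}_{\mathcal{L},\zeta}(t)$ according to the size of $\rho(\eta)$. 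The key relation I would exploit is that $\tau_{\mathcal{L},\zeta}^*(\alpha) = \alpha q - \tau_{\mathcal{L},\zeta}(q)$, which holds because $\alpha\in\partial\tau_{\mathcal{L},\zeta}(q)$ (this is recorded in the concave-functions preliminaries).

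First I would fix $\delta>0$ and define the ``good'' set $F^*(t)=\{\eta\in\mathcal{F}_{\mathcal{L},\zeta}(t): t^{\alpha+\delta}\le\rho(\eta)\le t^{\alpha-\delta}\}$, so property (ii) holds by construction. It remains to bound $\#F^*(t)$ from below. The idea is to compare the full sum $A_q(t)\asymp t^{-\tau_{\mathcal{L},\zeta}(q)}$ (valid since the defining limit exists, by \cref{l:lq-limit}) against the contribution of paths outside $F^*(t)$. For $\eta$ with $\rho(\eta)<t^{\alpha+\delta}$ or $\rho(\eta)>t^{\alpha-\delta}$, I would show using the $L^q$-spectrum bounds at nearby exponents $q\pm\epsilon$ that these ``bad'' paths contribute a negligible fraction of $A_q(t)$; concretely, for the small-$\rho$ tail one estimates $\sum \rho(\eta)^q$ over that tail by inserting a factor $(\rho(\eta)/t^{\alpha+\delta})^{\pm\epsilon}$ and invoking $A_{q\mp\epsilon}(t)\asymp t^{-\tau_{\mathcal{L},\zeta}(q\mp\epsilon)}$, and the differentiability of $\tau_{\mathcal{L},\zeta}$ at $q$ makes the resulting exponent gap strictly in our favour once $t$ is small. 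This leaves $\sum_{\eta\in F^*(t)}\rho(\eta)^q\asymp t^{-\tau_{\mathcal{L},\zeta}(q)}$. Since each such $\eta$ has $\rho(\eta)^q$ of size at most $t^{(\alpha\mp\delta)q}$ (sign depending on that of $q$), I would divide to conclude $\#F^*(t)\ge t^{-\tau_{\mathcal{L},\zeta}(q)-\alpha q+\delta|q|}\cdot(\text{const})= t^{-\tau_{\mathcal{L},\zeta}^*(\alpha)+\delta|q|}$ up to constants, which after absorbing the $t_0$-dependence gives property (i) with the exponent $-\tau_{\mathcal{L},\zeta}^*(\alpha)+\delta(|q|+1)$.

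The main obstacle I expect is controlling the two tails cleanly enough that the exponents genuinely separate. The subtlety is that differentiability of $\tau_{\mathcal{L},\zeta}$ at $q$ is exactly what forces $\tau_{\mathcal{L},\zeta}(q\pm\epsilon) - \tau_{\mathcal{L},\zeta}(q) = \pm\epsilon\alpha + o(\epsilon)$, so the gain from the $\delta$-threshold in the definition of $F^*(t)$ beats the loss $\tau_{\mathcal{L},\zeta}(q\pm\epsilon)-\tau_{\mathcal{L},\zeta}(q)$ only after choosing $\epsilon$ small relative to $\delta$; getting the quantifiers in the right order ($\delta$ first, then $\epsilon=\epsilon(\delta,q)$, then $t_0=t_0(\delta,q)$) is where care is needed. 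I would handle the two tails symmetrically, treating $q\ge 0$ and $q<0$ together by tracking signs through Hölder-type insertions, and I would lean on the fact that the limit defining $\tau_{\mathcal{L},\zeta}$ exists (not merely a liminf) so that $A_{q'}(t)\asymp t^{-\tau_{\mathcal{L},\zeta}(q')}$ holds genuinely at each fixed $q'$ for $t$ small. Notably, irreducibility of $\mathcal{L}$ is \emph{not} needed here — the lemma is stated for an arbitrary loop class — which is consistent with the estimates depending only on the existence of the $L^q$-limit from \cref{l:lq-limit}.
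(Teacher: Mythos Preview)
Your proposal is correct and follows essentially the same approach as the paper: define $F^*(t)$ by the two-sided constraint on $\rho(\eta)$, control the two tails $F^\pm(t)$ by comparing $A_q(t)$ with $A_{q\pm\epsilon}(t)$ and using differentiability of $\tau_{\mathcal{L},\zeta}$ at $q$ (choosing $\epsilon$ small in terms of $\delta$), and then divide $\sum_{F^*(t)}\rho(\eta)^q$ by the trivial upper bound $\rho(\eta)^q\le t^{\alpha q-\delta|q|}$ to extract the count. One minor slip in your write-up: with the paper's sign convention one has $A_q(t)\approx t^{\tau_{\mathcal{L},\zeta}(q)}$, not $t^{-\tau_{\mathcal{L},\zeta}(q)}$, but this does not affect the argument.
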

\begin{proof}
    Write $A_q(t)=\sum_{\eta\in\mathcal{F}_{\mathcal{L},\zeta}(t)}\rho(\eta)^q$.
    Since $\tau_{\mathcal{L},\zeta}'(q)$ exists, get $\epsilon>0$ such that
    \begin{align*}
        (\alpha-\delta/2)\epsilon&\leq|\tau_{\mathcal{L},\zeta}(q\pm \epsilon)-\tau_{\mathcal{L},\zeta}(q)|\leq(\alpha+\delta/2)\epsilon.
    \end{align*}
    Let $0<\gamma<\min\{\epsilon\delta/6,\delta/2,1\}$ and observe that $\gamma$ depends only on $\delta$ and $q$.
    Then since the limit defining $\tau_{\mathcal{L},\zeta}$ exists by \cref{l:lq-limit}, get $t_0$ depending on $\gamma$ and $q$ such that for all $0<t<t_0$,
    \begin{equation*}
        t^{\tau_{\mathcal{L},\zeta}(q+z)+\gamma}\leq A_{q+z}(t)\leq t^{\tau_{\mathcal{L},\zeta}(q+z)-\gamma}
    \end{equation*}
    for each $z\in\{0,-\epsilon,\epsilon\}$.
    Next, write $\mathcal{F}_{\mathcal{L},\zeta}(t)=F^-(t)\cup F^*(t)\cup F^+(t)$ where
    \begin{align*}
        F^-(t) &= \{\eta\in\mathcal{F}_{\mathcal{L},\zeta}(t):\rho(\eta)\leq t^{\alpha+\delta}\} &F^+(t) &= \{\eta\in\mathcal{F}_{\mathcal{L},\zeta}(t):\rho(\eta)\geq t^{\alpha-\delta}\}
    \end{align*}
    and $F^*(t) = \mathcal{F}_{\mathcal{L},\zeta}(t)\setminus(F^-(t)\cup F^+(t))$.
    By definition, (ii) holds for $\eta\in F^*(t)$.

    Combining the above inequalities gives that
    \begin{align*}
        \sum_{\eta\in F^-(t)}\rho(\eta)^q&\leq A_{q+\epsilon}(t)t^{-\epsilon(\alpha-\delta)}\leq t^{\tau_{\mathcal{L},\zeta}(q)+\epsilon\delta/2-\gamma}
    \end{align*}
    with the analgous inequality for $F^+(t)$.
    Then since $\gamma,t\in(0,1)$ and $\gamma<\epsilon\delta/6$,
    \begin{align*}
        \sum_{\eta\in F^*(t)}\rho(\eta)^q &\geq t^{\tau_{\mathcal{L},\zeta}(q)+\gamma}-2t^{\tau_{\mathcal{L},\zeta}(q)+\epsilon\delta/2-\gamma} \geq t^{\tau_{\mathcal{L},\zeta}(q)+2\gamma}(t^{-\gamma}-2)\geq t^{\tau_{\mathcal{L},\zeta}(q)+2\gamma}.
    \end{align*}
    But now for each $\eta\in F^*(t)$, we have $\rho(\eta)^q\leq\max\{t^{(\alpha+\delta)q},t^{(\alpha-\delta)q}\}=t^{\alpha q-\delta|q|}$ so that
    \begin{equation*}
        \# F^*(t)\geq t^{-\alpha q+\delta|q|}\sum_{\eta\in F^*(t)}\rho(\eta)^q\geq t^{-\tau_{\mathcal{L},\zeta}^*(\alpha)+\delta|q|+2\gamma}\geq t^{-\tau_{\mathcal{L}}^*(\alpha)+\delta(|q|+1)}
    \end{equation*}
    since $\gamma<\delta/2$, giving (i).
\end{proof}
\begin{proofref}{t:multi-f}
    Let $q\in\R$ with $\tau_{\mathcal{L},\zeta}'(q)=\alpha$, $n\in\N$ and $\eta\in\mathcal{F}_{\mathcal{L},\zeta}(2^{-n})$.
    First suppose we are given some large $m\in\N$ and a path $\phi\in\mathcal{F}_{\mathcal{L},\zeta}(2^{-m})$.
    We construct a path $\Psi(\phi)$ as follows.
    Write $\phi=\zeta\psi$.
    By the irreducibility assumption, there is a path $\gamma\in\mathcal{H}$ such that $\eta_0=\eta\gamma\psi$ is an admissible path and by \cref{l:left-prod}
    \begin{equation}\label{e:rp-constants}
        \rho(\eta_0)\asymp\rho(\eta)\norm{T(\gamma)}\norm{T(\psi)}\asymp \rho(\eta)\rho(\phi).
    \end{equation}
    Since $W(\eta\gamma)\asymp 2^{-n}$ and $W(\phi)\asymp 2^{-m}$, we have $W(\eta_0)\geq 2^{-n-m-m'}$ for some $m'$ depending only on the (fixed) choice of $\zeta$.
    Again by the irreducibility assumption, we can thus obtain $\Psi(\phi)\in\mathcal{F}_{\mathcal{L},\zeta}(2^{-n-m-m'})$ such that $\eta_0$ is a prefix of $\Psi(\phi)$ and $\rho(\Psi(\phi))\asymp\rho(\eta_0)$.
    Let $C$ be a fixed constant such that $C^{-1}\rho(\phi)\leq \rho(\Psi(\phi))/\rho(\eta)\leq C\rho(\phi)$.

    Now by \cref{l:counting} with constant $\delta=1/2k$ such that for all $m_0\geq c_0(q,k)$ there are paths $\phi_1,\ldots,\phi_N\in \mathcal{F}_{\mathcal{L},\zeta}(2^{-m_0})$ such that $N\geq 2^{m_0(\tau_{\mathcal{L},\phi}^*(\alpha)-(|q|+1)/2k)}$ and $2^{-m_0(\alpha+1/2k)}\leq\rho(\phi_i)\leq 2^{-m_0(\alpha-1/2k)}$.
    Now with $m=m_0+m'$ and $\eta_i=\Psi(\phi_i)$, we observe that $\eta_1,\ldots,\eta_N\in\mathcal{F}_{\mathcal{L},\zeta}(2^{-n-m})$ and
    \begin{equation*}
        N\geq 2^{m_0(\tau_{\mathcal{L},\phi}^*(\alpha)-(|q|+1)/2k}\geq 2^{m(\tau_{\mathcal{L},\phi}^*(\alpha)-(|q|+1)/k)}
    \end{equation*}
    and
    \begin{equation*}
        \frac{\rho(\eta_i)}{\rho(\eta)}\leq C\rho(\phi_i)\leq C 2^{-m_0(\alpha-1/2k)}\leq 2^{-m(\alpha-1/k)}
    \end{equation*}
    with a similar lower bound, for all $m\geq c_0(q,k)+m'$ sufficiently large depending only on fixed quantities.
    Thus the conditions for \cref{p:asymp-good} are satisfied, giving the desired result.
\end{proofref}

\subsection{Regular points in level sets of local dimensions}
As before, we fix a path $\zeta\in\Omega^*$ ending at a vertex in the loop class $\mathcal{L}$.

Recall that
\begin{equation*}
    E_{\mathcal{L},\zeta}(\alpha) = \{\gamma\in\Omega^\infty_{\mathcal{L},\zeta}:\underline{\dim}_{\loc}(\rho,\gamma)=\overline{\dim}_{\loc}(\rho,\gamma)=\alpha\}.
\end{equation*}
We wish to show that the set $E_{\mathcal{L},\zeta}(\alpha)$ can be approximated (in the sense of dimensions) by sets of points which have particularly nice properties.
\begin{definition}\label{d:xi-reg}
    Let $\xi$ be a finite path (not necessarily rooted) in $\mathcal{G}$.
    We say that a path $\gamma=(e_n)_{n=1}^\infty\in\Omega^\infty$ is \defn{$\xi$-regular} if there exists a monotonically increasing sequence $(n_j)_{j=1}^\infty\subset\N$ such that $\xi$ is a prefix of $(e_{n_j},e_{n_j+1},\ldots)$ for each $j$ and
    \begin{equation*}
        \lim_{j\to\infty}\frac{n_{j+1}}{n_j}=1.
    \end{equation*}
\end{definition}

This will be of key importance in \cref{s:mf-properties}.
The proof of the following result is very similar to \cite[Prop. 3.2]{fen2009}, so we are somewhat terse with details.
The irreducibility hypothesis is critical in order to obtain this result.
\begin{theorem}\label{t:reg-sub}
    Suppose $\mathcal{L}$ is an irreducible loop class and $\zeta\in\Omega^*$ a path ending at a vertex $v$ in $\mathcal{L}$.
    Then for any $\alpha\in[\alpha_{\min}(\mathcal{L}),\alpha_{\max}(\mathcal{L})]$ and finite path $\xi$ contained in $\mathcal{L}$ beginning at the vertex $v$, there exists $\emptyset\neq\Gamma=\Gamma(\xi)\subset E_{\mathcal{L},\zeta}(\alpha)$ such that
    \begin{equation*}
        \dim_H\Gamma = \dim_H E_{\mathcal{L},\zeta}(\alpha) = f_{\mathcal{L}}(\alpha)
    \end{equation*}
    and $\Gamma$ is composed only of $\xi$-regular points.
\end{theorem}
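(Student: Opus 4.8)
The plan is to establish the lower bound $\dim_H\Gamma\ge f_{\mathcal L}(\alpha)$ by an explicit Moran-type construction inside $\Omega^\infty_{\mathcal L,\zeta}$; the reverse inequality is automatic, since $\Gamma\subseteq E_{\mathcal L,\zeta}(\alpha)$ forces $\dim_H\Gamma\le\dim_H E_{\mathcal L,\zeta}(\alpha)=f_{\mathcal L}(\alpha)$, and \cref{t:multi-f} identifies $f_{\mathcal L}(\alpha)=\tau_{\mathcal L}^*(\alpha)$. Assume first that $\alpha=\tau_{\mathcal L,\zeta}'(q)$ for some $q$. Feeding a sequence $\delta_k\downarrow 0$ into \cref{l:counting}, I would fix scales $s_k\downarrow 0$ and collections $F^*(s_k)\subseteq\mathcal F_{\mathcal L,\zeta}(s_k)$ with $\#F^*(s_k)\ge s_k^{-\tau_{\mathcal L}^*(\alpha)+\delta_k(|q|+1)}$ and $s_k^{\alpha+\delta_k}\le\rho(\eta)\le s_k^{\alpha-\delta_k}$ for all $\eta\in F^*(s_k)$. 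Writing each $\eta=\zeta\psi_\eta$, the suffixes $\psi_\eta$ are paths in $\mathcal L$ issuing from the terminal vertex $v$ of $\zeta$, and by \cref{l:left-prod} we have $\rho(\eta)\asymp\norm{T(\psi_\eta)}$.

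I would then build $\Gamma$ in stages: in stage $k$ append $R_k$ consecutive blocks drawn from $\{\psi_\eta:\eta\in F^*(s_k)\}$, inserting between every two consecutive blocks a fixed connector $\gamma_1\,\xi\,\gamma_2$, where $\gamma_1,\gamma_2$ are paths provided by the irreducibility hypothesis (\cref{d:irred}) joining the previous block to the initial vertex $v$ of $\xi$ and the terminal vertex of $\xi$ back into $\mathcal L$. Two applications of \cref{d:irred}, together with the fixed factor $\norm{T(\xi)}$, show that each gluing costs only a bounded multiplicative constant, so that for a path $\gamma$ built from $k$ blocks $\psi^{(1)},\dots,\psi^{(k)}$ one has $\log\rho(\gamma|N_k)=\sum_{i=1}^k\log\norm{T(\psi^{(i)})}+O(k)$ and likewise $\log W(\gamma|N_k)=\sum_{i=1}^k\log W(\psi^{(i)})+O(k)$, where $N_k$ is the edge-length after the $k$-th block. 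The essential tuning is to take the repetition counts $R_k$ so large that the block length $\ell_{k+1}\asymp\log(1/s_{k+1})$ used in the next stage is negligible compared with the cumulative length reached before that stage begins; since $\ell_k\to\infty$, this simultaneously forces the number of blocks placed by length $N$ to be $o(N)$, so that the $O(k)$ error terms above are negligible.

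To verify $\Gamma\subseteq E_{\mathcal L,\zeta}(\alpha)$, I would first evaluate $\log\rho(\gamma|N_k)/\log W(\gamma|N_k)$ at the block boundaries: dividing the two sums and using that each stage-$j$ block contributes a ratio within $[\alpha-\delta_j,\alpha+\delta_j]$ (up to bounded additive errors in the logarithms), a Cesàro argument with $\delta_j\to0$ gives the limit $\alpha$. For indices $n$ interior to a block, finiteness of the edge set makes $\log W$ and $\log\rho$ change by a bounded amount per edge, so reading $o(N)$ further edges perturbs the ratio by $o(1)$; hence both $\underline{\dim}_{\loc}(\rho,\gamma)$ and $\overline{\dim}_{\loc}(\rho,\gamma)$ equal $\alpha$. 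For $\xi$-regularity, each inserted $\xi$ sits at a position $n_i=N_i+O(1)$, and the tuning condition $\ell_{i+1}=o(N_i)$ yields $N_{i+1}/N_i\to1$, hence $n_{i+1}/n_i\to1$, which is precisely \cref{d:xi-reg}. Finally, for the dimension bound I would equip $\Gamma$ with the measure splitting mass uniformly among the $\#F^*(s_j)$ choices at each block; its coarse Hölder exponent at depth $k$ equals $\bigl(\sum_{j}\log\#F^*(s_{(j)})\bigr)\big/\bigl(\sum_j\log(1/s_{(j)})\bigr)$, which converges to $\tau_{\mathcal L}^*(\alpha)$ as $\delta_k\to0$; the mass distribution principle then gives $\dim_H\Gamma\ge\tau_{\mathcal L}^*(\alpha)$.

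The main technical obstacle is the control at scales interior to a block, both for the local dimension of points and for the Hölder exponent of the measure, the latter of which the mass distribution principle requires at all scales rather than merely at block boundaries; this is exactly what the growth conditions $\ell_k\to\infty$ and $\ell_{k+1}=o(N_k)$ are designed to handle. A secondary point is the hypothesis $\alpha=\tau_{\mathcal L,\zeta}'(q)$: when $\alpha$ lies in the interior of a subdifferential of $\tau_{\mathcal L,\zeta}$ (a gap in the attained slopes), no such $q$ exists, and I would instead interleave blocks coming from two values of $q$ whose derivatives straddle $\alpha$, choosing their proportions so that the weighted exponent is exactly $\alpha$ while, by affineness of $\tau_{\mathcal L}^*$ across the gap, the dimension count still tends to $\tau_{\mathcal L}^*(\alpha)$. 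This mirrors the construction in \cite[Prop. 3.2]{fen2009}.
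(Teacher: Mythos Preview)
Your proposal is essentially correct and follows the same Moran-type construction as the paper: extract large collections of paths at each scale with the right $\rho$-values, splice them together via irreducibility while inserting copies of $\xi$, and compute the dimension of the resulting Cantor set. Your handling of $\xi$-regularity, the interior-of-block control, and the mass distribution argument are all in order.

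There is one genuine difference worth noting. You obtain your counting sets $F^*(s_k)$ from \cref{l:counting}, which requires $\alpha=\tau_{\mathcal L,\zeta}'(q)$ at some $q$, and then you patch the non-differentiable case by interleaving blocks from two nearby $q$'s. The paper avoids this case split entirely: it first records the elementary box-counting bound
\[
\dim_H E_{\mathcal L,\zeta}(\alpha)\le\liminf_{\epsilon\to0}\liminf_{t\to0}\frac{\log\#F(\alpha;t,\epsilon)}{-\log t},
\]
and since $\dim_H E_{\mathcal L,\zeta}(\alpha)=f_{\mathcal L}(\alpha)$ by \cref{t:multi-f}, this immediately yields sequences $t_j\to0$, $\epsilon_j\to0$ with $\#F(\alpha;t_j,\epsilon_j)\ge t_j^{-f_{\mathcal L}(\alpha)+\delta}$ for every $\alpha$ in the admissible range. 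The construction then proceeds uniformly without reference to differentiability. The paper also applies a pigeonhole step so that all paths in a given level end at a common vertex, and computes the dimension via \cite[Prop.~3.1]{flw2002} rather than the mass distribution principle; these are cosmetic differences. Your approach buys you a self-contained argument using only \cref{l:counting}; the paper's buys a cleaner treatment with no case analysis on $\alpha$.
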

\begin{proof}
    If $\mathcal{L}$ is simple, this result is immediate.

    Otherwise we assume $\mathcal{L}$ is not simple.
    All cylinders in the proof are taken relative to $\Omega^\infty_{\mathcal{L},\zeta}$.
    Set
    \begin{align*}
        F(\alpha; t,\epsilon) &:= \bigl\{\eta\in\mathcal{F}_{\mathcal{L},\zeta}(t):t^{\alpha+\epsilon}\leq \rho(\eta)\leq t^{\alpha-\epsilon}\bigr\}\\
        G(\alpha; s,\epsilon) &:= \bigcap_{0<t\leq s}\bigcup_{\eta\in F(\alpha;t,\epsilon)}[\eta].
    \end{align*}
    Of course, if $\gamma\in E_{\mathcal{L},\zeta}(\alpha)$, for any $\epsilon>0$, $\gamma\in G(\alpha; t,\epsilon)$ for all $t$ sufficiently small (depending on $\epsilon$) and thus
    \begin{equation*}
        E_{\mathcal{L},\zeta}(\alpha)\subseteq\bigcup_{s>0}G(\alpha; s,\epsilon).
    \end{equation*}
    Since each cylinder $[\eta]$ where $\eta\in F(\alpha; t,\epsilon)$ has diameter $W(\eta)\asymp t$, for any $s>0$ and $\epsilon>0$, 
    \begin{equation*}
        \dim_H G(\alpha; s,\epsilon)\leq \underline{\dim}_B G(\alpha; s,\epsilon)\leq\liminf_{t\to 0}\frac{\log \# F(\alpha; t,\epsilon)}{-\log t}.
    \end{equation*}
    This holds for any $\epsilon>0$.
    Thus by countable stability of the Hausdorff dimension,
    \begin{equation}\label{e:lim-H}
        \dim_H E_{\mathcal{L},\zeta}(\alpha)\leq\liminf_{\epsilon\to 0}\liminf_{t\to 0}\frac{\log \# F(\alpha; t,\epsilon)}{-\log t}.
    \end{equation}

    We now turn to the construction of the set $\Gamma$.
    For the remainder of this proof, unless otherwise stated, all implicit constants may depend on the (fixed) paths $\zeta$ and $\xi$.
    Let $\delta>0$ be arbitrary.
    By \cref{e:lim-H}, there are strictly monotonic sequences $(t_j)_{j=1}^\infty\subset (0,1)$ and $(\epsilon_j)_{j=1}^\infty$ both tending to 0 such that
    \begin{equation}\label{e:tj-def}
        \frac{\log \# F(\alpha; t_j,\epsilon_j)}{-\log t_j}>\dim_H E_{\mathcal{L},\zeta}(\alpha)-\delta
    \end{equation}
    for each $j\in\N$.
    Define a sequence $\{t_j^*\}_{j=1}^\infty$ by
    \begin{equation*}
        \underbrace{t_1,\ldots,t_1}_{N_1},\underbrace{t_2,\ldots,t_2}_{N_2},\ldots,\underbrace{t_i,\ldots,t_i}_{N_i},\ldots
    \end{equation*}
    where $N_j$ is defined recursively by $N_1=1$ and, for $j\geq 2$,
    \begin{equation*}
        N_j=2^{-\log t_{j+1}+N_{j-1}}.
    \end{equation*}
    For each $i$, let $A_i$ denote the set of indices $j\in\N$ where $t_j^*=t_i$.
    Set $\epsilon_j^*=\epsilon_i$ when $t_j^*=t_i$.

    Since there are only finitely many vertices in $\mathcal{L}$ and finitely many possible dimensions of transition matrices, by the pidgeonhole principle, for each $j$, there exists an index $(m_j,n_j)$ and a subset $G_j^*\subset F(\alpha;t_j^*,\epsilon_j^*)$ such that each path in $G_j$ begins and ends at the same vertex and $\# G_j^*\succcurlyeq\# F(\alpha;t_j^*,\epsilon_j^*)$.
    Let $\eta_j^*=\eta_i$ when $t_j^*=t_i$.

    Recall that the path $\xi$ begins at vertex $v$.
    There exist constants $C,D>0$ such that by repeatedly applying irreducibility of $\mathcal{L}$, for each path $\eta_j^*\in G_j^*$, there exist paths $\phi(\eta_j^*),\psi(\eta_j^*)\in\mathcal{H}$ such that the following two conditions hold:
    \begin{enumerate}[nl,r]
        \item the path $\theta(\eta_j^*):=\xi\phi(\eta_j^*)\eta_j^*\psi(\eta_j^*)$ is a cycle beginning and ending at vertex $v$, and
        \item for any $\gamma=\theta(\eta_1^*)\ldots\theta(\eta_k^*)\eta'$ where $\eta'$ is a prefix of $\theta(\eta_{k+1}^*)$,
            \begin{equation*}
                D^k\prod_{i=1}^{k}\norm{T(\eta_i^*)}\geq\norm{T(\gamma)}\geq C^k\prod_{i=1}^{k+1}\norm{T(\eta_i^*)}.
            \end{equation*}
    \end{enumerate}
    Then let for $n\in\N$
    \begin{equation*}
        \mathcal{G}_n=\bigl\{[\zeta\theta(\eta_1^*)\ldots \theta(\eta_n^*)]:(\eta^*_1,\ldots,\eta^*_n)\in\prod_{i=1}^n G_i^*\bigr\}
    \end{equation*}
    which is a nested sequence of families of cylinders, and set
    \begin{equation*}
        \Gamma_\delta= \bigcap_{n=1}^\infty\bigcup_{I\in\mathcal{G}_n}I.
    \end{equation*}
    A direct computation shows that $\Gamma_\delta\subset\Sigma(\alpha)$.

    We now show that $\dim_H\Gamma_\delta\geq\Lambda(\alpha)-\delta$.
    By \cite[Prop. 3.1]{flw2002} (the technical assumptions are immediate to verify), $\dim_H\Gamma_\delta=\liminf_{k\to\infty}a_k$ where $a_k$ satisfies
    \begin{equation*}
        \sum_{(\eta_1^*,\ldots,\eta_k^*)\in G_1\times\cdots\times G_k}W(\eta_1^*\ldots \eta_k^*)^{a_k}=1.
    \end{equation*}
    Let $1\leq j\leq k$ and choose $i$ such that $j\in A_i$.
    As $\xi$ is fixed, $W(\theta(\eta_j^*))\asymp W(\eta_j^*)\asymp t_i$ and $\eta_j^*\in F(\alpha;t_j^*,\epsilon_j^*)$ so $\eta_j^*\in\mathcal{F}_{t_j^*}(\Delta)=\mathcal{F}_{t_i}(\Delta)$.
    Let $r>0$ be such that $W(\eta_j^*)\geq rt_j^*$.
    Thus since $(t_j^*)\to 0$ and $\# F(\alpha; t_j^*,\epsilon_j^*)\to\infty$,
    \begin{align*}
        \dim_H\Gamma_\delta&\geq\liminf_{k\to\infty}\frac{\log\prod_{j=1}^k \# G_j^*}{-\log\prod_{j=1}^k(rt_j^*)}\\
                     &\geq \liminf_{k\to\infty}\frac{-k+\log\prod_{j=1}^k\# F(\alpha;t_j^*,\epsilon_j^*)}{k-\log\prod_{j=1}^k t_j^*}\\
                     &= \liminf_{k\to\infty}\frac{\log\prod_{j=1}^k\# F(\alpha;t_j^*,\epsilon_j^*)}{-\log\prod_{j=1}^k t_j^*}.
    \end{align*}
    Now by definition of the $N_j$ and \cref{e:tj-def}, it follows that
    \begin{align*}
        \dim_H\Gamma_\delta&\geq \dim_H E_{\mathcal{L},\zeta}(\alpha)-\delta
    \end{align*}
    as claimed.
    Take $\Gamma=\bigcup_{n=1}^\infty\Gamma_{2^{-n}}$, and the result follows.
\end{proof}

\section{Multifractal analysis of self-similar measures}\label{s:mf-properties}
We continue to use the notation of the previous section.
We fix a WIFS $(S_i,p_i)_{i\in\mathcal{I}}$ with self-similar measure $\mu$.
In particular, we assume that $\Phi$ is an iteration rule with corresponding finite transition graph $\mathcal{G}$, as described in \cref{ss:fnc}.

\subsection{Local dimensions and regular points}
Intuitively, the multifractal analysis of self-similar sets satisfying the finite neighbour condition is related to the multifractal analysis results for loop classes from the preceding section.
However, the exact relationship is somewhat more complicated to establish: while the local dimension of $\rho$ at a path $\gamma$ depends only on the single sequence of edges determining $\gamma$, the local dimension of $\mu$ at a point $x\in K$ can also depend on net intervals which are adjacent to net intervals containing $x$.
This happens when $x$ is the shared boundary point of two distinct net intervals, but it can also happen when $x$ is an interior point approximated very well by boundary points (so that balls $B(x,r)$ overlap significantly with neighbouring net intervals, for infinitely many values of $r$).

In order to better understand this adjacency structure, we introduce the notion of the approximation sequence of an interior point, as well as the set of regular points $K_R\subseteq K$.
Let $x\in K$ be an interior point, which we recall means that $\pi^{-1}(x)=\{\gamma\}$ is a single (infinite) path.
Let $(\Delta_i)_{i=0}^\infty$ with $\Delta_0=[0,1]$ and each $\Delta_{i+1}$ a child of $\Delta_i$ denote the sequence of net intervals corresponding to $\gamma$.
Of course, $\Delta_n=\pi(\gamma|n)$.
Given some $i$ and $[a,b]=\Delta_{i+1}\subseteq\Delta_i=[c,d]$, by the reductions described in \cref{r:pr-ch}, exactly one of $c=a<b<d$, $c<a<b=d$, or $c<a<b<d$ must hold.
Moreover, since $x$ is an interior point, it cannot hold that all $\Delta_k$ share a common left (resp. right) endpoint for all sufficiently large $k$ where the left (resp. right) endpoint is also the right (resp. left) endpoint of some adjacent net interval.
In particular, there exists a monotonically increasing infinite sequence $(n_j)_{j=1}^\infty$ such that there exists a neighbourhood of $\Delta_{n_j+2}$ in $K$ which is contained entirely in $\Delta_{n_j}$.

We now make the following definition:
\begin{definition}\label{d:reg-point}
    Given an interior point $x\in K$, we call the sequence $(n_j)_{j=1}^\infty$ described above the \defn{approximation sequence} of $x$.
    We then say that $x$ is \defn{regular} if its approximation sequence satisfies
    \begin{equation*}
        \lim_{j\to\infty}\frac{n_{j+1}}{n_j}=1.
    \end{equation*}
    We denote the set of regular points in $K$ by $K_R$.
\end{definition}
The intuition is that interior points in $K$ which are approximated very well by boundary points are contained in long sequences of net intervals which share left endpoints or right endpoints, so that regular points are those which are poorly approximated by boundary points.

The main point of the approximation sequence is that $x$ is bounded uniformly away from the neighbouring net intervals of $\Delta_{n_j}=\pi(\gamma|n_j)$ for each $j\in\N$.
To be precise, we have the following lemma.
\begin{lemma}\label{l:ap-sub}
    Let $x$ be an interior point with approximation sequence $(n_j)_{j=1}^\infty$.
    There exists some $R>0$ depending only on the IFS and $m=m(R)\in\N$ such that, for any $j\in\N$,
    \begin{equation*}
        \Delta_{n_j+m}\subseteq B(x, R\cdot\diam(\Delta_{n_j}))\cap K\subseteq\Delta_{n_j}.
    \end{equation*}
\end{lemma}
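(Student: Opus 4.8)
The plan is to treat the two inclusions separately: the outer containment $B(x,R\diam(\Delta_{n_j}))\cap K\subseteq\Delta_{n_j}$ fixes the constant $R$, after which the inner containment $\Delta_{n_j+m}\subseteq B(x,R\diam(\Delta_{n_j}))$ determines $m$. The inner containment is routine: since the edge weights lie in a finite subset of $(0,1)$, the number $w:=\max_{e\in E(\mathcal{G})}W(e)$ satisfies $w<1$, and as $\diam\circ\pi=W$ (see \cref{d:edge-weight}) the descendant $\Delta_{n_j+m}$ satisfies $\diam(\Delta_{n_j+m})\leq w^m\diam(\Delta_{n_j})$. Since $x\in\Delta_{n_j+m}$, choosing $m$ with $w^m<R$ forces every point of $\Delta_{n_j+m}$ to lie within $R\diam(\Delta_{n_j})$ of $x$, giving the inner inclusion.

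The real content is the outer containment, which I would reformulate as the statement that every point of $K\setminus\Delta_{n_j}$ lies at distance at least $R\diam(\Delta_{n_j})$ from $x$, for a constant $R>0$ independent of $x$ and $j$. Writing $\Delta_{n_j}=[c,d]$, the set $K\setminus\Delta_{n_j}$ splits into the parts of $K$ strictly left of $c$ and strictly right of $d$, and I would bound the distance from $x$ to each separately (the bound being vacuous, hence trivial, when $\Delta_{n_j}$ is the leftmost or rightmost net interval). The defining property of the approximation sequence — that some $K$-neighbourhood of $\Delta_{n_j+2}$ is contained in $\Delta_{n_j}$ — is exactly what guarantees these distances are positive; the work lies in making them a uniform proportion of $\diam(\Delta_{n_j})$.

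For the left side I would split into two cases according to the position of $\Delta_{n_j+2}=[a'',b'']$ inside $\Delta_{n_j}$. If $a''>c$, then $x\geq a''$ while every point of $K$ left of $c$ is at most $c$, so the distance is at least $a''-c$; by \cref{p:ttype} the ratio $(a''-c)/\diam(\Delta_{n_j})$ depends only on $\vs(\Delta_{n_j})$ together with the two-step descent to $\Delta_{n_j+2}$, hence takes finitely many values, all positive in this case, and is bounded below. If instead $a''=c$, the neighbourhood condition forces $K$ not to accumulate at $c$ from the left, so there is a genuine gap $(L,c)$ with $L<c$, and the distance is at least $c-L$. The crucial point is that this gap is intrinsic to $K$: letting $\Delta_0\supseteq\Delta_{n_j}$ be the (unique) ancestor in which $c$ first appears as an interior subdivision point, \cref{l:dk-max} identifies $\Delta_0^\circ\cap K$ with $\Delta_0^\circ\cap\bigcup_{f\in\vs(\Delta_0)}T_{\Delta_0}\circ f(K)$, so the normalized position of $c$ and of the nearest point of $K$ to its left are both determined by $\vs(\Delta_0)$. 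As $\mathcal{G}$ is finite there are finitely many such configurations, whence $c-L\geq R_0\diam(\Delta_0)\geq R_0\diam(\Delta_{n_j})$ for a uniform $R_0>0$. The right side is symmetric, and taking $R$ to be the minimum of the resulting constants completes the estimate.

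The main obstacle is precisely this last uniform gap bound. The subtlety is that although the partition only detects the gap at a fine level — where the adjacent net interval may be far smaller than $\Delta_{n_j}$ — one must not estimate $c-L$ against that small diameter, but rather recognize that the non-accumulation of $K$ at $c$ from the left is already an intrinsic fact about $K\cap\Delta_0$ at the coarser scale $\diam(\Delta_0)\geq\diam(\Delta_{n_j})$. Correctly locating the ancestor $\Delta_0$ at which $c$ becomes an interior subdivision point and invoking \cref{l:dk-max} together with the finiteness of the neighbour sets is exactly what converts the qualitative separation supplied by the approximation sequence into the required quantitative, scale-uniform bound.
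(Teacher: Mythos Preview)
Your proof is correct and follows the paper's approach. The paper handles the interior case $\Delta_{n_j+2}\subseteq\Delta_{n_j}^\circ$ exactly as you do (finitely many normalized positions via \cref{p:ttype}) and dismisses the shared-endpoint case with ``similar arguments''; your argument via the ancestor $\Delta_0$ together with \cref{l:dk-max} is a clean and arguably necessary way to make that step rigorous, since the gap to the left of $c$ is not controlled by the two edges from $\Delta_{n_j}$ to $\Delta_{n_j+2}$ alone. One small point: if $L$ lies to the left of $\Delta_0$ then its normalized position is not literally determined by $\vs(\Delta_0)$, but in that case $c-L$ exceeds the distance from $c$ to the left endpoint of $\Delta_0$, which equals $T_{\Delta_0}^{-1}(c)\cdot\diam(\Delta_0)$ and is already one of finitely many positive multiples of $\diam(\Delta_0)$, so the bound still follows.
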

\begin{proof}
    Since there is a neighbourhood of $\Delta_{n_j+2}$ in $K$ contained entirely in $\Delta_{n_j}$, either $\Delta_{n_j+2}\subseteq\Delta_{n_j}^\circ$ or $\Delta_{n_j+2}$ shares an endpoint with $\Delta_{n_j}$ but there is no other adjacent net interval in $\mathcal{P}_{n_j}$.
    We only treat the first case; the second follows by similar arguments.
    We recall by \cref{p:ttype} that the position index $q(\Delta_{i+1},\Delta_i)$ depends only on the neighbour set of $\Delta_i$.
    Thus if we write $\Delta_{n_j+2}=[a,b]$ and $\Delta_{n_j}=[c,d]$ where $a<c<d<b$, there are only finitely many positive values for $(c-a)/(d-c)$ and $(b-d)/(d-c)$.
    The existence of $R$ follows.

    Moreover, recall that $W(e)=\diam(\Delta_i)/\diam(\Delta_{i+1})$ when $\Delta_{i+1}$ is the child of $\Delta_i$ corresponding to the edge $e$.
    Therefore, with $W_{\min}=\min\{W(e):e\in E(\mathcal{G})\}$, it suffices to take $m$ such that $W_{\min}^{m-2}\leq R$.
\end{proof}

Using the approximation sequence, we can establish some basic relationships between local dimensions and their loop class analogues.
A similar version of the following result was first proven in \cite{hr2021}.
\begin{proposition}\label{p:nper-dim}
    Suppose $x$ is an interior point with unique symbolic representation $\gamma$.
    \begin{enumerate}[nl,r]
        \item We always have
            \begin{equation*}
                \underline{\dim}_{\loc}(\mu,x)\leq\underline{\dim}_{\loc}(\rho,\gamma)\leq \overline{\dim}_{\loc}(\mu,x)\leq\overline{\dim}_{\loc}(\rho,\gamma).
            \end{equation*}
        \item If $\dim_{\loc}(\rho,\gamma)$ exists, then $\overline{\dim}_{\loc}(\mu,x)=\dim_{\loc}(\rho,\gamma)$.
        \item If $\dim_{\loc}(\mu,x)$ exists, then $\underline{\dim}_{\loc}(\rho,\gamma)=\dim_{\loc}(\mu,x)$.
    \end{enumerate}
\end{proposition}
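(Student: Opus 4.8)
The plan is to relate the $\mu$-measure of balls $B(x,r)$ to the $\rho$-values $\rho(\gamma|n)=\mu(\pi(\gamma|n))$ by carefully choosing, for each scale $r$, an appropriate net interval level $n=n(r)$. The basic dictionary is that $W(\gamma|n)=\diam(\Delta_n)$ decreases geometrically, so for each $r$ there is a level $n$ with $\diam(\Delta_{n+1})\le r<\diam(\Delta_n)$ (or a comparable normalization). Since $\Delta_n\subseteq B(x,r)$ is false in general but $\Delta_n\supseteq\Delta_{n+1}$ and $x\in\Delta_n$, one inclusion is easy: $B(x,\diam\Delta_n)\supseteq\Delta_n$ gives $\mu(B(x,r))\ge\mu(\Delta_n)$ up to adjusting $r$, and since $\mu(\Delta_n)=\rho(\gamma|n)$ this feeds the comparison in one direction. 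The subtlety is that a ball $B(x,r)$ may spill into net intervals adjacent to $\Delta_n$, so $\mu(B(x,r))$ can be strictly larger than $\rho(\gamma|n)$; this is exactly the adjacency phenomenon flagged in the discussion preceding the statement.

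\textbf{The easy inequalities.} First I would establish the outer inequalities of (i), which do not require regularity. For the leftmost inequality $\underline{\dim}_{\loc}(\mu,x)\le\underline{\dim}_{\loc}(\rho,\gamma)$: choosing $r\asymp\diam(\Delta_n)=W(\gamma|n)$, we always have $\Delta_n\subseteq B(x,r)$, hence $\mu(B(x,r))\ge\rho(\gamma|n)$, and taking $\liminf$ along $n\to\infty$ (with $\log r\asymp\log W(\gamma|n)<0$) flips the inequality to give $\underline{\dim}_{\loc}(\mu,x)\le\underline{\dim}_{\loc}(\rho,\gamma)$. Symmetrically, the rightmost inequality $\overline{\dim}_{\loc}(\mu,x)\le\overline{\dim}_{\loc}(\rho,\gamma)$ follows from the same inclusion taking $\limsup$. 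The middle inequality $\underline{\dim}_{\loc}(\rho,\gamma)\le\overline{\dim}_{\loc}(\mu,x)$ is a general fact comparing a $\liminf$ and a $\limsup$ of two sequences that agree up to bounded multiplicative factors along a common scale; I expect this to drop out once the scale correspondence $r\leftrightarrow n$ is fixed, because the $\rho$-values at level $n$ are trapped between $\mu$ of balls at comparable radii.

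\textbf{The reverse bounds via the approximation sequence.} The statements (ii) and (iii) are the real content, and here the approximation sequence $(n_j)$ is the essential tool. For (iii): given that $\dim_{\loc}(\mu,x)$ exists, I want to show $\underline{\dim}_{\loc}(\rho,\gamma)$ equals it, which by (i) amounts to the reverse inequality $\underline{\dim}_{\loc}(\rho,\gamma)\ge\dim_{\loc}(\mu,x)$. Along the approximation sequence, \cref{l:ap-sub} provides $m$ with $\Delta_{n_j+m}\subseteq B(x,R\diam\Delta_{n_j})\cap K\subseteq\Delta_{n_j}$, so that $\mu(B(x,R\diam\Delta_{n_j}))\le\mu(\Delta_{n_j})=\rho(\gamma|n_j)$; combined with $\rho(\gamma|(n_j+m))\preccurlyeq\mu(B(x,R\diam\Delta_{n_j}))$ from the left inclusion and the fact that $\rho(\gamma|(n_j+m))\asymp\rho(\gamma|n_j)$ (the extra $m$ edges contribute boundedly, using \cref{l:left-prod}), one pins down the $\rho$-local dimension along the subsequence $(n_j)$. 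Statement (ii) is the dual argument, comparing $\overline{\dim}_{\loc}(\mu,x)$ against an existing $\dim_{\loc}(\rho,\gamma)$.

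\textbf{The main obstacle.} The hard part will be controlling $\mu(B(x,r))$ from above by $\rho(\gamma|n)$ — that is, showing the ball does not pick up disproportionate mass from adjacent net intervals at the relevant scales. This is precisely where the approximation sequence is indispensable: at levels $n_j$ the point $x$ is bounded uniformly away from the neighbouring net intervals by \cref{l:ap-sub}, so $B(x,R\diam\Delta_{n_j})\cap K\subseteq\Delta_{n_j}$ and the upper bound $\mu(B(x,R\diam\Delta_{n_j}))\le\rho(\gamma|n_j)$ holds cleanly. Off this subsequence no such containment is available, which is exactly why only the one-sided relations in (i) hold unconditionally; the equalities in (ii) and (iii) must be extracted along $(n_j)$ and then propagated using the existence hypothesis on one of the two local dimensions to force agreement of the full $\liminf$/$\limsup$. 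I would take care that the bounded multiplicative factors $R$, $W_{\min}^m$, and the $\asymp$ constants from \cref{l:left-prod} all disappear in the logarithmic ratio as $j\to\infty$, since $\log\diam(\Delta_{n_j})\to-\infty$.
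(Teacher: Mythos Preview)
Your proposal has the right ingredients but the logical structure is inverted. You treat the middle inequality $\underline{\dim}_{\loc}(\rho,\gamma)\leq\overline{\dim}_{\loc}(\mu,x)$ in (i) as a triviality that ``drops out'' from the scale correspondence, and then locate the real work in (ii) and (iii). This is backwards. The two outer inequalities in (i) do follow from the single inclusion $\Delta_n\subseteq B(x,W(\gamma|n))$, exactly as you say; but this inclusion only gives $\mu(B(x,r))\geq\rho(\gamma|n)$, which is a one-sided bound. The $\rho$-values are \emph{not} trapped between $\mu$-values of balls at all comparable scales, so your claimed argument for the middle inequality fails: you genuinely need the reverse containment $B(x,R\diam\Delta_{n_j})\cap K\subseteq\Delta_{n_j}$, which is available only along the approximation sequence via \cref{l:ap-sub}. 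Feeding this in gives
\[
\overline{\dim}_{\loc}(\mu,x)\geq\limsup_{j}\frac{\log\mu(B(x,RW(\gamma|n_j)))}{\log RW(\gamma|n_j)}\geq\limsup_{j}\frac{\log\rho(\gamma|n_j)}{\log W(\gamma|n_j)}\geq\underline{\dim}_{\loc}(\rho,\gamma),
\]
where the last step uses only that a $\limsup$ along a subsequence dominates the full $\liminf$. No existence hypothesis is needed here.

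Once (i) is established in full, (ii) and (iii) are immediate sandwich arguments: if $\underline{\dim}_{\loc}(\rho,\gamma)=\overline{\dim}_{\loc}(\rho,\gamma)$ then the chain in (i) squeezes $\overline{\dim}_{\loc}(\mu,x)$; if $\underline{\dim}_{\loc}(\mu,x)=\overline{\dim}_{\loc}(\mu,x)$ then it squeezes $\underline{\dim}_{\loc}(\rho,\gamma)$. So the approximation-sequence argument you sketch for (iii) is really the proof of the middle inequality in (i), and after that there is nothing left to do for (ii) and (iii). Note also that you do not need the comparison $\rho(\gamma|(n_j+m))\asymp\rho(\gamma|n_j)$ you allude to (and \cref{l:left-prod} concerns left prefixes, not right extensions, so it would not give this anyway); working directly with $\rho(\gamma|n_j)$ suffices.
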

\begin{proof}
    It suffices to show (i), since it is clear that (ii) and (iii) follow directly.

    For each $t>0$ let $n(t)$ be maximal such that $W(\gamma|n(t))\leq t$.
    Then if $\Delta_t=\pi(\gamma|n(t))$, we have $\Delta_t\subseteq B(x,t)$ so that
    \begin{align*}
    \overline{\dim}_{\loc}(\mu,x)= \limsup_{t\to 0}\frac{\log\mu(B(x,t))}{\log t}&\leq\limsup_{t\to 0}\frac{\log\rho(\gamma|n(t))}{\log t}\\
                                                                                     &=\overline{\dim}_{\loc}(\rho,\gamma).
    \end{align*}
    Replacing the limit superior with the limit inferior, we also have that $\underline{\dim}_{\loc}(\mu,x)\leq\underline{\dim}_{\loc}(\rho,\gamma)$.

    To get the remaining bound, let $x$ have approximation sequence $(n_k)_{k=1}^\infty$ and let $R,m$ be as in \cref{l:ap-sub}.
    We then have, since $W(\gamma|(n_k+m))\asymp \rho W(\gamma|n_k)$,
    \begin{align*}
        \overline{\dim}_{\loc}(\mu,x)&=\limsup_{t\to 0}\frac{\log\mu(B(x,t))}{\log t}\\
                                     &\geq\limsup_{k\to\infty}\frac{\log\mu(B(x,R\cdot W(\gamma|n_k)))}{\log R\cdot W(\gamma|n_k)}\\
                                     &\geq\limsup_{k\to\infty}\frac{\log\rho(\gamma|(n_k+m))}{\log W(\gamma|(n_k+m))}\\
                                     &\geq\underline{\dim}_{\loc}(\rho,\gamma).
    \end{align*}
    as required.
\end{proof}

When the local dimension exists, the content of the following lemma states that we can extend the nice properties along the approximation sequence to net intervals in similar levels.
A similar statement holds when the loop class local dimension exists.
\begin{lemma}\label{l:approx-reg}
    Suppose $x$ is an interior point with symbolic representation $\gamma$.
    Let $x$ have approximation sequence $(n_j)_{j=1}^\infty$ and let $(k_j)_{j=1}^\infty\subset\N$ satisfy $\lim_{j\to\infty}\frac{k_j}{n_j}=0$.
    \begin{enumerate}[nl,r]
        \item Suppose $\dim_{\loc}(\mu,x)$ exists.
            Then
            \begin{equation*}
                \lim_{j\to\infty}\frac{\log\rho(\gamma|n_j-k_j)}{\log\rho(\gamma|n_j)}=1.
            \end{equation*}
        \item Suppose $\dim_{\loc}(\rho,\gamma)$ exists.
            Then with $m$ from \cref{l:ap-sub},
            \begin{equation*}
                \lim_{j\to\infty}\frac{\log \mu\bigl(B(x, W(\gamma|n_j+m))\bigr)}{\log \mu\bigl(B(x,W(\gamma|n_j+m+k_j))\bigr)}=1
            \end{equation*}
    \end{enumerate}
\end{lemma}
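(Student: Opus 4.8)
The plan is to reduce both statements to the single clean comparison that is available only along the approximation sequence, namely the boundary sandwiching of \cref{l:ap-sub}: at the times $n_j$ the net interval $\Delta_{n_j}=\pi(\gamma|n_j)$ and the ball $B(x,R\diam\Delta_{n_j})$ have comparable $\mu$-measure. Throughout write $W_n=W(\gamma|n)$ and $\rho_n=\rho(\gamma|n)=\mu(\pi(\gamma|n))$. Two elementary facts will be used repeatedly. First, since there are finitely many edges, $0<W_{\min}\le W(e)\le W_{\max}<1$, so $W_{\min}^n\le W_n\le W_{\max}^n$ and hence $|\log W_n|\asymp n$; combined with $k_j/n_j\to 0$ this gives $|\log W_{n_j-k_j}|/|\log W_{n_j}|\to 1$ and $|\log W_{n_j+m+k_j}|/|\log W_{n_j}|\to 1$, because the extra factor of $W$ over $k_j$ (resp.\ $m+k_j$) edges contributes only $O(k_j)$ to the logarithm while $|\log W_{n_j}|\asymp n_j$. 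Second, along $\gamma$ the net intervals are nested, so $\mu$ is monotone ($\rho_n\le\rho_{n'}$ for $n\ge n'$), and since $x\in\Delta_n$ with $\diam\Delta_n=W_n$ we have $\Delta_n\subseteq B(x,W_n)$, giving $\rho_n\le\mu(B(x,W_n))$.

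For (i), suppose $\dim_{\loc}(\mu,x)=\alpha$. From \cref{l:ap-sub}, $\mu(B(x,RW_{n_j}))\le\rho_{n_j}\le\mu(B(x,W_{n_j}))$; since the local dimension exists and constant factors in the radius are negligible in the logarithm, both outer terms equal $W_{n_j}^{\alpha+o(1)}$, whence $\log\rho_{n_j}\sim\alpha\log W_{n_j}$. The nesting bound $\rho_{n_j-k_j}\ge\rho_{n_j}$ immediately gives $\log\rho_{n_j-k_j}/\log\rho_{n_j}\le 1$. For the matching lower bound I would use $\rho_{n_j-k_j}=\mu(\Delta_{n_j-k_j})\le\mu(B(x,W_{n_j-k_j}))=W_{n_j-k_j}^{\alpha+o(1)}$, so $-\log\rho_{n_j-k_j}\ge(\alpha-o(1))|\log W_{n_j-k_j}|$; dividing by $-\log\rho_{n_j}=(\alpha+o(1))|\log W_{n_j}|$ and invoking $|\log W_{n_j-k_j}|/|\log W_{n_j}|\to 1$ yields $\liminf\ge 1$. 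Squeezing gives the claim.

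For (ii), suppose $\dim_{\loc}(\rho,\gamma)=\beta$; now $\log\rho_n\sim\beta\log W_n$ holds for \emph{every} $n$ by definition, not only along $(n_j)$. I would first enlarge the constant $m$ of \cref{l:ap-sub} so that additionally $W_{\max}^m\le R$ (taking $m$ larger only tightens the inclusion there), so that $W_{n_j+m}\le RW_{n_j}$ and $B(x,W_{n_j+m})\subseteq B(x,RW_{n_j})$. The sandwich of \cref{l:ap-sub}, together with $\log\rho_{n_j+m}\sim\beta\log W_{n_j}$ and $\log\rho_{n_j}\sim\beta\log W_{n_j}$, pins $\log\mu(B(x,RW_{n_j}))\sim\beta\log W_{n_j}$. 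Writing $P_j=-\log\mu(B(x,W_{n_j+m}))$ and $Q_j=-\log\mu(B(x,W_{n_j+m+k_j}))$, the lower bounds $\mu(B(x,W_{n_j+m}))\ge\rho_{n_j+m}$ and $\mu(B(x,W_{n_j+m+k_j}))\ge\rho_{n_j+m+k_j}$ (each net interval sits inside the concentric ball of its own diameter), combined with $|\log W_{n_j+m+k_j}|\sim|\log W_{n_j}|$, give $P_j,Q_j\le(\beta+o(1))|\log W_{n_j}|$; the upper bounds $\mu(B(x,W_{n_j+m}))\le\mu(B(x,RW_{n_j}))=W_{n_j}^{\beta+o(1)}$ and $\mu(B(x,W_{n_j+m+k_j}))\le\mu(B(x,W_{n_j+m}))$ give $P_j,Q_j\ge(\beta-o(1))|\log W_{n_j}|$. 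Hence $P_j\sim Q_j\sim\beta|\log W_{n_j}|$ and $P_j/Q_j\to 1$, which is the assertion.

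The main obstacle is precisely the failure of $\pi$ to be bi-Lipschitz: at a generic level $n$ the ball $B(x,W_n)$ can meet net intervals carrying far more mass than $\Delta_n$, so the clean comparison between $\mu(B(x,\cdot))$ and $\rho$ is available only at the approximation times $n_j$ supplied by \cref{l:ap-sub}. The hypothesis $k_j/n_j\to 0$ serves exactly to transport that comparison to the neighbouring levels $n_j-k_j$ and neighbouring radii $W_{n_j+m+k_j}$ while keeping all logarithmic ratios $\to 1$, since $|\log W_n|\asymp n$. I would also record that the divisions above are legitimate because $\alpha,\beta>0$: the values of the (existing) local dimensions lie in $[\alpha_{\min}(\mathcal{L}),\alpha_{\max}(\mathcal{L})]$, which for a self-similar measure is a subinterval of $(0,\infty)$.
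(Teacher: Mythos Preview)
Your argument is correct and follows essentially the same route as the paper: both proofs use the sandwich from \cref{l:ap-sub} together with the existing local dimension to squeeze the desired ratio, exploiting $|\log W_n|\asymp n$ and $k_j/n_j\to 0$ to control the shifted levels. Your bounds $\Delta_{n_j-k_j}\subseteq B(x,W_{n_j-k_j})$ are slightly tighter than the paper's $\Delta_{n_j-k_j}\subseteq B(x,W_{\min}^{-k_j-1}W_{n_j})$, and you are more explicit about the need for $\alpha,\beta>0$ (which the paper's division step also tacitly uses); note that your forward reference for this is not circular, since part (i) of \cref{t:m-upper-bound} does not invoke \cref{l:approx-reg}.
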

\begin{proof}
    We first see (i).
    For each $i$ let $\Delta_i$ have symbolic representation $\gamma|i$, set $t_i=\diam(\Delta_{n_i})$, and let $\alpha=\dim_{\loc}(\mu,x)$.
    By \cref{l:ap-sub}, there exists some $R>0$ and $m\in\N$ such that
    \begin{equation}\label{e:rho-bd}
        B(x,R t_j)\subseteq\Delta_{n_j}\subseteq\Delta_{n_j-k_j}\subseteq B(x,c_j t_j)
    \end{equation}
    where $c_j=W_{\min}^{-k_j-1}$.
    But then since $\log c_jt_j\asymp n_j-k_j$,
    \begin{align*}
        \lim_{j\to\infty}\frac{(k_j+1)\log W_{\min}}{\log c_jt_j} =\lim_{j\to\infty}\frac{k_j}{n_j-k_j}=0
    \end{align*}
    so that
    \begin{align*}
        \lim_{j\to\infty}\frac{\log\mu(B(x,c_j t_j))}{\log t_j} &= \lim_{j\to\infty}\frac{\log\mu(B(x,c_j t_j))}{(k_j+1)\log W_{\min}+\log c_jt_j}\\
                                                                      &= \lim_{j\to\infty}\frac{\log\mu(B(x,c_j t_j))}{\log c_j t_j}=\alpha
    \end{align*}
    Arguing similarly, we also have $\alpha=\lim_{j\to\infty}\frac{\log\mu(B(x,R t_j))}{\log t_j}$.
    Thus
    \begin{equation*}
        \lim_{j\to\infty}\frac{\log\mu(B(x,c_j t_j))}{\log\mu(B(x,R t_j))}=1
    \end{equation*}
    and the result follows from \cref{e:rho-bd}.

    The proof of (ii) follows similarly after observing that
    \begin{align*}
        \Delta_{n_j}&\supseteq B(x,R\cdot W(\gamma|n_j))\supseteq B(x,W(\gamma|n_j+m))\\
                    &\supseteq B(x,W(\gamma|n_j+m+k_j))\supseteq \Delta_{n_j+m+k_j}.
    \end{align*}
\end{proof}

Finally, the situation is nicest when $x\in K_R$ is a regular point.
Note that this strengthens the usual observations in \cref{p:nper-dim}.
\begin{corollary}\label{c:reg-loc-dim}
    Suppose $x\in K_R$ is a regular point with unique symbolic representation $\gamma$ eventually in the loop class $\mathcal{L}$.
    If either $\dim_{\loc}(\mu,x)$ exists or $\dim_{\loc}(\rho,\gamma)$ exists, then they both exist and are equal.
\end{corollary}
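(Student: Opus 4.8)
The plan is to use the chain of inequalities in \cref{p:nper-dim} to reduce the claim, in each of the two cases, to a single remaining inequality, and then to establish that inequality using the sandwich provided by \cref{l:ap-sub} together with the regularity hypothesis $\lim_{j\to\infty} n_{j+1}/n_j = 1$ from \cref{d:reg-point}. If $\dim_{\loc}(\mu,x) = \alpha$ exists, the chain in \cref{p:nper-dim} forces $\underline{\dim}_{\loc}(\rho,\gamma) = \alpha$ and $\overline{\dim}_{\loc}(\rho,\gamma) \ge \alpha$, so it remains only to prove $\overline{\dim}_{\loc}(\rho,\gamma) \le \alpha$. Symmetrically, if $\dim_{\loc}(\rho,\gamma) = \beta$ exists, the chain forces $\overline{\dim}_{\loc}(\mu,x) = \beta$ and $\underline{\dim}_{\loc}(\mu,x) \le \beta$, so it remains to prove $\underline{\dim}_{\loc}(\mu,x) \ge \beta$. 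Throughout I exploit two monotonicities: since the net intervals $(\Delta_n)$ are nested, $\rho(\gamma|n) = \mu(\Delta_n)$ is non-increasing in $n$; and $r \mapsto \mu(B(x,r))$ is non-decreasing. It is convenient to work with the positive quantities $-\log\rho(\gamma|n)$ and $-\log W(\gamma|n)$, both of which are comparable to the index $n$ since there are only finitely many edge weights, all in $(0,1)$.

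For the first case, fix a large $N$ and let $j$ be the index with $n_j \le N < n_{j+1}$. Nesting gives $\rho(\gamma|n_{j+1}) \le \rho(\gamma|N) \le \rho(\gamma|n_j)$ and $W(\gamma|n_{j+1}) \le W(\gamma|N) \le W(\gamma|n_j)$, so $\log\rho(\gamma|N)/\log W(\gamma|N)$ is bounded above by $(-\log\rho(\gamma|n_{j+1}))/(-\log W(\gamma|n_j))$. By \cref{l:ap-sub} we have $\mu(B(x, R\cdot W(\gamma|n_j))) \le \rho(\gamma|n_j)$, and since $\dim_{\loc}(\mu,x) = \alpha$ exists the left side has logarithm asymptotic to $\alpha\log W(\gamma|n_j)$; hence $\limsup_j (-\log\rho(\gamma|n_j))/(-\log W(\gamma|n_j)) \le \alpha$. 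Regularity gives $-\log W(\gamma|n_{j+1}) \sim -\log W(\gamma|n_j)$ because both are comparable to $n_j \sim n_{j+1}$, so the bounding ratio also has $\limsup \le \alpha$; as every large $N$ lies in some interval $[n_j, n_{j+1})$, this yields $\overline{\dim}_{\loc}(\rho,\gamma) \le \alpha$.

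The second case is parallel. Set $t_j = W(\gamma|n_j+m)$, so that $\Delta_{n_j+m} \subseteq B(x,t_j) \subseteq \Delta_{n_j}$ by \cref{l:ap-sub}; combined with the existence of $\dim_{\loc}(\rho,\gamma) = \beta$ and the fact that $m$ is fixed, squeezing $\rho(\gamma|n_j+m) \le \mu(B(x,t_j)) \le \rho(\gamma|n_j)$ yields $-\log\mu(B(x,t_j)) \sim \beta\cdot(-\log t_j)$. For an arbitrary small $r$, choose $j$ with $t_{j+1} < r \le t_j$; monotonicity of $\mu(B(x,\cdot))$ gives $\mu(B(x,r)) \le \mu(B(x,t_j))$, while $-\log r < -\log t_{j+1}$, so $(-\log\mu(B(x,r)))/(-\log r) \ge (-\log\mu(B(x,t_j)))/(-\log t_{j+1})$. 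Regularity again gives $-\log t_{j+1} \sim -\log t_j$, so the right-hand side tends to $\beta$, proving $\underline{\dim}_{\loc}(\mu,x) \ge \beta$.

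The one genuinely essential input is the regularity of $x$: it is exactly the condition $n_{j+1}/n_j \to 1$ that lets the control available along the approximation sequence be propagated to all intermediate scales, collapsing $-\log W(\gamma|n_{j+1}) \sim -\log W(\gamma|n_j)$ (respectively $-\log t_{j+1} \sim -\log t_j$). Without it the gaps between consecutive $n_j$ could be a definite proportion of $n_j$, and the bounding ratios would degrade by a constant factor that does not vanish. This is precisely the scale-propagation mechanism isolated in \cref{l:approx-reg}, which could be invoked directly in place of the explicit sandwich arguments above; in either formulation, this propagation step is the main (and only) obstacle.
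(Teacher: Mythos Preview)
Your proof is correct and follows essentially the same approach as the paper: both use the sandwich from \cref{l:ap-sub} together with the regularity hypothesis $n_{j+1}/n_j\to 1$ to propagate control along the approximation sequence to all scales, which is precisely the content of \cref{l:approx-reg}. The paper is terser (it treats only the case where $\dim_{\loc}(\rho,\gamma)$ exists and invokes \cref{l:approx-reg} directly), whereas you spell out both cases and the sandwich argument explicitly; one minor point is that your justification ``both are comparable to $n_j\sim n_{j+1}$'' for $-\log W(\gamma|n_{j+1})\sim -\log W(\gamma|n_j)$ is slightly loose, though the claim is correct since the difference is $O(n_{j+1}-n_j)=o(n_j)$ while $-\log W(\gamma|n_j)\asymp n_j$.
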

\begin{proof}
    We see this when $\dim_{\loc}(\rho,\gamma)=\alpha$ exists; the proof when $\dim_{\loc}(\mu,x)$ exists is analgous.
    Set $k_j=n_{j+1}-n_j$.
    But then for all $i$ sufficiently large, $n_j + m\leq i\leq n_j+m+k_j$ for some $j$.
    Then since $x$ is a regular point, \cref{l:approx-reg} applies with $(k_j)_{j=1}^\infty$ and
    \begin{align*}
        \limsup_{i\to\infty}\frac{\log\mu\bigl(B(x,W(\gamma|i))\bigr)}{\log W(\gamma|i)}&\leq \limsup_{j\to\infty}\frac{\log \mu\Bigl(B\bigl(x,W(\gamma|(n_j+m))\bigr)\Bigr)}{\log W(\gamma|(n_j+m))}\\
                                                                                        &\leq \limsup_{j\to\infty}\frac{\log\rho(\gamma|n_j)}{\log W(\gamma|n_j)}=\alpha.
    \end{align*}
    The lower bound follows similarly, so that $\dim_{\loc}(\mu,x)=\alpha$.
\end{proof}

\subsection{The upper bound for the multifractal spectrum}
Set
\begin{equation*}
    E_\mu(\alpha;\mathcal{L})=\{x\in\kint_{\mathcal{L}}:\dim_{\loc}(\mu,x)=\alpha\}=\kint_{\mathcal{L}}\cap E_\mu(\alpha).
\end{equation*}
Given a path $\zeta\in\Omega^*$ ending at a vertex in $\mathcal{L}$, one can think of $E_\mu(\alpha;\mathcal{L})\cap\pi(\zeta)$ as an analogue of the set $E_{\mathcal{L},\zeta}(\alpha)$ from \cref{ss:symb-defs}.
In \cref{t:multi-f} the upper bound $f_{\mathcal{L}}(\alpha)\leq\tau_{\mathcal{L}}^*(\alpha)$ always holds, with no assumptions on $\mathcal{L}$.
Here, we show that $\tau_{\mathcal{L}}^*(\alpha)$ is also an upper bound for the Hausdorff dimension of the level sets $E_\mu(\alpha;\mathcal{L})$.

Compare part (i) in \cref{t:m-upper-bound} with \cite[Prop. 4.4]{hr2021}.
Note that our definition of $\alpha_{\min}(\mathcal{L})$ and $\alpha_{\max}(\mathcal{L})$ (as defined in \cref{e:a-min-max}) is formally different from that paper.
Regardless, one can shown that they coincide when $\mathcal{L}$ is an irreducible loop class.
\begin{theorem}\label{t:m-upper-bound}
    Let $(S_i,p_i)_{i\in\mathcal{I}}$ be a WIFS satisfying the finite neighbour condition with associated self-similar measure $\mu$.
    Let $\mathcal{L}$ be a loop class.
    Then
    \begin{enumerate}[nl,r]
        \item $\dim_{\loc}(\mu,x)\in[\alpha_{\min}(\mathcal{L}),\alpha_{\max}(\mathcal{L})]$ for any $x\in \kint_{\mathcal{L}}$ for which the local dimension exists, and
        \item $\dim_H E_\mu(\alpha;\mathcal{L})\leq\tau_{\mathcal{L}}^*(\alpha)$ for any $\alpha\in\R$.
    \end{enumerate}
\end{theorem}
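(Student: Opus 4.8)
The plan is to transfer both statements to the symbolic space $\Omega^\infty$, where the local dimension of $\mu$ at an interior point $x$ is controlled by the local dimension of $\rho$ at its unique symbolic representation $\gamma$ through \cref{p:nper-dim}, and then to read off the required bounds from the loop class $L^q$-spectrum $\tau_{\mathcal{L}}$.

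For part (i), fix an interior point $x\in\kint_{\mathcal{L}}$ whose local dimension exists, let $\gamma$ be its symbolic representation, and choose a prefix $\zeta$ of $\gamma$ ending at a vertex of $\mathcal{L}$; recall $\tau_{\mathcal{L},\zeta}=\tau_{\mathcal{L}}$ by \cref{l:lq-limit}. I would first establish uniform pointwise bounds on $\rho$: since the limit defining $\tau_{\mathcal{L}}$ exists, for each fixed $q$ and any small $\epsilon'>0$ we have $\sum_{\eta\in\mathcal{F}_{\mathcal{L},\zeta}(t)}\rho(\eta)^q\le t^{\tau_{\mathcal{L}}(q)-\epsilon'}$ for all $t$ sufficiently small, so every individual $\eta\in\mathcal{F}_{\mathcal{L},\zeta}(t)$ satisfies $\rho(\eta)\le t^{(\tau_{\mathcal{L}}(q)-\epsilon')/q}$ when $q>0$ and $\rho(\eta)\ge t^{(\tau_{\mathcal{L}}(q)-\epsilon')/q}$ when $q<0$. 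Applying these to the prefixes $\eta=\gamma|n$ with $t=W(\gamma|n)$, dividing by $\log W(\gamma|n)<0$, and letting $\epsilon'\to 0$ and then $q\to+\infty$ (resp. $q\to-\infty$) yields $\underline{\dim}_{\loc}(\rho,\gamma)\ge\alpha_{\min}(\mathcal{L})$ and $\overline{\dim}_{\loc}(\rho,\gamma)\le\alpha_{\max}(\mathcal{L})$. Since $\dim_{\loc}(\mu,x)=\underline{\dim}_{\loc}(\rho,\gamma)$ by \cref{p:nper-dim}, this common value lies in $[\alpha_{\min}(\mathcal{L}),\alpha_{\max}(\mathcal{L})]$.

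For part (ii), write $\tilde E=\pi^{-1}(E_\mu(\alpha;\mathcal{L}))$; as every point of $\kint_{\mathcal{L}}$ is an interior point, $\pi$ is a bijection from $\tilde E$ onto $E_\mu(\alpha;\mathcal{L})$, so by \cref{l:pi-Lip} it suffices to bound $\dim_H\tilde E$. Decomposing $\tilde E$ into the countably many pieces $\tilde E\cap\Omega^\infty_{\mathcal{L},\zeta}$ according to the first-entry prefix $\zeta$ into $\mathcal{L}$, countable stability of Hausdorff dimension reduces the problem to each piece, for which the relevant spectrum is again $\tau_{\mathcal{L},\zeta}=\tau_{\mathcal{L}}$. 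The crucial observation, and the step I expect to be most delicate, is a consequence of the \emph{existence} of the measure local dimension: for $\gamma\in\tilde E\cap\Omega^\infty_{\mathcal{L},\zeta}$ with $\dim_{\loc}(\mu,x)=\alpha$, the upper estimate $\rho(\gamma|n)=\mu(\Delta_n)\le\mu(B(x,W(\gamma|n)))\le W(\gamma|n)^{\alpha-\epsilon}$ holds at \emph{every} large scale, but the matching lower estimate $\rho(\gamma|n)\ge W(\gamma|n)^{\alpha+\epsilon}$ need not. However, along the approximation sequence $(n_j)$ of $x$, \cref{l:ap-sub} sandwiches $B(x,R\,W(\gamma|n_j))\cap K$ between $\Delta_{n_j+m}$ and $\Delta_{n_j}$, which forces $\rho(\gamma|n_j)$ to agree with $W(\gamma|n_j)^{\alpha}$ up to $\epsilon$-errors for $j$ large. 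Setting $F(\alpha;t,\epsilon)=\{\eta\in\mathcal{F}_{\mathcal{L},\zeta}(t):t^{\alpha+\epsilon}\le\rho(\eta)\le t^{\alpha-\epsilon}\}$, each such $\gamma$ therefore has infinitely many prefixes $\gamma|n_j\in F(\alpha;W(\gamma|n_j),\epsilon)$.

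Finally I would run a routine covering estimate. A Markov-type bound using the $L^q$-spectrum, comparing $\#F(\alpha;t,\epsilon)\cdot\min_{\eta\in F}\rho(\eta)^q$ with $\sum_{\eta\in\mathcal{F}_{\mathcal{L},\zeta}(t)}\rho(\eta)^q\le t^{\tau_{\mathcal{L}}(q)-\epsilon'}$ and optimizing over $q$ in $\tau_{\mathcal{L}}^*(\alpha)=\inf_q(\alpha q-\tau_{\mathcal{L}}(q))$, gives $\#F(\alpha;t,\epsilon)\le t^{-\tau_{\mathcal{L}}^*(\alpha)+\delta}$ for $t$ small, where $\delta\to 0$ as $\epsilon\to 0$ (with the endpoint cases $\alpha=\alpha_{\min}(\mathcal{L}),\alpha_{\max}(\mathcal{L})$ handled by taking $q$ large, and the case $\tau_{\mathcal{L}}^*(\alpha)=-\infty$ trivial by part (i)). Since each $\gamma\in\tilde E\cap\Omega^\infty_{\mathcal{L},\zeta}$ lies in $\bigcup_{\eta\in F(\alpha;t,\epsilon)}[\eta]$ at arbitrarily small scales, the tail unions $\bigcup_{k\ge M}\bigcup_{\eta\in F(\alpha;2^{-k},\epsilon)}[\eta]$ cover the set by cylinders of diameter $\le 2^{-M}$, and $\sum_{k\ge M}\#F(\alpha;2^{-k},\epsilon)\,(2^{-k})^s\to 0$ whenever $s>\tau_{\mathcal{L}}^*(\alpha)+\delta$; hence $\dim_H(\tilde E\cap\Omega^\infty_{\mathcal{L},\zeta})\le\tau_{\mathcal{L}}^*(\alpha)+\delta$. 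Letting $\epsilon\to 0$ and taking the supremum over $\zeta$ completes the bound. The main obstacle throughout is precisely that the symbolic upper local dimension of $\rho$ may exceed $\alpha$, so a naive covering at all scales fails; it is the existence of $\dim_{\loc}(\mu,x)$ combined with the approximation sequence of \cref{l:ap-sub} that produces the sparse but cofinal family of good scales making the covering valid, and reconciling the discrete scales $2^{-k}$ with the scales $W(\gamma|n_j)$ requires a routine adjustment of $\epsilon$ that I would carry out with care.
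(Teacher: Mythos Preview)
Your proposal is correct and follows the same essential strategy as the paper: both arguments hinge on the approximation sequence from \cref{l:ap-sub} to locate a cofinal family of scales at which $\rho(\gamma|n_j)\asymp W(\gamma|n_j)^\alpha$, and both bound $\#F(\alpha;t,\epsilon)$ via the $L^q$-spectrum exactly as in \cref{l:ft-count}. Part (i) is effectively identical to the paper's argument.

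For part (ii) there is a genuine, if modest, difference in packaging. The paper stays in $K$: it shows that the net intervals $\pi(\eta)$ with $\eta\in F(\alpha;2^{-n},\epsilon)$ form a Vitali cover of $E_\mu(\alpha;\mathcal{L})\cap\Delta_0$ (invoking \cref{l:approx-reg} to match $\rho(\gamma|(n_j-i))$ to dyadic scales for a bounded range of $i$), and then applies the Vitali covering theorem for Hausdorff measure. You instead pull back to $\Omega^\infty$ via the Lipschitz map $\pi$ and run a direct cylinder covering, which bypasses the Vitali theorem entirely at the cost of the dyadic-scale adjustment you flag. Your route is slightly more elementary; the paper's route keeps everything in $K$ and uses \cref{l:approx-reg} to do the scale-matching more systematically. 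One small point to tidy in your write-up: not every $x\in\kint_{\mathcal{L}}$ is an interior point (a boundary point whose two symbolic representations both lie in $\Omega^\infty_{\mathcal{L}}$ is still in $\kint_{\mathcal{L}}$), but such points are countable and can be discarded without affecting the Hausdorff dimension.
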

We first recall some notation from \cref{ss:mf-Moran}.
Fix some $\Delta_0\in\mathcal{F}$ such that $\vs(\Delta_0)\in V(\mathcal{L})$.
Let $\Delta_0$ have symbolic representation $\zeta_0$ and set
\begin{equation*}
    A_q(t) = \sum_{\eta\in\mathcal{F}_{\mathcal{L},\zeta_0}(t)}\rho(\eta)^q.
\end{equation*}
so that
\begin{align*}
    \tau_{\mathcal{L}}(q)=\tau_{\mathcal{L},\zeta_0}(q)&=\lim_{t\to 0}\frac{\log A_q(t)}{\log t}.
\end{align*}
The projection $\pi$ taking paths in $\Omega^*$ to net intervals in $\mathcal{P}$ restricts to the map
\begin{equation*}
    \pi:\Omega^*_{\mathcal{L},\zeta_0}\to\{\Delta\in\mathcal{P}:\Delta\subseteq\Delta_0,\vs(\Delta)\in V(\mathcal{L})\}.
\end{equation*}
We recall that $\rho=\mu\circ\pi$.
As defined in \cref{t:reg-sub}, we also set
\begin{equation*}
    F(\alpha; t,\epsilon) := \bigl\{\eta\in\mathcal{F}_{\mathcal{L},\zeta_0}(t):t^{\alpha+\epsilon}\leq \rho(\eta)\leq t^{\alpha-\epsilon}\bigr\}.
\end{equation*}

We first prove the following standard counting result on the size of the sets $F(\alpha; t,\epsilon)$.
This is essentially the same as, for example, \cite[Lem. 4.1]{ln1999}.
\begin{lemma}\label{l:ft-count}
    Let $\alpha\geq 0$ be arbitrary and $q\in\partial \tau_{\mathcal{L}}^*(\alpha)$.
    Then there exists some $r>0$ such that for all $0<t<r$,
    \begin{equation*}
        \#F(\alpha;t,\epsilon)\leq t^{-\tau_{\mathcal{L}}^*(\alpha)-(1+|q|)\epsilon}.
    \end{equation*}
\end{lemma}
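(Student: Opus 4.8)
The claim is a one-sided counting bound: the number of paths $\eta \in \mathcal{F}_{\mathcal{L},\zeta_0}(t)$ whose measure $\rho(\eta)$ sits in the window $[t^{\alpha+\epsilon}, t^{\alpha-\epsilon}]$ is at most $t^{-\tau_{\mathcal{L}}^*(\alpha) - (1+|q|)\epsilon}$. The plan is to bound $\#F(\alpha;t,\epsilon)$ by comparing it to the defining sum $A_q(t) = \sum_{\eta \in \mathcal{F}_{\mathcal{L},\zeta_0}(t)} \rho(\eta)^q$, which we control via the existence of the limit $\tau_{\mathcal{L}}(q) = \lim_{t\to 0} \log A_q(t)/\log t$ from \cref{l:lq-limit}.

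First I would fix $q \in \partial\tau_{\mathcal{L}}^*(\alpha)$, which by the basic duality for concave functions (recorded in the preliminaries) means $\alpha \in \partial\tau_{\mathcal{L}}(q)$ and hence $\tau_{\mathcal{L}}^*(\alpha) + \tau_{\mathcal{L}}(q) = \alpha q$. The key estimate is that each $\eta \in F(\alpha;t,\epsilon)$ contributes at least a controlled amount to $A_q(t)$. Indeed, for $\eta \in F(\alpha;t,\epsilon)$ we have $t^{\alpha+\epsilon} \le \rho(\eta) \le t^{\alpha-\epsilon}$, so raising to the power $q$ and using $t \in (0,1)$ gives
\begin{equation*}
    \rho(\eta)^q \geq t^{\alpha q - |q|\epsilon}
\end{equation*}
regardless of the sign of $q$ (the worse of the two endpoint bounds always yields the exponent $\alpha q - |q|\epsilon$). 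Summing over $F(\alpha;t,\epsilon) \subseteq \mathcal{F}_{\mathcal{L},\zeta_0}(t)$ then yields
\begin{equation*}
    \#F(\alpha;t,\epsilon) \cdot t^{\alpha q - |q|\epsilon} \leq \sum_{\eta \in F(\alpha;t,\epsilon)} \rho(\eta)^q \leq A_q(t).
\end{equation*}

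It remains to bound $A_q(t)$ from above. Since the limit defining $\tau_{\mathcal{L}}(q)$ exists by \cref{l:lq-limit}, there is some $r > 0$ such that for all $0 < t < r$ we have $A_q(t) \leq t^{\tau_{\mathcal{L}}(q) - \epsilon}$. Combining this with the previous display and rearranging,
\begin{equation*}
    \#F(\alpha;t,\epsilon) \leq t^{\tau_{\mathcal{L}}(q) - \epsilon - \alpha q + |q|\epsilon} = t^{-\tau_{\mathcal{L}}^*(\alpha) - (1+|q|)\epsilon},
\end{equation*}
using $\tau_{\mathcal{L}}(q) - \alpha q = -\tau_{\mathcal{L}}^*(\alpha)$. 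This is exactly the asserted bound. I expect no serious obstacle here; the only points requiring care are getting the sign conventions right in the elementary inequality $\rho(\eta)^q \geq t^{\alpha q - |q|\epsilon}$ (checking both cases $q \geq 0$ and $q < 0$ against the two endpoints of the window), and invoking the existence of the limit rather than merely a liminf, so that the upper bound $A_q(t) \leq t^{\tau_{\mathcal{L}}(q)-\epsilon}$ holds uniformly for all small $t$. The hypothesis $\alpha \ge 0$ in the statement does not seem essential to this argument, so I would double-check whether it is genuinely needed or merely a convenient standing assumption.
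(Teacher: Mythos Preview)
Your approach is exactly the paper's: bound $A_q(t)$ from below by restricting the sum to $F(\alpha;t,\epsilon)$, bound it from above using the existence of the limit from \cref{l:lq-limit}, and combine via $\tau_{\mathcal{L}}^*(\alpha) = \alpha q - \tau_{\mathcal{L}}(q)$.

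There is, however, a sign slip in your key inequality. For $\eta \in F(\alpha;t,\epsilon)$ the correct uniform bound is $\rho(\eta)^q \geq t^{\alpha q + |q|\epsilon}$, not $t^{\alpha q - |q|\epsilon}$: when $q \geq 0$ the minimum of $\rho(\eta)^q$ is attained at $\rho(\eta) = t^{\alpha+\epsilon}$, giving exponent $q(\alpha+\epsilon) = \alpha q + |q|\epsilon$; when $q < 0$ it is attained at $\rho(\eta) = t^{\alpha-\epsilon}$, giving $q(\alpha-\epsilon) = \alpha q + |q|\epsilon$ as well. A compensating arithmetic error in your last display (you wrote $-\epsilon + |q|\epsilon$ but equated it to $-(1+|q|)\epsilon$) restores the correct final exponent, so the conclusion stands, but the two intermediate displays are wrong as written. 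Your observation that the hypothesis $\alpha \geq 0$ plays no role in the argument is accurate.
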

\begin{proof}
    We prove this for $q<0$, but the case $q\geq 0$ follows identically.
    To do this, we bound $A_q(t)$ in two ways for $t$ sufficiently small.
    On one hand,
    \begin{equation*}
        A_q(t)\geq\sum_{\eta\in F(\alpha;t,\epsilon)}\rho(\eta)^q\geq t^{q(\alpha-\epsilon)}\#F(\alpha; t,\epsilon).
    \end{equation*}
    On the other hand, for $t$ sufficiently small (depending on $\epsilon$ and $\Delta_0$), $A_q(t)\leq t^{\tau_{\mathcal{L}}(q)-\epsilon}$.
    Combining these observations, we have
    \begin{equation*}
        \# F(\alpha; t,\epsilon)\leq t^{\tau_{\mathcal{L}}(q)-\epsilon}t^{-q(\alpha-\epsilon)}=t^{-\tau_{\mathcal{L}}^*(\alpha)-(1-q)\epsilon}
    \end{equation*}
    since $q\in\partial \tau_{\mathcal{L}}^*(\alpha)$ so that $\tau_{\mathcal{L}}^*(\alpha)=\alpha q-\tau_{\mathcal{L}}(q)$.
\end{proof}
We now begin the main proof.
\begin{proofref}{t:m-upper-bound}
    To see (i), suppose $x\in \kint_{\mathcal{L}}$ is arbitrary with unique symbolic representation $\gamma=(e_n)_{n=1}^\infty$.
    Let $\zeta\in\Omega^*$ be a prefix of $\gamma$ ending in $\mathcal{L}$.

    By \cref{p:nper-dim}, $\dim_{\loc}(\mu,x)=\underline{\dim}_{\loc}(\rho,\gamma)$, so there exists an increasing sequence $(n_j)_{j=1}^\infty$ such that
    \begin{equation*}
        \underline{\dim}_{\loc}(\rho,x)=\lim_{j\to\infty}\frac{\log \rho(\gamma|n_j)}{\log W(\gamma|n_j)}.
    \end{equation*}
    With $t_j=W(\gamma|n_j)$, since $\gamma|n_j\in\mathcal{F}_{t_j}$, we have for $j$ sufficiently large that $\gamma|n_j\in\Omega^*_{\mathcal{L},\zeta}$ so that
    \begin{equation*}
        \frac{\log \sum_{\eta\in\mathcal{F}_{\mathcal{L},\zeta}(t_j)}\rho(\eta)^q}{\log t_j}\leq q\frac{\log \rho(\gamma|n_j)}{\log t_j}.
    \end{equation*}
    Taking the limit infimum as $j$ goes to infinity yields
    \begin{equation*}
        \tau_{\mathcal{L}}(q)= \tau_{\mathcal{L},\zeta}(q)\leq q\dim_{\loc}(\mu,x)
    \end{equation*}
    where $q\in\R$ is arbitrary.
    It follows that $\dim_{\loc}(\mu,x)\in[\alpha_{\min}(\mathcal{L}),\alpha_{\max}(\mathcal{L})]$.

    We now see (ii).
    Since
    \begin{equation*}
        \kint_{\mathcal{L}}=\bigcup_{\{\Delta\in\mathcal{P}:\vs(\Delta)\in V(\mathcal{L})\}}\Delta\cap \kint_{\mathcal{L}},
    \end{equation*}
    it suffices to show that $\dim_H E_0\leq \tau_{\mathcal{L}}^*(\alpha)$ where
    \begin{equation*}
        E_0:= E_\mu(\alpha; \mathcal{L})\cap\Delta_0
    \end{equation*}
    and $\Delta_0=\pi(\zeta_0)$ is a fixed net interval with neighbour set in $\mathcal{L}$.
    We fix notation as above; in particular, we recall that $\zeta_0$ is the symbolic representation of $\Delta_0$.

    Again we assume $q< 0$; the case $q\geq 0$ follows similarly.
    Fix $\epsilon>0$ and set
    \begin{equation*}
        \mathcal{G}_n=\{\pi(\eta):\eta\in F(\alpha; 2^{-n},\epsilon)\}
    \end{equation*}
    where $\pi(\eta)$ is the net interval with symbolic representation $\eta$.
    By \cref{l:ft-count}, there exists $N=N(\epsilon)$ such that for all $n\geq N$,
    \begin{equation*}
        \#\mathcal{G}_n=\# F(\alpha; 2^{-n},\epsilon)\leq 2^{n(\tau_{\mathcal{L}}^*(\alpha)+(1-q)\epsilon)}.
    \end{equation*}
    Let $\mathcal{G}=\bigcup_{n=N(\epsilon)}^\infty G_n$.

    We first see that $\mathcal{G}$ is a Vitali cover for $E_0$.
    Let $x\in E_0$ be arbitrary.
    Since $x=\pi(\gamma)$ is an interior point, it has an approximation sequence $(n_j)_{j=1}^\infty$.
    Let $m$ be such that any path $\eta$ in $\mathcal{G}$ of length at least $m$ has $W(\eta)\leq 1/3$.
    Such a constant exists since there are only finitely many possible edge weights $W(e)\in(0,1)$.
    The choice of $m$ ensures that there exists some $m_j\in\N$ such that $W(\gamma|n_j)\leq 2^{-m_j}\leq W(\gamma|n_j-m)$.
    Since $\dim_{\loc}(\mu,x)$ exists and $R\cdot W(\gamma|n_j)\asymp 2^{-m_j}$ where $R>0$ is a fixed constant,
    \begin{equation}\label{e:2-bd}
        \lim_{j\to\infty}\frac{\log \mu(B(x,R\cdot W(\gamma|n_j)))}{\log \mu(B(x,2^{-m_j}))}=1.
    \end{equation}

    Now, by \cref{l:ap-sub} and \cref{l:approx-reg} applied to the constant sequence $k_j=m$, we have for $j$ sufficiently large and $0\leq i\leq m$ arbitrary
    \begin{equation*}
    \mu(B(x, R\cdot W(\gamma|n_j)))\leq\rho(\gamma|n_j)\leq \rho(\gamma|n_j-i)\leq \rho(\gamma|n_j)^{1-\epsilon}.
    \end{equation*}
    Moreover, we always have
    \begin{equation*}
        B(x,2^{-m_j})\supseteq B(x, W(\gamma|n_j))\supseteq \pi(\gamma|n_j)
    \end{equation*}
    so that $\rho(\gamma|n_j)\leq \mu(B(x,2^{-m_j}))$.
    Thus applying \cref{e:2-bd}, for all $j$ sufficiently small,
    \begin{equation*}
        \mu(B(x,2^{-m_j}))^{1+\epsilon}\leq \rho(\gamma|n_j-i)\leq\rho(\gamma|n_j)^{1-\epsilon}\leq\mu(B(x,2^{-m_j}))^{1-\epsilon}.
    \end{equation*}
    Finally, since $\dim_{\loc}(\mu,x)=\alpha$, for all $j$ sufficiently small and $0\leq i\leq m$ with $\gamma|(n_j-i)\in\mathcal{F}_{\mathcal{L},\zeta_0}(2^{-m_j})$,
    \begin{equation*}
        (2^{-m_j})^{\alpha+\epsilon}\leq \rho(\gamma|n_j-i)\leq (2^{-m_j})^{\alpha-\epsilon}.
    \end{equation*}
    Thus $\pi(\gamma|(n_j-i))\in\mathcal{G}$.
    Since this is true for all $j$ sufficiently large, we may take $\diam(\gamma|(n_j-i))$ arbitrarily small, so $\mathcal{G}$ is indeed a Vitali cover for $E_0$.

    Now suppose $\{E_i\}_{i=1}^\infty$ is any disjoint subcollection of $\mathcal{G}$: then for $s=\tau_{\mathcal{L}}^*(\alpha)+2(1-q)\epsilon$,
    \begin{align*}
        \sum_{i=1}^\infty\diam(E_i)^s &= \sum_{n=N(\epsilon)}^\infty\sum_{\Delta\in \mathcal{G}_n}\diam(\Delta)^s\leq \sum_{n=N(\epsilon)}^\infty 2^{-ns}\#\mathcal{G}_n\\
                                      &\leq \sum_{n=N(\epsilon)}^\infty\bigl(2^{-\tau_{\mathcal{L}}^*(\alpha)-2(1-q)\epsilon)}2^{\tau_{\mathcal{L}}^*(\alpha)+(1-q)\epsilon}\bigr)^n\\
                                      &= \sum_{n=N(\epsilon)}^\infty (2^{-(1-q)\epsilon})^n<\infty.
    \end{align*}
    Thus by the Vitali covering theorem for Hausdorff measure, we must have
    \begin{equation*}
        \mathcal{H}^s(E_0)\leq\sum_{i=1}^\infty\diam(E_i)^s<\infty
    \end{equation*}
    so that $\dim_H E_0\leq \tau_{\mathcal{L}}^*(\alpha)+2(1-q)\epsilon$.
    Since $\epsilon>0$ was arbitrary, the result follows.
\end{proofref}

\subsection{Irreducibility and the lower bound for the multifractal spectrum}
We recall that the notion of irreducibility was introduced in \cref{sss:irreducibility}.
Moreover, recall that a point $x\in K_{\mathcal{L}}$ is said to be an interior point of $K_{\mathcal{L}}$ if it only has symbolic representations that are eventually in $\mathcal{L}$, and the set of such points is denoted by $\kint_{\mathcal{L}}$.

We now introduce the notion of an interior path, and use this to relate the notions of $\xi$-regularity in $\Omega^\infty$ (introduced in \cref{d:xi-reg}) with regular points in $K$ (as defined in \cref{d:reg-point}).
\begin{definition}
    We say $\xi$ is an \defn{interior path} if whenever $(\Delta_i)_{i=0}^m$ is a sequence of net interals where $\Delta_{i+1}$ is a child of $\Delta_i$ corresponding to $\xi$, there is a neighbourhood of $\Delta_m$ in $K$ which is contained entirely in $\Delta_0$.
\end{definition}
Recall that $\Omega^\infty$ is the set of rooted infinite paths in $\mathcal{G}$ and $K_R$ is the set of regular points.
\begin{lemma}\label{l:xi-reg-proj}
    Let $\xi$ be an interior path in a loop class $\mathcal{L}$, and let $\gamma\in\Omega^\infty_{\mathcal{L}}$ be $\xi$-regular.
    Then for any path $\eta$ such that $\eta\gamma\in\Omega^\infty$, $\pi(\eta\gamma)\in \kint_{\mathcal{L}}\cap K_R$.
\end{lemma}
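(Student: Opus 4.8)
The plan is to establish the two memberships separately: that $x:=\pi(\eta\gamma)$ is an interior point whose (then unique) symbolic representation is eventually in $\mathcal{L}$, giving $x\in\kint_{\mathcal{L}}$; and that the approximation sequence of $x$ has consecutive ratios tending to $1$, giving $x\in K_R$. Throughout, write $\ell=|\eta|$ and $m=|\xi|$, let $(n_j)_{j=1}^\infty$ be the increasing sequence witnessing $\xi$-regularity of $\gamma=(e_n)_{n=1}^\infty$, so that $n_{j+1}/n_j\to 1$ and $\xi$ is a prefix of $(e_{n_j},e_{n_j+1},\ldots)$, and let $(\Delta_i)_{i=0}^\infty$ denote the net intervals along $\eta\gamma$, i.e.\ $\Delta_i=\pi((\eta\gamma)|i)$. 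At each $j$ the intervals $\Delta_{\ell+n_j-1},\ldots,\Delta_{\ell+n_j+m-1}$ trace out the path $\xi$, so since $\xi$ is an interior path there is a neighbourhood of $\Delta_{\ell+n_j+m-1}$ in $K$ contained in $\Delta_{\ell+n_j-1}$.

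First I would show that $x$ is an interior point. Since $x\in\Delta_{\ell+n_j+m-1}$, the interior-path property exhibits, for every $j$, a $K$-neighbourhood of $x$ contained in $\Delta_{\ell+n_j-1}$; that is, $x$ lies in the relative interior (in $K$) of $\Delta_{\ell+n_j-1}$. On the other hand, a point admitting two distinct symbolic representations is, at every level past the level where the two representations first diverge, a shared endpoint of two adjacent net intervals, and the adjacent interval contributes points of $K$ on the opposite side of $x$ arbitrarily close to $x$ (its children containing $x$ shrink to $x$ while retaining interior points of $K$); such a point can never lie in the relative interior of the net interval containing it. Taking $j$ large enough that divergence has occurred before level $\ell+n_j-1$ rules this out, so $\pi^{-1}(x)=\{\eta\gamma\}$. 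As $\gamma\in\Omega^\infty_{\mathcal{L}}$, the path $\eta\gamma$ is eventually in $\mathcal{L}$, whence $\pi^{-1}(x)\subseteq\Omega^\infty_{\mathcal{L}}$ and $x\in\kint_{\mathcal{L}}$.

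It remains to prove that $x$ is regular. The crucial step is that each $\xi$-block forces an index of the approximation sequence of $x$ (as in \cref{d:reg-point}) to lie within a bounded distance, depending only on $m$, of $\ell+n_j$: the strict interiority (in $K$) of $\Delta_{\ell+n_j+m-1}$ inside $\Delta_{\ell+n_j-1}$ is incompatible with $x$ approaching a genuine shared boundary point throughout the whole block, so some two-step window $\Delta_{p+2}\subseteq\Delta_p$ within the block satisfies the defining trapping condition of the approximation sequence. Granting this, I obtain indices $\tilde n_j=\ell+n_j+c_j$ with $(c_j)$ bounded, all belonging to the approximation sequence of $x$; since $n_j\to\infty$ and the shifts are bounded, $\tilde n_{j+1}/\tilde n_j\to 1$. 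Finally, because the full approximation sequence contains $\{\tilde n_j\}$, any two consecutive terms of it lie between two consecutive elements of $\{\tilde n_j\}$, so their ratio is dominated by the corresponding ratio of the $\tilde n_j$ and therefore also tends to $1$. Hence $x$ is regular and $x\in K_R$.

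The step I expect to be the main obstacle is the index-matching in the final paragraph: converting the containment of a $K$-neighbourhood of $\Delta_{\ell+n_j+m-1}$ inside $\Delta_{\ell+n_j-1}$ into a genuine index of the approximation sequence near $\ell+n_j$. This requires tracking the endpoint-sharing trichotomy ($c=a<b<d$, $c<a<b=d$, or $c<a<b<d$) described before \cref{d:reg-point} along the $\xi$-block, in order to locate a two-step window in which $x$ is pulled away from both endpoints of the enclosing net interval. By contrast, the interior-point argument of the second paragraph is routine once the interior-path property is available.
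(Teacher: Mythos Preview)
Your proposal is correct and follows the same strategy as the paper: locate an approximation-sequence index inside (or within bounded distance of) each $\xi$-block, and deduce both $x\in\kint_{\mathcal{L}}$ and $x\in K_R$ from this. The paper's proof is a single sentence asserting that ``some $j$ with $n+m_1\le j\le n+m_1+m_2$ is a point in the approximation sequence of $\pi(\eta\gamma)$'' and leaves the verification to the reader; you supply precisely that verification, correctly identifying the endpoint-sharing trichotomy (from \cref{r:pr-ch}) as the tool needed to extract a two-step trapping window from the $m$-step interior-path containment. Your final ratio argument---passing from the subsequence $(\tilde n_j)$ to the full approximation sequence via sandwiching consecutive terms---is a detail the paper omits entirely but which is needed for the conclusion, and your treatment of it is sound.
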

\begin{proof}
    This is a direct application of the definitions, noting that if $\gamma=(e_n)_{n=1}^\infty$, $\eta$ has length $m_1$, $\xi$ has length $m_2$, and $\xi$ appears at some position $n$, then some $j$ with $n+m_1\leq j\leq n+m_1+m_2$ is a point in the approximation sequence of $\pi(\eta\gamma)$.
\end{proof}

Recall that
\begin{equation*}
    E_\mu(\alpha;\mathcal{L})=\{x \in \kint_{\mathcal{L}}:\dim_{\loc}(\mu,x)=\alpha\}.
\end{equation*}
We will also need the following result, which follows by a similar argument to \cite[Prop. 3.15]{rut2021} or (in a somewhat more specialized case) \cite[Prop. 2.7]{hhn2018}.
\begin{lemma}\label{l:simple}
    Suppose $\mathcal{L}$ is a simple loop class and $x\in K_{\mathcal{L}}$.
    If $x$ is an interior point with $\pi^{-1}(x)=\{\gamma\}$, then
    \begin{equation*}
        \dim_{\loc}(\mu,x)=\dim_{\loc}(\rho,\gamma).
    \end{equation*}
    Otherwise $x\in K$ is a boundary point with $\pi^{-1}(x)=\{\gamma_1,\gamma_2\}$, and
    \begin{equation*}
        \dim_{\loc}(\mu,x)=\min\{\dim_{\loc}(\rho,\gamma_1),\dim_{\loc}(\rho,\gamma_2)\}.
    \end{equation*}
\end{lemma}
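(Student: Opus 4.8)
The plan is to transfer the problem to the symbolic side by comparing $\mu(B(x,r))$ with the values $\rho$ takes along the symbolic representation(s) of $x$, and to exploit the rigidity forced by simplicity of $\mathcal{L}$. The key structural remark is that if a path $\gamma=(e_n)_{n=1}^\infty$ is eventually in the simple loop class $\mathcal{L}$, then for all large $n$ it follows the unique cycle of $\mathcal{L}$; hence, by \cref{p:ttype}, the diameter ratio $W(e_{n+1})$, the position index of $\Delta_{n+1}=\pi(\gamma|(n+1))$ inside $\Delta_n=\pi(\gamma|n)$, and the adjacency pattern of $\Delta_n$ are all eventually periodic, with period the length $k$ of the cycle.

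For the interior case I would first show that $x\in\kint_{\mathcal{L}}\cap K_R$, i.e.\ that $x$ is regular. Record, for each large $n$, whether $\Delta_{n+1}$ meets $\Delta_n$ in its left endpoint, its right endpoint, or in neither; this ``shape sequence'' is eventually periodic. If some entry of a period is of the third type, or if two consecutive entries share opposite endpoints, then $\Delta_{n+2}\subseteq\Delta_n^\circ$ and so a neighbourhood of $\Delta_{n+2}$ in $K$ lies in $\Delta_n$, producing a point of the approximation sequence; by periodicity these occur with gaps bounded by a multiple of $k$, so $\lim_{j\to\infty}n_{j+1}/n_j=1$ and $x\in K_R$. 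Otherwise every entry of the period shares one fixed endpoint with an adjacent net interval, and periodicity forces all $\Delta_n$ (for large $n$) to share that common endpoint, making $x$ a boundary point and contradicting interiority. Once $x\in K_R$ is established, \cref{c:reg-loc-dim} — whose proof compares $\mu(B(x,r))$ with $\rho(\gamma|n(r))$ at every scale via \cref{l:ap-sub} and \cref{l:approx-reg} — gives $\dim_{\loc}(\mu,x)=\dim_{\loc}(\rho,\gamma)$, with upper and lower local dimensions matching in either direction.

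For the boundary case, $x$ is the common endpoint of the two nested sequences $\Delta_n^{(1)}=\pi(\gamma_1|n)$ and $\Delta_n^{(2)}=\pi(\gamma_2|n)$ approaching $x$ from opposite sides. I would show that for small $r$, choosing $n_i=n_i(r)$ with $W(\gamma_i|n_i)\asymp r$, the ball splits as $\mu(B(x,r))\asymp\rho(\gamma_1|n_1)+\rho(\gamma_2|n_2)$. Writing $g_i(r)=\log\rho(\gamma_i|n_i(r))/\log r$, the fact that $\log(a+b)$ differs from $\max\{\log a,\log b\}$ by at most $\log 2$, together with $\log r<0$, turns the maximum into a minimum after dividing by $\log r$; since $\liminf_r\min\{g_1,g_2\}=\min\{\liminf_r g_1,\liminf_r g_2\}$ and $\liminf_r g_i=\underline{\dim}_{\loc}(\rho,\gamma_i)$ (because $\log W(\gamma_i|n_i(r))\sim\log r$), this yields the minimum of the lower local dimensions, and the full identity $\dim_{\loc}(\mu,x)=\min\{\dim_{\loc}(\rho,\gamma_1),\dim_{\loc}(\rho,\gamma_2)\}$ whenever the two $\rho$-local dimensions exist, which is the situation in which the lemma is applied.

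The main obstacle is the measure comparison itself — that $\mu(B(x,r))$ receives, up to a bounded constant, contributions only from the net intervals lying along $\gamma$ (interior case) or along $\gamma_1,\gamma_2$ (boundary case), and in particular that the finitely many net intervals adjacent to $\Delta_n$ at the relevant scale have $\mu$-measure $\preccurlyeq\rho(\gamma|n)$. This is exactly where simplicity is essential: it pins the adjacency structure through \cref{p:ttype} and, via the shape-sequence dichotomy above, rules out the long runs of endpoint-sharing net intervals that would otherwise let an adjacent interval dominate the ball. The reductions of \cref{r:pr-ch} (children are proper subintervals) are needed to make the left/right-sharing analysis valid, and \cref{l:left-prod} supplies the bounded per-step control of $\rho$ used to pass between scales $r$ and cylinder levels.
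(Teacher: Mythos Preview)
The paper does not actually prove this lemma; it only points to \cite[Prop.~3.15]{rut2021} and \cite[Prop.~2.7]{hhn2018}. Your outline is essentially the argument those references carry out, and the overall strategy is sound. Two points, however, need tightening.

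In the interior case you appeal to \cref{c:reg-loc-dim}, but that corollary has the hypothesis that one of the two local dimensions already exists. You must record that $\dim_{\loc}(\rho,\gamma)$ exists: once $\gamma$ enters the simple cycle $\theta$ of $\mathcal{L}$ it is eventually periodic, so $\rho(\gamma|n)\asymp\norm{T(\theta)^m}$ and $W(\gamma|n)\asymp W(\theta)^m$ with $m\asymp n$, and Gelfand's formula for the spectral radius gives the limit. Your shape-sequence dichotomy also misses one subcase: if the periodic pattern is constantly ``left'' but that endpoint is \emph{not} shared with an adjacent net interval (for instance $x=0$), then $x$ is still interior, and every large $n$ lies in the approximation sequence, so regularity is immediate. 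The paper's own discussion preceding \cref{d:reg-point} isolates exactly this distinction.

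In the boundary case you locate the difficulty in the wrong place. The comparison $\mu(B(x,r))\asymp\rho(\gamma_1|n_1)+\rho(\gamma_2|n_2)$ holds for \emph{every} boundary point, with no use of simplicity: taking $m_i$ maximal with $W(\gamma_i|m_i)\geq r$ gives $B(x,r)\subseteq\Delta^{(1)}_{m_1}\cup\Delta^{(2)}_{m_2}$, while $\Delta^{(i)}_{m_i+1}\subseteq B(x,r)$ since $x$ is its endpoint; both $m_i$ and $m_i+1$ satisfy $\log W(\gamma_i|\cdot)\sim\log r$, and your $\liminf\min=\min\liminf$ observation then yields
\[
\min_i\underline{\dim}_{\loc}(\rho,\gamma_i)=\underline{\dim}_{\loc}(\mu,x)\leq\overline{\dim}_{\loc}(\mu,x)\leq\min_i\overline{\dim}_{\loc}(\rho,\gamma_i).
\]
What simplicity (or more precisely eventual periodicity of $\gamma_1,\gamma_2$, which for boundary points already follows from finiteness of $V(\mathcal{G})$) actually buys is the \emph{existence} of $\dim_{\loc}(\rho,\gamma_i)$, which collapses the sandwich to the stated equality.
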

We now show here that the regular points in a non-simple $K_{\mathcal{L}}$ are abundant.
\begin{theorem}\label{t:m-lower-bound}
    Let $\mathcal{L}$ be an irreducible loop class which is not simple, or simple and contains an interior point.
    Then $E_\mu(\alpha;\mathcal{L})\neq\emptyset$ if and only if $f_{\mathcal{L}}(\alpha)\geq 0$ if and only if $\alpha\in[\alpha_{\min}(\mathcal{L}),\alpha_{\max}(\mathcal{L})]$.
    Moreover,
    \begin{equation*}
        \dim_H E_\mu(\alpha;\mathcal{L})\cap K_R\geq f_{\mathcal{L}}(\alpha)
    \end{equation*}
    for all $\alpha$.
\end{theorem}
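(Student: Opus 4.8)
The plan is to transfer the symbolic multifractal formalism \cref{t:multi-f} to $K$ through the projection $\pi$, using \cref{t:reg-sub} to produce an abundance of regular points with the correct local dimension. First I would dispose of the three equivalences, which all reduce to the dimension lower bound. By \cref{t:multi-f}, $f_{\mathcal{L}}=\tau_{\mathcal{L}}^*$ is concave and finite exactly on $[\alpha_{\min}(\mathcal{L}),\alpha_{\max}(\mathcal{L})]$; since it coincides with a Hausdorff dimension it is $\geq 0$ wherever finite and equals $-\infty$ otherwise, so $f_{\mathcal{L}}(\alpha)\geq 0$ if and only if $\alpha\in[\alpha_{\min}(\mathcal{L}),\alpha_{\max}(\mathcal{L})]$. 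If $E_\mu(\alpha;\mathcal{L})\neq\emptyset$, then \cref{t:m-upper-bound}(i) forces $\alpha\in[\alpha_{\min}(\mathcal{L}),\alpha_{\max}(\mathcal{L})]$; conversely, if $\alpha$ lies in this interval then $f_{\mathcal{L}}(\alpha)\geq 0$, and the lower bound $\dim_H(E_\mu(\alpha;\mathcal{L})\cap K_R)\geq f_{\mathcal{L}}(\alpha)$ yields nonemptiness. Thus everything hinges on the final displayed inequality.

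To prove it, fix $\alpha\in[\alpha_{\min}(\mathcal{L}),\alpha_{\max}(\mathcal{L})]$ (otherwise $f_{\mathcal{L}}(\alpha)=-\infty$ and there is nothing to show). I would first exhibit an interior path $\xi$ in $\mathcal{L}$ beginning at the distinguished vertex $v$: when $\mathcal{L}$ is non-simple, the branching at some vertex produces a child bounded away from both endpoints of its parent, while when $\mathcal{L}$ is simple the hypothesised interior point of $K_{\mathcal{L}}$ supplies a sub-path of its symbolic representation lying in $\mathcal{L}$ that is interior. Applying \cref{t:reg-sub} with this $\xi$ gives a nonempty $\Gamma\subseteq E_{\mathcal{L},\zeta}(\alpha)$ consisting of $\xi$-regular points with $\dim_H\Gamma=f_{\mathcal{L}}(\alpha)$. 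By \cref{l:xi-reg-proj} every $\gamma\in\Gamma$ projects into $\kint_{\mathcal{L}}\cap K_R$, and since $\dim_{\loc}(\rho,\gamma)=\alpha$ exists, \cref{c:reg-loc-dim} gives $\dim_{\loc}(\mu,\pi(\gamma))=\alpha$; hence $\pi(\Gamma)\subseteq E_\mu(\alpha;\mathcal{L})\cap K_R$. It therefore suffices to prove $\dim_H\pi(\Gamma)\geq\dim_H\Gamma$.

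This last step is the main obstacle, since $\pi$ is only $1$-Lipschitz (\cref{l:pi-Lip}) and not bi-Lipschitz, so a priori it could lower dimension; the $\xi$-regularity is precisely what prevents collapse. For $\gamma\in\Gamma$ with $x=\pi(\gamma)$ and approximation sequence $(n_j)$, \cref{l:ap-sub} gives $B(x,R\diam(\Delta_{n_j}))\cap K\subseteq\Delta_{n_j}=\pi(\gamma|n_j)$, so the $\pi$-preimage of any ball of radius $r\leq R\diam(\Delta_{n_j})$ meets $\Gamma$ only inside the cylinder $[\gamma|n_j]$, whose diameter is $W(\gamma|n_j)\asymp\diam(\Delta_{n_j})$. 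Choosing $j=j(r)$ maximal with $R\diam(\Delta_{n_{j+1}})<r\leq R\diam(\Delta_{n_j})$ and using $\log\diam(\Delta_n)\asymp -n$, the regularity $n_{j+1}/n_j\to 1$ forces $\log W(\gamma|n_j)/\log r\to 1$; consequently, for any exponents $s''<s<\dim_H\Gamma$, the cylinders pulled back from a sufficiently fine centred ball-cover of $\pi(\Gamma)$ cover $\Gamma$ with total $s$-cost at most a constant multiple of the $s''$-cost of the ball-cover. Since $s<\dim_H\Gamma$ gives $\mathcal{H}^s(\Gamma)>0$, every fine cover of $\pi(\Gamma)$ has $s''$-cost bounded below, whence $\dim_H\pi(\Gamma)\geq s''$. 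The convergence $n_{j+1}/n_j\to 1$ is not uniform over $\Gamma$, so I would decompose $\Gamma=\bigcup_N\Gamma_{k,N}$ according to the level $N$ beyond which $n_{j+1}/n_j\leq 1+1/k$, run the estimate on each $\Gamma_{k,N}$ where it is uniform, and invoke countable stability of the Hausdorff dimension together with $\sup_N\dim_H\Gamma_{k,N}=\dim_H\Gamma$; letting $s''\uparrow s\uparrow\dim_H\Gamma$ and $k\to\infty$ yields $\dim_H\pi(\Gamma)\geq\dim_H\Gamma=f_{\mathcal{L}}(\alpha)$, completing the proof.
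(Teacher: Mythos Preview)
Your overall architecture matches the paper's: reduce the equivalences to the lower bound, produce an interior path $\xi$ in $\mathcal{L}$, invoke \cref{t:reg-sub} to get a $\xi$-regular set $\Gamma\subseteq E_{\mathcal{L},\zeta}(\alpha)$ of full dimension, and then use \cref{l:xi-reg-proj} together with \cref{c:reg-loc-dim} to see that $\pi(\Gamma)\subseteq E_\mu(\alpha;\mathcal{L})\cap K_R$. The paper handles the simple case slightly differently, dispatching it immediately via \cref{l:simple} rather than running the general machinery, but your unification via an interior path is fine.

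The genuine divergence is in the last step, showing $\dim_H\pi(\Gamma)\geq\dim_H\Gamma$. You invoke $\xi$-regularity a second time, controlling the pullback of a ball $B(x,r)$ by the cylinder $[\gamma|n_j]$ coming from the approximation sequence, and then compensate for the gap between $\diam(\Delta_{n_j})$ and $r$ using $n_{j+1}/n_j\to 1$, together with a countable decomposition to handle the lack of uniformity. This works, but is more delicate than you suggest: your decomposition $\Gamma=\bigcup_N\Gamma_{k,N}$ by the index $N$ at which $n_{j+1}/n_j\leq 1+1/k$ kicks in does not by itself give a uniform scale threshold, since $n_N^\gamma$ (and hence $\diam(\Delta_{n_N^\gamma})$) still varies with $\gamma$; one must further stratify by, say, an upper bound on $n_N^\gamma$ before the covering estimate becomes uniform.

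The paper avoids all of this. Its argument for $\dim_H\pi(\Gamma)\geq\dim_H\Gamma$ does not use regularity at all: given any $\epsilon$-cover $\{U_i\}$ of $\pi(\Gamma)$ with $t_i=\diam U_i$, each $U_i$ meets at most a fixed number (roughly $1/W_{\min}+1$) of net intervals $\pi(\eta)$ with $\eta\in\mathcal{F}(t_i)$, since those intervals have diameter in $(W_{\min}t_i,t_i]$ and overlap only on endpoints. The corresponding cylinders $[\eta]$ cover $\Gamma$ and have diameter $W(\eta)\asymp t_i$, so $\mathcal{H}^s(\Gamma)\preccurlyeq\mathcal{H}^s(\pi(\Gamma))$ directly. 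In other words, $\pi$ already behaves like a bounded-to-one bi-Lipschitz map at the level of covers; the role of $\xi$-regularity in the paper is solely to guarantee the inclusion $\pi(\Gamma)\subseteq E_\mu(\alpha;\mathcal{L})\cap K_R$, not to preserve dimension under $\pi$.
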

\begin{proof}
    If $\mathcal{L}$ is simple, since $\mathcal{L}$ contains interior points, the result follows directly from \cref{l:simple}.

    Otherwise, $\mathcal{L}$ is not simple, so there exists some vertex $v\in V(\mathcal{L})$ and an interior path $\xi\in\Omega^*(\mathcal{L},v)$.
    Let $\zeta_1\in\Omega^*$ be any path ending at a vertex in $\mathcal{L}$.
    By \cref{t:reg-sub}, get $\Gamma\subseteq E_{\mathcal{L},\zeta_1}(\alpha)$ such that $\dim_H\Gamma\geq\dim_H E_{\mathcal{L},\zeta_1}(\alpha)$ and each $\gamma\in\Gamma$ is $\xi$-regular with $\dim_{\loc}(\rho,\gamma)=\alpha$.

    By \cref{l:xi-reg-proj} and \cref{c:reg-loc-dim}, $\pi(\Gamma)\subseteq E(\mathcal{L},\alpha)\cap K_R$.
    In particular, this proves $E_{\mu}(\mathcal{L};\alpha)\cap K_R$ is non-empty whenever $f_{\mathcal{L}}(\alpha)\geq 0$, and
    \begin{equation*}
        \dim_H E_{\mu}(\mathcal{L};\alpha)\cap K_R\geq\dim_H\pi(\Gamma).
    \end{equation*}
    We also know by \cref{t:multi-f} that $\alpha\in[\alpha_{\min}(\mathcal{L}),\alpha_{\max}(\mathcal{L})]$ if and only if $f_{\mathcal{L}}(\alpha)\geq 0$.
    The remaining implication follows from \cref{t:m-upper-bound}.

    It remains to prove that $\dim_H\pi(\Gamma)=\dim_H(\Gamma)$.
    We recall from \cref{l:pi-Lip} that $\pi$ is Lipschitz, so $\dim_H\pi(\Gamma)\leq\dim_H\Gamma$.
    Conversely, let $\Delta\in\mathcal{P}$ be the net interval with symbolic representation $\zeta_1$ and let $\{U_i\}_{i=1}^\infty$ be some $\epsilon$-cover of $\pi(\Gamma)\subseteq\Delta$.
    Without loss of generality, we may assume $U_i\subseteq\Delta$ for each $i\in\N$.
    Let $t_i=\diam U_i <\epsilon$ and let $b_i$ denote the maximal number of net intervals of generation $t_i$ which intersect $U_i$.
    Note that $b_i\leq 1/[a]+1$ where the diameter of any generation $t$ net interval is at least $at$.
    These net intervals have symbolic representations $\{\zeta_1\eta_{ij}:1\leq j\leq b_i\}$, and the corresponding cylinders $\mathcal{C}=\{[\eta_{ij}]:i\in\N,1\leq j\leq b_i\}$ cover $\Gamma$ and have diameter $W(\eta_{ij})\asymp t_i$.
    Thus there exists some $A>0$ such that $\mathcal{C}$ forms an $A\epsilon$-cover of $\Gamma$.

    It follows that for a suitable constant $c$,
    \begin{equation*}
        \sum_{i=1}^{\infty}\sum_{j=1}^{b_{i}}\left(\diam([\eta_{ij}])\right)^{s}\leq cA^{s}\sum_{i}(\diam(U_{i}))^{s}
    \end{equation*}
    and therefore for each $\epsilon >0$,
    \begin{equation*}
        H_{\epsilon A}^{s}(\Gamma)\leq cA^{s}H_{\epsilon }^{s}(\pi(\Gamma )).
    \end{equation*}
    Letting $\epsilon \rightarrow 0$, we deduce that $H^{s}(\pi(\Gamma ))\geq (cA^{s})^{-1}H^{s}(\Gamma )$.
    This implies $\dim_{H}\pi(\Gamma )\geq \dim_{H}(\Gamma )$, so that $\dim_H\pi(\Gamma)=\dim_H\Gamma$.
\end{proof}
If $\mathcal{L}$ is an irreducible non-simple loop class, then necessarily $\mathcal{L}$ contains an interior path.
The only additional case occurs when $\mathcal{L}$ is a simple loop class without an interior path.
In this case, it may hold that every $x\in K_{\mathcal{L}}$ has two symbolic representations, and the local dimension is always given by the symbolic representation of the adjacent path not eventually in $\mathcal{L}$.
This motivates the following definition.
\begin{definition}\label{d:degen}
    We say that a loop class $\mathcal{L}$ is \defn{non-degenerate} if $\mathcal{L}$ is not simple, or if $\mathcal{L}$ is simple and there exists some $x\in K$ such that
    \begin{equation*}
        \dim_{\loc}(\mu,x)=\dim_{\loc}(\rho,\gamma)
    \end{equation*}
    for some $\gamma\in\Omega^\infty_{\mathcal{L}}$.
    We say that $\mathcal{L}$ is \defn{degenerate} otherwise.
\end{definition}
\begin{corollary}\label{c:m-spectrum}
    Suppose every loop class in $\mathcal{G}$ is irreducible, with non-degenerate loop classes $\mathcal{L}_1,\ldots,\mathcal{L}_m$.
    Then the multifractal spectrum of $\mu$ is given by
    \begin{equation*}
        f_\mu(\alpha)=\max\{f_{\mathcal{L}_1}(\alpha),\ldots,f_{\mathcal{L}_m}(\alpha)\}
    \end{equation*}
    for each $\alpha\in\R$.
\end{corollary}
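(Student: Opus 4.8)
The plan is to establish the two inequalities $f_\mu(\alpha)\ge\max_i f_{\mathcal{L}_i}(\alpha)$ and $f_\mu(\alpha)\le\max_i f_{\mathcal{L}_i}(\alpha)$ separately. The lower bound is essentially immediate from the machinery already in place. For each non-degenerate loop class $\mathcal{L}_i$, \cref{t:m-lower-bound} gives $\dim_H\bigl(E_\mu(\alpha;\mathcal{L}_i)\cap K_R\bigr)\ge f_{\mathcal{L}_i}(\alpha)$, and since $E_\mu(\alpha;\mathcal{L}_i)\subseteq E_\mu(\alpha)$, monotonicity of Hausdorff dimension yields $f_\mu(\alpha)\ge f_{\mathcal{L}_i}(\alpha)$. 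Taking the maximum over $1\le i\le m$ gives $f_\mu(\alpha)\ge\max_i f_{\mathcal{L}_i}(\alpha)$, which already covers the case where the right-hand side is $-\infty$.

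For the upper bound, I would decompose $E_\mu(\alpha)$ according to symbolic representations. Every interior point lies in $\kint_{\mathcal{L}}$ for a unique loop class, while the remaining points are boundary points, forming a countable set $B$; thus $E_\mu(\alpha)=\bigcup_{\mathcal{L}}E_\mu(\alpha;\mathcal{L})\cup\bigl(E_\mu(\alpha)\cap B\bigr)$, the union running over all loop classes. For a degenerate (hence simple) loop class $\mathcal{L}$ I claim $E_\mu(\alpha;\mathcal{L})=\emptyset$: any $x\in\kint_{\mathcal{L}}\cap E_\mu(\alpha)$ would, by \cref{l:simple}, satisfy $\dim_{\loc}(\mu,x)=\dim_{\loc}(\rho,\gamma)$ for some representation $\gamma$ eventually in $\mathcal{L}$, contradicting the definition of degeneracy. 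For each non-degenerate $\mathcal{L}_i$, \cref{t:m-upper-bound} gives $\dim_H E_\mu(\alpha;\mathcal{L}_i)\le\tau_{\mathcal{L}_i}^*(\alpha)$, and since $\mathcal{L}_i$ is irreducible, \cref{t:multi-f} identifies $\tau_{\mathcal{L}_i}^*(\alpha)=f_{\mathcal{L}_i}(\alpha)$. Invoking countable stability of Hausdorff dimension then reduces the upper bound to the estimate $f_\mu(\alpha)=\max\bigl\{\max_i f_{\mathcal{L}_i}(\alpha),\ \dim_H(E_\mu(\alpha)\cap B)\bigr\}$.

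The hard part will be controlling the boundary contribution. Since $B$ is countable, $\dim_H(E_\mu(\alpha)\cap B)\le 0$, so this term can only break the desired identity in the degenerate-looking scenario where $\max_i f_{\mathcal{L}_i}(\alpha)=-\infty$ yet some boundary point realizes the local dimension $\alpha$. I would rule this out by showing that whenever a boundary point $x$, with representations $\gamma_1,\gamma_2$ eventually in loop classes $\mathcal{L}^{(1)},\mathcal{L}^{(2)}$, has $\dim_{\loc}(\mu,x)=\alpha$, the value $\alpha$ lies in $[\alpha_{\min}(\mathcal{L}_i),\alpha_{\max}(\mathcal{L}_i)]$ for some non-degenerate $\mathcal{L}_i$, so that $f_{\mathcal{L}_i}(\alpha)\ge 0$ and the countable term is absorbed. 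This rests on the measure estimate $\mu(B(x,t))\asymp\rho(\gamma_1|n_1(t))+\rho(\gamma_2|n_2(t))$, with $n_j(t)$ the level at which the net interval on side $j$ has diameter comparable to $t$; such an estimate identifies $\alpha$ with the smaller of the two directional local dimensions, and combining this with \cref{l:simple} and the containment $\rho(\gamma_j|n)=\mu(\pi(\gamma_j|n))\le\mu(B(x,\diam\pi(\gamma_j|n)))$ shows that the realizing side cannot be degenerate. The delicate point is handling boundary points both of whose representations pass through non-simple classes, where \cref{l:simple} does not apply directly and one must appeal to the symbolic range bound $\underline{\dim}_{\loc}(\rho,\gamma_j)\in[\alpha_{\min}(\mathcal{L}^{(j)}),\alpha_{\max}(\mathcal{L}^{(j)})]$ underlying \cref{t:m-upper-bound}. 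Once this is in hand, combining the two inequalities yields $f_\mu(\alpha)=\max_i f_{\mathcal{L}_i}(\alpha)$ for every $\alpha\in\R$.
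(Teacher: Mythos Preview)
Your approach is correct and essentially the same as the paper's: both split into the lower bound via \cref{t:m-lower-bound}, the upper bound via \cref{t:m-upper-bound} and \cref{t:multi-f} on each $E_\mu(\alpha;\mathcal{L}_i)$, and handle the countable boundary set separately using \cref{l:simple}. The paper is terser---it leaves the vanishing of $E_\mu(\alpha;\mathcal{L})$ for degenerate $\mathcal{L}$ implicit and simply cites \cref{l:simple} for the boundary case---but the substance is identical.

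One small simplification for your boundary analysis: once you have the formula $\dim_{\loc}(\mu,x)=\min_j\dim_{\loc}(\rho,\gamma_j)$ for a boundary point (which is the content of the reference underlying \cref{l:simple}, and holds regardless of whether the loop classes involved are simple), the ``delicate point'' dissolves. If $\alpha=\dim_{\loc}(\rho,\gamma_j)$ realises the minimum with $\gamma_j$ eventually in $\mathcal{L}^{(j)}$, then $\mathcal{L}^{(j)}$ is non-degenerate automatically: either it is non-simple (and non-degenerate by definition), or it is simple and the equality $\dim_{\loc}(\mu,x)=\dim_{\loc}(\rho,\gamma_j)$ itself witnesses non-degeneracy. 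Moreover $\gamma_j\in E_{\mathcal{L}^{(j)},\zeta}(\alpha)$ for a suitable prefix $\zeta$, so $f_{\mathcal{L}^{(j)}}(\alpha)\geq 0$ directly, and the countable term is absorbed without needing to extract a separate ``symbolic range bound'' from the proof of \cref{t:m-upper-bound}.
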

\begin{proof}
    Combining the general upper bound from \cref{t:m-upper-bound} and the lower bound \cref{t:m-lower-bound} using irreducibility, it follows for each $1\leq i\leq m$ that
    \begin{equation*}
        \dim_H E_\mu(\alpha;\mathcal{L}_i)=f_{\mathcal{L}_i}(\alpha).
    \end{equation*}
    Of course,
    \begin{equation*}
        \bigcup_{i=1}^m E_\mu(\alpha;\mathcal{L}_i)\supseteq E_\mu(\alpha)\cap\kint.
    \end{equation*}
    Moreover, if $x\notin\kint$, by \cref{l:simple}, then $\dim_{\loc}(\mu,x)=\dim_{\loc}(\rho,\gamma)$ for some infinite path $\gamma\in\Omega^\infty_{\mathcal{L}}$.
    Then this $\mathcal{L}$ is non-degenerate, and $K\setminus\kint$ is countable and hence has Hausdorff dimension 0.
    Thus
    \begin{equation*}
        f_\mu(\alpha)=\dim_H E_\mu(\alpha)=\dim_H E_\mu(\alpha)\cap\kint
    \end{equation*}
    as required.
\end{proof}
\subsection{Decomposability and bounds for the \texorpdfstring{$L^q$}{Lq}-spectrum}
Recall that the notion of decomposability was introduced in \cref{sss:decomposable}.

Similarly to how we bounded the multifractal formalism $f_\mu$ in terms of the functions $f_{\mathcal{L}}$ for loop classes $\mathcal{L}$, in this section, we establish bounds for the $L^q$-spectrum $\tau_{\mu}$ in terms of the functions $\tau_{\mathcal{L}}$.
We first note the following general upper bound.
\begin{lemma}\label{l:lq-upper-bound}
    Let $\mu$ be a self-similar measure satisfying the $\Phi$-FNC, with loop classes $\mathcal{L}_1,\ldots,\mathcal{L}_m$.
    Then
    \begin{equation*}
        \tau_\mu(q)\leq\limsup_{t\to 0}\frac{\log\sup\sum_i\mu(B(x_i,t))^q}{\log t}\leq \min\{\tau_{\mathcal{L}_1}(q),\ldots,\tau_{\mathcal{L}_m}(q)\}
    \end{equation*}
    where the supremum is taken over all centred packings $\{B(x_i,t)\}_i$ of $K=\supp\mu$.
\end{lemma}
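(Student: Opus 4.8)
The plan is to dispatch the two inequalities separately. The first is immediate: by definition $\tau_\mu(q)$ is the $\liminf$ as $t\to 0$ of the quantity $\log(\sup\sum_i\mu(B(x_i,t))^q)/\log t$, whereas the middle term is the $\limsup$ of this same quantity, and $\liminf\le\limsup$ always.

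For the second inequality, write $S(t)=\sup\sum_i\mu(B(x_i,t))^q$, so the middle term is $M(q)=\limsup_{t\to 0}\log S(t)/\log t$; it suffices to show $M(q)\le\tau_{\mathcal{L}_j}(q)$ for each fixed $j$ and then minimise over $j$. Fix a net interval $\Delta_0$ with $\vs(\Delta_0)\in V(\mathcal{L}_j)$ and symbolic representation $\zeta_0$, so that by definition $\tau_{\mathcal{L}_j}(q)=\lim_{t\to 0}\log P_j(t)/\log t$, where $P_j(t)=\sum_{\eta\in\mathcal{F}_{\mathcal{L}_j,\zeta_0}(t)}\rho(\eta)^q$ and the limit exists by \cref{l:lq-limit}. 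The key idea is that the loop class $\mathcal{L}_j$ already supplies, at every scale, a centred packing of $K$ whose $q$-sum is comparable to $P_j(t)$. Concretely, I would apply the ball construction from the second half of the proof of \cref{p:lq-lim}, but restricted to the subcollection $\mathcal{F}_{\mathcal{L}_j,\zeta_0}(t)\subseteq\mathcal{F}(t)$: for each such $\eta$ one obtains a ball $B(x_\eta,r_\eta)\subseteq\pi(\eta)$ with $r_\eta\asymp t$ and $\mu(B(x_\eta,r_\eta))\asymp\rho(\eta)$. Since the net intervals $\pi(\eta)$ are pairwise disjoint (up to endpoints) and the constructed balls lie well inside them, these balls are disjoint, so after passing to a common radius $s\asymp t$ they form a legitimate centred packing of $K$. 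Hence $S(s)\succcurlyeq_q P_j(t)$ for some $s\asymp t$.

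It remains to convert this into the statement about $M(q)$. Taking logarithms in $S(s)\succcurlyeq_q P_j(t)$ and dividing by $\log s<0$ yields $\log S(s)/\log s\le(\log P_j(t)+O(1))/\log s$; since $s\asymp t$ gives $\log s=\log t+O(1)$, the right-hand side tends to $\tau_{\mathcal{L}_j}(q)$ as $t\to 0$. As $t$ ranges over all small values the corresponding scales $s$ are cofinal near $0$ (given any small $s$, one simply runs the construction at scale $t\asymp s$), so $M(q)=\limsup_{s\to 0}\log S(s)/\log s\le\tau_{\mathcal{L}_j}(q)$. Minimising over $j$ completes the proof.

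I expect the one genuinely delicate point to be the scale-matching in the middle paragraph: the balls produced by the construction have radii $r_\eta$ that are only comparable to $t$, not equal, so some care is needed to replace them by balls of a single radius $s\asymp t$ while retaining both disjointness and the lower bound $\mu(B(x_\eta,s))\succcurlyeq\rho(\eta)$. This is exactly the technical issue already resolved inside \cref{p:lq-lim} (the sets $T_\Delta\circ f\circ S_\tau(K)$ sit a definite distance inside their net intervals), so I would reuse that mechanism rather than reprove it. Everything else is bookkeeping with $\liminf$/$\limsup$ and the sign of $\log t$.
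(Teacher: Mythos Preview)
Your proposal is correct and follows essentially the same approach as the paper: the paper also dispatches the first inequality by definition, and for the second invokes ``the same proof as \cref{p:lq-lim}'' together with the inclusion $\mathcal{F}_{\mathcal{L},\zeta}(t)\subseteq\mathcal{F}(t)$ and existence of the limit from \cref{l:lq-limit}. The only organizational difference is that the paper routes through the full sum $\sum_{\eta\in\mathcal{F}(t)}\rho(\eta)^q$ as an intermediate quantity, whereas you apply the ball construction from \cref{p:lq-lim} directly to the loop-class subcollection; this is a harmless rearrangement of the same argument.
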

\begin{proof}
    The first inequality follows by definition.

    To see the second inequality, let $\mathcal{L}$ be an arbitrary loop class.
    Let $\zeta\in\Omega^*$ be a path ending at a vertex in $\mathcal{L}$.
    Then by definition
    \begin{equation*}
        \sum_{\eta\in\mathcal{F}(t)}\rho(\eta)^q\geq\sum_{\eta\in\mathcal{F}_{\mathcal{L},\zeta}(t)}\rho(\eta)^q.
    \end{equation*}
    Now the same proof as \cref{p:lq-lim} shows that
    \begin{equation*}
        \limsup_{t\to 0}\frac{\log\sup\sum_i\mu(B(x_i,t))^q}{\log t}=\limsup_{t\to 0}\frac{\log\sum_{\eta\in\mathcal{F}(t)}\rho(\eta)^q}{\log t}\leq\tau_{\mathcal{L},\zeta}(q)=\tau_{\mathcal{L}}(q)
    \end{equation*}
    by existence of the limit defining $\tau_{\mathcal{L},\zeta}$ given in \cref{l:lq-limit}.
    But $\mathcal{L}$ was arbitrary, so the result follows.
\end{proof}
We now have the following result establishing our lower bound as well.
Note the similarity of this result and proof to \cite[Thm. 5.2]{hhstoappear}.
\begin{theorem}\label{t:lq-lower-bound}
    Let $\mu$ be a self-similar satisfying the $\Phi$-FNC with decomposable transition graph $\mathcal{G}$.
    Let $\mathcal{G}$ have loop classes $\mathcal{L}_1,\ldots,\mathcal{L}_m$.
    Then
    \begin{equation*}
        \tau_\mu(q)=\min\{\tau_{\mathcal{L}_1}(q),\ldots,\tau_{\mathcal{L}_m}(q)\}.
    \end{equation*}
    for any $q\in\R$.
    Moreover, the limit defining $\tau_\mu(q)$ exists for any $q\in\R$.
\end{theorem}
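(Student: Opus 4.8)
The plan is to prove the two inequalities separately, combining the general upper bound from \cref{l:lq-upper-bound} with a matching lower bound obtained from the decomposability hypothesis. Since \cref{l:lq-upper-bound} already gives
\begin{equation*}
    \limsup_{t\to 0}\frac{\log\sup\sum_i\mu(B(x_i,t))^q}{\log t}\leq\min\{\tau_{\mathcal{L}_1}(q),\ldots,\tau_{\mathcal{L}_m}(q)\},
\end{equation*}
and $\tau_\mu$ is defined via a $\liminf$, the main work is to show that the corresponding $\liminf$ is bounded \emph{below} by $\min_i\tau_{\mathcal{L}_i}(q)$. Establishing this lower bound for all $q\in\R$ (not just $q\geq 0$) simultaneously yields existence of the limit, because the $\liminf$ will then equal the $\limsup$.

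The key step is to reorganize the sum $\sum_{\eta\in\mathcal{F}(t)}\rho(\eta)^q$ (which computes $\tau_\mu(q)$ by \cref{p:lq-lim}) according to the decomposition $\eta=\phi\lambda_1\psi_1\cdots\psi_{m-1}\lambda_m$ of each path described in \cref{sss:decomposable}. First I would fix the loop-class ordering $\mathcal{L}_1,\ldots,\mathcal{L}_m$ compatible with transition paths and, using decomposability, replace $\rho(\eta)=\norm{T(\eta)}$ by the product $\norm{T(\lambda_1)}\cdots\norm{T(\lambda_m)}\asymp\rho(\lambda_1)\cdots\rho(\lambda_m)$ up to a uniform multiplicative constant. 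Since there are only finitely many choices for the initial and transition paths $\phi,\psi_i$, each contributing a bounded factor to $W(\eta)$, one can arrange that the scales of the component paths $\lambda_i\in\mathcal{F}_{\mathcal{L}_i,\zeta_i}(t_i)$ multiply to give $\prod_i t_i\asymp t$. This should let me factor the sum over $\mathcal{F}(t)$ as a product-like expression $\prod_{i=1}^m\bigl(\sum_{\lambda_i}\rho(\lambda_i)^q\bigr)$, where each inner sum is controlled by $A_q^{(i)}(t_i)\approx t_i^{\tau_{\mathcal{L}_i}(q)}$ via the existence of the loop-class limits in \cref{l:lq-limit}. The product of these, subject to $\prod_i t_i\asymp t$, is minimized (for the purpose of the $\liminf$ exponent) precisely by putting all the scale into the loop class with the smallest $\tau_{\mathcal{L}_i}(q)$, which produces the exponent $\min_i\tau_{\mathcal{L}_i}(q)$.

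The main obstacle I anticipate is handling the case $q<0$ carefully: the product decomposition involves summing $\rho(\lambda_i)^q$ with a negative exponent, so the constants hidden in the $\asymp$ relations from decomposability and from \cref{l:left-prod} get raised to the $q$-th power, and I must ensure these remain uniformly bounded (independent of $t$ and of the number of repetitions within each loop class). This is exactly why decomposability — rather than mere submultiplicativity — is required: it gives the two-sided comparison $\norm{T(\eta)}\asymp\prod_i\norm{T(\lambda_i)}$ with constants depending only on $\mathcal{G}$, so the finitely many initial/transition factors contribute only an overall constant that vanishes in the logarithmic limit. A secondary technical point is partitioning the scales: one must show that as $\eta$ ranges over $\mathcal{F}(t)$, the tuple of component scales $(t_1,\ldots,t_m)$ ranges (up to bounded distortion) over all factorizations of $t$, so that the supremum/infimum over scale allocations genuinely recovers $\min_i\tau_{\mathcal{L}_i}(q)$ and not something larger. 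Once both inequalities are in place, the equality forces the $\liminf$ and $\limsup$ to coincide, giving existence of the limit defining $\tau_\mu(q)$ for every $q\in\R$.
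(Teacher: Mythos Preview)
Your proposal is correct and follows essentially the same approach as the paper: combine \cref{l:lq-upper-bound} with a lower bound obtained by decomposing each $\eta\in\mathcal{F}(t)$ into its loop-class pieces via decomposability, mapping into $\prod_i\mathcal{F}_{\mathcal{L}_i,\zeta_i}(t_i)$, and factoring the sum as a product of loop-class sums each controlled by $t_i^{\tau_{\mathcal{L}_i}(q)-\epsilon}$ via \cref{l:lq-limit}. The one point you leave implicit that the paper makes explicit is that the number of admissible scale tuples $(t_1,\ldots,t_m)$ with $\prod_i t_i\asymp t$ grows only polynomially in $\log t$ (since there are finitely many edge weights), so summing $\prod_i t_i^{\theta(q)-\epsilon}\preccurlyeq t^{\theta(q)-\epsilon}$ over all such tuples does not affect the exponent.
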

\begin{proof}
    For each loop class $\mathcal{L}_i$, fix a path $\zeta_i\in\Omega^*$ ending at a vertex $v_i\in V(\mathcal{L}_i)$.
    Now for each vertex $w\in V(\mathcal{L}_i)$, let $\gamma_{i,w}$ be a path in $\mathcal{L}_i$ from $v_i$ to $w$.
    Let $s_0>0$ be such that
    \begin{equation*}
        s_0^{1/m}\leq\min_{i}\min_{w\in\mathcal{L}_i}W(\zeta_i\gamma_{i,w}).
    \end{equation*}
    Similarly, since there are only finitely many initial and transition paths, there is $s_1>0$ such that if $\eta\in\mathcal{F}(t)$ has decomposition $(\lambda_1,\ldots,\lambda_n)$, then
    \begin{equation*}
        s_1 t\geq W(\lambda_1)\cdots W(\lambda_n).
    \end{equation*}
    Next, define sets of path weights
    \begin{align*}
        \Lambda_i&:=\{W(\eta):\eta\in\mathcal{F}_{\mathcal{L}_i,\zeta_i}\}\\
        \Lambda(t) &:= \{(t_1,\ldots,t_m)\in\Lambda_1\times\cdots\times\Lambda_m: s_1 t\geq t_1\cdots t_m\geq s_0 t\}.
    \end{align*}
    Since there are only finitely many edge weights $W(e)$ for $e\in E(\mathcal{G})$, it follows that there is some $k\in\N$ such that $\#\Lambda(t)\leq(-\log t)^k$ for all $t$ sufficiently small.

    We now construct a function
    \begin{equation}\label{e:Psi-def}
        \Psi:\mathcal{F}(t)\to\bigcup_{(t_1,\ldots,t_m)\in\Lambda(t)}\mathcal{F}_{\mathcal{L}_1,\zeta_1}(t_1)\times\cdots\times\mathcal{F}_{\mathcal{L}_m,\zeta_m}(t_m)
    \end{equation}
    as follows.
    Suppose the path $\eta\in\mathcal{F}(t)$ has decomposition $(\lambda_1,\ldots,\lambda_m)$.
    Then if the path $\lambda_i$ begins at vertex $w_i\in V(\mathcal{L}_i)$, we set
    \begin{equation*}
        \Psi(\eta)=(\zeta_1\gamma_{1,w_1}\lambda_1,\ldots,\zeta_m\gamma_{m,w_m}\lambda_m).
    \end{equation*}
    Note that $\Psi$ is well-defined by choice of $s_0$ and the definition of $\Lambda(t)$.

    Since there are only finitely many transition paths, there is a uniform bound on the number of paths with the same decomposition.
    Moreover, since there are only finitely many paths $\gamma_{i,w_i}$, for a fixed path $\eta$, the number of distinct decompositions of paths $\eta'$ with $\Psi(\eta)=\Psi(\eta')$ is also uniformly bounded.
    Thus, even though $\Psi$ need not be injective, there is some constant $N\in\N$ (independent of $t$) such that each fibre of $\Psi$ has cardinality at most $N$.

    Fix
    \begin{equation*}
        \theta(q):=\min\{\tau_{\mathcal{L}_1}(q),\ldots,\tau_{\mathcal{L}_m}(q)\}.
    \end{equation*}
    By \cref{l:lq-limit}, for any $\epsilon>0$ and all $t$ sufficiently small,
    \begin{equation*}
        \sum_{\eta_i\in\mathcal{F}_{\mathcal{L}_i,\zeta_i}(t)}\norm{T(\eta_i)}^q\leq t^{\tau_{\mathcal{L}_i}(q)-\epsilon}\leq t^{\theta(q)-\epsilon}.
    \end{equation*}
    Moreover, by the decomposability assumption and \cref{l:left-prod}, it follows that if $\Psi(\eta)=(\eta_1,\ldots,\eta_m)$, then
    \begin{equation*}
        \norm{T(\eta)}^q\preccurlyeq_q\norm{T(\eta_1)}^q\cdots\norm{T(\eta_m)}^q.
    \end{equation*}
    Thus for all $t$ sufficiently small,
    \begin{align*}
        \sum_{\eta\in\mathcal{F}(t)}\rho(\eta)^q&\preccurlyeq_q\sum_{(t_1,\ldots,t_m)\in\Lambda(t)}\left(\sum_{\eta_1\in\mathcal{F}_{\mathcal{L}_1,\zeta_1}(t_1)}\cdots\sum_{\eta_m\in\mathcal{F}_{\mathcal{L}_m,\zeta_m}(t_m)}\norm{T(\eta_1)}^q\cdots\norm{T(\eta_m)}^q\right)\\
                                                &= \sum_{(t_1,\ldots,t_m)\in\Lambda(t)}\left(\sum_{\eta_1\in\mathcal{F}_{\mathcal{L}_1,\zeta_1}(t_1)}\norm{T(\eta_1)}^q\right)\cdots\left(\sum_{\eta_m\in\mathcal{F}_{\mathcal{L}_m,\zeta_m}(t_m)}\norm{T(\eta_m)}^q\right)\\
                                                &\preccurlyeq_q \sum_{(t_1,\ldots,t_m)\in\Lambda(t)} t_1^{\theta(q)-\epsilon}\cdots t_m^{\theta(q)-\epsilon}\\
                                                &\preccurlyeq_q\#\Lambda(t) t^{\theta(q)-\epsilon}.
    \end{align*}
    Since $\#\Lambda(t)$ grows polynomially in $\log t$, it follows by \cref{p:lq-lim} that
    \begin{equation*}
        \tau_\mu(q)\geq\liminf_{t\to 0}\frac{\log\sum_{\eta\in\mathcal{F}(t)}\rho(\eta)^q}{\log t}\geq\theta(q)-\epsilon.
    \end{equation*}
    But $\epsilon>0$ was arbitrary, and combining this with \cref{l:lq-upper-bound} yields the desired result.
\end{proof}
\begin{remark}\label{r:q-pos-min}
    In fact, since for any path $\eta$ with decomposition $(\lambda_1,\ldots,\lambda_m)$, we have
    \begin{equation*}
        \norm{T(\eta)}\preccurlyeq\norm{T(\lambda_1)}\cdots\norm{T(\lambda_m)}
    \end{equation*}
    with no assumptions on the transition graph $\mathcal{G}$, the same proof as above shows that
    \begin{equation*}
        \tau_{\mathcal{L}}(q)=\tau_\mu(q)=\min\{\tau_{\mathcal{L}_1}(q),\ldots,\tau_{\mathcal{L}_m}(q)\}
    \end{equation*}
    by \cref{p:ess-formula} for $q\geq 0$ without the decomposability assumption.
\end{remark}
\begin{remark}
    Unlike the results for the multifractal formalism in \cref{c:m-spectrum}, we note that \cref{l:lq-upper-bound} and \cref{t:lq-lower-bound} write the $L^q$-spectrum in terms of \emph{all} loop classes, and not just the non-degenerate loop classes.
\end{remark}

\section{Applications and examples}\label{s:multi-examples}
Throughout this section, naturally, $(S_i,p_i)_{i\in\mathcal{I}}$ is a WIFS satisfying the finite neighbour condition with respect to the iteration rule $\Phi$, and has transition graph $\mathcal{G}$ and associated self-similar measure $\mu$.
\subsection{Consequences of the main results}\label{ss:cons}
Our first application, which follows essentially from the bounds in the previous section along with standard properties of concave functions, describes precisely when the multifractal formalism holds.
\begin{corollary}\label{c:multi-validity}
    Suppose $\mathcal{G}$ is irreducible and decomposable, and suppose the maximal loop classes $\mathcal{L}_1,\ldots,\mathcal{L}_m$ are non-degenerate.
    Then $\mu$ satisfies the multifractal formalism at $\alpha$ if and only if $\alpha\in\partial\tau_{\mathcal{L}_i}(q)$ for some $1\leq i\leq m$ and $q\in\R$ with $\min\{\tau_{\mathcal{L}_1}(q),\ldots,\tau_{\mathcal{L}_m}(q)\}=\tau_{\mathcal{L}_i}(q)$.
    In particular, if the derivative $\alpha=\tau_\mu'(q)$ exists at some $q\in\R$, then $\mu$ satisfies the multifractal formalism at $\alpha$.
\end{corollary}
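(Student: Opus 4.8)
The plan is to reduce the whole statement to the two explicit formulas supplied by the main theorems and then argue purely with concave conjugates. By \cref{t:lq-lower-bound} (which uses decomposability) we have $\tau_\mu = \min\{\tau_{\mathcal{L}_1},\ldots,\tau_{\mathcal{L}_m}\}$, and by \cref{c:m-spectrum} (using irreducibility and non-degeneracy) together with the loop-class multifractal formalism $f_{\mathcal{L}_i}=\tau_{\mathcal{L}_i}^*$ from \cref{t:multi-f}, we have $f_\mu = \max\{\tau_{\mathcal{L}_1}^*,\ldots,\tau_{\mathcal{L}_m}^*\}$. I would also keep in hand the general upper bound $f_\mu\leq\tau_\mu^*$ and the conjugacy identity $\tau_{\mathcal{L}_i}^*(\alpha)+\tau_{\mathcal{L}_i}(q)=\alpha q$, valid whenever $\alpha\in\partial\tau_{\mathcal{L}_i}(q)$. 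Throughout I restrict attention to $\alpha$ in the finite range $[\alpha_{\min}(\mu),\alpha_{\max}(\mu)]$, since outside this interval both $f_\mu(\alpha)$ and $\tau_\mu^*(\alpha)$ equal $-\infty$.

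For the sufficiency direction, suppose $\alpha\in\partial\tau_{\mathcal{L}_i}(q)$ with $\tau_{\mathcal{L}_i}(q)=\tau_\mu(q)$. Then using $f_\mu=\max_j\tau_{\mathcal{L}_j}^*$ and the conjugacy identity,
\begin{equation*}
    f_\mu(\alpha)\geq\tau_{\mathcal{L}_i}^*(\alpha)=\alpha q-\tau_{\mathcal{L}_i}(q)=\alpha q-\tau_\mu(q)\geq\tau_\mu^*(\alpha),
\end{equation*}
where the final inequality is merely the definition of $\tau_\mu^*$ as an infimum over $\R$. Combined with $f_\mu\leq\tau_\mu^*$ this forces $f_\mu(\alpha)=\tau_\mu^*(\alpha)$. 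For the necessity direction, suppose $f_\mu(\alpha)=\tau_\mu^*(\alpha)$. Choose $i$ realizing the maximum, so $f_\mu(\alpha)=\tau_{\mathcal{L}_i}^*(\alpha)$, and choose $q$ attaining the infimum defining $\tau_{\mathcal{L}_i}^*(\alpha)$, so that $\alpha\in\partial\tau_{\mathcal{L}_i}(q)$ and $\tau_{\mathcal{L}_i}^*(\alpha)=\alpha q-\tau_{\mathcal{L}_i}(q)$. If we had $\tau_\mu(q)<\tau_{\mathcal{L}_i}(q)$, then $\tau_\mu^*(\alpha)\leq\alpha q-\tau_\mu(q)<\alpha q-\tau_{\mathcal{L}_i}(q)=\tau_{\mathcal{L}_i}^*(\alpha)=\tau_\mu^*(\alpha)$, a contradiction; hence $\tau_{\mathcal{L}_i}(q)=\tau_\mu(q)$, which is exactly the asserted condition.

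Finally, for the ``in particular'' clause, suppose $\tau_\mu'(q)=\alpha$ exists and pick $i$ with $\tau_{\mathcal{L}_i}(q)=\tau_\mu(q)$. Since $\tau_{\mathcal{L}_i}\geq\tau_\mu$ everywhere with equality at $q$, the concave function $\tau_{\mathcal{L}_i}-\tau_\mu$ attains a minimum of $0$ at $q$; comparing one-sided derivatives (and using $\tau_{\mathcal{L}_i}'^-(q)\geq\tau_{\mathcal{L}_i}'^+(q)$) forces $\tau_{\mathcal{L}_i}$ to be differentiable at $q$ with $\tau_{\mathcal{L}_i}'(q)=\alpha$, so $\alpha\in\partial\tau_{\mathcal{L}_i}(q)$ and the sufficiency direction applies. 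The step I expect to require the most care is the attainment of the infimum at a \emph{finite} $q$ in the necessity direction: this can fail precisely at the extreme local-dimension values $\alpha=\alpha_{\min}(\mathcal{L}_i)$ or $\alpha_{\max}(\mathcal{L}_i)$, where the minimizing slope is approached only as $q\to\pm\infty$. I would handle this by restricting to $\alpha$ in the relative interior of the domain of $\tau_{\mathcal{L}_i}^*$ (equivalently, where $\partial\tau_{\mathcal{L}_i}^*(\alpha)\neq\emptyset$) and invoking the finiteness of the $\tau_{\mathcal{L}_i}$ on $\R$ to guarantee nonemptiness of the relevant subdifferentials.
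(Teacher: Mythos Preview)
Your sufficiency direction is clean and essentially matches the paper's. The necessity direction, however, contains a sign error that breaks the contradiction. You assume $\tau_\mu(q)<\tau_{\mathcal{L}_i}(q)$ and then write $\alpha q-\tau_\mu(q)<\alpha q-\tau_{\mathcal{L}_i}(q)$; but subtracting a \emph{smaller} number gives a \emph{larger} result, so the correct inequality is $\alpha q-\tau_\mu(q)>\alpha q-\tau_{\mathcal{L}_i}(q)$. With the corrected sign your chain reads $\tau_\mu^*(\alpha)\leq\alpha q-\tau_\mu(q)$ and $\alpha q-\tau_\mu(q)>\tau_{\mathcal{L}_i}^*(\alpha)=\tau_\mu^*(\alpha)$, which is perfectly consistent and yields no contradiction. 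The underlying issue is your choice of $q$: picking $q$ with $\alpha\in\partial\tau_{\mathcal{L}_i}(q)$ does not force $\tau_{\mathcal{L}_i}(q)=\tau_\mu(q)$ (think of $\tau_{\mathcal{L}_i}$ affine with slope $\alpha$ on an interval that only touches $\tau_\mu$ at one endpoint).

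The fix is to choose $q$ from the other function: take $q$ with $\alpha\in\partial\tau_\mu(q)$, so $\tau_\mu^*(\alpha)=\alpha q-\tau_\mu(q)$. Since $\tau_{\mathcal{L}_i}(q)\geq\tau_\mu(q)$, one has $\alpha q-\tau_{\mathcal{L}_i}(q)\leq\alpha q-\tau_\mu(q)=\tau_\mu^*(\alpha)=\tau_{\mathcal{L}_i}^*(\alpha)\leq\alpha q-\tau_{\mathcal{L}_i}(q)$, forcing equality throughout; hence $\tau_{\mathcal{L}_i}(q)=\tau_\mu(q)$ and $\alpha\in\partial\tau_{\mathcal{L}_i}(q)$. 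This is effectively what the paper does (it introduces both $q_1\in\partial\tau_{\mathcal{L}_i}^{-1}(\alpha)$ and $q_2\in\partial\tau_\mu^{-1}(\alpha)$ and compares), and it also sidesteps your attainment worry at the endpoints, since existence of $q$ with $\alpha\in\partial\tau_\mu(q)$ is exactly the hypothesis $\alpha\in[\alpha_{\min}(\mu),\alpha_{\max}(\mu)]$.

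One minor point in the ``in particular'' clause: $\tau_{\mathcal{L}_i}-\tau_\mu$ is a difference of concave functions and need not be concave, so you cannot invoke concavity of that difference. Your one-sided derivative comparison is the right idea, though: from $\tau_{\mathcal{L}_i}\geq\tau_\mu$ with equality at $q$ one gets $\tau_{\mathcal{L}_i}'^+(q)\geq\tau_\mu'(q)=\alpha$ and $\tau_{\mathcal{L}_i}'^-(q)\leq\alpha$, and concavity of $\tau_{\mathcal{L}_i}$ gives $\tau_{\mathcal{L}_i}'^-(q)\geq\tau_{\mathcal{L}_i}'^+(q)$, so all three are equal to $\alpha$.
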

\begin{proof}
    Since $\mathcal{G}$ is decomposable, $\tau_\mu=\min\{\tau_{\mathcal{L}_1}(q),\ldots,\tau_{\mathcal{L}_m}\}$ by \cref{t:lq-lower-bound}.

    First suppose $f_\mu(\alpha)=\tau_\mu^*(\alpha)$, so there is some $\mathcal{L}_i$ such that
    \begin{equation*}
        \tau_\mu^*(\alpha)=f_{\mathcal{L}_i}(\alpha)=\tau_{\mathcal{L}_i}^*(\alpha)
    \end{equation*}
    by \cref{c:m-spectrum} and \cref{t:multi-f}.
    Since $\tau_\mu^*(\alpha)=\tau_{\mathcal{L}_i}^*(\alpha)$, there are $q_1,q_2\in\R$ such that $\alpha\in\partial\tau_{\mathcal{L}_i}(q_1)\cap\partial\tau_\mu(q_2)$: therefore, $\tau_{\mathcal{L}_i}(q_1)-\tau_\mu(q_2)=\alpha(q_1-q_2)$.
    Without loss of generality, suppose $q_1<q_2$.
    Since $\tau_\mu(q_1)\leq \tau_{\mathcal{L}_i}(q_1)$ and $\tau_{\mathcal{L}_i}(q_1)\leq\tau_{\mathcal{L}_i}(q_2)-(q_2-q_1)\alpha$ by concavity, this can only happen when $\tau_{\mathcal{L}_i}(q_1)=\tau_\mu(q_1)$ and $\alpha\in\partial\tau_{\mathcal{L}_i}(q_1)$, as required.

    Conversely, suppose $\alpha\in\partial\tau_{\mathcal{L}_i}(q)$ where $\tau_{\mathcal{L}_i}(q)=\tau_\mu(q)$.
    Since $\tau_\mu\leq\tau_{\mathcal{L}_i}$, it follows that $\alpha\in\partial\tau_\mu(q)$ so that $\tau_\mu^*(\alpha)=\tau_{\mathcal{L}_i}^*(\alpha)$.
    But $\tau_{\mathcal{L}_i}(q)\leq\min\{\tau_{\mathcal{L}_1}(q),\ldots,\tau_{\mathcal{L}_m}(q)\}$ by assumption so
    \begin{equation*}
        \tau_\mu^*(\alpha)=\tau_{\mathcal{L}_i}^*(\alpha)=\max\{\tau_{\mathcal{L}_1}^*(\alpha),\ldots,\tau_{\mathcal{L}_m}^*(\alpha)\}=f_\mu(\alpha)
    \end{equation*}
    by \cref{c:m-spectrum}.

    If $\tau_\mu'(q)$ exists, it follows immediately that $\alpha\in\partial\tau_{\mathcal{L}_i}(q)$ for any $i$ such that $\tau_{\mathcal{L}_i}(q)=\min\{\tau_{\mathcal{L}_1}(q),\ldots,\tau_{\mathcal{L}_m}(q)\}$.
\end{proof}
Our next result was obtained in \cite{rut2021} under the weak separation condition, but we obtain it here (in a slightly more specialized case) as a direct corollary of the prior results.
\begin{corollary}\label{c:one-loop}
    Suppose $\mathcal{G}$ has exactly one loop class $\mathcal{L}$.
    Then $\mu$ satisfies the multifractal formalism.
\end{corollary}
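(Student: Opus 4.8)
The plan is to show that when $\mathcal{G}$ has a single loop class $\mathcal{L}$, all the structural hypotheses of \cref{c:m-spectrum} and \cref{t:lq-lower-bound} hold automatically with $m=1$, and then to read off the formalism by comparing $f_\mu$ with $\tau_\mu^*$. The crux is the observation that the unique loop class must be \emph{essential}. To see this, I would argue by contradiction: suppose $\mathcal{L}$ is not essential, so there is a vertex $v\in V(\mathcal{L})$ and a path from $v$ to some $w\notin V(\mathcal{L})$. Since every net interval has children, there is an infinite forward path issuing from $w$, and this path is eventually in a loop class; as $\mathcal{L}$ is the only loop class, the path must re-enter $\mathcal{L}$. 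But then $w$ lies on a directed path from $\mathcal{L}$ back into $\mathcal{L}$, which forces $w$ into the maximal strongly connected component $\mathcal{L}$, contradicting $w\notin V(\mathcal{L})$. Hence $\mathcal{L}$ is essential.

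Once essentiality is established, the remaining hypotheses follow quickly. By \cref{l:ess-irred} an essential loop class is irreducible, so $\mathcal{G}$ is irreducible. Moreover an essential loop class is never simple, so by \cref{d:degen} it is non-degenerate. Decomposability is also immediate: with only one loop class there are no transition paths, so every $\eta\in\Omega^*$ decomposes simply as $\eta=\phi\lambda$ with $\phi$ an initial path and $\lambda$ a path in $\mathcal{L}$. Since there are only finitely many initial paths, \cref{l:left-prod} gives $\norm{T(\eta)}\asymp\norm{T(\lambda)}$, which is exactly the decomposability estimate in the case $m=1$.

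With these facts in hand I would invoke the main decomposition results specialized to $m=1$. \cref{c:m-spectrum} yields $f_\mu=f_{\mathcal{L}}$ on all of $\R$, and \cref{t:multi-f} identifies $f_{\mathcal{L}}=\tau_{\mathcal{L}}^*$. On the spectral side, \cref{t:lq-lower-bound} gives $\tau_\mu=\tau_{\mathcal{L}}$, whence $\tau_\mu^*=\tau_{\mathcal{L}}^*$. Combining these equalities gives $f_\mu=\tau_{\mathcal{L}}^*=\tau_\mu^*$ at every $\alpha\in\R$, which is precisely the complete multifractal formalism. Alternatively one could quote \cref{c:multi-validity} directly: with a single loop class the side condition $\min\{\tau_{\mathcal{L}_1},\ldots,\tau_{\mathcal{L}_m}\}=\tau_{\mathcal{L}_i}$ is vacuous, so the formalism holds at every $\alpha\in\partial\tau_{\mathcal{L}}(q)$, and as $q$ ranges over $\R$ these subdifferentials exhaust the relevant range of $\alpha$.

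I expect the essentiality argument to be the only genuinely non-routine step; everything afterward is a direct specialization of the general theorems. The points to be careful about there are confirming that an infinite forward path exists from every vertex (which holds because each net interval has at least two distinct children by \cref{r:pr-ch}) and that such a path must eventually stabilize in a loop class, which is the standard finiteness property of the graph $\mathcal{G}$.
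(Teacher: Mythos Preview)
Your proof is correct and follows essentially the same route as the paper's: establish that the unique loop class is essential (hence irreducible by \cref{l:ess-irred}), observe that decomposability is vacuous since there are no transition paths so $\tau_\mu=\tau_{\mathcal{L}}$ by \cref{t:lq-lower-bound}, and conclude via \cref{t:multi-f}. You are more explicit than the paper in spelling out the essentiality argument and in invoking non-degeneracy together with \cref{c:m-spectrum} to identify $f_\mu$ with $f_{\mathcal{L}}$, but the logical structure is identical.
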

\begin{proof}
    Since $\mathcal{L}$ is the only loop class, it must be essential, so it is irreducible by \cref{l:ess-irred}.
    Since there is only one loop class and therefore no transition paths, the decomposability condition holds vacuously, and by \cref{t:lq-lower-bound}, $\tau_\mu=\tau_{\mathcal{L}}$.
    Thus the result follows from the multifractal formalism for irreducible graph-directed systems proven in \cref{t:multi-f}.
\end{proof}
We now prove the following result, which completely characterizes the validity of the multifractal formalism in terms of a qualitative property of the multifractal spectrum.
\begin{corollary}\label{c:ir-de}
    Suppose the transition graph $\mathcal{G}$ is irreducible and decomposable, and every loop class is non-degenerate.
    Then $\mu$ satisfies the multifractal formalism if and only if $f_\mu$ is a concave function.
\end{corollary}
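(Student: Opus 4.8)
The plan is to reduce everything to the two structural identities that are already available under these hypotheses. From \cref{c:m-spectrum} together with the loop-class multifractal formalism of \cref{t:multi-f}, irreducibility and non-degeneracy give
\[
    f_\mu(\alpha)=\max\{\tau_{\mathcal{L}_1}^*(\alpha),\ldots,\tau_{\mathcal{L}_m}^*(\alpha)\},
\]
while decomposability gives, via \cref{t:lq-lower-bound},
\[
    \tau_\mu(q)=\min\{\tau_{\mathcal{L}_1}(q),\ldots,\tau_{\mathcal{L}_m}(q)\}.
\]
The forward direction is then immediate: if the multifractal formalism holds, then $f_\mu=\tau_\mu^*$, which is concave since every concave conjugate is concave.

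For the converse I would show that $\tau_\mu^*$ is exactly the concave hull of $f_\mu$, so that concavity of $f_\mu$ forces $f_\mu=\tau_\mu^*$. Concretely, first compute the concave conjugate of $f_\mu$. Using the maximum representation above and the fact that the infimum defining $f_\mu^*$ commutes with a finite minimum,
\[
    f_\mu^*(q)=\inf_{\alpha}\Bigl(\alpha q-\max_i\tau_{\mathcal{L}_i}^*(\alpha)\Bigr)=\min_i\inf_\alpha\bigl(\alpha q-\tau_{\mathcal{L}_i}^*(\alpha)\bigr)=\min_i\tau_{\mathcal{L}_i}^{**}(q)=\tau_\mu(q),
\]
where the last two equalities use the involution property $\tau_{\mathcal{L}_i}^{**}=\tau_{\mathcal{L}_i}$ (each $\tau_{\mathcal{L}_i}$ is concave by \cref{l:lq-limit}) together with the minimum representation of $\tau_\mu$. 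Taking conjugates once more yields $f_\mu^{**}=\tau_\mu^*$.

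It then remains to observe that $f_\mu^{**}=f_\mu$ whenever $f_\mu$ is concave, which closes the argument: combining the hypothesis with the previous display gives $f_\mu=f_\mu^{**}=\tau_\mu^*$, i.e.\ the multifractal formalism holds at every $\alpha$. The step requiring the most care—and what I expect to be the main obstacle—is precisely this biconjugate identity, since $h^{**}$ recovers a concave $h$ only when $h$ is also closed (upper semicontinuous). Here this causes no genuine difficulty: $f_\mu=\max_i\tau_{\mathcal{L}_i}^*$ is a finite maximum of concave conjugates, each of which is upper semicontinuous, so $f_\mu$ is automatically closed, and the concavity hypothesis supplies the rest. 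The remaining book-keeping is to carry out the conjugate computation on the effective domain $[\alpha_{\min}(\mu),\alpha_{\max}(\mu)]$, where $f_\mu$ takes finite values, and to track the $-\infty$ values outside it consistently under the conventions for $f^*$ from the preliminaries. One could alternatively bypass the biconjugate altogether and argue pointwise from \cref{c:multi-validity}, but the conjugation computation above is the cleanest route.
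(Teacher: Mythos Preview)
Your argument is correct and takes a genuinely different route from the paper. Both proofs begin with the same two structural identities for $f_\mu$ and $\tau_\mu$, and both dispatch the forward implication in one line. For the converse, however, the paper argues pointwise: given $\alpha_0$, it locates $q$ with $\alpha_0\in\partial\tau_\mu(q)=[\alpha_1,\alpha_2]$, invokes \cref{c:multi-validity} when $\tau_\mu$ is differentiable at $q$, and otherwise finds loop classes $\mathcal{L}_1,\mathcal{L}_2$ with $\alpha_i\in\partial\tau_{\mathcal{L}_i}(q)$ and $\tau_{\mathcal{L}_i}(q)=\tau_\mu(q)$, so that $f_\mu(\alpha_1)=\tau_\mu^*(\alpha_1)$ and $f_\mu(\alpha_2)=\tau_\mu^*(\alpha_2)$; since $\tau_\mu^*$ is affine on $[\alpha_1,\alpha_2]$, concavity of $f_\mu$ together with $f_\mu\le\tau_\mu^*$ forces equality at $\alpha_0$. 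Your approach instead computes globally that $f_\mu^*=\tau_\mu$ via the identity $\inf_\alpha\min_i=\min_i\inf_\alpha$ and the involution $\tau_{\mathcal{L}_i}^{**}=\tau_{\mathcal{L}_i}$, whence $f_\mu^{**}=\tau_\mu^*$; you then close with Fenchel--Moreau, correctly noting that closedness of $f_\mu$ is automatic since it is a finite maximum of concave conjugates. Your route is shorter and yields the bonus statement that $\tau_\mu^*$ is always the closed concave hull of $f_\mu$ under these hypotheses; the paper's route is more hands-on, avoids any appeal to biconjugate theory, and ties directly into the pointwise characterisation of \cref{c:multi-validity}. You even flag the paper's approach as the alternative you chose not to take.
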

\begin{proof}
    If $\mu$ satisfies the multifractal formalism, then $f_\mu=\tau_\mu^*$ where $\tau_\mu^*$ is a concave function.

    Conversely, suppose $f_\mu$ is a concave function.
    We have by \cref{t:lq-lower-bound} and \cref{c:m-spectrum} that
    \begin{equation}
        f_\mu=\max\{f_{\mathcal{L}_1},\ldots,f_{\mathcal{L}_m}\}\text{ and } \tau_{\mu}=\min\{\tau_{\mathcal{L}_1},\ldots,\tau_{\mathcal{L}_m}\}.\label{e:taumu}
    \end{equation}
    where $\mathcal{G}$ has loop classes $\mathcal{L}_1,\ldots,\mathcal{L}_m$.

    Now let $\alpha_0\in\R$ be arbitrary.
    Let $q$ be the unique value such that $\alpha_0\in\partial\tau_\mu(q)=[\alpha_1,\alpha_2]$.
    If $\alpha_1=\alpha_2$, $\tau_\mu$ is differentiable at $q$ and we are done by \cref{c:multi-validity}.
    Otherwise, by \cref{e:taumu}, there exist two loop classes, say $\mathcal{L}_1$ and $\mathcal{L}_2$, such that $\alpha_1\in\partial\tau_{\mathcal{L}_1}(q)$, $\alpha_2\in\partial\tau_{\mathcal{L}_2}(q)$ and $\tau_{\mathcal{L}_1}(q)=\tau_{\mathcal{L}_2}(q)=\tau_\mu(q)$.
    Observe that $\tau_\mu^*(\alpha)=\alpha q-\tau_\mu(q)$ for any $\alpha\in[\alpha_1,\alpha_2]$.
    Moreover, since concave conjugation is order reversing, by \cref{e:taumu}, $f_\mu(\alpha_1)=f_{\mathcal{L}_1}(\alpha_1)=\tau_\mu^*(\alpha_1)$ and $f_\mu(\alpha_2)=f_{\mathcal{L}_2}(\alpha_2)=\tau_\mu^*(\alpha_2)$.
    But $f_\mu(\alpha_0)\leq\tau_\mu^*(\alpha_0)$ and $f_\mu$ is concave by assumption, forcing $\tau_\mu^*(\alpha_0)=f_\mu(\alpha_0)$ as required.
\end{proof}
\begin{remark}
    For IFS of the form $(\lambda x+d_i)_{i\in\mathcal{I}}$ satisfying the finite type condition, the following version of the reverse implication was first observed in \cite[Rem. 5.3]{fen2009}: if $\tau_\mu=\tau_{\mathcal{L}}$ for an essential loop class $\mathcal{L}$, then $\mu$ satisfies the multifractal formalism.
    This result follows for any IFS satisfying the $\Phi$-FNC by combining \cref{p:ess-formula}, the fact that the essential loop class is always irreducible, and the general upper bound $f_\mu\leq\tau_\mu^*$,
\end{remark}
A sufficient condition for the measure $\mu$ to fail the multifractal formalism is for the set of attainable local dimensions of $\mu$ to not be a closed interval.
In general, this condition is not necessary.
However, in certain situations, we can determine that it is necessary and sufficient.
\begin{corollary}\label{c:loc-dim-set}
    Suppose the transition graph $\mathcal{G}$ is decomposable.
    Suppose in addition that every non-essential loop class is simple and non-degenerate.
    Then $\mu$ satisfies the multifractal formalism if and only if the set of local dimensions
    \begin{equation*}
        \{\dim_{\loc}(\mu,x):x\in K\}
    \end{equation*}
    is a closed interval.
\end{corollary}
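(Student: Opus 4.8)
The plan is to reduce the statement to the characterization in \cref{c:ir-de}, that $\mu$ satisfies the multifractal formalism if and only if $f_\mu$ is concave, and then to show that under the present hypotheses this concavity is equivalent to the set of local dimensions being a closed interval. First I would check that the structural results are in force. Every essential loop class is not simple, hence non-degenerate by \cref{d:degen}, and is irreducible by \cref{l:ess-irred}; every non-essential loop class is simple and non-degenerate by hypothesis, and a simple non-degenerate loop class contains an interior point, so \cref{t:m-lower-bound} applies to it directly (via \cref{l:simple}). Consequently the conclusion of \cref{c:m-spectrum} holds: writing $\mathcal{L}_1,\dots,\mathcal{L}_m$ for the loop classes and $I_i=[\alpha_{\min}(\mathcal{L}_i),\alpha_{\max}(\mathcal{L}_i)]$, we have $f_\mu=\max_i f_{\mathcal{L}_i}$ with $f_{\mathcal{L}_i}=\tau_{\mathcal{L}_i}^*$, while \cref{t:lq-lower-bound} (using decomposability) gives $\tau_\mu=\min_i\tau_{\mathcal{L}_i}$. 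By the argument of \cref{c:ir-de}, or directly since $\tau_\mu^*$ is the least concave majorant of $f_\mu=\max_i\tau_{\mathcal{L}_i}^*$ and $f_\mu\leq\tau_\mu^*$ always holds, $\mu$ satisfies the multifractal formalism if and only if $f_\mu$ is concave.

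Next I would identify the set of local dimensions. Since $\dim_H$ of a nonempty set is nonnegative while $\dim_H\emptyset=-\infty$, one has $E_\mu(\alpha)\neq\emptyset$ precisely when $f_\mu(\alpha)\geq 0$, so the set of local dimensions equals $\{\alpha:f_\mu(\alpha)\geq 0\}$. By \cref{t:m-lower-bound} together with \cref{t:multi-f}, $f_{\mathcal{L}_i}(\alpha)\geq 0$ exactly when $\alpha\in I_i$, and therefore the set of local dimensions is $D=\bigcup_{i=1}^m I_i$.

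The key structural observation is that a simple loop class contributes only a single point. For a simple loop class $\mathcal{L}$ each vertex has a unique outgoing edge, so a path ending in $\mathcal{L}$ has a forced continuation and $\mathcal{F}_{\mathcal{L},\zeta}(t)$ consists of boundedly many paths obtained by iterating the generating cycle $\theta$. Thus $\tau_{\mathcal{L}}(q)=\alpha_{\mathcal{L}}\,q$ is affine, $I_{\mathcal{L}}=\{\alpha_{\mathcal{L}}\}$ is a single point, and $f_{\mathcal{L}}=\tau_{\mathcal{L}}^*$ equals $0$ at $\alpha_{\mathcal{L}}$ and $-\infty$ elsewhere. On the other hand the essential loop class (unique up to its value of $\tau_{\mathcal{L}}$ by \cref{r:ess-unique}) is not simple and supplies the genuinely spread-out concave spectrum $f_{\mathrm{ess}}=\tau_{\mathrm{ess}}^*$, which is nonnegative on the whole interval $I_{\mathrm{ess}}$. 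Hence $D=I_{\mathrm{ess}}\cup\{\alpha_{\mathcal{L}_i}:\mathcal{L}_i\text{ simple}\}$ is the interval $I_{\mathrm{ess}}$ together with finitely many isolated points.

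With this in hand the equivalence is a short argument about concave functions. If $D$ is a closed interval, then, since finitely many points cannot extend an interval, every simple point $\alpha_{\mathcal{L}_i}$ lies in $I_{\mathrm{ess}}$, where $f_{\mathcal{L}_i}(\alpha_{\mathcal{L}_i})=0\leq f_{\mathrm{ess}}(\alpha_{\mathcal{L}_i})$; thus $f_\mu=\max_i f_{\mathcal{L}_i}=f_{\mathrm{ess}}$ is concave and the formalism holds. Conversely, if some simple point $\alpha_0$ lies outside $I_{\mathrm{ess}}$, then the gap between $I_{\mathrm{ess}}$ and $\alpha_0$ is an interval of positive length meeting no $I_j$, so $f_\mu=-\infty$ there while being finite on $I_{\mathrm{ess}}$ and at $\alpha_0$; the effective domain of $f_\mu$ is then non-convex, so $f_\mu$ is not concave (and $D$ is not an interval). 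The main obstacle I anticipate is the careful justification that a simple loop class yields an affine $\tau_{\mathcal{L}}$ even when the cycle matrix is not irreducible, since subexponential corrections must be shown not to affect the limiting exponent, together with the bookkeeping verifying that the countably many boundary (non-interior) points of $K$ do not enlarge the set of local dimensions beyond $\bigcup_i I_i$.
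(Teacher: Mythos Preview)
Your approach is essentially the same as the paper's: verify that all loop classes are irreducible and non-degenerate so that \cref{c:m-spectrum} and \cref{t:lq-lower-bound} apply, reduce to \cref{c:ir-de}, observe that simple loop classes contribute only a single point to $f_\mu$ while the essential class contributes a genuine concave spectrum, and conclude that $f_\mu$ is concave if and only if all the simple points already lie in $I_{\mathrm{ess}}$.

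One small correction: your claim that ``a simple non-degenerate loop class contains an interior point'' is not justified by \cref{d:degen} and is not needed. Non-degeneracy only says there exists some $x\in K$ (possibly a boundary point) with $\dim_{\loc}(\mu,x)=\dim_{\loc}(\rho,\gamma)$ for some $\gamma\in\Omega^\infty_{\mathcal{L}}$. You should instead invoke \cref{c:m-spectrum} directly: its hypothesis is only that every loop class be irreducible, and the paper states (and uses in its own proof of this corollary) that any simple loop class is irreducible. The two ``obstacles'' you flag at the end are non-issues: for a simple loop class $\mathcal{F}_{\mathcal{L},\zeta}(t)$ has bounded cardinality, so $\tau_{\mathcal{L}}$ is affine regardless of any irreducibility of the cycle matrix; and the boundary-point bookkeeping is already absorbed into the proof of \cref{c:m-spectrum} via \cref{l:simple}.
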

\begin{proof}
    The forward direction is immediate.

    Conversely, denote the loop classes by $\{\mathcal{L}_1,\ldots,\mathcal{L}_m\}$.
    If $\mathcal{L}$ is any simple loop class, then $f_{\mathcal{L}}(\alpha)=0$ for precisely one value of $\alpha$, and is $-\infty$ otherwise.
    Since the essential loop class and any simple loop class is irreducible, by \cref{r:ess-unique}, we have
    \begin{equation*}
        f_{\mathcal{L}}(\alpha)=\max\{f_{\mathcal{L}_1}(\alpha),\ldots,f_{\mathcal{L}_m}(\alpha)\}=f_\mu(\alpha).
    \end{equation*}
    Thus the result follows by \cref{c:ir-de}.
\end{proof}

\subsection{A family of examples of Testud}\label{ss:tes-ex}
Let $\ell\geq 2$ be a positive integer.
Let $P,N\subseteq\{0,1,\ldots,\ell-1\}$ where $\{0,\ell-1\}\subseteq P\cup N$.
Let $\mathcal{I}=P\times\{1\}\cup N\times\{-1\}$ and for $(i,\pm 1)\in\mathcal{I}$, define
\begin{align*}
    S_{(i,1)}(x) &= \frac{x}{\ell}+\frac{i}{\ell} & S_{(i,-1)}(x) &= -\frac{x}{\ell}+\frac{i+1}{\ell}.
\end{align*}
In this subsection, we study the multifractal theory of the IFS $\{S_{\underline{i}}\}_{\underline{i}\in\mathcal{I}}$.
This family of IFS was studied in \cite{tes2006a} and \cite{os2008} under the assumption that $P=\{0,1,\ldots,\ell-1\}$.
We do not require this assumption in our analysis.

Fix the iteration rule $\Phi$ from \cref{ex:uniform-transition}.
Write $V=\{v_{1},v_{-1},v_{\pm 1}\}$ where $v_{1}=\{x\mapsto x\}$, $v_{-1}=\{x\mapsto -x+1\}$ and $v_{\pm 1}=v_{1}\cup v_{-1}$.
Since the images $S_{(i,\pm 1)}((0,1))$ are either disjoint or coincide exactly,
\begin{equation*}
    \mathcal{P}_n = \{S_\sigma([0,1]):\sigma\in\mathcal{I}^n\}.
\end{equation*}
In particular, if $\Delta\in\mathcal{P}$ is any net interval, then $\vs(\Delta)\in V$.
Thus $(S_{\underline{i}})_{\underline{i}\in\mathcal{I}}$ satisfies the $\Phi$-FNC.
Note that $v_1=\vroot\in V(\mathcal{G})$.

If $P\cap N=\emptyset$, then the IFS satisfies the open set condition with respect to the open interval $(0,1)$.
Otherwise, there exists some index $i$ such that $(i,1)$ and $(i,-1)$ are both in $\mathcal{I}$, so that $v_{\pm 1}$ is a neighbour set in $V$.
For the remainder of this section, we will assume that this is the case.
The open set condition may hold even when $P\cap N\neq\emptyset$ with respect to an open set that is not an interval (take, for example, $\ell=4$, $P=\{0,1,3\}$, and $N=\{1\}$), but for simplicity we omit this discussion.

\subsubsection{Properties of the transition graph}
We begin with a description of the transition graph $\mathcal{G}$.
\begin{proposition}
    Suppose $P\cap N\neq\emptyset$.
    There is a unique essential loop class $\mathcal{G}_{\ess}$, and $v_{\pm 1}\in V(\mathcal{G}_{\ess})$.
    Moreover, exactly one of the following holds:
    \begin{enumerate}[nl,r]
        \item We have $P=N$.
            Then $\mathcal{G}_{\ess}$ is the only loop class and $V(\mathcal{G}_{\ess})=\{v_{\pm 1}\}$.
        \item There is some $i$ such that $i\in P\setminus N$ and $\ell-1-i\notin P$ or $i\in N\setminus P$ and $i,\ell-1-i\notin N$.
            Then $\mathcal{G}_{\ess}$ is the only loop class and $V(\mathcal{G}_{\ess})=V(\mathcal{G})=\{v_{1},v_{-1},v_{\pm 1}\}$.
        \item Otherwise, there is exactly one non-essential loop class $\mathcal{L}$.
            In this case, if $P\setminus N\neq\emptyset$, then $v_1\in V(\mathcal{L})$, and if $N\setminus P\neq\emptyset$, then $v_{-1}\in V(\mathcal{L})$.
    \end{enumerate}
\end{proposition}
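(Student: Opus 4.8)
The plan is to compute the edge structure of $\mathcal{G}$ directly on the three possible vertices $v_1,v_{-1},v_{\pm 1}$ by tracking orientations, and then read off the loop classes. Since $\mathcal{P}_n=\{S_\sigma([0,1]):\sigma\in\mathcal{I}^n\}$, every net interval equals $S_\sigma([0,1])$ for some word $\sigma=((i_1,\epsilon_1),\ldots,(i_n,\epsilon_n))$, whose \emph{orientation} is $\prod_k\epsilon_k\in\{+1,-1\}$. The neighbour set records exactly which orientations occur among the words mapping $[0,1]$ onto $\Delta$: the identity $x\mapsto x$ lies in $\vs(\Delta)$ iff some orientation-preserving word maps onto $\Delta$, and the flip $x\mapsto -x+1$ lies in $\vs(\Delta)$ iff some orientation-reversing word does. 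This is precisely why $\vs(\Delta)\in\{v_1,v_{-1},v_{\pm 1}\}$, and $v_{\pm 1}$ occurs since $i_0\in P\cap N$ makes the root's child at digit $i_0$ carry both orientations.

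\textbf{The master transition rule.} First I would establish how children are generated. Using $S_{(i,1)}(x)=x/\ell+i/\ell$ (orientation-preserving onto $[i/\ell,(i+1)/\ell]$) and $S_{(i,-1)}(x)=-x/\ell+(i+1)/\ell$ (orientation-reversing onto the same interval), an orientation-preserving word onto $\Delta$ followed by $(j,\pm 1)$ lands on the child in left-position $j$ with orientation $\pm 1$, whereas an orientation-reversing word onto $\Delta$ followed by $(i,\pm 1)$ lands on the child in left-position $\ell-1-i$ with orientation $\mp 1$. Combining, the child of $\Delta$ at left-position $j$ acquires the identity precisely when $\vs(\Delta)$ contains the identity and $j\in P$, or $\vs(\Delta)$ contains the flip and $\ell-1-j\in N$; it acquires the flip precisely when $\vs(\Delta)$ contains the identity and $j\in N$, or contains the flip and $\ell-1-j\in P$. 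Specializing to each vertex gives the full edge list: $v_1\to v_1$ and $v_{-1}\to v_{-1}$ exist iff $P\setminus N\neq\emptyset$; $v_1\to v_{-1}$ and $v_{-1}\to v_1$ exist iff $N\setminus P\neq\emptyset$; and $v_1,v_{-1}\to v_{\pm 1}$ always (take $j\in P\cap N$).

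\textbf{The essential class and the exit condition.} Taking $j=i_0\in P\cap N$ shows the child of a $v_{\pm 1}$-interval is again $v_{\pm 1}$, so $v_{\pm 1}$ carries a self-loop; as every vertex reaches $v_{\pm 1}$, the unique sink (essential) strongly connected component is the one containing $v_{\pm 1}$, proving the opening claims. The crucial observation is that the two exit conditions from $v_{\pm 1}$ coincide: specializing the rule to $\vs(\Delta)=v_{\pm 1}$ and solving for a child of pure type $v_1$ versus pure type $v_{-1}$, one finds an edge $v_{\pm 1}\to v_1$ iff an edge $v_{\pm 1}\to v_{-1}$ iff the condition $C$ holds, where $C$ states that there is $i\in P\setminus N$ with $\ell-1-i\notin P$, or $i\in N\setminus P$ with $\ell-1-i\notin N$ (this is the condition in (ii)). Hence $v_{\pm 1}$ reaches $v_1$ and $v_{-1}$ simultaneously if $C$ holds, and only itself otherwise, so $\mathcal{G}_{\ess}=\{v_1,v_{-1},v_{\pm 1}\}$ exactly when $C$ holds and $\mathcal{G}_{\ess}=\{v_{\pm 1}\}$ otherwise.

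\textbf{Assembling the trichotomy, and the main obstacle.} It remains to split cases. If $P=N$ then $C$ fails and the master rule forces every non-root interval to be $v_{\pm 1}$, so $v_1$ is transient and $\mathcal{G}_{\ess}=\{v_{\pm 1}\}$ is the only loop class: case (i). If $C$ holds, $\mathcal{G}_{\ess}$ contains all three vertices and is the only loop class: case (ii). Otherwise $C$ fails with $P\neq N$, giving $\mathcal{G}_{\ess}=\{v_{\pm 1}\}$ and forcing the non-essential loop classes to live among $\{v_1,v_{-1}\}$; from the edge list I would check that if $N\setminus P=\emptyset$ then $v_{-1}$ is unreachable from the root so $\{v_1\}$ is the unique non-essential class, if $P\setminus N=\emptyset$ the pair forms a single $2$-cycle $\{v_1,v_{-1}\}$, and if both are nonempty $\{v_1,v_{-1}\}$ is again a single strongly connected class — in each situation exactly one non-essential class, containing $v_1$ when $P\setminus N\neq\emptyset$ and $v_{-1}$ when $N\setminus P\neq\emptyset$: case (iii). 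The main obstacle is the orientation bookkeeping behind the master rule, and in particular verifying that the two exit conditions for $v_{\pm 1}$ genuinely collapse to the single condition $C$, since this is what produces the clean trichotomy; a secondary subtlety is that $N\setminus P=\emptyset$ makes $v_{-1}$ absent, which is exactly what prevents two separate non-essential loop classes from arising.
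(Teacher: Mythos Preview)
Your approach is correct and essentially the same as the paper's: both compute the edge structure of $\mathcal{G}$ directly by tracking which orientations appear at each child position, then read off the loop classes. Your derivation is somewhat more systematic---you first write down a single ``master transition rule'' and then specialize, whereas the paper checks each case more ad hoc---but the underlying orientation bookkeeping and the resulting trichotomy are identical.
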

\begin{proof}
    If $i\in P\cap N$, then $\{(i,1),(i,-1)\}\subset\mathcal{I}$ so that $S_{(i,1)}([0,1])=S_{(i,-1)}([0,1])=\Delta$ is a net interval with neighbour set $v_{\pm 1}$.
    This neighbour set is essential since if $\Delta=S_\sigma([0,1])$ is any net interval, then $S_{\sigma(i,1)}([0,1])$ is a net interval with neighbour set $v_{\pm 1}$.

    It is clear that exactly one of the conditions must hold.
    We verify corresponding properties of the transition graph $\mathcal{G}$.
    \begin{enumerate}[nl,r]
        \item If $P=N$, then for any net interval in $\mathcal{P}_1$, we see that $\vs(\Delta)=v_{\pm 1}$.
            Thus every outgoing edge from $\vroot$ ends at the vertex $v_{\pm 1}$.
        \item Suppose there is some $i\in P\setminus N$ with $\ell-1-i\notin N$.
            Let $\Delta=S_\sigma([0,1])\in\mathcal{P}_n$ have $\vs(\Delta)=v_{\pm 1}$.
            Then $S_{\sigma(i,1)}([0,1])$ is a net interval with $\vs(\Delta)=v_1$ and $S_{\sigma(\ell-1-i,-1)}([0,1])$ is a net interval with $\vs(\Delta)=v_{-1}$.

            The other case follows similarly.
        \item Finally, suppose (i) and (ii) do not hold.
            Let $S_\sigma([0,1])=\Delta$ be a net interval with $\vs(\Delta)=v_{\pm 1}$, and suppose $r_\sigma>0$, and $\tau$ has $r_\tau<0$ and $S_\tau([0,1])=\Delta$ as well.
            Suppose $i\in P$ so that $\Delta'=S_{\sigma(i,1)}([0,1])$ is a child of $\Delta$.
            Then negating the condition (ii), we either have $i\in N$ (and $\Delta'$ has neighbours generated by $\sigma(i,1)$ and $\sigma(i,-1)$) or $\ell-1-i\in P$ (and $\Delta'$ has neighbours generated by $\sigma(i,1)$ and $\tau(i,1)$), so $\Delta'$ has neighbour set $v_{\pm 1}$.
            The other case $i\in N$, or the cases where $\Delta'=S_{\tau(i,\pm 1)}([0,1])$, follow similarly.
            Thus $V(\mathcal{G}_{\ess})=\{v_{\pm 1}\}$.

            Since $P\neq N$, if $P\setminus N\neq\emptyset$, there is an edge from $\vroot=v_1$ to $v_1$ and if $N\setminus P\neq\emptyset$, there are edges from $v_1$ to $v_{-1}$ and $v_{-1}$ to $v_1$.
            Thus the claim follows.
    \end{enumerate}
\end{proof}
We can now observe the following result.
\begin{lemma}\label{l:tes-ir-dec}
    With any choice of probabilities, the transition graph $\mathcal{G}$ is irreducible and decomposable.
\end{lemma}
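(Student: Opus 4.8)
The plan is to reduce everything to the structural classification of $\mathcal{G}$ established in the preceding proposition, which splits the analysis into three cases according to the relationship between $P$ and $N$, and then to invoke the general criteria for irreducibility and decomposability from \cref{ss:irred}. The single new input needed is the observation that the non-essential loop class, whenever it exists, is supported only on the size-one neighbour sets $v_1 = \{x\mapsto x\}$ and $v_{-1} = \{x\mapsto -x+1\}$.

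First I would dispose of decomposability, which is the easier half. The only vertices of $\mathcal{G}$ are $v_1$, $v_{-1}$, and $v_{\pm 1}$, and $v_1$, $v_{-1}$ are neighbour sets of size one. In cases (i) and (ii) of the preceding proposition there are no non-essential loop classes, so the hypothesis of \cref{l:size-one-loops} holds vacuously. In case (iii) the unique essential loop class satisfies $V(\mathcal{G}_{\ess}) = \{v_{\pm 1}\}$, so the non-essential loop class $\mathcal{L}$ can contain only vertices from $\{v_1, v_{-1}\}$, each of size one. Thus in every case each vertex lying in a non-essential loop class is a neighbour set of size one, and \cref{l:size-one-loops} applies directly to give decomposability.

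For irreducibility I would argue case by case. The essential loop class is always irreducible by \cref{l:ess-irred}, and in cases (i) and (ii) it is the only loop class, so $\mathcal{G}$ is irreducible. In case (iii) it remains to treat the non-essential loop class $\mathcal{L}$, whose vertices all have size one. Here the block matrix $M(\mathcal{L})$ from \cref{ss:irred} reduces to the scalar weighted adjacency matrix of $\mathcal{L}$: each block $M_{i,j} = \sum_{e\in\mathcal{A}_{i,j}} T(e)$ is a $1\times 1$ quantity, strictly positive exactly when there is an edge from $v_i$ to $v_j$ (positivity of each $T(e)$ coming from \cref{l:pos-col-row}, since for a $1\times 1$ matrix the unique entry is its only column). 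Because $\mathcal{L}$ is a loop class it is strongly connected, so between any two of its vertices there is a directed path; hence some power of $M(\mathcal{L})$ has a positive entry in each coordinate, i.e. $M(\mathcal{L})$ is irreducible. Then \cref{l:irred} yields that $\mathcal{L}$ is irreducible, completing the argument.

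The substance has essentially already been carried out in the classification of the transition graph and in the general lemmas of \cref{ss:irred}, so I do not anticipate a serious obstacle. The only point requiring a little care is confirming that in case (iii) no vertex of size greater than one appears in $\mathcal{L}$; this follows because $v_{\pm 1}$ is the unique vertex of the essential class and a non-essential loop class has vertex set disjoint from the essential one. Once this is in place, decomposability is immediate from \cref{l:size-one-loops} and irreducibility from \cref{l:ess-irred} together with \cref{l:irred}.
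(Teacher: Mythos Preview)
Your proposal is correct and follows essentially the same approach as the paper: the essential loop class is handled by \cref{l:ess-irred}, the non-essential loop class (when it exists) by \cref{l:irred} using that its vertices $v_1,v_{-1}$ are size-one neighbour sets, and decomposability by \cref{l:size-one-loops}. The paper's proof is terser but the logic is identical.
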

\begin{proof}
    The essential loop class $\mathcal{G}_{\ess}$ is always irreducible by \cref{l:ess-irred}.
    If there is a loop class $\mathcal{L}$, we observe that either $V(\mathcal{L})$ consists of a single vertex or $V(\mathcal{L})=\{v_1,v_{-1}\}$ and there are edges joining $v_1$ and $v_{-1}$ and $v_{-1}$ and $v_1$.

    Since the neighbour sets $v_1$ and $v_{-1}$ have cardinality one, irreducibility follows by \cref{l:irred}.
    Decomposability follows directly from \cref{l:size-one-loops}.
\end{proof}
\subsubsection{Multifractal properties of associated measures}
We can compute formulas for the loop class $L^q$-spectra.
\begin{proposition}
    \begin{enumerate}[nl,r]
        \item Suppose $\mathcal{L}$ is a non-essential loop class.
            Let $\mathcal{J}=(P\setminus N)\cup (N\setminus P)$, and for $j\in\mathcal{J}$, write $p_j=p_{(j,1)}$ if $j\in P\setminus N$, and $p_j=p_{(j,-1)}$ if $j\in N\setminus P$.
            Then
            \begin{equation*}
                \tau_{\mathcal{L}}(q)=\frac{\log\sum_{j\in\mathcal{J}}p_j^q}{-\log \ell}.
            \end{equation*}
        \item Let $T(x)=1-x$.
            Then with $\nu=\mu+\mu\circ T$,
            \begin{equation*}
                \tau_{\mathcal{G}_{\ess}}(q)=\tau_\nu(q).
            \end{equation*}
    \end{enumerate}
\end{proposition}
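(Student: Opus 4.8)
The plan is to compute both spectra explicitly using the description of the transition graph already established. The key simplification is uniformity of scale: every map contracts by $1/\ell$, so each net interval in $\mathcal{P}_n$ has diameter $\ell^{-n}$, every edge weight equals $1/\ell$, and consequently $\mathcal{F}_{\mathcal{L},\zeta}(\ell^{-n})$ is exactly the set of rooted paths of length $n$ lying in $\Omega^*_{\mathcal{L},\zeta}$. Since the limit defining $\tau_{\mathcal{L}}$ exists by \cref{l:lq-limit}, it suffices to evaluate $\sum_\eta \rho(\eta)^q$ along the subsequence $t=\ell^{-n}$. For part (i), I would first read off the edges and transition scalars of the non-essential loop class $\mathcal{L}$, whose vertices lie in $\{v_1,v_{-1}\}$ and therefore carry $1\times 1$ transition matrices. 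Fixing a parent $\Delta_1=S_\sigma([0,1])$ with $\vs(\Delta_1)=v_1$ (so $\sigma$ has positive orientation and is the unique word onto $\Delta_1$), every child is $S_\sigma([j/\ell,(j+1)/\ell])$, reached by the positive word $(j,1)$ if $j\in P$ and the negative word $(j,-1)$ if $j\in N$. Since each neighbour is $\id$ or $x\mapsto-x+1$ and $\mu((0,1))=1$, the ratio in \cref{e:tr-mat} is $1$; the child then has neighbour set $v_1$ precisely when $j\in P\setminus N$ (a self-loop with scalar $p_{(j,1)}=p_j$), neighbour set $v_{-1}$ when $j\in N\setminus P$ (an edge $v_1\to v_{-1}$ with scalar $p_{(j,-1)}=p_j$), and $v_{\pm 1}$ (leaving $\mathcal{L}$) when $j\in P\cap N$. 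The identical analysis from a parent with $\vs(\Delta_1)=v_{-1}$ yields self-loops $v_{-1}\to v_{-1}$ with scalar $p_j$ for $j\in P\setminus N$ and edges $v_{-1}\to v_1$ with scalar $p_j$ for $j\in N\setminus P$.

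Writing $a=\sum_{j\in P\setminus N}p_j^q$ and $b=\sum_{j\in N\setminus P}p_j^q$, the previous step and \cref{l:left-prod} give
\begin{equation*}
    \sum_{\eta\in\mathcal{F}_{\mathcal{L},\zeta}(\ell^{-n})}\rho(\eta)^q\asymp \text{(sum of entries of }M^n\text{)}, \qquad M=\begin{pmatrix} a & b\\ b & a\end{pmatrix},
\end{equation*}
with $M=(a)$ in the degenerate single-vertex case (where $b=0$). As $M$ has non-negative entries with Perron eigenvalue $a+b=\sum_{j\in\mathcal{J}}p_j^q>0$, the sum of entries of $M^n$ grows like $(a+b)^n$ for every $q\in\R$ (in the two-vertex case $M$ or $M^2$ is irreducible; the single-vertex case is immediate). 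Taking logarithms and dividing by $\log\ell^{-n}$ yields $\tau_{\mathcal{L}}(q)=\log\bigl(\sum_{j\in\mathcal{J}}p_j^q\bigr)/(-\log\ell)$, as claimed.

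For part (ii), I would invoke \cref{p:ess-formula}, which gives $\tau_{\mathcal{G}_{\ess}}(q)=\tau_{\mu|_\Delta}(q)$ for any net interval $\Delta$ with $\vs(\Delta)\in V(\mathcal{G}_{\ess})$; take $\Delta=[i/\ell,(i+1)/\ell]$ for some $i\in P\cap N$, so $\vs(\Delta)=v_{\pm 1}$. The self-similarity $\mu=\sum_{(j,s)\in\mathcal{I}}p_{(j,s)}S_{(j,s)}\mu$ together with non-atomicity of $\mu$ gives $\mu|_\Delta=p_{(i,1)}S_{(i,1)}\mu+p_{(i,-1)}S_{(i,-1)}\mu$, and since $S_{(i,-1)}=S_{(i,1)}\circ T$ (with $T\mu=\mu\circ T$ as $T$ is an involution) this equals $S_{(i,1)}\bigl(p_{(i,1)}\mu+p_{(i,-1)}(\mu\circ T)\bigr)$. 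Because $S_{(i,1)}$ is an affine bijection, the $L^q$-spectrum is unchanged under this pushforward; and because both probabilities are positive, $p_{(i,1)}\mu+p_{(i,-1)}(\mu\circ T)\asymp\mu+\mu\circ T$ as set functions, so their $L^q$-spectra coincide. Hence $\tau_{\mathcal{G}_{\ess}}(q)=\tau_{\mu+\mu\circ T}(q)=\tau_\nu(q)$.

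The main obstacle is the bookkeeping in part (i): correctly tracking how the orientation of $\sigma$ interacts with the $\pm$-label of each digit to determine both the neighbour set of each child and the associated transition scalar, and then confirming that the sum-of-entries asymptotics are governed by the Perron root $a+b$ uniformly in $q$ (including $q<0$, where individual terms $p_j^q$ may be large). Part (ii) is essentially formal once \cref{p:ess-formula} is applied and the affine-invariance and bounded-weight insensitivity of the $L^q$-spectrum are used.
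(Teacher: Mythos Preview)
Your proposal is correct and follows essentially the same route as the paper. For part (ii) the arguments are virtually identical: apply \cref{p:ess-formula} to a level-one net interval with neighbour set $v_{\pm 1}$, then use self-similarity to see that $\mu|_\Delta$ is, up to a similarity and bounded positive weights, equal to $\mu+\mu\circ T$.

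For part (i) your analysis is correct but slightly more elaborate than necessary. The paper exploits the observation you implicitly establish but do not quite state: from \emph{each} vertex of $\mathcal{L}$ (whether $v_1$ or $v_{-1}$), the outgoing edges within $\mathcal{L}$ are in bijection with $\mathcal{J}$, and the edge indexed by $j$ always carries the scalar $p_j$. Hence (taking $\zeta=\emptyset$, since $\vroot=v_1\in V(\mathcal{L})$) rooted paths of length $n$ in $\mathcal{L}$ correspond bijectively to words $(j_1,\ldots,j_n)\in\mathcal{J}^n$ with $\rho(\eta)=p_{j_1}\cdots p_{j_n}$, giving $\sum_\eta\rho(\eta)^q=(\sum_{j\in\mathcal{J}}p_j^q)^n$ exactly, with no need to pass through the matrix $M$ or invoke Perron--Frobenius. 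Your $2\times 2$ matrix computation recovers this because $(1,1)^T$ is an eigenvector of $M$ with eigenvalue $a+b$, but the bijection makes the factorisation immediate.
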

\begin{proof}
    \begin{enumerate}[nl,r]
        \item Observe that there is a bijection between paths in $\Omega^n_{\mathcal{L}}$ and words in $\mathcal{J}^n$.
            Moreover, if $\eta\in\Omega^n_{\mathcal{L}}$ has corresponding sequence $(j_1,\ldots,j_n)\in\mathcal{J}^n$, a direct computation gives that $\rho(\eta)=\norm{T(\eta)}=p_{j_1}\cdots p_{j_n}$.
            Thus since $\vroot=v_1\in V(\mathcal{L})$,
            \begin{align*}
                \tau_{\mathcal{L}}(q)=\tau_{\mathcal{L},\emptyset}(q)&= \lim_{n\to\infty}\frac{\log\sum_{(j_1,\ldots,j_n)\in\mathcal{J}^n}p_{j_1}\cdots p_{j_n}}{-n\log\ell}\\
                                                                     &= \lim_{n\to\infty}\frac{\log\left(\sum_{j\in\mathcal{J}}p_j\right)^n}{-n\log\ell}\\
                                                                     &= \frac{\log \sum_{j\in\mathcal{J}}p_j^q}{-\log\ell}
            \end{align*}
            as claimed.

        \item Let $\Delta\in\mathcal{P}_1$ have $\vs(\Delta)=v_{\pm 1}$.
            By \cref{p:ess-formula}, $\tau_{\mathcal{G}_{\ess}}(q)=\tau_{\mu|_\Delta}(q)$.
            But for any Borel set $E\subseteq\Delta$, we have by \cref{e:qi-formula} since $\vs(\Delta)=\{\id, T\}$
            \begin{align*}
                \mu(E) &= \mu(E)p_{(i,1)}+\mu\circ T(E) p_{(i,-1)}\asymp \nu(E).
            \end{align*}
            Thus $\tau_{\mathcal{G}_{\ess}}(q)=\tau_\nu(q)$ for any $q\in\R$.
    \end{enumerate}
\end{proof}
We now observe the following conclusion.
\begin{theorem}
    If there is no non-essential loop class, then $\mu$ satisfies the multifractal formalism.
    Otherwise, there is a single non-essential loop class $\mathcal{L}$.
    Then
    \begin{align*}
        \tau_\mu(q)&=\min\{\tau_{\mathcal{L}}(q),\tau_{\mathcal{G}_{\ess}}(q)\}\\
        f_\mu(\alpha) &= \max\{\tau_{\mathcal{L}}^*(\alpha),\tau_{\mathcal{G}_{\ess}}^*(\alpha)\}.
    \end{align*}
\end{theorem}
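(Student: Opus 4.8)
The plan is to read off both formulas from the general theory of \cref{s:mf-properties}, using the structural description of $\mathcal{G}$ just obtained and the irreducibility and decomposability recorded in \cref{l:tes-ir-dec}. Recall that (under the standing assumption $P\cap N\neq\emptyset$) there is always a unique essential loop class $\mathcal{G}_{\ess}$. Hence the hypothesis ``no non-essential loop class'' simply says that $\mathcal{G}_{\ess}$ is the only loop class of $\mathcal{G}$, and in this case the multifractal formalism is immediate from \cref{c:one-loop}. So the substance of the theorem is the remaining case, in which $\mathcal{G}$ has exactly two loop classes: the essential class $\mathcal{G}_{\ess}$ and the single non-essential class $\mathcal{L}$.

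The formula for the $L^q$-spectrum is the most direct. By \cref{l:tes-ir-dec} the transition graph is decomposable, so \cref{t:lq-lower-bound} applies verbatim and yields $\tau_\mu(q)=\min\{\tau_{\mathcal{L}}(q),\tau_{\mathcal{G}_{\ess}}(q)\}$ for every $q\in\R$, together with the existence of the defining limit. No further work is needed here.

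For the multifractal spectrum I would invoke \cref{c:m-spectrum}: again by \cref{l:tes-ir-dec} every loop class is irreducible, and the conclusion is $f_\mu=\max\{f_{\mathcal{L}_i}\}$ taken over the \emph{non-degenerate} loop classes $\mathcal{L}_i$. Combining this with the loop-class multifractal formalism $f_{\mathcal{L}_i}=\tau_{\mathcal{L}_i}^*$ of \cref{t:multi-f} (valid since both classes are irreducible) then gives exactly $f_\mu(\alpha)=\max\{\tau_{\mathcal{L}}^*(\alpha),\tau_{\mathcal{G}_{\ess}}^*(\alpha)\}$ --- provided both $\mathcal{L}$ and $\mathcal{G}_{\ess}$ are non-degenerate. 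The essential class $\mathcal{G}_{\ess}$ is never simple, so it is non-degenerate by \cref{d:degen}, and the only point needing genuine verification is the non-degeneracy of $\mathcal{L}$.

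This last point is the main obstacle, and it is where the assertion really has content: if $\mathcal{L}$ were degenerate, \cref{c:m-spectrum} would drop the term $\tau_{\mathcal{L}}^*$ and the stated maximum could fail (this is precisely the mechanism producing an isolated point in the set of local dimensions). If $\mathcal{L}$ is not simple, non-degeneracy holds automatically. If $\mathcal{L}$ is simple --- which happens exactly when the single cycle of $\mathcal{L}$ is traversed by repeatedly applying one map $S_{(j,\pm1)}$ --- I would exhibit a point realizing the symbolic local dimension: the cycle corresponds to a contracting similarity whose unique fixed point $x^*$ is the projection of the periodic path $\gamma\in\Omega^\infty_{\mathcal{L}}$, and \cref{l:simple} then identifies $\dim_{\loc}(\mu,x^*)$ with $\dim_{\loc}(\rho,\gamma)$. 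The delicate step is checking that $x^*$ is an interior point of $K_{\mathcal{L}}$ (equivalently, that an adjacent net interval lying outside $\mathcal{L}$ does not lower its local dimension), which I expect to follow from the fact that the images $S_{(i,\pm1)}([0,1])$ are $\ell$-adic intervals meeting only at endpoints, so that the fixed point $x^*$ is not badly approximated by boundary points.
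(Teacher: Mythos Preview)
Your approach is exactly the paper's: the proof there is a one-line appeal to \cref{l:tes-ir-dec} together with \cref{c:m-spectrum} and \cref{t:lq-lower-bound}, and the case of no non-essential loop class is handled (implicitly) by \cref{c:one-loop}. Your use of \cref{t:multi-f} to pass from $f_{\mathcal{L}_i}$ to $\tau_{\mathcal{L}_i}^*$ is also the intended step.

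You have in fact been more careful than the paper on one point: the hypothesis of \cref{c:m-spectrum} asks that the loop classes entering the maximum be non-degenerate, and the paper's proof does not pause to check this for $\mathcal{L}$. Your diagnosis is correct --- this is the only place where something must be verified --- and your proposed resolution via \cref{l:simple} is the right one. One small correction: ``simple'' here does not force $\mathcal{L}$ to be a single self-loop; it can also be the two-vertex cycle $v_1\to v_{-1}\to v_1$ when $|\mathcal{J}|=1$ with the unique $j$ lying in $N\setminus P$ (or symmetrically). In either case the periodic point $\pi(\gamma)$ is an endpoint of a nested chain of $\ell$-adic net intervals, and since adjacent net intervals in this example meet only at $\ell$-adic rationals, $\pi(\gamma)$ has a unique symbolic representation unless it is such a shared endpoint; even then \cref{l:simple} gives $\dim_{\loc}(\mu,\pi(\gamma))=\dim_{\loc}(\rho,\gamma)$ because the adjacent path lies in $\mathcal{G}_{\ess}$ and, by the structure of case~(iii), has the same or larger symbolic local dimension. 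So your sketch goes through, and fills a small gap the paper leaves implicit.
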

\begin{proof}
    This follows directly from \cref{l:tes-ir-dec} by the general results \cref{c:m-spectrum} and \cref{t:lq-lower-bound}.
\end{proof}
\subsection{Bernoulli convolutions with Pisot contractions}\label{ss:bconv-Pisot}
\subsubsection{Simple Pisot contractions}
A simple Pisot number is the unique positive real root of a polynomial
\begin{equation*}
    p_k(x)=x^k-x^{k-1}-\cdots-x-1
\end{equation*}
for some $k\geq 2$.
We denote this number by $r_k$.
Naturally, $r_k$ is a \emph{Pisot number}, which is a real algebraic number strictly greater than $1$ with Galois conjugates having modulus strictly less than 1.
Note that $r_2=(\sqrt{5}+1)/2$ is the Golden ratio, and $1<r_2<r_3<r_4<\cdots<2$.

We are interested in the (possibly biased) Bernoulli convolution associated with the parameter $\lambda=1/r_k$, which we view as a self-similar measure associated with the IFS
\begin{align*}
    S_1(x)&=\lambda x & S_2(x) &= \lambda x+(1-\lambda)
\end{align*}
and probabilities $p_1,p_2>0$ with $p_1+p_2=1$.
It is known (since at least \cite{nw2001}) that the IFS $(S_i)_{i=1,2}$ satisfies the finite type condition, and thus satisfies the finite neighbour condition with respect to the iteration rule from \cref{ex:uniform-transition}.

In \cite{fen2005}, Feng proved, with probabilities $p_1=p_2=1/2$, that the associated self-similar measure satisfies the multifractal formalism.
Here, we show how this result can be obtained as a special case of our general results.

Fix any probabilities $p_1,p_2>0$ with $p_1+p_2=1$.
We first obtain basic results on the structure of the transition graph $\mathcal{G}$ and some information on sets of local dimensions.
\begin{proposition}\label{p:s-b-set}
    The transition graph $\mathcal{G}$ has a unique essential loop class $\mathcal{G}_{\ess}$ and two non-essential simple loop classes $\mathcal{L}_1$ and $\mathcal{L}_2$.
    Both loop classes $\mathcal{L}_1$ and $\mathcal{L}_2$ have a single vertex which is a neighbour set which has cardinality one.

    Each $\Omega_{\mathcal{L}_i}^\infty$ consists of a single path $\gamma_i$, where $\pi(\gamma_1)=0$ and $\pi(\gamma_2)=1$, and
    \begin{equation}\label{e:loc-dim-formula}
        \begin{aligned}
            \dim_{\loc}(\mu,0)&=\dim_{\loc}(\rho,\gamma_1)=\frac{\log p_1}{\log\lambda}\\
            \dim_{\loc}(\mu,1) &= \dim_{\loc}(\rho,\gamma_2)=\frac{\log p_2}{\log\lambda}.
        \end{aligned}
    \end{equation}
    Moreover, there exists a $\gamma\in\Omega^\infty_{\mathcal{G}_{\ess}}$ such that
    \begin{equation*}
        \dim_{\loc}(\mu,\pi(\gamma))=\frac{\log p_1p_2}{2\log \lambda}.
    \end{equation*}
\end{proposition}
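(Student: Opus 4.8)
The plan is to exhibit an explicit eventually periodic path $\gamma\in\Omega^\infty_{\mathcal{G}_{\ess}}$ whose local dimension can be computed directly, and then transfer the computation from the symbolic measure $\rho$ to $\mu$. I would write $\gamma=\zeta\theta^\infty$, where $\zeta$ is a fixed path reaching a vertex of $\mathcal{G}_{\ess}$ and $\theta$ is a cycle of length $L$ through that vertex; the whole point is to choose $\theta$ so that $T(\theta)$ has spectral radius exactly $(p_1p_2)^{L/2}$. Such a cycle should arise from the reflection symmetry $x\mapsto 1-x$ of the overlap structure, which interchanges $S_1\leftrightarrow S_2$ and hence the roles of $p_1$ and $p_2$ in the two neighbours of the central overlapping net interval (the one with neighbour set of cardinality $2$ living in $\mathcal{G}_{\ess}$). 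This symmetry is precisely what produces an ``anti-diagonal'' block in $T(\theta)$ whose dominant growth rate is $\sqrt{p_1p_2}$ per unit length rather than $\max\{p_1,p_2\}$.

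The local dimension of $\rho$ at $\gamma$ is then computed as follows. Since $\lambda=1/r_k$ is the reciprocal of a Pisot number, the IFS satisfies a Garsia-type separation property, so every net interval at level $n$ has diameter $\asymp\lambda^n$; hence each edge weight satisfies $W(e)\asymp\lambda$ and $\log W(\gamma|n)=n\log\lambda+O(1)$. By \cref{l:left-prod} we have $\rho(\gamma|(|\zeta|+mL))=\norm{T(\zeta)T(\theta)^m}\asymp\norm{T(\theta)^m}$, and Gelfand's formula gives $\frac{1}{m}\log\norm{T(\theta)^m}\to\log\operatorname{sr}(T(\theta))$. Interpolating over the intermediate indices (which only changes the norm by bounded factors within each period) shows that the limit defining $\dim_{\loc}(\rho,\gamma)$ exists and
\begin{equation*}
    \dim_{\loc}(\rho,\gamma)=\frac{\log\operatorname{sr}(T(\theta))}{L\log\lambda}.
\end{equation*}
With $\operatorname{sr}(T(\theta))=(p_1p_2)^{L/2}$ this is exactly $\frac{\log p_1p_2}{2\log\lambda}$.

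To pass from $\rho$ to $\mu$, I would arrange $\theta$ to contain an interior path, so that $\pi(\gamma)$ is an interior point of $K_{\mathcal{G}_{\ess}}$ whose approximation sequence has bounded gaps; then $n_{j+1}/n_j\to 1$ and $\pi(\gamma)$ is regular with unique symbolic representation $\gamma$. Since $\mathcal{G}_{\ess}$ is essential, hence irreducible by \cref{l:ess-irred}, \cref{c:reg-loc-dim} applies and yields $\dim_{\loc}(\mu,\pi(\gamma))=\dim_{\loc}(\rho,\gamma)=\frac{\log p_1p_2}{2\log\lambda}$, as required. Alternatively, once one knows $\frac{\log p_1p_2}{2\log\lambda}\in[\alpha_{\min}(\mathcal{G}_{\ess}),\alpha_{\max}(\mathcal{G}_{\ess})]$, the existence of such a point is immediate from \cref{t:m-lower-bound}, so it would suffice to bound the endpoints of the essential spectrum rather than name a single orbit.

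The main obstacle is the explicit identification of the cycle $\theta$ and the verification that $\operatorname{sr}(T(\theta))=(p_1p_2)^{L/2}$ (in particular, that the dominant growth rate is $\sqrt{p_1p_2}$ and not something larger). This is where the simple Pisot relation $r_k^{k}=r_k^{k-1}+\cdots+r_k+1$ enters, since it governs exactly which words $S_\sigma$ collide and therefore the precise entries of the transition matrices along the central loop; I expect the diameter-normalization factors $\tfrac{f_i\mu((0,1))}{g_j\mu((0,1))}$ appearing in \eqref{e:tr-mat} to telescope only around a full symmetric cycle, which is why a clean answer should emerge from $T(\theta)$ rather than from a single edge. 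Everything else is soft, but pinning down these matrices requires a direct analysis of the neighbour sets in $\mathcal{G}_{\ess}$ of the same kind already carried out for the structural part of the proposition.
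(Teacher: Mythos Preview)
Your overall strategy matches the paper's: take an eventually periodic path $\gamma=\zeta\theta^\infty$ with $\theta$ a cycle that is also an interior path in $\mathcal{G}_{\ess}$, compute $\dim_{\loc}(\rho,\gamma)$ from the growth of $\norm{T(\theta)^m}$, and transfer to $\mu$ via regularity. The difference is in how hard the matrix computation actually is.

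You anticipate working at the central overlap vertex with a size-two neighbour set, invoking the $x\mapsto 1-x$ symmetry to force an anti-diagonal structure in $T(\theta)$ and then appealing to Gelfand's formula for the spectral radius. The paper avoids all of this by observing that the essential class also contains vertices $v_1,v_2$ whose neighbour sets have cardinality \emph{one} (they arise as the net intervals $\Delta_1=[\lambda^2,1-\lambda]$ and $\Delta_2=[\lambda,1-\lambda^2]$ at level two), with edges $e_{12}$ from $v_1$ to $v_2$ and $e_{21}$ from $v_2$ to $v_1$ satisfying $T(e_{12})=\begin{pmatrix}ap_1\end{pmatrix}$, $T(e_{21})=\begin{pmatrix}a^{-1}p_2\end{pmatrix}$, and $W(e_{12})=W(e_{21})=\lambda$. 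Thus $\theta=(e_{12},e_{21})$ has $T(\theta)=\begin{pmatrix}p_1p_2\end{pmatrix}$ as a scalar, the normalization constants telescope exactly over one period, and the local dimension drops out immediately with no spectral-radius or symmetry argument required. The cycle $\theta$ is an interior path (each edge is a strict-interior child), so $\pi(\gamma)$ is regular and \cref{c:reg-loc-dim} (which does not need irreducibility, only regularity) gives $\dim_{\loc}(\mu,\pi(\gamma))=\dim_{\loc}(\rho,\gamma)$ directly.

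In short, your plan would work but is more machinery than needed; the paper's computation is a two-line scalar calculation once one notices the right pair of size-one vertices in $\mathcal{G}_{\ess}$.
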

\begin{proof}
    We will assume that $k\geq 3$; the case $k=2$ is similar, but easier (in fact, full details of the computation are given in \cite[Sec. 5.1]{hr2021}).

    By a direct computation, the part of the graph $\mathcal{G}$ spanned by $\Omega^2$ is given in \cref{f:gm-graph}, along with the net intervals in $\mathcal{P}_2$ drawn in \cref{f:netiv-diag}.
    The net intervals labelled $\Delta_i$ for $i=1,2,3$ have neighbour sets $\vs(\Delta_i)=v_i$, which are the labelled vertices in the partial transition graph.

    We can now see that the leftmost child of $\Delta_1$ has neighbour set $v_2$, and the corresponding edge $e_{12}$ has $T(e_{12})=\begin{pmatrix}a p_1\end{pmatrix}$ and $W(e_{12})=\lambda$ for some constant $a>0$.
    Similarly, there is an edge $e_{21}$ from $v_2$ to $v_1$ with $T(e_{21})=\begin{pmatrix}a^{-1} p_2\end{pmatrix}$ and $W(e_{21})=\lambda$.
    From here, a straightforward induction argument (using the fact that $\lambda^k+\lambda^{k-1}+\cdots+\lambda-1=0$) yields that, in fact, $v_1,v_2,v_3$ are vertices in a unique essential loop class $\mathcal{G}_{\ess}$, and the cycles labelled as $\mathcal{L}_1$ and $\mathcal{L}_2$ indeed make up simple loop classes.

    Since the edge $e_1$ (resp. $e_2$) corresponds to the left-most (resp. right-most) child of the base net interval $[0,1]$ and $\vroot$ is not in any loop class, it follows for $i=1,2$ that $\Omega_{\mathcal{L}_i}$ consists of a single path $\gamma_i=(e_i',e_i,e_i,\ldots)$ with $\pi(\gamma_1)=0$ and $\pi(\gamma_2)=1$.
    Moreover, since $T(e_i)=\begin{pmatrix}p_i\end{pmatrix}$ and $W(e_i)=\lambda$, $\norm{T(\gamma_i|n)}\asymp p_i^{n}$ and thus \cref{e:loc-dim-formula} holds.
    Now since $\theta=(e_{12},e_{21})$ is a cycle and an interior path in $\mathcal{G}_{\ess}$, let $\gamma$ denote any path of the form $\gamma_0\theta\theta\ldots$, so $\gamma\in\Omega^\infty_{\mathcal{G}_{\ess}}$ and by \cref{l:left-prod}
    \begin{equation*}
        \dim_{\loc}(\mu,\pi(\gamma)) = \dim_{\loc}(\rho,\gamma)=\frac{\log p_1p_2}{2\log\lambda}.
    \end{equation*}
    as claimed.
\end{proof}
\begin{figure}[ht]
    \begin{tikzpicture}[
    baseline=(current bounding box.center),
    ]
    \node[vtx,label=above:{$\vroot$}] (vroot) at (0,0) {};
    \node[vtx] (l1) at (-2,-1) {};
    \node[vtx] (l2) at (2,-1) {};
    \node[vtx] (v0) at (0,-2) {};
    \node[vtx,label=below:{$v_1$}] (v1) at (-2,-3) {};
    \node[vtx,label=below:{$v_2$}] (v2) at (2,-3) {};
    \node[vtx,label=below:{$v_3$}] (v3) at (0,-3) {};

    \draw[edge] (vroot) -- node[elbl]{$e_1'$} (l1);
    \draw[edge] (vroot) -- node[elbl]{$e_2'$} (l2);
    \draw[edge] (l1) .. controls +(135:2) and +(225:2) .. node[elbl]{$e_1$} (l1);
    \draw[edge] (l2) .. controls +(45:2) and +(-45:2) .. node[elbl]{$e_2$} (l2);

    \draw[edge] (vroot) -- (v0);
    \draw[edge] (v0) -- (v3);
    \draw[edge] (l1) -- (v0);
    \draw[edge] (l1) -- (v1);
    \draw[edge] (l2) -- (v0);
    \draw[edge] (l2) -- (v2);

    \draw[thick,dotted] ($ (l1) + (-0.7,0) $) ellipse (1cm and 0.7cm);
    \node[fill=white] (l1Label) at ($ (l1) + (-0.7,0) + (180:1cm and 0.7cm) $) {$\mathcal{L}_1$};

    \draw[thick,dotted] ($ (l2) + (0.7,0) $) ellipse (1cm and 0.7cm);
    \node[fill=white] (l2Label) at ($ (l2) + (0.7,0) + (0:1cm and 0.7cm) $) {$\mathcal{L}_2$};
\end{tikzpicture}
    \caption{Partial transition graph for the simple Pisot Bernoulli convolution}
    \label{f:gm-graph}
\end{figure}
\begin{figure}[ht]
    \def\lb{0.544}
\begin{tikzpicture}[
    xscale=14,
    netiv/.style={thick,shorten <= 1pt,shorten >= 1pt,<->}]
    \drawiv{0}{\lb^2}{0}
    \drawiv{\lb-\lb^2}{\lb}{0.5}
    \drawiv{1-\lb}{1-\lb+\lb^2}{0}
    \drawiv{1-\lb^2}{1}{0.5}
    \foreach \x/\lbl in {0/$0$,\lb-\lb^2/$\lambda-\lambda^2$,\lb^2/$\lambda^2$,1-\lb/$1-\lambda$,\lb/$\lambda$,1-\lb^2/$1-\lambda^2$,1-\lb+\lb^2/$1-\lambda+\lambda^2$,1/$1$} {
        \draw[thick,dotted] (\x,0.8) -- (\x,-1.1) node[below,rotate=300,anchor=west]{\lbl};
    }

    \draw[netiv] (\lb^2,-0.7) -- node[fill=white]{$\Delta_1$} (1-\lb,-0.7);
    \draw[netiv] (1-\lb,-0.7) -- node[fill=white]{$\Delta_3$} (\lb,-0.7);
    \draw[netiv] (\lb,-0.7) -- node[fill=white]{$\Delta_2$} (1-\lb^2,-0.7);
\end{tikzpicture}
    \caption{Net intervals in $\mathcal{P}_2$ for the simple Pisot Bernoulli convolution}
    \label{f:netiv-diag}
\end{figure}
\begin{theorem}\label{t:simple-Pisot-mf}
    Let $\mu$ the Bernoulli convolution associated with the Pisot number $r_k$.
    Then $\mu$ satisfies the multifractal formalism if and only if $p_1=p_2=1/2$.
\end{theorem}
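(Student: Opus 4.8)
The plan is to verify the hypotheses of \cref{c:loc-dim-set} and then reduce the theorem to an explicit description of the set of attained local dimensions. By \cref{p:s-b-set}, the transition graph $\mathcal{G}$ has a unique essential loop class $\mathcal{G}_{\ess}$ and two non-essential \emph{simple} loop classes $\mathcal{L}_1,\mathcal{L}_2$, each a single vertex which is a neighbour set of cardinality one. Hence $\mathcal{G}$ is irreducible (the essential class by \cref{l:ess-irred}, the single-vertex simple classes by \cref{l:irred}) and decomposable (by \cref{l:size-one-loops}), every non-essential loop class is simple, and each is non-degenerate: \cref{p:s-b-set} provides $\dim_{\loc}(\mu,0)=\dim_{\loc}(\rho,\gamma_1)$ and $\dim_{\loc}(\mu,1)=\dim_{\loc}(\rho,\gamma_2)$, which is exactly the condition of \cref{d:degen}. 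Thus \cref{c:loc-dim-set} applies, and $\mu$ satisfies the multifractal formalism if and only if the set of attained local dimensions is a closed interval.

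Next I would identify that set. Writing $\alpha_i=\log p_i/\log\lambda$, \cref{c:m-spectrum} and \cref{t:multi-f} give $f_\mu=\max\{f_{\mathcal{L}_1},f_{\mathcal{L}_2},f_{\mathcal{G}_{\ess}}\}$, where each $f_{\mathcal{L}_i}$ equals $0$ at $\alpha=\alpha_i$ and $-\infty$ otherwise, while $f_{\mathcal{G}_{\ess}}=\tau_{\mathcal{G}_{\ess}}^*$ is concave and nonnegative precisely on $[\alpha_{\min}(\mathcal{G}_{\ess}),\alpha_{\max}(\mathcal{G}_{\ess})]$. Since the set of local dimensions is $\{\alpha:f_\mu(\alpha)\ge 0\}$, it equals
\[
    \{\alpha_1,\alpha_2\}\cup[\alpha_{\min}(\mathcal{G}_{\ess}),\alpha_{\max}(\mathcal{G}_{\ess})],
\]
which is a closed interval if and only if $\alpha_1,\alpha_2\in[\alpha_{\min}(\mathcal{G}_{\ess}),\alpha_{\max}(\mathcal{G}_{\ess})]$. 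A key point is that $\beta:=\log(p_1p_2)/(2\log\lambda)=(\alpha_1+\alpha_2)/2$ always lies in this interval, since by \cref{p:s-b-set} it is the local dimension of the periodic point $\pi(\gamma)$ for some $\gamma\in\Omega^\infty_{\mathcal{G}_{\ess}}$, whence $\alpha_{\min}(\mathcal{G}_{\ess})\le\beta\le\alpha_{\max}(\mathcal{G}_{\ess})$ by \cref{t:m-upper-bound}.

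For the easy direction, if $p_1=p_2=1/2$ then $\alpha_1=\alpha_2=\beta$, so $\{\alpha_1,\alpha_2\}=\{\beta\}\subseteq[\alpha_{\min}(\mathcal{G}_{\ess}),\alpha_{\max}(\mathcal{G}_{\ess})]$ and the set of local dimensions is exactly that interval; the multifractal formalism holds. For the converse I would argue the contrapositive: assume $p_1\neq p_2$, say $p_1<p_2$, so $\alpha_1>\beta>\alpha_2$. The reflection $x\mapsto 1-x$ swaps $p_1\leftrightarrow p_2$ while preserving $\mathcal{G}_{\ess}$, so $[\alpha_{\min}(\mathcal{G}_{\ess}),\alpha_{\max}(\mathcal{G}_{\ess})]$ is symmetric in $(p_1,p_2)$; it therefore suffices to show $\alpha_{\max}(\mathcal{G}_{\ess})<\alpha_1$, which forces $\alpha_1$ to be an isolated point of the set of local dimensions and the multifractal formalism to fail.

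The main obstacle is exactly this strict inequality $\alpha_{\max}(\mathcal{G}_{\ess})<\alpha_1=\log p_1/\log\lambda$. I would establish it by analysing the finitely many cycles of $\mathcal{G}_{\ess}$: by irreducibility and \cref{e:irred-power} the extremal loop-class local dimensions are attained along cycles, so $\alpha_{\max}(\mathcal{G}_{\ess})=\max_C \log\norm{T(C)}/\log W(C)$ over cycles $C$ in $\mathcal{G}_{\ess}$. The conceptual reason the inequality should hold is that the pure behaviours $\rho\asymp p_1^{\,n}$ and $\rho\asymp p_2^{\,n}$ occur \emph{only} along the separate simple loop classes $\mathcal{L}_1,\mathcal{L}_2$; any cycle internal to $\mathcal{G}_{\ess}$ must traverse the overlap vertex and so accrues transition-matrix entries that are genuine sums over overlapping words, strictly increasing the measure relative to the pure-$p_1$ product and pinning its dimension strictly below $\alpha_1$. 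Making this quantitative requires the explicit transition matrices of $\mathcal{G}_{\ess}$ (computed as in \cref{p:s-b-set}); this is precisely the kind of computation of sets of local dimensions carried out in \cite{hr2021}, on which I would rely to conclude $\alpha_{\max}(\mathcal{G}_{\ess})<\alpha_1$ and hence complete the proof.
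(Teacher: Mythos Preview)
Your argument for the direction $p_1=p_2=1/2\Rightarrow$ multifractal formalism is essentially the paper's: verify decomposability via \cref{l:size-one-loops}, observe from \cref{p:s-b-set} that the two simple loop-class dimensions collapse to $\beta=(\alpha_1+\alpha_2)/2\in[\alpha_{\min}(\mathcal{G}_{\ess}),\alpha_{\max}(\mathcal{G}_{\ess})]$, and invoke \cref{c:loc-dim-set}.

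For the converse, however, the paper takes a much shorter route than you do: it simply cites \cite[Thm.~3.1]{hh2019}, which shows directly that \emph{any} overlapping biased Bernoulli convolution has an isolated point in its set of local dimensions (no finite-type structure required). Your proposed approach---reducing $\alpha_{\max}(\mathcal{G}_{\ess})<\alpha_1$ to an analysis of cycles in $\mathcal{G}_{\ess}$---is a reasonable strategy in principle, but as written it has a genuine gap. The identification $\alpha_{\max}(\mathcal{G}_{\ess})=\max_C\log\norm{T(C)}/\log W(C)$ over cycles is not what \cref{e:irred-power} says (that estimate concerns powers of a single irreducible matrix, not arbitrary products along paths in $\mathcal{G}_{\ess}$), and in any case the essential class here has many vertices and non-scalar transition matrices, so the cycle reduction requires a separate argument. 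More importantly, you defer the actual strict inequality to ``computations carried out in \cite{hr2021}'' without locating a specific result there that covers all simple Pisot parameters $r_k$; this is exactly the step that carries the content of the converse, and it is not established. The paper avoids all of this by invoking the ready-made general result from \cite{hh2019}.
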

\begin{proof}
    It follows from a general observation in \cite[Thm. 3.1]{hh2019} that if $p_1\neq 1/2$, then the set of attainable local dimensions of $\mu$ is not a closed interval (this holds for any overlapping biased Bernoulli convolution, with no separation assumptions).
    Thus $\mu$ does not satisfy the multifractal formalism.

    Conversely, when $p_1=p_2=1/2$, it follows from \cref{p:s-b-set} that the set of local dimensions is a closed interval.
    The IFS is decomposable by \cref{l:size-one-loops}, so by \cref{c:loc-dim-set}, $\mu$ satisfies the multifractal formalism.
\end{proof}
\subsubsection{Other Pisot contractions}
More generally, we can take $r\in(1,2)$ to be any Pisot number.
Let $\mu$ be the Bernoulli convolution with parameter $\lambda=1/r$ associated with probabilities $p_1$ and $p_2$.
We have the following result.
\begin{theorem}
    Suppose $r$ is the Pisot number which is the unique positive real root of any of the polynomials below:
    \begin{itemize}[nl]
        \item $x^3-2x^2+x-1$.
        \item $x^4-x^3-2x^2+1$.
        \item $x^4-2x^3+x-1$.
    \end{itemize}
    Let $\mathcal{G}$ be the transition graph associated with the Bernoulli convolution with parameter $\lambda=1/r$.
    Then $\mathcal{G}$ has one essential loop class $\mathcal{G}_{\ess}$ and two simple loop classes $\mathcal{L}_1$ and $\mathcal{L}_2$, each of which has a single vertex which is a neighbour set of size one.
    Moreover, the set of local dimensions is a closed interval with right endpoint $\log 2/\log r$ when $p_1=p_2=1/2$.

    In particular, $\mu$ satisfies the multifractal formalism if and only if $p_1=p_2=1/2$.
\end{theorem}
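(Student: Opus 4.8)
The plan is to mirror the architecture of \cref{t:simple-Pisot-mf} together with its supporting structural result \cref{p:s-b-set}, replacing the recursive analysis of the simple Pisot family by a direct computation for each of the three listed polynomials. Since $r$ is a Pisot number with $\lambda=1/r\in(1/2,1)$, the associated Bernoulli convolution satisfies the finite type condition, hence the $\Phi$-FNC with respect to the iteration rule of \cref{ex:uniform-transition}; in particular the transition graph $\mathcal{G}$ is finite. The first and principal step is to compute $\mathcal{G}$ explicitly and verify the asserted structure: a single essential loop class $\mathcal{G}_{\ess}$ together with exactly two non-essential loop classes $\mathcal{L}_1,\mathcal{L}_2$, each a single vertex whose neighbour set has cardinality one. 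Exactly as in \cref{p:s-b-set}, these two classes should arise from the leftmost and rightmost net intervals, so that each $\Omega^\infty_{\mathcal{L}_i}$ is a single path $\gamma_i$ with $\pi(\gamma_1)=0$, $\pi(\gamma_2)=1$, and $\dim_{\loc}(\mu,0)=\log p_1/\log\lambda$, $\dim_{\loc}(\mu,1)=\log p_2/\log\lambda$.

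Next I would determine the set of local dimensions in the unbiased case $p_1=p_2=1/2$. Here both simple classes contribute the single value $\log 2/\log r$ at the endpoints. Since $\mathcal{G}_{\ess}$ is essential it is irreducible by \cref{l:ess-irred}, so \cref{t:multi-f} gives $f_{\mathcal{G}_{\ess}}=\tau_{\mathcal{G}_{\ess}}^*$, which is nonnegative precisely on $[\alpha_{\min}(\mathcal{G}_{\ess}),\alpha_{\max}(\mathcal{G}_{\ess})]$. By \cref{c:m-spectrum} the full set of local dimensions is then this interval together with $\{\log 2/\log r\}$, and it is a closed interval with right endpoint $\log 2/\log r$ exactly when $\alpha_{\max}(\mathcal{G}_{\ess})=\log 2/\log r$. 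I would establish this identity by exhibiting a cycle in $\mathcal{G}_{\ess}$ whose transition matrix realises the endpoint growth rate (giving $\alpha_{\max}(\mathcal{G}_{\ess})\geq\log 2/\log r$) together with the bound $\alpha_{\max}(\mathcal{G}_{\ess})\leq\alpha_{\max}(\mu)=\log 2/\log r$ for the unbiased convolution; alternatively, one may invoke the computation of sets of local dimensions under precisely these hypotheses carried out in \cite{hr2021}.

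Finally I would assemble the pieces. Decomposability of $\mathcal{G}$ follows from \cref{l:size-one-loops}, since both non-essential loop classes have neighbour sets of size one, and all loop classes are non-degenerate. When $p_1=p_2=1/2$, the previous step shows the set of local dimensions is a closed interval, so \cref{c:loc-dim-set} yields the multifractal formalism. When $p_1\neq p_2$, the measure is an overlapping biased Bernoulli convolution, so by \cite[Thm.\ 3.1]{hh2019} its set of local dimensions contains an isolated point and is therefore not a closed interval; \cref{c:loc-dim-set} then shows the formalism fails, completing the equivalence.

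The main obstacle is the first step. Unlike the simple Pisot family, where the relation $p_k(x)=x^k-x^{k-1}-\cdots-x-1$ drives a clean inductive description of the children of each neighbour set, these three polynomials admit no such uniform recursion, so verifying that $\mathcal{G}$ has exactly one essential class and two single-vertex simple classes requires an essentially finite but intricate (likely computer-assisted) enumeration of the neighbour sets and their transitions, carried out separately for each polynomial. The secondary delicate point is pinning down $\alpha_{\max}(\mathcal{G}_{\ess})=\log 2/\log r$ internally to $\mathcal{G}_{\ess}$, since this is precisely the condition ruling out a gap between the bulk spectrum and the endpoint value, and hence the crux of the closed-interval assertion.
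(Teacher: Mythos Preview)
Your proposal is correct and follows essentially the same architecture as the paper's proof. The paper is terser: it records that a direct (computer-aided) computation shows the net intervals in $\mathcal{P}_2$ have the same relative placement and the same transition matrices as in \cref{p:s-b-set}, so the entire argument of \cref{t:simple-Pisot-mf} carries over verbatim. In particular, this observation dissolves what you flag as the ``secondary delicate point'': once the $\mathcal{P}_2$ data coincides, the interior cycle $(e_{12},e_{21})$ in $\mathcal{G}_{\ess}$ with $T(e_{12})T(e_{21})=\begin{pmatrix}p_1p_2\end{pmatrix}$ is inherited for free, so $\log 2/\log r$ is attained inside the essential class without any separate analysis of $\alpha_{\max}(\mathcal{G}_{\ess})$.
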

\begin{proof}
    This follows by a direct computation, preferably with the aid of a computer: the net intervals in $\mathcal{P}_2$ have the same relative placement and the corresponding transition matrices are the same as given in \cref{p:s-b-set}.
    Thus the conclusion follows by the same argument as \cref{t:simple-Pisot-mf}
\end{proof}
\subsection{A family of non-equicontractive examples}\label{ss:non-e}
Fix parameters $\lambda_1,\lambda_2>0$ and consider the IFS given by
\begin{align}\label{e:wifs-1}
    S_1(x) &= \lambda_1 x & S_2(x) &= \lambda_2 x +\lambda_1(1-\lambda_2) & S_3(x) &= \lambda_2 x+(1-\lambda_2)
\end{align}
where $\lambda_1+2\lambda_2-\lambda_1\lambda_2\leq 1$.
Note that the case $\lambda_1=\lambda_2=1/3$ is discussed in \cref{e:gen-ifs}.
This IFS was first introduced in \cite[Prop. 4.3]{lw2004}, and the multifractal analysis of this measure was studied extensively in \cite{dn2017,rut2021}.

The IFS in \cref{e:wifs-1} is a special case of the following general construction.
Fix parameters $\lambda_1,\lambda_2>0$ and some $k\in\N$, and for $j\in\{0,1,\ldots,k\}$ let $\beta_j=\lambda_1\cdot (\lambda_2/\lambda_1)^{j}$.
Then consider the IFS given by the $k+2$ maps
\begin{equation}\label{e:w-ifs}
    \begin{aligned}
        S_0(x) &= \lambda_1 x\\
        S_i(x) &= \beta_i x+\sum_{j=1}^i\beta_{j-1}(1-\beta_j)\text{ for each }i\in\{1,\ldots,k\}\\
        S_{k+1}(x) &= \lambda_2 x + (1-\lambda_2)
    \end{aligned}
\end{equation}
under the constraint $S_k(1)+\lambda_2\leq 1$.
This IFS coincides with \cref{e:wifs-1} when $k=1$, and coincides with \cite[Ex. 8.5]{dn2017} when $k=2$.

The author proved in \cite[Thm. 5.7]{rut2021} that any self-similar measure associated with the IFS \cref{e:wifs-1} satisfies the multifractal formalism.
However, the proof in that paper is complicated by the use of the iteration rule given in \cref{ex:weighted-transition}.
If we instead take the iteration rule from \cref{ex:uniform-transition} with corresponding transition graph $\mathcal{G}$, the situation is much more straightforward, even with our general setup.
\begin{proposition}\label{p:w-ifs-graph}
    The transition graph $\mathcal{G}$ is strongly connected.
\end{proposition}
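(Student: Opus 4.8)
The plan is to show that every vertex of $\mathcal{G}$ lies on a directed path from the root and that, conversely, there is a directed path back to a fixed reference vertex (or to the root), so that all vertices are mutually reachable. Since $\mathcal{G}$ is finite, strong connectivity is equivalent to the existence, for every pair of vertices $v,w$, of a directed path from $v$ to $w$. The most efficient route is to identify a single distinguished vertex---most naturally the essential loop class vertex associated with the neighbour set $v_{\pm 1}$ arising from the overlap $S_1(1) = S_2(0)$ (which holds precisely because $\lambda_1(1-\lambda_2)+\lambda_2\cdot 0 = \lambda_1(1-\lambda_2)$, the left endpoint of $S_2([0,1])$, coincides with $S_1(\lambda_1)$ up to the constraint geometry)---and show that every vertex both reaches this vertex and is reached from it.

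First I would explicitly compute the net intervals in $\mathcal{P}_1$ and $\mathcal{P}_2$ using the iteration rule from \cref{ex:uniform-transition}, exactly as illustrated in \cref{e:gen-ifs} for the case $\lambda_1=\lambda_2=1/3$. The key geometric input is the overlap relation $S_1(1) = S_2(0)$, which forces the existence of a net interval carrying a neighbour set of size two (the analogue of $v_{\pm 1}$); this is the only source of overlap in the system, and every other net interval is generated by iterating the maps on the separated pieces. I would then argue by the self-similar structure: from the root, applying $S_1$ repeatedly reaches net intervals near $0$ scaled by $\lambda_1$, applying $S_3$ repeatedly reaches net intervals near $1$ scaled by $\lambda_2$, and the overlap vertex is reached through the $S_1$--$S_2$ boundary. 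Because the iteration rule takes $\Phi(f_1,\ldots,f_m)=(\mathcal{I},\ldots,\mathcal{I})$, every net interval has children realizing all the maps, so from any vertex one can descend into the overlap region; conversely, the children of the overlap vertex recover all the basic neighbour sets.

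The main obstacle I anticipate is the bookkeeping of neighbour sets: I must verify that only finitely many distinct neighbour sets occur (so that $\mathcal{G}$ is genuinely finite, i.e.\ the $\Phi$-FNC holds, which is needed even to speak of a transition graph), and that each of them appears as a vertex reachable both from and to the overlap vertex. This requires tracking how the normalized maps $T_\Delta^{-1}\circ S_\sigma$ behave under the constraint $\lambda_1+2\lambda_2-\lambda_1\lambda_2\le 1$, which ensures the pieces $S_2([0,1])$ and $S_3([0,1])$ do not overlap and that $S_1([0,1])$ meets $S_2([0,1])$ only at a single boundary point. The strict inequality versus equality in the constraint affects whether certain net intervals degenerate, so I would treat the boundary case carefully or note it reduces by the reductions in \cref{r:pr-ch}.

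Once reachability in both directions through the distinguished overlap vertex is established for every vertex, strong connectivity follows: given any $v,w$, concatenate the path from $v$ to the overlap vertex with the path from the overlap vertex to $w$. I would conclude by remarking that this immediately implies (via \cref{l:ess-irred} or directly) that $\mathcal{G}$ consists of a single essential loop class, so that the decomposability hypothesis is vacuous and \cref{c:one-loop} applies, recovering the multifractal formalism for the entire family \cref{e:w-ifs}. The detailed computation of the finitely many neighbour sets and their adjacencies is routine but tedious, and is best organized as a finite case check (ideally with the aid of a computer for larger $k$), which I would summarize rather than grind through in full.
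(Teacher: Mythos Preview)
Your plan rests on a geometric misreading of the IFS. The images $S_i([0,1])$ do \emph{not} meet only at boundary points: from \cref{e:w-ifs} one checks $S_{i-1}(0)<S_i(0)<S_{i-1}(1)<S_i(1)$ for $i=1,\ldots,k$, so each consecutive pair overlaps on a genuine interval. In particular, in the $k=1$ case you have $S_1(1)=\lambda_1$ but $S_2(0)=\lambda_1(1-\lambda_2)<\lambda_1$, so your claimed identity $S_1(1)=S_2(0)$ is false. The neighbour set $v_{\pm 1}$ and the surrounding language belong to the Testud family of \cref{ss:tes-ex}, a completely different IFS with orientation-reversing maps; none of that structure is present here, and there is no single overlap vertex playing the role you describe. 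There are $k$ overlap intervals $\Delta_{i-1,i}$, each carrying a two-element neighbour set.

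The mechanism that actually makes the argument work---and which you do not mention---is the exact-overlap identity
\[
S_{i-1}\circ S_{k+1}=S_0\circ S_i\qquad(i=1,\ldots,k).
\]
This forces every level-2 net interval lying inside some $S_i([0,1])$ to be of the form $S_i(\Delta)$ for a $\Delta\in\mathcal{P}_1$, \emph{with the same neighbour set as $\Delta$}. Hence the entire vertex set of $\mathcal{G}$ is already visible at level~1, and the edge set is determined by which level-1 net intervals $S_i(\Delta)$ falls into. Listing these gives the explicit adjacency structure, from which strong connectivity is read off directly. Without this identity you cannot establish that only finitely many neighbour sets occur (so the $\Phi$-FNC itself is unverified in your outline), nor that the level-2 analysis closes up. The ``routine but tedious'' case check you propose to summarize is in fact the whole proof; the point is that the exact-overlap relation makes it a one-step computation rather than an unbounded induction.
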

\begin{proof}
    The definition of the IFS $(S_i)_{i=0}^{k+1}$ ensures for each $i=1,\ldots,k$ that
    \begin{equation}\label{e:overlap-exact}
        S_{i-1}\circ S_{k+1}=S_0\circ S_i\text{ and }S_{i-1}(0)<S_i(0)<S_{i-1}(1)<S_i(1),
    \end{equation}
    and by assumption $S_{k}(1)\leq S_{k+1}(0)$.
    Thus the net intervals in $\mathcal{P}_1$ are the intervals
    \begin{align*}
        \Delta_0 &= [0,S_1(0)] & \Delta_k&=[S_{k-1}(1),S_k(1)] & \Delta_{k+1} &= S_{k+1}([0,1])\\
    \end{align*}
    and
    \begin{align*}
        \Delta_{i,i+1} &= [S_i(1)\cap S_{i+1}(0)] \text{ for }i=0,1,\ldots,k-1\\
        \Delta_i &= [S_i(1),S_{i+1}(0)]\text{ for }i=1,\ldots,k-1
    \end{align*}
    which are ordered from left to write as $(\Delta_0,\Delta_{0,1},\Delta_1,\ldots,\Delta_{k-1,k},\Delta_k,\Delta_{k+1})$.
    Note that $\vroot=\vs(\Delta_{k+1})$, and set $v_i=\vs(\Delta_i)$ for $i=0,\ldots,k$ and $v_{i,i+1}=\vs(\Delta_{i,i+1})$ for $i=0,\ldots,k-1$.
    Set $V=\{\vroot\}\cup \{v_i:i=0,\ldots,k\}\cup\{v_{i,i+1}:i=0,\ldots,k-1\}$.

    It follows from \cref{e:overlap-exact} that the net intervals in $\mathcal{P}_2$ contained in $S_i([0,1])$ for all $0\leq i\leq k+1$ are just the intervals $S_i(\Delta_j)$ and $S_i(\Delta_{j,j+1})$ with $\vs(S_i(\Delta_j))=v_j$ and $\vs(S_i(\Delta_{j,j+1}))=v_{j,j+1}$ for all valid $j$.
    Tracking inclusion of these net intervals in the net intervals in $\mathcal{P}_1$ yields the graph $\mathcal{G}'$ with (unlabelled) edges given by
    \begin{itemize}[nl]
        \item $(\vroot, v) \text{ for all }v\in V$.
        \item $(v_0,v) \text{ for all }v\in V\setminus\{\vroot\}$.
        \item $(v_k,v) \text{ for all }v\in V\setminus\{v_0,v_{0,1}\}$.
        \item $(v_i,v) \text{ for all }v\in V\setminus\{v_0,v_{0,1},\vroot\}\text{ and }i\in\{1,\ldots,k-1\}$.
        \item $(v_{i-1,i}, v)\text{ for all }v\in\{v_0,v_{0,1}\}\text{ and }i\in\{1,\ldots,k\}$.
    \end{itemize}
    In particular, we observe that $\mathcal{G}'$ is strongly connected.
    Note that, for certain choices of $k,\lambda_1,\lambda_2$, the list $V$ of neighbour sets given above may include repetitions.
    In any case, the transition graph $\mathcal{G}$ is given by identifying vertices in $\mathcal{G}'$ corresponding to the same neighbour set, so $\mathcal{G}$ is strongly connected.
\end{proof}
\begin{theorem}
    Let $\mu$ be any self-similar measure associated with the IFS $(S_i)_{i=0}^{k+1}$ from \cref{e:w-ifs}.
    Then $\mu$ satisfies the multifractal formalism.
\end{theorem}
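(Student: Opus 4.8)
The plan is to combine Proposition~\ref{p:w-ifs-graph} with the consequences of the main results, specifically \cref{c:one-loop}. First I would observe that Proposition~\ref{p:w-ifs-graph} shows the transition graph $\mathcal{G}$ associated with the IFS $(S_i)_{i=0}^{k+1}$ (using the iteration rule from \cref{ex:uniform-transition}) is strongly connected. A strongly connected graph which contains at least one edge is itself its only loop class, and this loop class is necessarily essential: there are no vertices outside it, so trivially any directed path from a vertex of the class stays in the class. Hence $\mathcal{G}$ has exactly one loop class $\mathcal{L}$, and that class is essential.

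With this structural fact established, the result follows immediately from \cref{c:one-loop}, which states that if $\mathcal{G}$ has exactly one loop class, then $\mu$ satisfies the multifractal formalism. I would simply invoke this corollary. It is worth recalling \emph{why} \cref{c:one-loop} applies cleanly here: the single loop class is essential, so it is irreducible by \cref{l:ess-irred}; since there are no transition paths (there is only one loop class), the decomposability condition holds vacuously; and by \cref{t:lq-lower-bound} (or more directly by \cref{p:ess-formula}) we get $\tau_\mu = \tau_{\mathcal{L}}$. The multifractal formalism for the irreducible loop class then follows from \cref{t:multi-f}, giving $f_{\mathcal{L}} = \tau_{\mathcal{L}}^*$, and since $f_\mu = f_{\mathcal{L}}$ and $\tau_\mu = \tau_{\mathcal{L}}$ we obtain $f_\mu = \tau_\mu^*$, which is exactly the (complete) multifractal formalism.

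There is essentially no obstacle remaining in the proof itself; the real work has already been done in Proposition~\ref{p:w-ifs-graph}, where the geometry of the overlaps (encoded by the exact overlap relation $S_{i-1}\circ S_{k+1} = S_0 \circ S_i$ in \cref{e:overlap-exact}) is used to verify strong connectivity. The only point requiring a moment of care is the passage from "strongly connected with an edge" to "unique essential loop class," and the subsequent verification that the hypotheses of \cref{c:one-loop} (and implicitly those of the main theorems it relies upon) are genuinely met. In particular one should note that the iteration rule from \cref{ex:uniform-transition} always defines a valid iteration rule, and the IFS satisfies the $\Phi$-FNC precisely because $\mathcal{G}$ is finite, which follows from the finite type structure guaranteed by the exact overlaps. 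I would therefore write the proof as a one-line application.

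\begin{proof}
    By \cref{p:w-ifs-graph}, the transition graph $\mathcal{G}$ is strongly connected.
    Since $\mathcal{G}$ contains at least one edge, it is therefore its own unique loop class, which is moreover essential.
    In particular, $\mathcal{G}$ has exactly one loop class, so the result follows directly from \cref{c:one-loop}.
\end{proof}
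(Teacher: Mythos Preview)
Your proposal is correct and follows essentially the same approach as the paper: invoke \cref{p:w-ifs-graph} to get that $\mathcal{G}$ is strongly connected, and then apply \cref{c:one-loop}. The paper's proof is in fact a single sentence citing exactly these two results, so your version is, if anything, slightly more detailed in explaining why strong connectivity yields a unique (essential) loop class.
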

\begin{proof}
    This is immediate from \cref{p:w-ifs-graph} and \cref{c:one-loop}.
\end{proof}
\bibliographystyle{plain}
\bibliography{texproject/citation-main}
\end{document}